\documentclass[bibliography=totoc,11pt,BCOR5mm]{article}

\usepackage[utf8]{inputenc}
\usepackage[T1]{fontenc}
\usepackage[english]{babel}
\usepackage{amsfonts}
\usepackage{amsmath}
\usepackage{amssymb}
\usepackage{amsthm}
\begingroup
\makeatletter
\@for\theoremstyle:=definition,remark,plain\do{%
	\expandafter\g@addto@macro\csname th@\theoremstyle\endcsname{%
		\addtolength\thm@preskip\parskip
	}
}
\endgroup
\usepackage{booktabs}
\usepackage{caption}
\usepackage{color,soul}
\usepackage{chngcntr}
\usepackage{constants}
\usepackage{dsfont}
\usepackage[shortlabels]{enumitem}

\usepackage{epsfig}
\usepackage{epstopdf}
\usepackage{etoolbox}
\usepackage{fancyhdr}
\usepackage{float}
\usepackage{framed}
\usepackage[hmargin=1in,vmargin=1in,a4paper]{geometry}
\usepackage{graphicx}
\usepackage[ngerman]{isodate}
\usepackage{keyval}
\usepackage{lscape}
\usepackage{mathdots}
\usepackage{mathrsfs}
\usepackage{mathtools}
\usepackage{multirow}
\usepackage{needspace}
\usepackage{ngerman}
\usepackage[onehalfspacing]{setspace}
\usepackage[percent]{overpic}
\usepackage{pifont}
\usepackage{rotating}
\usepackage{setspace}
\usepackage{subfigure}
\usepackage{tikz}
\usepackage{titling}
\usepackage{verbatim}
\usepackage{wasysym}
\usepackage{hyperref}
\usepackage{cleveref}

\usepackage[yyyymmdd]{datetime}

\usepackage[normalem]{ulem} 

\usepackage[draft]{todonotes}

\title{Invariance principles and Log-distance of F-KPP fronts in a random medium}

\author{Alexander Drewitz%
  \thanks{Universit\"at zu K\"oln,
    Mathematisches Institut,
    Weyertal 86--90,
    50931 K\"oln, Germany.
    Email: \url{drewitz@math.uni-koeln.de}}
\quad
  Lars Schmitz%
  \thanks{Universit\"at zu K\"oln,
    Mathematisches Institut,
    Weyertal 86--90,
    50931 K\"oln, Germany.
    Email: \url{lschmit2@math.uni-koeln.de}}}

\date{\today}

\fancyfoot[C]{\thepage}

{ 
	\newtheorem{theorem}{Theorem} 
	\newtheorem{corollary}[theorem]{Corollary} \numberwithin{theorem}{section}
	\newtheorem{lemma}[theorem]{Lemma}\numberwithin{theorem}{section}
	\newtheorem{claim}[theorem]{Claim} \numberwithin{theorem}{section}
	 \numberwithin{theorem}{section}	
	\newtheorem{proposition}[theorem]{Proposition} \numberwithin{theorem}{section}
	 \numberwithin{theorem}{section}
	 \numberwithin{theorem}{section}
	 \numberwithin{theorem}{section}

	\theoremstyle{remark}
	\newtheorem{remark}[theorem]{Remark} \numberwithin{theorem}{section}
	\newtheorem{remark*}{Remark}
	
}

{
}

\newcommand*\diff{\mathop{}\!\mathrm{d}} 

\newcommand{\E}{\mathbb{E}} 
\newcommand{\p}{\mathbb{P}} 
\newcommand{\Eprob}{\texttt{E}} 
\newcommand{\Eprobth}{\emph{\texttt{E}}}
\newcommand{\prob}{\texttt{P}}
\newcommand{\probth}{\emph{\texttt{P}}}
\newcommand{\ei}{\texttt{ei}}
\newcommand{\es}{\texttt{es}}

\newcommand{\e}{\varepsilon} 
\newcommand{\given}{\, |\, }
\newcommand{\munder}{\overline{m}}
\newcommand{\N}{\mathbb{N}}
\newcommand{\R}{\mathbb{R}}

\newcommand{\overeta}{\eta^*}
\newcommand{\undereta}{\eta_*}

\newcommand{\Var}{\mathrm{Var}}
\newcommand{\Z}{\mathbb{Z}}
\newcommand{\F}{\mathcal{F}}
\newcommand{\tend}[2]{\displaystyle\mathop{\longrightarrow}_{#1\rightarrow#2}}
\newcommand{\varnet}[2]{\lfloor{#2}\rfloor_{#1}}
\newcommand{\epsnet}[1]{\varnet{\varepsilon}{#1}}
\newcommand{\te}{{t}'}
\newcommand{\y}{{y}'}

\newcommand{\parenthezises}[1]{\arabic{#1}}

\newconstantfamily{index_prob_cont_time}{
	symbol=\mathcal{T},
	format=\parenthezises,
}

\numberwithin{equation}{section}

\newconstantfamily{index_prob}{
	symbol=\mathcal{N},
	format=\parenthezises,
}
\newconstantfamily{constant}{
	symbol=\mathcal{C},
	format=\parenthezises,
}
\newconstantfamily{small}{
	symbol=c,
	format=\parenthezises,
}

\DeclareMathOperator*{\esssup}{ess\,sup}
\DeclareMathOperator*{\essinf}{ess\,inf}

\begin{document}

\singlespacing

\maketitle

\begin{center}
\small{{\bf Preliminary version}}
\end{center}
\bigskip
\begin{abstract}
	We study the front of the solution to the F-KPP equation with randomized non-linearity. Under suitable assumptions on the randomness involving spatial mixing behavior and boundedness, we show that the front of the solution  lags at most logarithmically in time  behind the front of the solution of the corresponding linearized equation, i.e.\  the parabolic Anderson model. This can be interpreted as a partial generalization of Bramson's findings \cite{bramson78} for the homogeneous setting.
 	Building on this result, we establish functional central limit theorems for the fronts of the solutions to both equations.
\end{abstract}

\tableofcontents

\section{Introduction and main results}

\subsection{The classical F-KPP equation}
\label{sec:hom_xi}

The F-KPP equation is the initial value problem given by
	\begin{align}
	\label{eq:homKPP1}
	\begin{split}
	w_t(t,x) &= \frac{1}{2}w_{xx}(t,x) + w(t,x)(1-w(t,x)),\quad t>0,\ x\in\R,\\
	w(0,x) &= w_0(x),\quad x\in\R, 
	\end{split}
	\end{align}
	with initial condition $w_0: \R \to [0,1].$ Its investigation has a long history, with seminal results dating back to Fisher \cite{fisher1937} and Kolmogorov-Petrovskii-Piskunov \cite{kolmogorov1937}. Their research had been motivated by pioneering works in genetics, where the equation has been used to model a randomly mating diploid population living in a one-dimensional habitat. Further applications can be found in chemical combustion theory or flame propagation, see \cite{aronson_diffusion}, \cite{fife_mcleod}, as well as \cite{Sa-03} and references therein. 
	
In \cite{kolmogorov1937} it has been shown that for reasonably general non-linearities (see \eqref{eq:standard_condi} at the beginning of Section \ref{sec:furtherNot} for further details) and initial conditions, the solution to \eqref{eq:homKPP1} approaches a  \emph{traveling wave} $g:\R\to [0,1]$. I.e., there exists a function $m: [0,\infty) \to \R$ -- generally referred to as the \emph{front} or \emph{breakpoint} --  such that
\[ w(t,\cdot +m(t)) \tend{t}{\infty} g\quad\text{uniformly.} \]
The limit $g$ is known (see \cite[Theorems~14 and 17]{kolmogorov1937}) to solve the differential equation
\[ \frac{1}{2}g''(x) + \sqrt{2}g'(x) + g(x)(1-g(x)) = 0,\ x\in\R, \]
\[ 0<g(x)<1\quad\text{for all }x\in\R\quad\text{ and }\quad g(x)\tend{x}{\infty} 0,\ g(x)\tend{x}{-\infty}1, \]
and it is unique up to spatial translations. Additionally, for the important  case of Heaviside-type initial condition $w_0=\mathds{1}_{(-\infty,0]},$ in \cite{kolmogorov1937} the first order asymptotics \( m(t)/t \tend{t}{\infty} \sqrt{2}\) has been derived. 
A couple of decades later, Bramson in his seminal work \cite{bramson78} improved this result by computing the second order correction up to additive constants.
More precisely, he showed that for each $\e\in(0,1),$ the choice $m(t)=m^\e(t):=\sup\{x\in\R: w(t,x)=\e \}$ fulfills
\begin{equation} \label{eq:homNonLinFront}
 m(t)  = \sqrt{2}t - \frac{3}{2\sqrt{2}}\ln t +\mathcal{O}(1)\quad \text{ as } t\to\infty. 
 \end{equation}
Later on, Bramson  \cite[Theorem~3]{bramson1983convergence} generalized this result to more general initial conditions; roughly speaking, he demanded the initial condition $w_0(x)$ to have a sufficiently fast exponential decay for $x\to\infty$ and to be non-vanishing for $x\to-\infty$. 
One of the main tools employed in the proof was the McKean representation of the solution to \eqref{eq:homKPP1} in terms of expectations of branching Brownian motion, see \cite{branch_markov_I} and \cite{mckean}.
Another important ingredient was the comparison of the solution of \eqref{eq:homKPP1} to the solution of its linearized version
\begin{align}
\label{eq:PAM_hom}
\begin{split}
u_t(t,x) &= \frac{1}{2}u_{xx}(t,x) + u(t,x),\quad t>0,\ x\in\R,\\
u(0,x) &= w_0(x). 
\end{split}
\end{align}
Indeed, since $m(t)$ describes the front of the solution where $u$ is small and hence $1-u \approx 1,$ the heuristics is that $m(t)$ is in some sense well-approximated by the front of the solution to \eqref{eq:PAM_hom}. 
More precisely, for $\munder(t):=\sup\{x\in\R: u(t,x)=\e \}$ and Heaviside-type initial condition $w_0=\mathds{1}_{(-\infty,0]}$, standard Gaussian computations entail
\begin{equation} \label{eq:homLinFront}
\munder(t) = \sqrt{2}t - \frac{1}{2\sqrt{2}}\ln t + \mathcal{O}(1).
\end{equation}
In combination with \eqref{eq:homNonLinFront}, this results in a respective logarithmic backlog of the two fronts in the sense that
\begin{equation}
\label{eq:log_delay_hom}
\munder(t) - m(t) = \frac{1}{\sqrt{2}}\ln t + \mathcal{O}(1).
\end{equation}
The main goal of this article is to investigate the effect of introducing a random potential in the non-linearity of \eqref{eq:homKPP1} as well as in its linearization \eqref{eq:PAM_hom},
on the logarithmic backlog derived in \eqref{eq:log_delay_hom} (cf.\
Theorem \ref{th: log-distance breakpoint median}). Taking advantage of this result we will then also derive functional central limit theorems for the fronts of the respective solutions to the randomized equations, see Theorem \ref{th:InvFrontPAM} and Corollary \ref{cor: functional clt for m} below, which can be interpreted as analogues to homogeneous-case results \eqref{eq:homLinFront} and \eqref{eq:homNonLinFront}.

\subsection{The randomized F-KPP equation and the parabolic Anderson model}
\label{sec:random_pot}
Already in Fisher's seminal paper \cite{fisher1937}, where he investigated the setting \eqref{eq:homKPP1} of homogeneous branching rates, it has been observed that a more realistic model would be obtained by considering spatially heterogeneous rates of the transformation of recessive to advantageous alleles. This, alongside a mathematical interest, is our guiding motivation to consider the setting of random $\xi$. Replacing the term $w(1-w)$ of \eqref{eq:homKPP1}  by a more general non-linearity $F(w)$ fulfilling suitable \emph{standard conditions} (see \eqref{eq:standard_condi} below), and overriding the notation $u$ and $w$ from the homogeneous setting of the previous section, we then arrive at the equation
\begin{align}
	\label{eq:KPP1}\tag{F-KPP}
	\begin{split}
	w_t &= \frac{1}{2}w_{xx} +  \xi(x,\omega)\cdot F(w),\quad t>0,\ x\in\R,\\
		w(0,x) &= w_0(x),\quad x\in\R,
	\end{split}
\renewcommand*{\theHequation}{notag.\theequation}
\end{align}
as well as its linearized version,  the \emph{parabolic Anderson model} 
\begin{align}
\label{eq:PAM}\tag{PAM}
\begin{split}
u_t &= \frac{1}{2}u_{xx} + \xi(x,\omega)\cdot u,\quad t>0,\ x\in\R,\\
u(0,x) &= u_0(x), \quad  x\in\R.
\end{split}
\renewcommand*{\theHequation}{notag2.\theequation}
\end{align}
Here, the stochastic process $\xi:\R\times\Omega\to(0,\infty)$ models the random medium. For the case $F(w)=w(1-w)$ and degenerate $\xi \equiv 1$, these two equations yield the special cases \eqref{eq:homKPP1} and \eqref{eq:PAM_hom}, respectively.
It has long been known, see e.g.\ Freidlin \cite[{Theorem 7.6.1}]{Fr-85},  that under suitable assumptions there exists $v_0>0$, such that the solution $w(t,x)$ to \eqref{eq:KPP1} converges  to $0$ (resp.\ $1$), uniformly  for all $x\geq \overline{v}t$ with $\overline{v}>v_0$ (resp.\ for all $x\leq\underline{v}t$ with $\underline{v}<v_0$), as $t$ tends to infinity. In a similar way, this result can be shown to hold for the solution to \eqref{eq:PAM} with the same $v_0$ as well, showing that the {\em speeds} or {\em velocities} of both fronts, the one of the solution to \eqref{eq:KPP1} as well as the one of the solution to \eqref{eq:PAM}, coincide.
Consequently, as in the homogeneous case, the question of second order corrections arises naturally.

\subsection{Summary of results}

In order to address this question and to be able to summarize our results, we introduce some notation.
Let $\e\in(0,1)$ and $a>0$. Furthermore, write $u=u^{\xi,u_0}$ for the solution to \eqref{eq:PAM} with initial condition $u_0,$ and $w=w^{\xi,F,w_0}$ for the solution to \eqref{eq:KPP1} with initial condition $w_0$. As in the previous section, the fronts of the respective solutions, denoted by
\begin{equation}
\label{eq:def_munder}
\begin{split}
\munder^{\xi,u_0,a}(t) &:= \sup\big\{  x\in\mathbb{R}: u(t,x)\geq a\big\},  \\
m^{\xi,F,w_0,\e}(t) &:= \sup\big\{ x\in\mathbb{R}: w(t,x)\geq \e \big\},
\end{split}
\end{equation}
are of special interest. Since we will from now on focus on the heterogeneous setting, we override the notation from Section \ref{sec:hom_xi}, where it was used to denote the fronts in the  homogeneous case, and define
\begin{align} \label{eq:munder}
\begin{split}
	\munder(t)&:=\munder^{a}(t):= \munder^{\xi,\mathds{1}_{(-\infty,0]},a}(t), \\
m(t)&:=m^{\varepsilon}(t):=  m^{\xi,F,\mathds{1}_{(-\infty,0]},\varepsilon}(t).
\end{split}
\end{align}
Our findings are motivated by the respective results 
of \eqref{eq:homLinFront}, \eqref{eq:homNonLinFront}, and
\eqref{eq:log_delay_hom} for the homogeneous case, which provide information about the position of the fronts of the solutions to the respective equations, and thus their respective backlog as well. Under suitable assumptions, our results are summarized in the following two statements: 
\begin{enumerate}	
	\item There exist a constant $C \in (0,\infty)$ and a $\p$-a.s.\ finite random time $\mathcal{T}(\omega)$ such that for all $t \ge  \mathcal{T}(\omega),$
	\begin{equation} \label{eq:logUB}
	\munder(t) - m(t) \le C \ln t;
	\end{equation}
	see Theorem \ref{th: log-distance breakpoint median} below. 
	
	\item After centering and diffusive rescaling, the stochastic processes $[0,\infty) \ni t \to \munder(t)$ and $[0,\infty) \ni t \to m(t)$ satisfy invariance principles; see 	
	Theorem \ref{th:InvFrontPAM} and Corollary \ref{cor: functional clt for m} below. 
\end{enumerate}

As is shown in the companion article \cite{CeDrSc-20}, in a certain sense there is a logarithmic lower bound for $\munder(t) - m(t) $  corresponding to \eqref{eq:logUB} as well; cf.\ also Section \ref{sec:discussion} below for further discussion.

\subsection{Further notation}
\label{sec:furtherNot}
In order to be able to precisely formulate the previously summarized results, we  have to introduce some further notation. We start with introducing the \emph{standard conditions} for the non-linearity, i.e., $F$ in \eqref{eq:KPP1} has to fulfill the following:
\begin{equation}
	\label{eq:standard_condi}
	\begin{split}
		&F \in C^1([0,1]),\quad F(0)=F(1)=0,\quad	 	 F(w)>0 \quad \forall w\in(0,1),\quad  \\
	F'&(0)=1=\sup_{w>0} F(w)w^{-1},\quad F'(1)<0,\quad \limsup_{w\downarrow 0}\frac{1-F'(w)}{w}<\infty.
	\end{split}
	\tag{SC}
\renewcommand*{\theHequation}{notag3.\theequation}
\end{equation}
\begin{figure}[h!]
	\centering
	\includegraphics[width=0.5\linewidth]{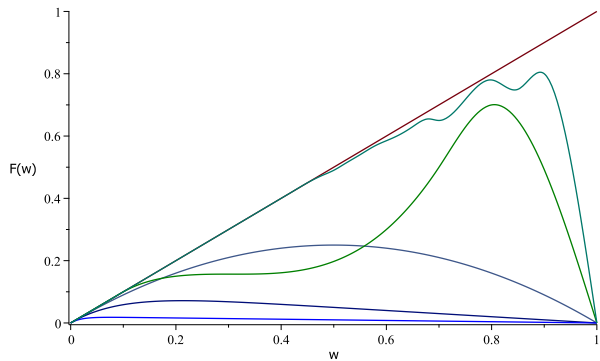}
	\caption{Sketches of functions fulfilling \eqref{eq:standard_condi}, all of which are dominated by the identity function.}
\end{figure}

We will now specify the classes of initial conditions under consideration for both, \eqref{eq:KPP1} and \eqref{eq:PAM}. For this purpose, we 
fix $\delta'\in(0,1)$ and $C'>1,$ and require an initial condition $u_0$ of \eqref{eq:PAM} to fulfill
\begin{equation}
\label{eq:PAM_initial}
\tag{PAM-INI}
\begin{split}
& \delta'\mathds{1}_{[-\delta',0]} \leq  u_0 \leq C'\mathds{1}_{(-\infty,0]}.
\end{split}
\renewcommand*{\theHequation}{notag4.\theequation}
\end{equation}
Our results also hold for initial conditions that decay sufficiently fast at infinity, as well as for  initial conditions that grow towards minus infinity with sufficiently small exponential rate.
However, in order to avoid further technical complications we stick to the above set of initial conditions.

In addition, let us introduce a tail condition for the initial condition of \eqref{eq:KPP1}, which is the same as the one for the case $\xi\equiv1$ stated in \cite[(1.17)]{bramson1983convergence}. For this purpose,
 we fix $N,N'>0$, and require $w_0$ as in \eqref{eq:KPP1} to fulfill 
\begin{equation}
\label{eq:KPP_initial}
\tag{KPP-INI}
\begin{split}
0\leq w_0\leq \mathds{1}_{(-\infty,0]} \qquad\text{and}\qquad \int_{[x-N,x]}w_0(y)\diff y \geq \delta'\quad \forall x\leq -N'.
\end{split}
\renewcommand*{\theHequation}{notag5.\theequation}
\end{equation}
Denote by $\mathcal{S}^1$ the class of functions $f:\R\to [0,\infty)$ which are  pointwise limits of increasing sequences of continuous functions, and let 
\begin{align*}
\mathcal{I}_{\text{PAM}} &:= \big\{ u_0\in\mathcal{S}^1:\ u_0\text{ fulfills \eqref{eq:PAM_initial}} \big\},\quad
\mathcal{I}_{\text{F-KPP}} := \big\{ w_0\in\mathcal{S}^1:\ w_0\text{ fulfills \eqref{eq:KPP_initial}}\big\},
\end{align*}
which will be the classes of initial conditions under consideration.
An emblematic example which is contained in both, $\mathcal{I}_{\text{F-KPP}}$ and $\mathcal{I}_{\text{PAM}},$  is the function $ \mathds{1}_{(-\infty,0]}$ of Heaviside type.

We will assume $\xi=(\xi(x))_{x\in\mathbb{R}}$ to be a stochastic process on a probability space $(\Omega,\F,\p)$ having   H\"{o}lder continuous paths, i.e., there exists $\alpha=\alpha(\xi)>0$ and $C=C(\xi)>0$, such that 
\begin{equation}
\label{eq:hoelder_cont}
\left| \xi(x)-\xi(y) \right| \leq C\,  |x-y|^\alpha\quad \forall x,y\in \R,
\tag{H\"OL}\renewcommand*{\theHequation}{notagHOELD.\theequation}
\end{equation}
and such that the  following conditions are fulfilled: 
\begin{itemize}
	\item $\xi$ is \emph{uniformly bounded away from $0$ and $\infty$}, i.e., there exist constants $0<\ei\leq\es<\infty$ such that $\p$-a.s.,
	\begin{equation}\label{eq:POT}
	\ei\leq \xi(x)\leq\es\quad \text{ for all }x\in\mathbb{R}\tag{BDD};
	\renewcommand*{\theHequation}{notag6.\theequation}
	\end{equation}
	\item $\xi$ is \emph{stationary}, i.e. for every $h\in\mathbb{R}$ 
	we have
	\begin{equation}
	\label{eq:STAT}
	(\xi(x))_{x\in\R} \overset{d}{=} (\xi(x+h))_{x\in\R};
	\tag{STAT}
	\renewcommand*{\theHequation}{notag7.\theequation}
	\end{equation}
	\item $\xi$ fulfills a  \emph{$\psi$-mixing} condition:  Let $\F_x:=\sigma(\xi(z):z\leq x)$ and $\F^y:=\sigma(\xi(z):z\geq y)$, $x,y\in\mathbb{R}$ and assume that there is a continuous, non-increasing function $\psi:[0,\infty)\to[0,\infty)$, such that for all $j,k\in\Z$ with
	$j\leq k$ as well as $X\in\mathcal{L}^1(\Omega,\F_j,\p)$ and $Y\in\mathcal{L}^1(\Omega,\F^k,\p)$ we have
	\begin{align}
	\label{eq:MIX}
	\big| \E\big[X-\E[X] \given \F^{k}\big]\big| &\leq \E[|X|]\cdot \psi(k-j), \notag \\
	\big| \E\big[Y-\E[Y] \given  \F_j\big]\big| &\leq \E[|Y|]\cdot \psi({k-j}),  \tag{MIX} \\
	\sum_{k=1}^\infty &\psi(k)<\infty. \notag
	\renewcommand*{\theHequation}{notag8.\theequation}
	\end{align}
	Note that \eqref{eq:MIX} implies the ergodicity of $\xi$ with respect to the shift operator $\theta_y$ acting via  $\xi(\cdot)\circ\theta_y=\xi(\cdot+y)$, $y\in\R$. 
\end{itemize}
Summarizing, we arrive at the following standing assumptions: 
\begin{equation}\label{eq:SASS} \tag{{\bf Standing assumptions}}
	\begin{gathered}
  \text{We will assume
   conditions \eqref{eq:hoelder_cont}, \eqref{eq:POT}, \eqref{eq:STAT} and \eqref{eq:MIX}}\\ \text{to be fulfilled from now on, if not explicitly mentioned otherwise.} 
\end{gathered}
\renewcommand*{\theHequation}{notag9.\theequation}
	\end{equation}
Since we allow for non-smooth initial conditions in both equations, \eqref{eq:KPP1} and \eqref{eq:PAM}, we shortly comment on the notion of solution in this setting. We call a function $w$ to \eqref{eq:KPP1} a \emph{generalized solution to \eqref{eq:KPP1}}, if it satisfies
\begin{equation}
\label{eq:genSol}
w(t,x) = E_x \Big[ \exp\Big\{ \int_0^t \xi(B_s)F(w(t-s,B_s))/w(t-s,B_s)\diff s \Big\} \, w_0(B_t) \Big],\quad \forall\ (t,x)\in[0,\infty)\times\R,
\end{equation}
where $E_x$ is the corresponding expectation of the probability measure $P_x$, under which $(B_t)_{t\geq0}$ is a standard Brownian motion starting in $x\in\R$. 
This equation can be interpreted as a mild formulation of \eqref{eq:KPP1} and one can show (see e.g.\ \cite[(1.4), p.\ 354, and (a), p.\ 355]{Fr-85})  that every classical solution to \eqref{eq:KPP1} also is a generalized one. Generalized solutions can be shown to exist under weak assumptions, and, vice versa, in many instances they turn out to be classical solutions indeed,  see Proposition \ref{prop:genClass} in Appendix \ref{sec:appendix}.

\begin{remark}
	\label{rem:gen_approx}
	A useful observation is that for initial conditions in $\mathcal{I}_{\text{F-KPP}}$, generalized solutions can be approximated by classical solutions. That is, if $(w_0^{(n)})_{n\in\N}\subset \mathcal{I}_{\text{F-KPP}}$ is a sequence of continuous functions which increase pointwise to $w_0$, then by Corollary~\ref{cor:mon_solutions} the corresponding sequence of (by Proposition~\ref{prop:genClass} classical) solutions  $(w^{(n)})_{n\in\N}=(w^{w_0^{(n)}})_{n\in\N}$ to \eqref{eq:KPP1} is also monotone and thus the limit $w(t,x):=\lim_{n\to\infty} w^{{(n)}}(t,x)$ exists for all $(t,x)\in[0,\infty)\times\R$. Dominated convergence and the fact that $w^{(n)}$ also is a generalized solution then imply
	\begin{align*}
	w(t,x) &= \lim_{n\to\infty} w^{(n)}(t,x) = \lim_{n\to\infty} E_x \Big[ \exp\Big\{ \int_0^t \xi(B_s)\frac{F(w^{(n)}(t-s,B_s))}{w^{(n)}(t-s,B_s)}\diff s \Big\} \, w^{(n)}_0(B_t) \Big] \\
	&=  E_x \Big[ \exp\Big\{ \int_0^t \xi(B_s)\frac{F(w(t-s,B_s))}{w(t-s,B_s)}\diff s \Big\} \, w_0(B_t) \Big].
	\end{align*}
	I.e., $w$ is the generalized solution to \eqref{eq:KPP1} with initial condition $w_0$.
\end{remark}

A similar concept is possible for the solution to \eqref{eq:PAM} as well. That is, for $u_0\in\mathcal{I}_{\text{PAM}}$, we call a function
\begin{align}
u(t,x)=E_x\Big[ \exp\Big\{ \int_0^{t} \xi(B_s)\diff s \Big\}u_0(B_t) \Big], \label{eq:feynman_kac}
\end{align} 
a \emph{generalized solution to \eqref{eq:PAM}}. 
Note that this is an explicit expression of the solution in terms of a Brownian path, while in \eqref{eq:genSol} the solution is only given implicitly.
Furthermore, if $u_0$ is continuous, then  by \cite[Remark~4.4.4]{karatzas_shreve}, there exists a unique solution $u$ to \eqref{eq:PAM} which fulfills $u\in C^{1,2}((0,\infty)\times\R)$ and $ u(0,\cdot)=u_0.$  
As a consequence of these observations, in the following, we will always consider generalized solutions.

\subsection{The linearized equation}
As already mentioned in the first section, we expect that investigating the solution to \eqref{eq:PAM} might also provide some insight into the solution to \eqref{eq:KPP1}. Therefore, starting with the first order of the front as a function of time, it turns out useful to consider the so-called \emph{Lyapunov exponent}
\begin{equation} 
\label{eq:lyapunov}
\Lambda(v) := \lim_{t\to\infty} \frac{1}{t} \ln u(t,vt).
\end{equation}
Due to Proposition~\ref{prop:lyapunov}, the Lyapunov exponent exists $\p$-a.s.\ for all $v\in \R$, is non-random, and -- as a consequence of Corollary~\ref{cor:lyapunov_alt} -- does not depend on the initial condition in $\mathcal{I}_{\text{PAM}}$ under consideration.
Furthermore, the function $[0,\infty)\ni v\mapsto\Lambda(v)$ is concave, tends to $-\infty$ as $v\to\infty$ and $\Lambda(0)=\es$, where $\es$ is defined in \eqref{eq:POT}. $\Lambda(v)$ describes the asymptotic exponential growth of the solution in the linear regime with speed $v$. By Proposition~\ref{prop:lyapunov}, there exists a unique $v_0>0$, such that 
\[\Lambda(v_0) = 0,\] 
which we will call \emph{velocity} or \emph{speed} of the solution to \eqref{eq:PAM}. 
Using  the properties of the Lyapunov exponent, we immediately infer the first order asymptotics for $\munder$ to $\p\text{-a.s.}$ satisfy
\begin{equation*}
\frac{\munder(t)}{t} \tend{t}{\infty} v_0
\end{equation*}
It will turn out that our methods work if we require $v_0$ to be strictly larger than some ‘‘critical'' value $v_c$, defined in Lemma~\ref{le:expectedLogMGfct} (d). Roughly speaking, the condition $v>v_c$ allows us to find a suitable additive tilting parameter in the exponent  of  the  Feynman-Kac representation \eqref{eq:feynman_kac}, which depends on $v$ and makes the solution $u(t,x)$ to \eqref{eq:PAM} amenable to the investigation by standard tools for values $x\approx vt$ and large $t$. Hence, we work under the assumption 
\begin{equation}
\label{eq:VEL}
\tag{VEL}
v_0>v_c.
\renewcommand*{\theHequation}{notag10.\theequation}
\end{equation}
As will be shown in Section~\ref{sec:vel}, this assumption is fulfilled for a rich class of potentials $\xi.$

We start with investigating the fluctuations of the function $t \mapsto \ln u(t,vt)$ around $t\Lambda(v)$ for values $v$ in a neighborhood of $v_0$, 
which are interesting in their own right. To this end, on the space $C([0,\infty))$ of continuous functions from $[0,\infty)$ to $\R,$ we define the metric
\begin{equation}
\label{eq:def_metric}
\rho(f,g):=\sum_{j=1}^\infty 2^{-j} \frac{\lVert f-g\rVert_j}{1+\lVert f-g\rVert_j}, \quad f,g\in C([0,\infty)),
\end{equation}
 where we write $\lVert f-g\rVert_j:=\sup_{x\in[0,j]}|f(x)-g(x)|.$
 This makes $(C([0,\infty)),\rho)$ a complete separable metric space.

\begin{theorem}
	\label{th:InvPAM}
	Let \eqref{eq:VEL} be fulfilled, $u_0\in\mathcal{I}_{\text{\emph{PAM}}},$ and $u=u^{\xi,u_0}$ be the corresponding solution to \eqref{eq:PAM}. Furthermore, let $V\subset(v_c,\infty)$ be a compact interval such that $v_0\in \text{\emph{int}}(V)$.	Then for each $v\in V$, as $n\to\infty$ the sequence of random variables $(nv)^{-1/2}\big( \ln u(n,vn)-n\Lambda(v) \big),$ $n \in \N,$ converges in $\p$-distribution to a centered Gaussian random variable with variance  $\sigma_v^2\in[0, \infty)$, where $\sigma_v^2$ is defined in \eqref{eq:sigmav}. If $\sigma_v^2>0$, the sequence of processes
	\[ [0,\infty) \ni t\mapsto \frac{1}{\sqrt{nv\sigma_v^2}} \big( \ln u(nt,vnt) - nt \Lambda(v) \big),\quad n\in\N, \]
	converges as $n\to\infty$ in $\p$-distribution to a standard Brownian motion  in the sense of weak convergence of measures on $C([0,\infty))$ endowed with the metric $\rho$ from \eqref{eq:def_metric}.
\end{theorem}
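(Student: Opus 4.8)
The plan is to rewrite $\ln u(n,vn) - n\Lambda(v)$, up to an error of order $o(\sqrt n)$, as a partial sum $\sum_{k=1}^n \eta_k$ of a centered, stationary, $\psi$-mixing sequence of random variables built from the environment, and then to invoke the classical central limit theorem and the classical invariance principle (Donsker-type theorem) for such sequences. Throughout, $v\in V$ is fixed; the role of the compact interval $V\subset(v_c,\infty)$ with $v_0\in\mathrm{int}(V)$ is only to make the various environment estimates uniform in $v$ (which is what one needs for the downstream front CLTs, where $v$ is replaced by a fluctuating velocity).

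\textbf{Step 1 (linearization via Feynman--Kac and tilting).} Using the Markov property at integer times in the representation \eqref{eq:feynman_kac}, one has for $k\ge1$
\begin{equation*}
u(k,vk) = E_{vk}\Big[\exp\Big\{\textstyle\int_0^1 \xi(B_s)\,\mathrm ds\Big\}\, u(k-1,B_1)\Big],
\end{equation*}
so that $\Delta_k := \ln u(k,vk) - \ln u(k-1,v(k-1))$ equals $\ln E_{vk}\big[\exp\{\int_0^1\xi(B_s)\,\mathrm ds\}\, u(k-1,B_1)/u(k-1,v(k-1))\big]$. Here assumption \eqref{eq:VEL} enters: via Lemma~\ref{le:expectedLogMGfct} it provides an admissible tilting parameter $\lambda(v)$, and the plan is to combine (i) a large-deviation estimate forcing the relevant paths to satisfy $B_1\approx v(k-1)$ with (ii) a gradient/Harnack-type estimate for the ratio $u(k-1,\cdot)/u(k-1,v(k-1))$ near $v(k-1)$ showing that it is close to $e^{\lambda(v)(\,\cdot\,-v(k-1))}$, with a relative error that depends only on $\xi$ in a bounded window around $v(k-1)$ (the bulk contribution of the environment to the left of $v(k-1)$ cancels in the ratio). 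This yields a decomposition $\Delta_k = \Lambda(v) + \eta_k + \zeta_k$, where $\eta_k$ is centered and $\sigma(\xi(z):|z-v(k-1)|\le r_k)$-measurable up to an error exponentially small in $r_k$ (take $r_k\asymp\log k$), and $\sum_{k\le n}|\zeta_k| = o(\sqrt n)$ $\p$-a.s.; summing and using $u(0,\cdot)=u_0$ with $\ln u_0(0)=\mathcal O(1)$ (since $\delta'\le u_0(0)\le C'$) gives $\ln u(n,vn)-n\Lambda(v) = \sum_{k=1}^n\eta_k + o(\sqrt n)$ $\p$-a.s.

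\textbf{Step 2 (stationarity, mixing, variance).} By \eqref{eq:STAT} the $\eta_k$ are identically distributed (indeed $\eta_k = g\circ\theta_{v(k-1)}$ up to the truncation error), and after replacing $\eta_k$ by its truncation to a window of width $\asymp v + \log k$ around $v(k-1)$ — these windows being ordered and essentially disjoint — condition \eqref{eq:MIX} shows that $(\eta_k)_{k\ge1}$ is a stationary $\psi$-mixing sequence whose mixing coefficients remain summable; \eqref{eq:POT} and \eqref{eq:standard_condi} give finite moments of all orders. Consequently $\sigma_v^2$ as in \eqref{eq:sigmav} equals $\lim_n (nv)^{-1}\Var\big(\sum_{k\le n}\eta_k\big)$ and lies in $[0,\infty)$ (the series $\sum_j|\mathrm{Cov}(\eta_1,\eta_{1+j})|$ converges by summability of $\psi$ and boundedness). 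The CLT for stationary $\psi$-mixing sequences then gives $(nv)^{-1/2}\sum_{k\le n}\eta_k\to N(0,\sigma_v^2)$ in distribution, hence by Step 1 the first assertion; when $\sigma_v^2=0$ the limit is the constant $0$, which is consistent and makes the functional statement vacuous.

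\textbf{Step 3 (functional CLT).} Assume $\sigma_v^2>0$. Donsker's invariance principle for stationary $\psi$-mixing sequences (via Billingsley-type tightness or a martingale approximation, using summability of $\psi$ and finite moments) shows that $t\mapsto (nv\sigma_v^2)^{-1/2}\sum_{k\le\lfloor nt\rfloor}\eta_k$ converges to a standard Brownian motion, uniformly on compacts. To pass from $\lfloor nt\rfloor$ back to $nt$ inside $u$, I would prove a continuous-time regularity estimate $\sup_{0\le s\le1}|\ln u(k+s,v(k+s))-\ln u(k,vk)|\le R_k$ with $\max_{k\le n}R_k = \mathcal O(\log n) = o(\sqrt n)$ (parabolic regularity of $u$ together with the bounds $\ei\le\xi\le\es$), so that after the $\sqrt n$ normalization the interpolation error, as well as the $o(\sqrt n)$ error from Step 1, is negligible uniformly on compacts. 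Since convergence in $(C([0,\infty)),\rho)$ is exactly uniform-on-compacts convergence, the claimed convergence of $t\mapsto (nv\sigma_v^2)^{-1/2}(\ln u(nt,vnt)-nt\Lambda(v))$ to standard Brownian motion follows.

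\textbf{Main obstacle.} The crux is Step 1: establishing the gradient/Harnack estimate that localizes each increment $\eta_k$ to a $\log$-sized window of the environment, with errors $\zeta_k$ small enough to sum to $o(\sqrt n)$ — this is precisely where \eqref{eq:VEL} and the admissible tilt are used, and where the localization of the tilted Brownian bridge and the cancellation of the far-left environment must be made quantitative and uniform over $v\in V$. A secondary difficulty is the continuous-time regularity estimate for $\ln u(\cdot,v\,\cdot)$ needed for the interpolation in Step 3; the partial-sum tightness itself is supplied by the $\psi$-mixing invariance principle.
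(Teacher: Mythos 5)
Your overall architecture (reduce to a centered stationary mixing sum, then apply a CLT and an invariance principle) matches the paper's in spirit, but the decomposition you propose in Step~1 is genuinely different from the paper's, and it contains the essential gap: the localization of the time-increments $\Delta_k=\ln u(k,vk)-\ln u(k-1,v(k-1))$ to a $\log$-sized window of the environment around $v(k-1)$ is asserted, not proved, and it is precisely the hard part. The ratio $u(k-1,y)/u(k-1,v(k-1))$ is \emph{not} a local functional of $\xi$ near $v(k-1)$: by the structure of the Feynman--Kac representation, the log-profile of $u(k-1,\cdot)$ at spatial scale $O(v)$ is governed by the tilting parameter $\eta^{\zeta}_{x}(x/(k-1))$ solving $(\overline{L}^{\zeta}_{x})'(\eta)=(k-1)/x$, which is a functional of the \emph{entire} environment on $[0,v(k-1)]$ and fluctuates around $\overline{\eta}(v)$ by about $\sqrt{\ln k/k}$. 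Your claimed cancellation of ``the bulk contribution of the environment to the left of $v(k-1)$'' only holds at leading order (this is the content of the paper's space-perturbation Lemma~\ref{le:space_perturb}, and even there only for displacements $|h|\ge C\ln t$); the correction terms are global functionals whose naive sum over $k\le n$ is of order $\sum_k\sqrt{\ln k/k}\asymp\sqrt{n\ln n}$, which is \emph{not} $o(\sqrt n)$. Controlling them requires an exact cancellation (in the paper, a second-order Taylor expansion around the random tilt, where the first derivative vanishes by the very definition of $\eta^\zeta_x(v)$; see Lemma~\ref{le:S_xLogbounded}), and nothing in your Step~1 supplies an analogue. Without it, the $\eta_k$ you define need not be (approximately) local, stationary, or mixing, and Steps~2--3 have no foundation.

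A second, related gap: the theorem asserts the limiting variance is the specific $\sigma_v^2$ of \eqref{eq:sigmav}, built from covariances of the hitting-time log-moment generating functions $L_i^\zeta(\overline{\eta}(v))$ indexed by \emph{space}. Your sequence $(\eta_k)$ is indexed by \emph{time}, and the identity $\sigma_v^2=\lim_n(nv)^{-1}\Var(\sum_{k\le n}\eta_k)$ is stated without proof; matching your variance to \eqref{eq:sigmav} (and, relatedly, establishing the centering identity $\Lambda(v)=\es-vL^*(1/v)$) is an additional nontrivial identification. The paper sidesteps both problems by decomposing in space rather than in time: it writes $\ln u(t,vt)\approx \es t+\ln Y_v^{\approx}(vt)$ via the exact large-deviation estimates of Proposition~\ref{prop:ExactLD}, expresses $\ln Y_v^{\approx}(vt)$ through the empirical Legendre transform $vt(\overline{L}^{\zeta}_{vt})^*(1/v)=\sum_{i\le vt}V_i^{\zeta,v}(\eta^\zeta_{vt}(v))$, replaces the random tilt by $\overline{\eta}(v)$ at a cost of $O(\ln t)$, and only then applies the martingale-approximation FCLT to the manifestly local, stationary, mixing sequence $(L_i^\zeta(\overline{\eta}(v)))_i$. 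Your Step~3 interpolation estimate and the reduction to $u_0=\mathds{1}_{(-\infty,0]}$ are fine and correspond to devices the paper also uses, but they do not repair Step~1.
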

In combination with perturbation estimates for $u,$ we will use this result in order to infer an invariance principle for the front of the solution to \eqref{eq:PAM}. Note that since the function $t\mapsto \munder_t$ may be discontinuous, we consider convergence in the Skorohod space $D([0,\infty))$  in the following result.

\begin{theorem}
	\label{th:InvFrontPAM}
	Let \eqref{eq:VEL} be fulfilled, $u_0\in\mathcal{I}_{\text{\emph{PAM}}}$ and $M>0$. Then for $n \to \infty,$ the sequence $n^{-1/2}\big( \munder^{\xi,u_0,M}(n)-v_0n \big),$ $n \in \N,$  converges in $\p$-distribution to a centered Gaussian random variable with variance  $\widetilde{\sigma}_{v_0}^2\in[0,\infty)$, where $\widetilde{\sigma}_{v_0}^2$ is defined in \eqref{eq:def_sigma_tilde}. If $\widetilde{\sigma}_{v_0}^2>0$,  the sequence of processes 
	\[ [0,\infty) \ni t\mapsto \frac{\munder^{\xi,u_0,M}(nt) - v_0nt}{\sqrt{n\widetilde{\sigma}_{v_0}^2}},\quad n\in\N, \]
	converges as $n\to\infty$ in $\p$-distribution to a standard Brownian motion in the Skorohod space  $D([0,\infty))$.  
\end{theorem}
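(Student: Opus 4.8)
The plan is to transfer the invariance principle of Theorem~\ref{th:InvPAM} from the log-solution $\ln u(nt, vnt)$ to the front $\munder^{\xi,u_0,M}(nt)$ via an implicit-function-type argument. The heuristic is that, since $\Lambda$ is concave and $C^1$ near $v_0$ with $\Lambda(v_0)=0$ and $\Lambda'(v_0)<0$, the event $\{\munder^{\xi,u_0,M}(t) \approx v_0 t + x\sqrt{t}\}$ should be essentially equivalent to $\{\ln u(t, v_0 t + x \sqrt t) \approx 0\}$; linearizing $\Lambda$ around $v_0$ then converts a fluctuation of size $\sqrt t$ in the front position into a fluctuation of size $\sqrt t$ in $\ln u$, with the proportionality constant $|\Lambda'(v_0)|$. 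Concretely, I would first establish a \emph{deterministic comparison lemma}: for $v$ in the compact neighborhood $V$, with high $\p$-probability (uniformly on compact time intervals after diffusive rescaling),
\[
\ln u(t, vt) = t\,\Lambda(v) + o(\sqrt t) \quad\text{whenever } t^{-1}\big(\munder^{\xi,u_0,M}(t) - v_0 t\big) \to 0,
\]
together with the requisite monotonicity of $x \mapsto u(t,x)$ near the front and the right \emph{a priori} bound $\munder^{\xi,u_0,M}(t) = v_0 t + \mathcal{O}(\sqrt{t\ln t})$ or similar, so that only $v$ in a shrinking neighborhood of $v_0$ is relevant. This is where the "perturbation estimates for $u$" alluded to in the text enter, and I expect it to be the technical heart: one needs quantitative control on how much $\ln u(t,x)$ changes as $x$ varies on the $\sqrt t$ scale, and on the modulus of continuity of $x\mapsto \ln u(t,x)$, presumably obtained from the Feynman–Kac representation \eqref{eq:feynman_kac} by tilting the Brownian motion and using \eqref{eq:POT} and the mixing \eqref{eq:MIX}.

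Granting this, the argument runs as follows. Fix $t$ large and set $x = (\munder^{\xi,u_0,M}(t) - v_0 t)/\sqrt t$. By definition of the front and monotonicity, $u(t, v_0 t + x\sqrt t) \approx M$, i.e. $\ln u(t, v_0 t + x\sqrt t) = \mathcal{O}(1)$. Writing $v_0 t + x\sqrt t = v t$ with $v = v_0 + x/\sqrt t$, and Taylor expanding,
\[
\ln u(t, vt) = t\,\Lambda(v) + \big(\ln u(t,vt) - t\Lambda(v)\big)
= t\,\Lambda(v_0) + x\sqrt t\,\Lambda'(v_0) + \mathcal{O}\!\big(x^2\big) + \big(\ln u(t,vt) - t\Lambda(v)\big).
\]
Since $\Lambda(v_0)=0$, dividing by $\sqrt t$ gives
\[
\frac{\munder^{\xi,u_0,M}(t) - v_0 t}{\sqrt t}\cdot\Lambda'(v_0)
= -\frac{1}{\sqrt t}\big(\ln u(t,vt) - t\Lambda(v)\big) + \frac{\mathcal{O}(1) + \mathcal{O}(x^2)}{\sqrt t}.
\]
The first term on the right, by Theorem~\ref{th:InvPAM} applied at the (random, but converging to $v_0$) speed $v$, converges to a Brownian motion scaled by $\sqrt{v_0\sigma_{v_0}^2}$ (using continuity of $v\mapsto\sigma_v^2$ and of the whole construction in $v$ on $V$, which needs to be checked); the error terms vanish once we know the a priori bound that $x$ is tight. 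Hence $\widetilde\sigma_{v_0}^2 := v_0\sigma_{v_0}^2/\Lambda'(v_0)^2$, matching the definition referenced as \eqref{eq:def_sigma_tilde}.

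For the full functional statement I would upgrade the one-dimensional-in-$t$ computation above to the process level: the map taking the path $t\mapsto \ln u(nt, v_0 nt)$ (plus lower-order corrections from the $x$-dependence) to the path $t \mapsto \munder^{\xi,u_0,M}(nt)$ is, up to asymptotically negligible errors, the linear rescaling by $-1/\Lambda'(v_0)$, which is continuous $C([0,\infty)) \to C([0,\infty))$; combined with the a priori tightness of $n^{-1/2}(\munder^{\xi,u_0,M}(n\cdot) - v_0 n\cdot)$ (to rule out large excursions where the linearization of $\Lambda$ fails) and the fact that $\munder$ is monotone-ish so its jumps are $o(\sqrt n)$, one obtains convergence in the Skorohod space $D([0,\infty))$ by the continuous mapping theorem together with a tightness argument. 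The main obstacle, as noted, is the quantitative perturbation/continuity estimate for $\ln u(t,\cdot)$ on the $\sqrt t$ scale uniformly in the relevant speed window, and the careful handling of the fact that the speed $v$ at which Theorem~\ref{th:InvPAM} is being invoked is itself random — this requires either a version of Theorem~\ref{th:InvPAM} that is uniform over $v\in V$ in an appropriate sense, or a sandwiching of the random front between deterministic-speed solutions $u(t, v^\pm t)$ with $v^\pm = v_0 \pm \eta$ and a limit $\eta\downarrow 0$.
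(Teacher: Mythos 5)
Your heuristic is correct, and so is your variance formula: since $\Lambda(v)=\es-v L^*(1/v)=\es-\overline{\eta}(v)+vL(\overline{\eta}(v))$ on $(v_c,\infty)$ and $L'(\overline{\eta}(v))=1/v$, one computes $\Lambda'(v_0)=L(\overline{\eta}(v_0))$, so your $\widetilde{\sigma}_{v_0}^2=v_0\sigma_{v_0}^2/\Lambda'(v_0)^2$ coincides with the paper's \eqref{eq:def_sigma_tilde}. You also correctly single out the random-speed issue as the obstacle. However, both workarounds you propose (a uniform-in-$v$ version of Theorem~\ref{th:InvPAM}, or a sandwiching with $v^\pm=v_0\pm\eta$ followed by $\eta\downarrow 0$) are not worked out, and neither matches what the paper actually does.

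The paper never invokes Theorem~\ref{th:InvPAM} at a random speed. Instead it only uses the fixed-speed process $U(t):=-\ln u(t,v_0 t)-\ln a$ and transfers from this to $r(t):=\munder(t)-v_0 t$ via the space-perturbation Lemma~\ref{le:space_perturb} (proved in a companion article): setting $\mathcal{L}(t,h):=\ln\big(u(t,v_0t+h)/u(t,v_0t)\big)$, one has $\mathcal{L}(t,h)\in[-(|L(\overline{\eta}(v_0))|+\delta)h,\,-(|L(\overline{\eta}(v_0))|-\delta)h]$ for all $h$ with $C(\e)\ln t\leq|h|\leq t\e(t)$ and all $t$ large, $\p$-a.s. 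Since $r(t)$ is (for $t$ large) the largest solution of $\mathcal{L}(t,r(t))=U(t)$, this pins $r(t)$ to the window $\big[\tfrac{U(t)}{|L(\overline{\eta}(v_0))|\mp\delta},\tfrac{U(t)}{|L(\overline{\eta}(v_0))|\pm\delta}\big]$ with probability tending to one, after which one lets $\delta\downarrow0$ and applies Theorem~\ref{th:InvPAM} at $v=v_0$ to $U(nt)$. This is cleaner than your approach because the would-be quantity $\ln u(t,vt)-t\Lambda(v)$ at the random speed $v=v_0+r(t)/t$ is, to leading order, equal to $\ln u(t,v_0t)-t\Lambda(v_0)$: the $h$-dependence of $\ln u(t,v_0t+h)$ cancels against $t\Lambda(v_0+h/t)$ through precisely the factor $L(\overline{\eta}(v_0))=\Lambda'(v_0)$, so there is no randomness of speed to contend with.

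There is a second gap worth flagging. You ask for an a priori bound $\munder(t)=v_0t+\mathcal{O}(\sqrt{t\ln t})$ to justify the linearization, but in fact you need a two-sided bound: the space-perturbation estimate fails for $|h|\lesssim\ln t$, so one must also rule out $|r(t)|$ being too small. The paper gets both sides with probability $\to1$ from Lemma~\ref{le:ApproxT_n} and Lemma~\ref{le:CLT_S_n} (approximating the breakpoint inverse $T_x$ by a CLT-scale sum of the centered $L_i^\zeta(\overline{\eta}(v_0))$), establishing $|r(t)|\in[C(\e)\ln t,\e(t)t]$. Your sketch does not address the lower bound at all. In short, the answer is right, the heuristic and constant are right, but the two missing ingredients are exactly Lemma~\ref{le:space_perturb} and the two-sided localization of $|r(t)|$; without these, neither of your proposed routes around the random speed closes.
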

We underline that in the above theorems the case $\sigma_v^2=0$ (resp.\  $\widetilde{\sigma}_{v_0}^2=0$) is allowed and leads to a degenerate limit of the corresponding sequences. This can be excluded, e.g.\ if the finite-dimensional projections of the stochastic process $\xi=(\xi(x))_{x\in\R}$     are associated (see e.g.\ \cite{associated_rv_rao} or \cite{oliveira} for definitions and results). In this case the covariances in \eqref{eq:sigmav} are nonnegative and $\sigma_v^2>0$ follows. \cite[Proposition~2.1]{nolen_random_medium_FKPP} provides an example of a potential, which is generated by an i.i.d.\ sequence of  random variables and thus associated.

\subsection{The non-linear equation}
Coming back to the original equation of interest, it is natural to ask whether we can obtain results for \eqref{eq:KPP1}, which are in some sense counterparts to those derived in latter section for \eqref{eq:PAM}. For the solution to \eqref{eq:KPP1} it is known (see e.g.\ \cite[§ 7.6]{Fr-85}) that the first order of the front is linear as well and moves with the same  velocity as the front of \eqref{eq:PAM}. Indeed, by \cite[Theorem 7.6.1]{Fr-85} we have that $\p\text{-a.s.},$
\begin{equation*}
\frac{m(t)}{t} \tend{t} {\infty} v_0.
\end{equation*}
As mentioned in Section~\ref{sec:random_pot}, the next result, which is one of the main results of the paper, states that there is an at most logarithmic distance of the fronts of \eqref{eq:KPP1} and \eqref{eq:PAM}. 
\begin{theorem}
	\label{th: log-distance breakpoint median}
	Let \eqref{eq:VEL} be fulfilled. Then for each $F$ fulfilling \eqref{eq:standard_condi} there exists a constant $\Cl{constant theorem log dist}>0$ such that the following holds:  For all $M>0$, $\varepsilon\in(0,1)$, $u_0\in\mathcal{I}_{\text{\emph{PAM}}}$ and $w_0\in\mathcal{I}_{\text{\emph{F-KPP}}},$ there exists a non-random  $C=C(\e,M,u_0,w_0)>0$ and a $\p\text{-a.s.}$ finite  random time  $\mathcal{T}=\mathcal{T}(\xi,\e,M,u_0,w_0)\geq 0$, such that 
	\begin{equation}
	\label{eq: log-distance breakp median}
	-C\leq \munder^{\xi,u_0,M}(t) - m^{\xi,F,w_0,\e}(t) \leq C_1\ln t + C\quad\forall t\geq \mathcal{T}.
	\end{equation}
\end{theorem}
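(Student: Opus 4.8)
The plan is to prove the two inequalities in \eqref{eq: log-distance breakp median} separately, relying on the comparison between generalized solutions of \eqref{eq:KPP1} and \eqref{eq:PAM} together with the first-order front asymptotics and, crucially, the invariance-principle-type control on $\munder$ coming from Theorem \ref{th:InvFrontPAM} (or, more precisely, the moderate-deviation estimates that feed into it). Since the standard conditions \eqref{eq:standard_condi} imply $0 \le F(w) \le w$ for all $w \in [0,1]$, the generalized solution $w$ to \eqref{eq:KPP1} with initial condition $w_0 \le \mathds{1}_{(-\infty,0]}$ is dominated pointwise by the solution $u$ to \eqref{eq:PAM} with initial condition $u_0 = \mathds{1}_{(-\infty,0]}$ (after first checking $w_0 \le u_0$, which holds for $w_0 \in \mathcal{I}_{\text{F-KPP}}$ and $u_0$ Heaviside, and then using that the difference of initial conditions propagates monotonically, e.g.\ via the McKean/Feynman–Kac representations and the fact that $F(w)/w \le 1 = \xi\text{-coefficient of }u$). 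Because all initial conditions in $\mathcal{I}_{\text{PAM}}$ and $\mathcal{I}_{\text{F-KPP}}$ are comparable up to bounded multiplicative and bounded spatial shifts, and $\Lambda$ does not depend on the initial condition in $\mathcal{I}_{\text{PAM}}$, it suffices to prove the statement for Heaviside data and then absorb the change of initial condition into the constant $C$ via standard comparison arguments (shifting space by a bounded amount changes the front by a bounded amount).

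For the \textbf{lower bound} $\munder(t) - m(t) \ge -C$: since $w \le u$, at any point $x$ with $u(t,x) < M$ we cannot have $w(t,x) \ge \e$ unless $\e < M$; the issue is that $M$ and $\e$ are unrelated, so I instead argue via the velocities. By the first-order asymptotics $m(t)/t \to v_0$ and $\munder(t)/t \to v_0$ $\p$-a.s., the two fronts differ by $o(t)$; to upgrade this to an $O(1)$ lower bound, I would use that to the right of $m(t)$ the solution $w$ is at most $\e$, and compare $w$ from below with a KPP solution started from a small bump, which by the Freidlin-type result (and a block/coupling argument using stationarity and \eqref{eq:POT}) travels with speed $v_0$; this shows $m(t) \ge \munder(t) - C$ for a bounded $C$ once $t$ is large. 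Alternatively — and this is cleaner — one compares $u$ and $w$ directly: near the front the nonlinearity satisfies $F(w)/w \ge 1 - \kappa w$ by the last condition in \eqref{eq:standard_condi}, so $w$ solves a PAM-type equation with a slightly damped coefficient, and a Gronwall/Feynman–Kac estimate shows $u(t,x) - w(t,x)$ is controlled by $\int \xi w^2$, which is small in the front region; hence if $u(t,x) \ge M$ then $w(t,x)$ is bounded below, giving the lower bound.

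For the \textbf{upper bound} $\munder(t) - m(t) \le C_1 \ln t + C$ — this is the heart of the matter and the analogue of Bramson's $\frac{1}{\sqrt 2}\ln t$. The strategy: fix a point $x_t := \munder(t) - C_1 \ln t$ and show $w(t, x_t) \ge \e$ for all large $t$, which forces $m(t) \ge x_t$. To do this I would run the McKean-type representation \eqref{eq:genSol} for $w$ backwards from time $t$: the Brownian motion from $x_t$ must reach the region where $w_0$ is nontrivial (essentially $x \le 0$), i.e.\ travel distance $\approx v_0 t$ in time $t$, which is an atypical-but-only-logarithmically-atypical event once we have the $\ln t$ head start; along such paths the exponential weight $\exp\{\int_0^t \xi(B_s) F(w(t-s,B_s))/w(t-s,B_s)\,ds\}$ can be bounded below by replacing $F(w)/w$ with something close to $1$ (valid because $w$ is small along the bulk of such a path, being to the right of its own front for most of the time) minus a correction, and then comparing with the corresponding weight for $u$. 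The key quantitative input is the tilted Feynman–Kac / Lyapunov analysis from Lemma~\ref{le:expectedLogMGfct} and the condition \eqref{eq:VEL}: the tilting parameter conjugate to speed $v_0$ exists precisely because $v_0 > v_c$, and it converts the $O(\ln t)$ spatial head start into the required polynomial (in $t$) gain in probability, overcoming the polynomial loss incurred by the Brownian "entropic repulsion" / barrier estimate. Concretely, one shows that the contribution to $w(t,x_t)$ from Brownian bridges staying below a suitable linear barrier and ending in $(-\infty,0]$ is at least $t^{-\beta + C_1 \lambda_{v_0}}$ up to constants, where $\lambda_{v_0}>0$ is the tilt; choosing $C_1$ large enough that $C_1 \lambda_{v_0} > \beta$ makes this bounded below, hence $\ge \e$.

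The main obstacle is this last step: making rigorous the claim that along the relevant Brownian paths one may replace $F(w(t-s,B_s))/w(t-s,B_s)$ by $1$ (up to a negligible, summable-in-$s$ error), since $w$ is small there. This requires an a priori upper bound on $w$ in a space-time region of the form $\{(s,y): y \ge \munder(s) - C_1 \ln s - (\text{linear-in-}(t-s)\text{ barrier})\}$, i.e.\ a self-improving estimate showing the front of $w$ has not already advanced past where we are trying to place it — so the argument is genuinely a bootstrap, as in Bramson. I would set it up as follows: first establish a crude upper bound $m(t) \le \munder(t) + o(t)$ (immediate from $w \le u$ and the velocity asymptotics); feed this into the barrier/representation estimate to get $m(t) \ge \munder(t) - C_1 \ln t - C$ with a possibly suboptimal but finite $C_1$; this in turn localizes $w$ well enough near its front that the replacement $F(w)/w \approx 1$ is justified on the dominant paths, closing the loop. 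Handling the non-smoothness of $w_0$ is routine via Remark~\ref{rem:gen_approx} (approximate from below by continuous data, all estimates being monotone), and the passage from Heaviside data to general $u_0 \in \mathcal{I}_{\text{PAM}}$, $w_0 \in \mathcal{I}_{\text{F-KPP}}$ only costs bounded shifts absorbed into $C$, using \eqref{eq:PAM_initial} and \eqref{eq:KPP_initial} together with the comparison principle.
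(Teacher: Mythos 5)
The upper bound is where your proposal diverges most seriously from the paper's strategy, and it has a genuine gap. The paper does \emph{not} estimate the Feynman–Kac weight $\exp\{\int_0^t \xi(B_s)F(w(t-s,B_s))/w(t-s,B_s)\,\mathrm{d}s\}$ directly. Instead it first reduces to non-linearities $F$ of the probability-generating-function type \eqref{eq:p_k} (any $F$ satisfying \eqref{eq:standard_condi} dominates such a $G$, by an explicit construction at the end of the proof), so that McKean's representation holds and $w(t,x)=\prob_x^\xi(N^\leq(t,0)\geq 1)$. It then defines ``leading particles'' $N_t^{\mathcal{L}}$ (paths staying above a barrier built from the breakpoint-inverse times $T_k$ plus a logarithmic correction), proves a first-moment lower bound $\Eprob_x^\xi[N_t^{\mathcal{L}}]\geq t^{-\gamma_1}$ and a second-moment upper bound $\Eprob_x^\xi[(N_t^{\mathcal{L}})^2]\leq t^{\gamma_2}$, and applies Cauchy–Schwarz to get $\prob_x^\xi(N_t^{\mathcal{L}}\geq 1)\geq t^{-(2\gamma_1+\gamma_2)}$. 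The crucial step you are missing entirely is the \emph{amplification lemma}: in a time window of length $r\sim c\ln t$, the branching process started from a single particle produces $\geq t^q$ particles near its starting position with probability $\geq 1-t^{-q}$; each of these, essentially independently, has probability $\geq t^{-(2\gamma_1+\gamma_2)}$ of reaching $(-\infty,0]$, so the probability that \emph{none} do is $\leq (1-t^{-(2\gamma_1+\gamma_2)})^{t^q}\leq t^{-q}$ for $q$ large. This upgrades the polynomially small single-particle bound to $w(t,\munder(t)-C\ln t)\geq 1-2t^{-q}$, which then forces $m^\e(t)\geq \munder(t)-C\ln t$ for any fixed $\e<1$.

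Your proposed direct Feynman–Kac comparison runs into two problems that the paper's branching approach is specifically designed to avoid. First, as you yourself anticipate, the factor $F(w)/w$ is \emph{not} close to $1$ along the whole relevant path: once the backward path $B_s$ drops behind the F-KPP front (i.e.\ $B_s<m(t-s)$), the solution is close to $1$ and $F(w)/w$ is close to $|F'(1)|$, which can be small. A path reaching $(-\infty,0]$ in time $t$ necessarily spends nontrivial time behind the front, and the resulting exponential loss is of the very order you are trying to control. Your proposed bootstrap (crude $o(t)$ bound, feed back in) does not clearly close this loop, because the loss behind the front is not controlled by an a priori \emph{upper} bound on $w$; it comes from $w$ being \emph{close to $1$} there, which cannot be improved. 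Second, even granting the FK estimate, the bound you write, ``$w(t,x_t)\geq t^{-\beta+C_1\lambda_{v_0}}$ and choose $C_1$ so that $C_1\lambda_{v_0}>\beta$,'' is internally inconsistent (since $w\leq 1$, a lower bound of the form $t^{\alpha}$ with $\alpha>0$ is vacuous); what such an argument can actually deliver is $w(t,x_t)\geq c$ for some $c>0$, which is not enough — you need $\geq\e$ for arbitrary $\e\in(0,1)$. The paper handles the passage from ``bounded below'' to ``close to $1$'' by two independent mechanisms: the particle-level amplification above, and (to absorb general $w_0\in\mathcal{I}_{\text{F-KPP}}$ and arbitrary $\e$) a traveling-wave argument using Bramson's theorem for the homogeneous equation with branching rate $\ei$. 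You invoke neither. Finally, your treatment of the lower bound is a list of alternatives rather than an argument; the paper's lower bound for Heaviside data is immediate from Markov's inequality in McKean's representation, and for general data it follows from the same perturbation lemmas (Corollary~\ref{cor:tightnessPAM}, the time/space perturbations) that control the $\munder$'s.
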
 
Moreover for $w_0=u_0$ and $M\leq \varepsilon$, the left inequality in \eqref{eq: log-distance breakp median} is zero for all $t\geq0$.

Furthermore, combining Theorem~\ref{th:InvFrontPAM} and Theorem~\ref{th: log-distance breakpoint median}, we can deduce an invariance principle for the front of \eqref{eq:KPP1} as well. 
\begin{corollary}
	\label{cor: functional clt for m}
	Let \eqref{eq:VEL} be fulfilled, $F$ fulfill \eqref{eq:standard_condi},  $w_0\in\mathcal{I}_{\text{\emph{F-KPP}}}$ and  $\e\in(0,1)$. Then as $n \to \infty,$ the sequence $n^{-1/2}\big( m^{\xi,F,w_0,\varepsilon}(n)-v_0n \big),$ $n \in \N,$  converges in $\p$-distribution to a centered Gaussian random variable with variance  $\widetilde{\sigma}_{v_0}^2\in[0,\infty)$, where $\widetilde{\sigma}_{v_0}^2$ is defined in \eqref{eq:def_sigma_tilde}. If $\widetilde{\sigma}_{v_0}^2>0$, the sequence of processes 
	\[ [0,\infty) \ni t\mapsto \frac{m^{\xi,F,w_0,\varepsilon}(nt) - v_0nt}{\sqrt{n\widetilde{\sigma}_{v_0}^2}},\quad n\in\N, \]
	converges as $n\to\infty$ in $\p$-distribution to a standard  Brownian motion in the Skorohod space  $D([0,\infty))$.
\end{corollary}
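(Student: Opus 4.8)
The plan is to deduce the corollary from Theorem~\ref{th:InvFrontPAM} and Theorem~\ref{th: log-distance breakpoint median} by viewing the front of~\eqref{eq:KPP1} as a perturbation of a conveniently chosen front of~\eqref{eq:PAM} that becomes negligible after diffusive rescaling. Concretely, I would fix an auxiliary initial condition $u_0:=\mathds{1}_{(-\infty,0]}\in\mathcal{I}_{\text{PAM}}$ and a level $M:=1$; any admissible choice works, and one really does need to allow $u_0\neq w_0$, since a general $w_0\in\mathcal{I}_{\text{F-KPP}}$ need not lie in $\mathcal{I}_{\text{PAM}}$, but this is harmless because the limiting variance $\widetilde{\sigma}_{v_0}^2$ in Theorem~\ref{th:InvFrontPAM} depends on neither $u_0$ nor $M$. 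Theorem~\ref{th:InvFrontPAM} then supplies the invariance principle for $t\mapsto\munder^{\xi,u_0,M}(t)$, while Theorem~\ref{th: log-distance breakpoint median}, applied with this $u_0$, this $M$ and the given $F$, $w_0$, $\e$, provides a constant $C_1>0$, a non-random $C>0$ and a $\p$-a.s.\ finite random time $\mathcal{T}(\omega)$ with $-C\le\munder^{\xi,u_0,M}(t)-m^{\xi,F,w_0,\e}(t)\le C_1\ln t+C$ for all $t\ge\mathcal{T}(\omega)$. After dividing by $\sqrt{n\widetilde{\sigma}_{v_0}^2}$ and rescaling time by $n$, the difference of the two rescaled front processes is then bounded, for $t$ with $nt\ge\mathcal{T}(\omega)$, by $(C_1\ln(nt)+C)/\sqrt{n\widetilde{\sigma}_{v_0}^2}$, which tends to $0$ uniformly on every compact time interval bounded away from the origin; a converging-together argument in the Skorohod space $D([0,\infty))$ then transfers the invariance principle from $\munder$ to $m$.

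In more detail, assuming first $\widetilde{\sigma}_{v_0}^2>0$, I would set $W_n(t):=(\munder^{\xi,u_0,M}(nt)-v_0nt)/\sqrt{n\widetilde{\sigma}_{v_0}^2}$ and $Z_n(t):=(m^{\xi,F,w_0,\e}(nt)-v_0nt)/\sqrt{n\widetilde{\sigma}_{v_0}^2}$. By Theorem~\ref{th:InvFrontPAM}, $W_n$ converges in $\p$-distribution to a standard Brownian motion $W$ in $D([0,\infty))$. It then suffices to show that $\sup_{t\in[0,T]}|W_n(t)-Z_n(t)|\to0$ in $\p$-probability for every $T>0$, since this forces the Skorohod distance between $W_n$ and $Z_n$ to tend to $0$ in $\p$-probability, whence $Z_n$ has the same $\p$-distributional limit $W$; evaluating at $t=1$ recovers the one-dimensional statement. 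The logarithmic bound disposes of the interval $[\eta,T]$ for any fixed $\eta>0$: on the event $\{\mathcal{T}(\omega)\le n\eta\}$, whose $\p$-probability tends to $1$, it gives $\sup_{t\in[\eta,T]}|W_n(t)-Z_n(t)|\le(C_1\ln(nT)+C)/\sqrt{n\widetilde{\sigma}_{v_0}^2}\to0$. For the initial piece I would use $\sup_{t\in[0,\eta]}|Z_n(t)|\le\sup_{t\in[0,\eta]}|W_n(t)|+\sup_{t\in[0,\eta]}|W_n(t)-Z_n(t)|$: the first term is small for $\eta$ small, uniformly in $n$, by tightness of $(W_n)$ in $D([0,\infty))$ together with $W_n(0)\to W(0)=0$, and the second term is split at $t=\mathcal{T}(\omega)/n$ into a part on $[\mathcal{T}(\omega)/n,\eta]$ of order $(\ln n)/\sqrt{n}$ and a part on $[0,\mathcal{T}(\omega)/n]$ equal to $(n\widetilde{\sigma}_{v_0}^2)^{-1/2}\sup_{s\in[0,\mathcal{T}(\omega)]}|\munder^{\xi,u_0,M}(s)-m^{\xi,F,w_0,\e}(s)|$, which vanishes once one knows this last supremum is $\p$-a.s.\ finite. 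In the degenerate case $\widetilde{\sigma}_{v_0}^2=0$ only the one-dimensional statement is asserted, and it is immediate: Theorem~\ref{th:InvFrontPAM} gives $n^{-1/2}(\munder^{\xi,u_0,M}(n)-v_0n)\to0$ in $\p$-distribution, and $n^{-1/2}|\munder^{\xi,u_0,M}(n)-m^{\xi,F,w_0,\e}(n)|\le n^{-1/2}(C_1\ln n+C)\to0$ $\p$-a.s., so Slutsky's theorem yields $n^{-1/2}(m^{\xi,F,w_0,\e}(n)-v_0n)\to0$ in $\p$-distribution.

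The step I expect to be the main obstacle is the small-time regime appearing in the initial-piece bound. The logarithmic distance estimate is available only for $t\ge\mathcal{T}(\omega)$, so near the origin the two rescaled fronts cannot be compared directly, and one must instead combine the tightness of the rescaled~\eqref{eq:PAM}-front with an a priori estimate to the effect that $s\mapsto\munder^{\xi,u_0,M}(s)$ and $s\mapsto m^{\xi,F,w_0,\e}(s)$ are $\p$-a.s.\ finite and locally bounded on compact time intervals; here one should keep in mind that, exactly as in Theorem~\ref{th:InvFrontPAM}, a front may fail to be finite on an initial transient interval on which the relevant super-level set is empty, but this interval is $\p$-a.s.\ bounded (after the front has moved far enough), so after diffusive rescaling it collapses to $\{0\}$ and does not affect the limit. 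This a priori control is routine: for $\munder^{\xi,u_0,M}$ with the above choice of $u_0$ and $M$ it follows from the Feynman--Kac representation~\eqref{eq:feynman_kac} together with the two-sided bound $\ei\le\xi\le\es$ and Gaussian tail estimates, and for $m^{\xi,F,w_0,\e}$ from the mild representation~\eqref{eq:genSol} or by comparison with a solution of~\eqref{eq:PAM}. Everything else is the standard converging-together bookkeeping in the Skorohod space.
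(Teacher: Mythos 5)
Your proof is correct and carries out exactly the strategy the paper indicates (the paper gives no detailed proof, only the remark that the corollary follows by combining Theorem~\ref{th:InvFrontPAM} with Theorem~\ref{th: log-distance breakpoint median}). Your core argument — fix an auxiliary $u_0\in\mathcal{I}_{\text{PAM}}$ and $M>0$, invoke the invariance principle for $\munder^{\xi,u_0,M}$, and show via the two-sided logarithmic estimate of Theorem~\ref{th: log-distance breakpoint median} (together with an a priori local bound on the fronts over $[0,\mathcal{T}(\omega)]$) that the uniform distance between the rescaled fronts on $[0,T]$ tends to $0$ in $\p$-probability, then apply a converging-together theorem in $D([0,\infty))$ — is sound, and the observation that $\widetilde{\sigma}_{v_0}^2$ does not depend on $u_0$ or $M$ is exactly what makes the substitution $u_0\neq w_0$ harmless. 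One small remark: the detour through bounding $\sup_{t\in[0,\eta]}|Z_n(t)|$ via tightness of $(W_n)$ is superfluous — your split of $\sup_{s\in[0,nT]}|\munder(s)-m(s)|$ at $s=\mathcal{T}(\omega)$ already gives the needed estimate $\sup_{t\in[0,T]}|W_n(t)-Z_n(t)|\to 0$ $\p$-a.s.\ directly, with no need to isolate a small initial window $[0,\eta]$ or to control $|Z_n|$ separately.
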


\subsection{Discussion and previous results} \label{sec:discussion}

As already alluded to above, in the homogeneous case \eqref{eq:homKPP1} of
constant potential the front has been well-understood by now. This is indeed the case to a much
 wider extent than illustrated in the Introduction, see e.g.\
  \cite{Bo-16} and references therein for further details. Also the heterogeneous setting \eqref{eq:KPP1} of random potential and the properties of its solution have been investigated. Specifically, under fairly general assumptions, the existence and characterization of the propagation speed (i.e., the linear order $\lim_{t \to \infty} m(t)/t$ of the position of the front) have been derived by Freidlin and G\"artner, see e.g.\ \cite{GaFr-79} as well as \cite[Chapter VII]{Fr-85}, using large deviation principles. Incidentally, the Feynman-Kac formula (see Proposition \ref{prop:many-to-few} below), which characterizes the solution to the linearization \eqref{eq:PAM}, has also played an important role in the derivation.

Second order corrections for the position have been investigated by Nolen \cite{nolen_random_medium_FKPP}. Also making the detour along \eqref{eq:PAM}, he examines \eqref{eq:KPP1} for a potential $\xi$ under similar assumptions, but requires the (random) initial conditions to satisfy
\begin{equation}
\label{eq:ODE}
\mathcal{C}_1(\xi) g^{\xi,\gamma}(x) \leq w_0(x,\xi) \leq \mathcal{C}_2(\xi)g^{\xi,\gamma}(x)\quad\forall\ x>0.
\end{equation}
Here, $\mathcal{C}_1(\xi)$ and $\mathcal{C}_2(\xi)$ are positive random variables, and $g=g^{\xi,\gamma}$ is a solution to 
\[ g''(x) + (\xi(x)-\gamma)g(x)=0,\ x>0 \]
for $\gamma>\overline \gamma$, for a certain $\overline \gamma >0.$ 
For technical reasons, he additionally  requires $\gamma<\gamma^*$, i.e., the initial condition $w_0(x)$ must not decay too fast as $x$ tends to infinity. The technical assumption~\eqref{eq:ODE} thus entails being in the {\em supercritical regime} which corresponds to waves that move faster than the minimal speed. As his main result, Nolen obtains a central limit theorem for $m(t)$ in this case also, see \cite[Theorem~1.4]{nolen_random_medium_FKPP}. In this lingo, our set of initial conditions corresponds to the {\em critical regime}, and
Corollary \ref{cor: functional clt for m} (for the critical regime) also suggests that the randomness of the functional central limit theorem is already coming from the environment, and not necessarily due to the random initial condition. 

Furthermore, in \cite{No-11a} a corresponding invariance principle for the front has been derived in the case where the non-linearity in \eqref{eq:KPP1} is either ignition typ or bistable.

In \cite{HaNoRoRy-12}, on the other hand,  for the case of periodic instead of random $\xi,$ the authors have investigated the respective logarithmic correction, corresponding to \eqref{eq:homNonLinFront} in the homogeneous setting; here, the authors have been able to characterize the constant in front of the logarithmic correction as a certain minimizer.

In our main Theorem \ref{th: log-distance breakpoint median}, we establish a corresponding logarithmic upper bound for the difference \eqref{eq:log_delay_hom}, also in the setting of a random environment. However, our methods are currently too coarse for identifying a sharp  prefactor. Nevertheless, it is a natural question whether the logarithmic upper bound we derive captures the correct order at least. And indeed, in some sense, a partial positive answer to this questions is provided in the companion article \cite{CeDrSc-20}. There, the authors show that even in the  setting of \eqref{eq:KPP1} described above, for some potential satisfying our standing assumptions, $w_0=\mathds{1}_{(-\infty,0]},$ and all $\varepsilon\in(0,1/2),$ there exists a  sequence $(t_n)_{n\in\N}=(t_n(\varepsilon))_{n\in\N}$ such that $t_n\to\infty$ and 
	\begin{equation}
	\label{eq:transition_area_grows}
	 m_{t_n}^\varepsilon - \inf\{ x\geq 0: w(t_n,x)\leq 1-\varepsilon\} \in \Omega(\ln n);
	\end{equation}
in particular, the solution does not exhibit a uniformly bounded transition front.

\bigskip
As explained above, there is a profound connection between the PDEs we consider and branching Brownian motion, which is more involved that in the setting of constant $\xi.$ Indeed,
related results for branching random walk in random environment (BRWRE) have been recently derived in \cite{drewitz_cerny}. In this source the authors analyze the distribution of the maximal particle of a branching random walk in random environment, which itself is closely related to a discrete-space version of \eqref{eq:KPP1}. In particular, a corresponding logarithmic upper bound on the distance of the expected position of the maximal particle and the median of the distribution of the maximal particle is given. Furthermore, an invariance principle for the median of the position of the maximal particle of BRWRE is derived. It is therefore no surprise that, on the one hand, principal techniques we employ in this paper are generalizations and adaptations of respective discrete space analogues from \cite{drewitz_cerny}. On the other hand, we work under weaker independence assumptions and less assumptions on the non-linearity which is assumed to be in \eqref{eq:standard_condi}. 
What is more, we also obtain a more profound understanding of the scope of the methods employed, see in particular Section \ref{sec:vel}. 
In combination with the current article, somewhat of a completion of the picture is provided by \cite{CeDrSc-20}, by establishing a result with a similar flavor of Bramson's alluded to above.

\bigskip

Recall that directly before Theorem \ref{th:InvPAM} we have introduced our assumption \eqref{eq:VEL}, which reads $v_0 >v_c;$ from a technical point of view, this will be necessary for our change of measure argument to work. It is not hard to show that for a rich class of potentials, this condition is satisfied indeed, cf.\ Section \ref{sec:vel}. What is more, however, in Proposition \ref{le:v_c>v_0} below we also obtain a more profound understanding of the scope of the methods employed. I.e., there exist potentials $\xi$ fulfilling \eqref{eq:SASS}, but such that $v_0<v_c$ holds true. In this regime our methods do not apply, and it is an interesting and open question to obtain a more profound understand of this situation as well.

\subsection{Notational conventions}
\label{sec:convention}
We will frequently use sums of real-indexed quantities $A_x$, $x\in\R$. In this case, we write
\begin{align*}
\sum_{i=1}^x A_{i} &:= \sum_{i=1}^{\lfloor x\rfloor}A_{i} + A_x,\quad x\in[0,\infty)\setminus\N_0,
\end{align*}
where $\sum_{i=1}^0:=0$. This notion remains consistent if we also allow for additive constants $b\in\R,$ i.e. 
\begin{align*}
\sum_{i=1}^x (A_{i} + b) = \sum_{i=1}^{\lfloor x\rfloor} A_{i} +A_x+ b\cdot x = \sum_{i=1}^x A_{i} + \sum_{i=1}^x b,\quad x\in[0,\infty).
\end{align*}
Finally, we set
\begin{align*}
\sum_{i=x+1}^y A_{i} &:=
\begin{cases}
\sum_{i=1}^y A_{i} -\sum_{i=1}^x A_{i},& x\leq y, \\
\sum_{i=1}^x A_{i} -\sum_{i=1}^y A_{i}, & x>y,
\end{cases}
\quad x,y\in[0,\infty).
\end{align*}
A prime example is the quantity $A_{x}= \ln E_x\big[ e^{\int_0^{H_{\lceil x\rceil-1}} (\xi(B_s)-\es) \diff s} \big]$, where $H_y:=\inf\{t\geq0:B_t=y\}$. Indeed, by the strong Markov property we have $\ln E_x\big[ e^{\int_0^{H_{0}} (\xi(B_s)-\es) \diff s} \big]=\sum_{i=1}^{\lfloor x\rfloor}A_i + A_x$ for all $x\in[0,\infty)\setminus \N_0$. 

Furthermore, we will often use positive finite constants $c_1,c_2,\ldots$ in the proofs. This numbering is consistent within any of the proofs, and it is reset after each proof. On the
other hand, $C_1,C_2,\ldots$ will be used to denote positive finite constants that are fixed throughout
the article, and they will oftentimes depend on each other. 
Other constants like $c,C, \varepsilon,\delta$ etc.\ in the proofs  are used to compare certain quantities and are also reset after each proof.  

\bigskip

{\bf Acknowledgments:} We would like to thank Jiří Černý for useful discussions and remarks on a preliminary version of this paper.

\section{Some technical tools}
In this section we will introduce some further tools that will be helpful in the proof of the main results.

\subsection{Connection to branching processes}

We define a \emph{branching Brownian motion in random environment} (BBMRE) as follows: Conditionally on the realization of $\xi$ and for fixed $x\in\mathbb{R}$, consider an initial particle starting at a point $x$ and moving as a standard Brownian motion $(B_t)_{t\geq0}$ on $\mathbb{R}$. While at site $y$, the particle dies at rate $\xi(y)$. More precisely, for an exponentially distributed random variable $S$ with parameter one, independent of everything else, the first particle dies at time $\inf\big\{t\geq0: S<\int_0^t\xi(B_s)\diff s \big\}$. 
When the initial particle dies, it gives birth to $k$ new particles with probability $p_k$, $k\in\N$ (see \eqref{eq:p_k} below for the precise assumptions on the $p_k$). The new particle(s) start their evolution at the site where their parent particle had died, and they evolve independently of everything else and according to the same stochastic behavior as their parent. Note that on the one hand we assume $p_0=0$, so genealogies do not die out at any finite time. On the other hand, we allow $p_1>0,$ i.e., it is possible to die and give birth to one descendant. This implies that a particle at site $y$ branches into more than one particle with rate $\xi(y)(1-p_1)$. We denote the corresponding probability measure by $\prob_x^\xi$ and write $\Eprob_{x}^\xi$ for the respective expectation.
By $N(t)$ we denote the set of particles alive at time $t$. For $\nu\in N(t)$ we let $(X_s^\nu)_{s\in[0,t]}$ be the spatial trajectory of the genealogy of ancestral particles of $\nu$ up to time $t$. For a Borel set $A\subset\R$ and $y \in \R$ we define
\begin{align} \label{eq:NtA}
N(t,A)&:= \big\{ \nu\in N(t): X_t^\nu \in A \big\}\quad \text{ and } \quad N^\leq (t,y) := |N(t,(-\infty,y])|,
\end{align}
i.e., the set of particles which are in $A$ at time $t$, and the number of particles to the left of $y$ at time $t.$

Let us now introduce a set of functions, formulated in terms of the  probability generating function of the sequence $(p_k)_{k\in\N}$ as above. More precisely, assume $(p_k)_{k\in\N}$ and $F=F^{(p_k)_{k\in\N}}$ to fulfill
\begin{equation}
\label{eq:p_k}
\tag{PROB}
\begin{split}
p_k\in[0,1]\ \forall k\in\mathbb{N},&\quad \sum_{k=1}^\infty p_k=1,\quad \sum_{k=1}^\infty kp_k  = 2,\quad m_2 := \sum_{k=1}^\infty k^2 p_k<\infty; \\
F(u)&=1-u - \sum_{k=1}^\infty p_k(1-u)^k,\ u\in[0,1].
\end{split}
\renewcommand*{\theHequation}{notag11.\theequation}
\end{equation}
Note that such $F$ always fulfills \eqref{eq:standard_condi}, is smooth and  strictly concave on $(0,1)$. 

A fundamental result is the connection of the solution to \eqref{eq:KPP1} and the expected number of particles of the branching process. This is sometimes referred to as the McKean representation. The respective result in the homogeneous setting has been proven in \cite{branch_markov_I} and \cite{mckean} also.

\begin{proposition}
	\label{prop:mckean1}
	Let $w_0\in\mathcal{I}_{\text{F-KPP}}$, $\xi$   be H\"{o}lder continuous and $F$ fulfill \eqref{eq:p_k}. Then $\p$-a.s.\ 
	the solution to \eqref{eq:KPP1} is given by
	\begin{equation}
	\label{eq:McKean}
	\tag{McKean}
	w(t,x) = 1- \Eprobth_x^\xi \Big[ \prod_{\nu\in N(t)} \big(1-w_0(X_t^\nu)\big) \Big],\quad (t,x)\in [0,\infty) \times\R.
	\renewcommand*{\theHequation}{notag12.\theequation}
	\end{equation}
\end{proposition}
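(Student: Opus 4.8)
The plan is to establish the McKean representation \eqref{eq:McKean} by showing that the right-hand side, call it
$\widetilde w(t,x) := 1 - \Eprob_x^\xi\big[\prod_{\nu\in N(t)}(1-w_0(X_t^\nu))\big]$, is a generalized solution to \eqref{eq:KPP1}
in the sense of \eqref{eq:genSol}, and then invoking uniqueness of the generalized solution (together with Remark~\ref{rem:gen_approx}) to conclude $\widetilde w = w$. I would first treat the case of a continuous initial condition $w_0\in\mathcal{I}_{\text{F-KPP}}$, for which a classical solution exists, and then pass to general $w_0\in\mathcal{I}_{\text{F-KPP}}$ by the monotone approximation from Remark~\ref{rem:gen_approx}: if $w_0^{(n)}\uparrow w_0$ with $w_0^{(n)}$ continuous, then $\prod_{\nu\in N(t)}(1-w_0^{(n)}(X_t^\nu))$ decreases pointwise to $\prod_{\nu\in N(t)}(1-w_0(X_t^\nu))$, and monotone (or dominated, since everything is bounded by $1$) convergence gives $\widetilde w^{(n)}(t,x)\to\widetilde w(t,x)$, matching the limit that defines the generalized solution $w$.

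For the continuous case, the core step is a first-step (renewal) decomposition of the BBMRE. Condition on $\xi$ and start the initial particle at $x$; let $\tau$ be its branching/death time and recall that, given the Brownian path $(B_s)$, $\tau$ has the hazard rate $\xi(B_s)$, so $\prob_x^\xi(\tau > r \mid B) = \exp\{-\int_0^r \xi(B_s)\diff s\}$. Splitting on whether $\tau > t$ or $\tau \le t$, and using that at time $\tau$ the particle is replaced by $k$ i.i.d.\ copies of the process (started at $B_\tau$) with probability $p_k$, independence of the subtrees yields, for $v(t,x):=\Eprob_x^\xi\big[\prod_{\nu\in N(t)}(1-w_0(X_t^\nu))\big] = 1-\widetilde w(t,x)$,
\begin{equation*}
v(t,x) = E_x\Big[ e^{-\int_0^t \xi(B_s)\diff s}\,(1-w_0(B_t)) + \int_0^t \xi(B_r)\, e^{-\int_0^r \xi(B_s)\diff s}\, \Phi\big(v(t-r,B_r)\big)\diff r \Big],
\end{equation*}
where $\Phi(s) := \sum_{k=1}^\infty p_k s^k$ is the probability generating function of $(p_k)$. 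Here I use Fubini/Tonelli (all integrands in $[0,1]$) and the strong Markov property of Brownian motion at the independent clock. This is a mild-type (Feynman--Kac) integral equation; one then checks by a standard Duhamel/Feynman--Kac manipulation — differentiating out the killing term — that $\widetilde w = 1-v$ solves the mild equation \eqref{eq:genSol}. Concretely, using $\Phi(1-a) = 1-a-F(a)$ from \eqref{eq:p_k} with $a = w_0$ or $a=\widetilde w(t-r,B_r)$, the killing integral rearranges into the form $E_x\big[\exp\{\int_0^t \xi(B_s)F(\widetilde w(t-s,B_s))/\widetilde w(t-s,B_s)\diff s\}\,w_0(B_t)\big]$; the exponential Feynman--Kac factor appears precisely because resumming the geometric-in-$\tau$ contributions converts the additive killing rate $\xi$ into the multiplicative factor with exponent involving $\xi\cdot F(w)/w$. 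Since the generalized solution to \eqref{eq:KPP1} with continuous initial datum is unique (Proposition~\ref{prop:genClass} and the associated uniqueness for \eqref{eq:genSol}), we get $\widetilde w = w$.

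The main obstacle I anticipate is the careful justification of the first-step decomposition and the resulting fixed-point/uniqueness argument in the random medium: one must verify that conditionally on $\xi$ the BBMRE is well-defined (no explosion, guaranteed by $p_0=0$, $\es<\infty$ and $m_2<\infty$, so $\Eprob_x^\xi|N(t)|<\infty$), that $v$ is the \emph{unique} bounded solution of the above integral equation (a contraction argument on a small time interval, with $\xi$ bounded, then iterated), and that the manipulation turning the additive killing into the exponential Feynman--Kac weight is valid — this is where boundedness of $\xi$ and the identity $\Phi(1-u)=1-u-F(u)$ do the real work. Measurability in $\xi$ (to make ``$\p$-a.s.'' meaningful) and Hölder continuity of $\xi$ (ensuring the classical solution theory of Proposition~\ref{prop:genClass} applies) are needed but routine. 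Everything else — Fubini, dominated convergence, the strong Markov property — is standard.
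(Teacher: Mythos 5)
Your proposal is correct in substance and starts from the same two pillars as the paper's proof: the first-branching-time (renewal) decomposition of the BBMRE, giving
$v(t,x)=E_x\big[e^{-\int_0^t\xi(B_r)\diff r}(1-w_0(B_t))\big]+\sum_k p_k\int_0^t E_x\big[\xi(B_s)e^{-\int_0^s\xi(B_r)\diff r}v(t-s,B_s)^k\big]\diff s$, and the monotone approximation of general $w_0\in\mathcal{I}_{\text{F-KPP}}$ by continuous initial data via Remark~\ref{rem:gen_approx}. Where you diverge is the middle step. The paper stays at the PDE level: it introduces the auxiliary Feynman--Kac functions $u^{(1)}$ and $u^{(2)}(\cdot;s,k)$, uses \cite[Remark~4.4.4, Corollary~4.4.5]{karatzas_shreve} to get that these are classical solutions of the linear killed equation, and then differentiates the renewal identity in $t$ (with careful interchanges of $\partial_t$, $\sum_k$ and $\int_0^t\diff s$ by dominated convergence) to conclude that $u=1-w$ solves $u_t=\tfrac12\Delta u-\xi u+\xi\sum_k p_k u^k$ classically, which is the F-KPP equation for $w$. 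You instead propose to resum the additive killing term into the multiplicative exponential weight and verify the mild formulation \eqref{eq:genSol} directly, then invoke uniqueness of bounded generalized solutions (Proposition~\ref{prop:genClass}). That route is legitimate --- the identity $1-\Phi(1-w)=w+F(w)$ turns the renewal equation into the Duhamel formula with potential $\xi(1+F(w)/w)$, and the standard iteration/resummation lemma (essentially the equivalence the paper itself cites from \cite[(1.4), p.~354]{Fr-85}) then yields \eqref{eq:genSol}, using only boundedness of $\xi$ and of $F(w)/w\le 1$. What you lose relative to the paper is that the classical $C^{1,2}$ regularity of $w$ for continuous $w_0$ is not produced as a by-product (the paper uses this elsewhere, e.g.\ in step 2 of the proof of Theorem~\ref{th: log-distance breakpoint median} where the comparison principle is applied to classical solutions); what you gain is avoiding the delicate differentiation-under-the-integral arguments. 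Two points you should make explicit if you write this up: the resummation lemma needs an actual proof or precise citation rather than the phrase ``standard manipulation'', since it is the only nontrivial analytic step left in your argument; and the quotient $F(w)/w$ must be given the value $F'(0)=1$ where $w$ vanishes for \eqref{eq:genSol} to be well posed (a convention the paper also uses tacitly).
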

For the proof see Section~\ref{sec:mckean_proof}. 
\begin{remark} \label{rem:McKeanHeaviside}
	We will frequently use the application of the latter result to  functions $w_0=\mathds{1}_{(-\infty,0]}$,  resulting in
	\begin{align*}
	w(t,x) &= 1- \Eprob_x^\xi \Big[ \prod_{\nu\in N(t)} \mathds{1}_{(0,\infty)}(X_t^\nu) \Big] = 1- \prob_x^\xi \big( X_t^\nu > 0\ \forall\nu\in N(t) \big)
	=\prob_x^\xi \big( N^\leq (t,0)\geq1 \big). 
	\end{align*}
\end{remark}

Furthermore, we will need the so-called \emph{many-to-few} lemma, which breaks down the moments of the branching process to a functional of the Brownian paths. For our purposes, it suffices to state it up to the second moments (\emph{many-to-one} and \emph{many-to-two formula}). 
\begin{proposition}
	\label{prop:many-to-few}
	Let $(p_k)_{k\in\N}$ fulfill \eqref{eq:p_k} and let  $\varphi_1$, $\varphi_2: \, [0,\infty) \to [-\infty,\infty]$ be c\`{a}dl\`{a}g functions satisfying $\varphi_1\leq\varphi_2$. Then the first and second moments of the number of particles in $N(t)$ whose genealogy stays between $\varphi_1$ and $\varphi_2$ in the time interval $[0,t]$ are given by
	\begin{align}
	\Eprobth_x^\xi&\Big[ \big| \nu\in N(t): \varphi_1(s)\leq X_s^\nu \leq \varphi_2(s)\ \forall s\in[0,t] \big| \Big] \notag \\
	&= E_x\Big[ \exp\Big\{ \int_0^t \xi(B_r)\diff r \Big\};\varphi_1(s)\leq B_s\leq \varphi_2(s)\ \forall s\in[0,t] \Big] \quad  \label{eq:f-k-1}\tag{FK-1}
	\renewcommand*{\theHequation}{notag13.\theequation}
	\end{align}
	and
	\begin{align}
	\Eprobth_x^\xi\Big[& \big| \nu\in N(t): \varphi_1(s)\leq X_s^\nu \leq \varphi_2(s)\ \forall s\in[0,t] \big|^2 \Big] \notag \\
	&= E_x\Big[ \exp\Big\{ \int_0^t \xi(B_r)\diff r \Big\};\varphi_1(s)\leq B_s\leq \varphi_2(s)\ \forall s\in[0,t] \Big] \notag \\
	&\qquad + (m_2-2)\int_0^t E_x\Big[ \exp\Big\{ \int_0^s\xi(B_r)\diff r \Big\}\, \xi(B_s) \mathds{1}_{ \{\varphi_1(r)\leq B_r\leq \varphi_2(r)\ \forall 0\leq r\leq s \} } \Big. \label{eq:f-k-2}\tag{FK-2} \\
	&\qquad\quad\times\Big. \Big( E_{y}\big[\exp\big\{ \int_0^{t-s}\xi(B_r)\diff r \big\}\, \mathds{1}_{ \{ \varphi_1(r+s)\leq B_r\leq \varphi_2(r+s)\ \forall 0\leq r\leq t-s\}}\big] \Big)^2_{|_{y =B_s}} \Big] \diff s,\notag
	\renewcommand*{\theHequation}{notag14.\theequation}
	\end{align}
	respectively.
\end{proposition}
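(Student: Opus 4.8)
The plan is to prove both formulas by the standard spine/first-moment--second-moment decomposition, conditioning on the first branching event of the BBMRE. First I would set up notation: write $Z(t) := |\{\nu\in N(t):\varphi_1(s)\le X_s^\nu\le \varphi_2(s)\ \forall s\in[0,t]\}|$ for the counting functional of interest, and let $T$ denote the death/branching time of the initial particle, which (conditionally on $\xi$ and on the Brownian path $(B_s)$ of the initial particle) satisfies $\prob_x^\xi(T>t\mid B)=\exp\{-\int_0^t\xi(B_r)\,\diff r\}$. The key structural fact is the branching property: at time $T$ the initial particle is replaced by $K$ i.i.d.\ copies of the whole BBMRE started from $B_T$, where $\p(K=k)=p_k$, and conditionally on the path up to $T$ these subtrees are independent of each other and of the past.

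For \eqref{eq:f-k-1}, I would compute $\Eprob_x^\xi[Z(t)]$ by first-step analysis. Either $T>t$, contributing the event that the lone initial particle's own path stays in the tube, which (after integrating out the independent exponential clock against the path) yields exactly $E_x[\exp\{\int_0^t\xi(B_r)\,\diff r\};\varphi_1(s)\le B_s\le\varphi_2(s)\ \forall s\in[0,t]]$ minus the contribution of branching times $T=s\le t$; or $T=s\le t$, in which case — using $\sum_k kp_k=2$ and the fact that the tube-constraint up to time $s$ must hold for the ancestral path, and each of the $K$ children independently restarts — one gets a recursive integral equation for $n_1(t,x):=\Eprob_x^\xi[Z(t)]$. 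Writing $g(s,x):= E_x[\exp\{\int_0^s\xi(B_r)\,\diff r\}\xi(B_s)\mathds{1}_{\{\text{tube on }[0,s]\}}\, n_1(t-s,B_s)]$ appropriately, the renewal-type equation $n_1(t,x)= (\text{surviving-particle term}) + 2\int_0^t g(s,x)\,\diff s$ is solved by the claimed Feynman--Kac expression; one verifies this by checking that the right-hand side of \eqref{eq:f-k-1} satisfies the same equation (a Feynman--Kac/Duhamel identity for Brownian motion killed on exiting the tube, with the extra exponential potential $\xi$), or more cleanly by a direct spine argument: attach to each surviving particle weight $1$, use the many-to-one change of measure under which the spine performs Brownian motion weighted by $\exp\{\int_0^t\xi\}$ and each branch event contributes its mean offspring $2$, but since along the spine the size-biased offspring mean cancels against the $\exp$ normalization exactly when $\sum kp_k=2$, the net result is precisely the tube-restricted Feynman--Kac term with no extra factor. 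I would present whichever of these two is shorter; the spine version avoids solving the integral equation explicitly.

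For \eqref{eq:f-k-2}, the same first-step decomposition applied to $\Eprob_x^\xi[Z(t)^2]$ produces, besides the diagonal term equal to $\Eprob_x^\xi[Z(t)]$ (from the $T>t$ case, where $Z(t)\in\{0,1\}$ so $Z(t)^2=Z(t)$), a cross term from the $T=s\le t$ case: writing $Z(t)=\sum_{i=1}^K Z_i(t-s)$ with $Z_i$ i.i.d.\ copies started at $B_s$, one has $\Eprob[Z(t)^2\mid \mathcal{F}_s, K]=K\,\Eprob_{B_s}^\xi[Z(t-s)^2]+K(K-1)\big(\Eprob_{B_s}^\xi[Z(t-s)]\big)^2$, and taking expectations over $K$ gives coefficients $\sum_k kp_k=2$ and $\sum_k k(k-1)p_k=m_2-2$. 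The $2\cdot\Eprob_{B_s}^\xi[Z(t-s)^2]$ term feeds the same renewal equation whose ``source'' is the diagonal term, so summing over $s$ it reconstitutes exactly the first (Feynman--Kac) line of \eqref{eq:f-k-2} — this is the same cancellation as in the first-moment computation — while the $(m_2-2)\big(\Eprob_{B_s}^\xi[Z(t-s)]\big)^2$ term, after substituting the already-proved formula \eqref{eq:f-k-1} for the inner first moment started at $y=B_s$ on the shifted tube $\varphi_i(\cdot+s)$ over $[0,t-s]$, gives precisely the integral term in \eqref{eq:f-k-2}. Assembling these pieces yields the claim.

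The main obstacle I anticipate is handling the measure-zero technicalities caused by the tube constraints and the c\`adl\`ag (not continuous) boundaries $\varphi_1,\varphi_2$: one must be careful that ``$\varphi_1(s)\le B_s\le\varphi_2(s)\ \forall s$'' is a well-defined measurable event for Brownian motion against c\`adl\`ag barriers, that the strong Markov property applies at the branching times (which are stopping times independent of the future increments given the path), and that in gluing the ancestral constraint on $[0,s]$ with the descendant constraint on $[s,t]$ one does not double-count or lose the single instant $s$ (the point value $B_s=y$ is pinned, and continuity of Brownian motion makes the overlap negligible). A secondary technical point is justifying the interchange of expectation and the (a.s.\ finite, since $p_0=0$ forbids extinction but particle numbers can still be large) sums/products, which follows from the $m_2<\infty$ assumption controlling second moments and a standard Gronwall/iteration argument showing $\Eprob_x^\xi[|N(t)|^2]<\infty$ uniformly on compact time intervals (here $\xi\le\es$ from \eqref{eq:POT} is what makes the bound uniform in $\xi$). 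Once these measurability and integrability points are dispatched, the algebra of the first-step decomposition is routine.
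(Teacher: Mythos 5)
Your renewal-equation argument is correct and gives the stated formulas, but it takes a genuinely different route from the paper. The paper simply cites the spine/many-to-few machinery of Harris and Roberts for \eqref{eq:f-k-1}, and for \eqref{eq:f-k-2} appeals to their Lemma~1 together with an adaptation of a branching-random-walk argument: one conditions on the first splitting of the two ``spine'' particles, and the rate at which the two spines separate at site $y$ is $(m_2-2)\xi(y)$, which is where the prefactor $(m_2-2)$ in \eqref{eq:f-k-2} comes from. You instead propose a self-contained first-step decomposition at the first branching time $T$, obtaining the Volterra equations $n_1 = g + 2\mathcal{K}_- n_1$ and $n_2 = g + 2\mathcal{K}_- n_2 + (m_2-2)\mathcal{K}_-(n_1^2)$ with $g(t,x)=E_x[e^{-\int_0^t\xi}\mathds{1}_{\text{tube}}]$ and $\mathcal{K}_\pm$ the convolution operators with kernel $e^{\pm\int_0^s\xi}\xi(B_s)\mathds{1}_{\text{tube}}$, and then solving them via the Duhamel identity for $e^{\pm\int\xi}$. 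This is sound: the operator identity $\mathcal{K}_+ = \mathcal{K}_- + 2\mathcal{K}_-\mathcal{K}_+$ (a consequence of the Feynman--Kac/Duhamel relation between the $+\xi$ and $-\xi$ propagators on the tube) is exactly what upgrades the $e^{-\int\xi}$ weight in the naive first-step formula to the $e^{+\int\xi}$ weight appearing in \eqref{eq:f-k-2}, and the coefficients $2=\sum kp_k$ and $m_2-2=\sum k(k-1)p_k$ arise exactly as you compute. The trade-off is that the paper's spine proof is essentially free once one has the spine change-of-measure framework, whereas your first-principles version requires an explicit uniqueness/Gronwall argument for the Volterra equations and the boundedness $\Eprob_x^\xi[|N(t)|^2]<\infty$ to justify it; you correctly identify this and the c\`adl\`ag-barrier measurability issues as the remaining technical work. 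Since you also mention the spine version as the cleaner alternative, your plan effectively covers both the paper's approach and an elementary one.
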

The proof of the first identity follows from \cite[Section 4.1]{harris_robert_multiple_spines}. 
The second identity can be shown  by using \cite[Lemma 1]{harris_robert_multiple_spines}, conditioning on the first splitting of the so-called ‘‘spines'', similarly to the proof of \cite[(2.6)]{guen} for $n=2$ there. Indeed, one has to consider branching Brownian motion instead of branching random walk and replace binary branching by general branching. Then the expectation of the quantity in display \cite[(2.15)]{guen} turns into the second summand in \eqref{eq:f-k-2}, because the first splitting rate of the two ‘‘spine  particles''  at site $y$ is $(m_2-2)\xi(y)$.  

As a consequence of Proposition~\ref{prop:many-to-few}, the solution to \eqref{eq:PAM} can be expressed by a functional of the branching process, i.e.\ we have that the solution to \eqref{eq:PAM} is given by 
\begin{align*}
u(t,x)&=E_x\Big[ \exp\Big\{ \int_0^{t} \xi(B_s)\diff s \Big\}\, u_0(B_t) \Big]=\Eprob_x^\xi\Big[ \sum_{\nu\in N(t)} u_0(X_t^\nu) \Big]. 
\end{align*}
As a special case for  $u_0=\mathds{1}_{(-\infty,0]}$, this turns into
\begin{align}
u(t,x)=\Eprob_x^\xi\big[ N^\leq(t,0) \big] &=E_x\Big[  \exp\Big\{ \int_0^{t} \xi(B_s)\diff s \Big\}; B_t\leq 0 \Big].
\label{eq:F-K}
\end{align}

\subsection{Change of measure}
The first tool is a change of measure which makes it typical for the Brownian motion in the Feynman-Kac formula started at $tv,$ some $v \ne 0,$ to be close to the origin at time $t.$
For this purpose, let $(\xi(x))_{x\in\mathbb{R}}$ be as in \eqref{eq:POT} and define the shifted potential
\begin{equation*}
\zeta:=\xi-\es.
\end{equation*}
Then $\p$-a.s., 
\begin{equation} \label{eq:zetaBd}
    \zeta(x)\in[-(\es-\ei),0] \quad \forall x\in\mathbb{R}.
\end{equation} 
We will oftentimes write 
\begin{equation} \label{eq:hit}
H_y:=\inf\big\{t\geq0:B_t=y\big\}, \quad y\in\mathbb{R}, \quad \text{ and } \quad \tau_i:=H_{i-1}-H_{i}, \quad i \in \Z,
\end{equation}
for the first hitting times and their pairwise differences.
The above shift of $\xi$ gives rise to a change of measure 
which will play a crucial role in the following. For $x,y \in \R$ as well as $\eta \le  0$ define the probability measures $P_{x,y}^{\zeta,\eta}$ via
\begin{align}
P_{x,y}^{\zeta,\eta}(A)&:=\frac{1}{Z_{x,y}^{\zeta,\eta}}E_x\Big[ \exp\Big\{\int_{0}^{H_{x-y}}\big(\zeta(B_s)+\eta\big)\diff s\Big\};A \Big], \quad A\in\sigma\big(B_{t\wedge H_{x-y}}:t\geq 0\big),\label{eq:def_P_x_zeta}
\end{align}
with normalizing constant
\begin{align*}
Z_{x,y}^{\zeta,\eta}&:=E_x\Big[ \exp\Big\{\int_{0}^{H_{x-y}}\big(\zeta(B_s)+\eta\big)\diff s\Big\} \Big] \in (0,\infty),
\end{align*}
where $P_x$ and $E_x$, $x\in\R$, are defined below \eqref{eq:feynman_kac}. 
For $A\in\sigma\big(B_{t\wedge H_{x-y}}:t\geq 0\big)$, using the strong Markov property at time $H_{x-y}$, we infer $P_{x,y}^{\zeta,\eta}(A)=P_{x,y'}^{\zeta,\eta}(A)$ for all $y'\geq y$. Thus, by Kolmogorov's extension theorem (see e.g.\ \cite[Theorem 2.4.3]{tao_lectures}),
\begin{equation}
 \label{eq:P_x}
\big(P_{x,y}^{\zeta,\eta}\big)_{y\geq0} \text{ can be extended to a unique probability measure }P_x^{\zeta,\eta} \text{ on }\sigma\big( B_t:t\geq 0 \big).
\end{equation}
 We write $E_x^{\zeta,\eta}$ for the corresponding expectation and introduce the logarithmic moment generating functions
\begin{align}
L_{x}^{\zeta}(\eta)&:=\ln E_{x}\Big[ \exp\Big\{ \int_0^{H_{\lceil x \rceil -1}}\big( \zeta(B_s)+\eta \big)\diff s \Big\} \Big],\quad x>0,\label{eq:LogMG_x}\\
\overline{L}_{x}^\zeta(\eta)&:=\frac{1}{x}\sum_{i=1}^{x} L_{i}^{\zeta}(\eta) = \frac{1}{x} \ln E_x\Big[ \exp\Big\{ \int_0^{H_0}(\zeta(B_s)+\eta)\diff s \Big\} \Big],\quad x>0,\notag
\end{align}
where we recall the notation introduced in Section \ref{sec:convention}, and
where the last equality is due to the Markov property. In addition, set
\begin{equation}
\label{eq:expectedLogMG}
L(\eta):=\E\big[ L_{1}^{\zeta}(\eta) \big].
\end{equation} 
Due to \eqref{eq:zetaBd},
for any $\eta\le  0$
the quantities above are well-defined, and it is easy to check that in this case and under \eqref{eq:POT}, the expressions defined in \eqref{eq:LogMG_x}--\eqref{eq:expectedLogMG} are finite. We have the following useful properties.
\begin{lemma}\label{le:expectedLogMGfct}
	\begin{enumerate}
		\item  The function $(-\infty,0)\ni\eta\mapsto L(\eta)$ is infinitely  differentiable  and its derivative $L'(\eta)$ is positive and monotonically strictly increasing. 
		\item
		We have $\p\text{-a.s.}$ that
		\begin{equation}
		\label{eq:LLNEmpLogMG}
		\lim_{x\to\infty} \overline{L}_{x}^\zeta(\eta) = L(\eta)\quad\text{for all }\eta\leq  0.
		\end{equation} 		
		\item $L'(\eta)\downarrow 0$ as $\eta\downarrow -\infty$ 
		\item For every $v>v_c:=\frac{1}{L'(0-)}$ 
		(where $\frac{1}{+\infty}:=0$), which we call \emph{critical velocity}, there exists a 
		\begin{equation}
		\label{eq:legTraExpectedLogMGfct_1}
		\text{ unique solution $\overline{\eta}(v)<0$ to the equation }
		L'( \overline{\eta}(v) ) = \frac{1}{v}.
		\end{equation}
		$\overline{\eta}(v)$ can be characterized as the unique maximizer to $(-\infty,0]\ni \eta\mapsto \frac{\eta}{v} - L(\eta) $, i.e. 
		\begin{equation}\label{eq:legTraExpectedLogMGfct_2}
		  \sup_{\eta\leq 0} \Big( \frac{\eta}{v} - L(\eta) \Big)= \frac{\overline{\eta}(v)}{v} - L\big( \overline{\eta}(v) \big).
		\end{equation} 
		The function $(v_c,\infty)\ni v\mapsto \overline{\eta}(v)$ is continuously differentiable and strictly decreasing. 
	\end{enumerate}
\end{lemma}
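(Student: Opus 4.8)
\textbf{Proof plan for Lemma~\ref{le:expectedLogMGfct}.}

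The plan is to treat each of the four parts in turn, deriving everything from the bound \eqref{eq:zetaBd} on $\zeta$, stationarity \eqref{eq:STAT}, the mixing hypothesis \eqref{eq:MIX}, and standard Brownian excursion estimates. For part (a), I would start by observing that $L(\eta) = \E[L_1^\zeta(\eta)]$ where $L_1^\zeta(\eta) = \ln E_1[\exp\{\int_0^{H_0}(\zeta(B_s)+\eta)\,\diff s\}]$. Since $\zeta \le 0$ and, for $\eta < 0$, the integrand is bounded above by $\eta < 0$ on an interval whose length $H_0$ has exponential moments under $E_1$, the expectation inside the logarithm is finite and in fact smooth in $\eta$ on $(-\infty,0)$: differentiating under the expectation is justified because $\frac{\partial^k}{\partial\eta^k}$ brings down factors of $H_0^k$ and one controls $E_1[H_0^k \exp\{\eta H_0\}] < \infty$ by dominated convergence, uniformly for $\eta$ in compact subsets of $(-\infty,0)$. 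This gives $L_1^\zeta \in C^\infty$; then a second application of dominated convergence (using $|\zeta| \le \es - \ei$ to get an integrable dominating function, uniformly in $\eta$ locally) moves the derivatives past $\E[\cdot]$, so $L \in C^\infty((-\infty,0))$. For positivity and strict monotonicity of $L'$: writing $L_1^\zeta(\eta) = \ln E_1^{\text{path}}[\cdots]$, one checks $\frac{d}{d\eta}L_1^\zeta(\eta) = E_1^{\zeta,\eta}[H_0] > 0$ (the expected hitting time under the tilted measure, which is strictly positive) and $\frac{d^2}{d\eta^2}L_1^\zeta(\eta) = \Var_1^{\zeta,\eta}(H_0) > 0$ since $H_0$ is non-degenerate under $P_1^{\zeta,\eta}$. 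Taking expectations over $\xi$ preserves $L'(\eta) = \E[\frac{d}{d\eta}L_1^\zeta(\eta)] > 0$ and $L''(\eta) = \E[\Var_1^{\zeta,\eta}(H_0)] > 0$, whence $L'$ is strictly increasing.

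For part (b), the claim is a law of large numbers for $\overline L_x^\zeta(\eta) = \frac1x\sum_{i=1}^x L_i^\zeta(\eta)$. By stationarity \eqref{eq:STAT}, the summands $L_i^\zeta(\eta) = \ln E_i[\exp\{\int_0^{H_{i-1}}(\zeta+\eta)\}]$ form a stationary sequence (in $i \in \N$); each is bounded (by $0$ above and by a deterministic constant below coming from $\zeta \ge -(\es-\ei)$ and the probability that a Brownian motion hits $i-1$ from $i$ quickly), hence in $\mathcal L^1$. The mixing condition \eqref{eq:MIX} — noting $L_i^\zeta(\eta)$ is $\sigma(\xi(z) : i-1 \le z \le i)$-measurable, hence $\F^{i-1}$- and (essentially) $\F_i$-measurable — implies the sequence is ergodic with respect to the shift, as already remarked after \eqref{eq:MIX}. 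Birkhoff's ergodic theorem then yields $\overline L_x^\zeta(\eta) \to \E[L_1^\zeta(\eta)] = L(\eta)$ $\p$-a.s.\ along integer $x$; the fractional-part correction $A_x$ in the notational convention of Section~\ref{sec:convention} is bounded uniformly, so it vanishes after division by $x$ and the limit holds for all real $x \to \infty$. To get the \emph{simultaneous} statement "for all $\eta \le 0$" on a single null set, I would first establish it for a countable dense set of $\eta$'s and then extend by monotonicity and continuity: $\eta \mapsto \overline L_x^\zeta(\eta)$ is non-decreasing and convex (as a log-moment generating function), and $L$ is continuous by part (a) (continuity at $0-$ handled via monotone convergence), so pointwise convergence on a dense set upgrades to all $\eta$.

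For part (c), I want $L'(\eta) \downarrow 0$ as $\eta \downarrow -\infty$. Since $L'(\eta) = \E[E_1^{\zeta,\eta}[H_0]]$ and the tilting by $e^{\eta H_0}$ with $\eta \to -\infty$ increasingly penalizes large $H_0$, one expects $E_1^{\zeta,\eta}[H_0] \to 0$. Concretely, $E_1^{\zeta,\eta}[H_0] = \frac{E_1[H_0 \exp\{\int_0^{H_0}(\zeta+\eta)\}]}{E_1[\exp\{\int_0^{H_0}(\zeta+\eta)\}]} \le \frac{E_1[H_0 e^{\eta H_0}]}{E_1[e^{(\eta - (\es-\ei)) H_0}]}$ (using $0 \ge \int_0^{H_0}\zeta \ge -(\es-\ei)H_0$ for upper/lower bounds on numerator/denominator); the explicit one-sided hitting-time Laplace transform $E_1[e^{-\lambda H_0}] = e^{-\sqrt{2\lambda}}$ and its $\lambda$-derivative make the ratio $\to 0$ as $\eta \to -\infty$ by direct computation. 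Monotonicity of $L'$ from part (a) plus positivity forces the limit to be the infimum, which this bound shows is $0$. Finally part (d) is essentially the Legendre–Fenchel apparatus: $\eta \mapsto \frac\eta v - L(\eta)$ has derivative $\frac1v - L'(\eta)$, which by parts (a) and (c) decreases strictly from $\frac1v$ (at $\eta\to-\infty$, since $L'\to 0$) to $\frac1v - L'(0-)$. When $v > v_c = 1/L'(0-)$ we have $\frac1v - L'(0-) < 0$, so there is a unique zero $\overline\eta(v) < 0$ of $L'(\overline\eta(v)) = 1/v$ by the intermediate value theorem and strict monotonicity; this zero is the unique interior maximizer, giving \eqref{eq:legTraExpectedLogMGfct_1}–\eqref{eq:legTraExpectedLogMGfct_2}. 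Differentiability and strict monotonicity of $v \mapsto \overline\eta(v)$ follow from the implicit function theorem applied to $L'(\overline\eta(v)) = 1/v$: $\overline\eta{}'(v) = -\frac{1}{v^2 L''(\overline\eta(v))} < 0$ since $L'' > 0$ by part (a). The main obstacle I anticipate is the careful justification of differentiating under the two nested expectations in part (a) with uniform-in-$\eta$ integrable dominating functions, and the companion bookkeeping in part (b) of getting a \emph{single} exceptional null set valid for all $\eta \le 0$ simultaneously — both are routine in spirit but need the boundedness from \eqref{eq:zetaBd} and the Brownian hitting-time tail estimates used cleanly.
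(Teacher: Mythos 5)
Your proposal is correct and follows essentially the same route as the paper: part (a) inlines the content of the appendix Lemma~\ref{le:log mom fct and derivatives} (differentiation under the expectation, the tilted-measure formulas $L'(\eta)=\E[E_1^{\zeta,\eta}[H_0]]$ and $L''(\eta)=\E[\Var_1^{\zeta,\eta}(H_0)]>0$), part (b) is the same ergodic-theorem argument (the paper cites Freidlin's Theorem 7.5.1 plus triviality of the invariant $\sigma$-algebra where you invoke Birkhoff directly), and part (d) is the identical intermediate-value/implicit-function-theorem argument. The one place you genuinely diverge is part (c): the paper deduces $L'(\eta)\downarrow 0$ from convexity of $L$ together with the sublinear lower bound $L(\eta)\geq -\sqrt{2(\es-\ei-\eta)}$, whereas you bound $E_1^{\zeta,\eta}[H_0]$ directly by the ratio $E_1[H_0e^{\eta H_0}]/E_1[e^{(\eta-(\es-\ei))H_0}]$ and compute with the explicit Laplace transform; both work, and your version is more hands-on while the paper's is shorter given that convexity is already in hand from (a). One inaccuracy to fix in (b): $L_i^\zeta(\eta)$ is \emph{not} $\sigma(\xi(z):i-1\leq z\leq i)$-measurable, since the Brownian path started at $i$ may travel arbitrarily far to the right before hitting $i-1$; it is only $\F^{i-1}$-measurable (this is exactly why the paper needs the approximate-localization estimates of Lemma~\ref{le:expMixLi} elsewhere). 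This does not break your argument — Birkhoff's theorem applies to any $\mathcal L^1$ functional of the ergodic shift on $\xi$, so ergodicity of the sequence is automatic — but the stated justification should be replaced by that observation.
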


\begin{proof}[Proof of Lemma \ref{le:expectedLogMGfct}]
	\begin{enumerate}	 
		\item This follows from Lemma~\ref{le:log mom fct and derivatives}.
		\item
		By \cite[Theorem 7.5.1]{Fr-85}, for every $\eta\leq 0$  we get  $\p$-a.s.\ that
		\( \lim_{x\to\infty} \overline{L}^\zeta_x(\eta) = \E\big[  L_1^\zeta(\eta)\given \mathcal{F}^\zeta_\text{inv} \big],\)
		where $ \mathcal{F}^\zeta_\text{inv}$ is the $\sigma$-algebra of all $\p$-invariant sets. Due to our standing assumptions, the family $\zeta(x),$ $x \in \R,$ is mixing and thus ergodic. Thus, $ \mathcal{F}^\zeta_\text{inv}$ is $\p$-trivial, i.e., $\E\big[ L_1^\zeta(\eta)\given \mathcal{F}^\zeta_\text{inv} \big]=L(\eta)$. By continuity of the functions $\overline{L}^\zeta_x$ and $L$, the statement follows.
		\item  We note that $L$ is strictly increasing and strictly convex on $(-\infty, 0)$ by (a) and 
		\[ L(\eta)\geq\E\Big[ \ln E_1\big[ e^{-(\es-\ei-\eta)H_0} \big] \Big]= -\sqrt{2(\es-\ei-\eta)}\quad\text{for all }\eta\leq0, \]
		where the equality is due to \cite[(2.0.1), p.~204]{handbook_brownian_motion}. 
		Thus, we infer that its derivative $L'(\eta)$ tends to $0$ as $\eta\to-\infty$.
		\item Using that $1/v_c>1/v$ (where $\frac{1}{0}:=+\infty$) and the fact that $L'$ is strictly increasing with $L'(\eta)\downarrow0$ for $\eta\downarrow-\infty$, we can find a unique $\overline{\eta}(v)<0$  such that $L'(\overline{\eta}(v))=1/v$, giving \eqref{eq:legTraExpectedLogMGfct_1}. On the other hand, \eqref{eq:legTraExpectedLogMGfct_2} is a direct consequence of \eqref{eq:legTraExpectedLogMGfct_1} and standard properties of the Legendre transformations of strictly convex functions. 
		Because $L'$ is strictly increasing and smooth on $(-\infty,0)$, it has a strictly increasing inverse function $(L')^{-1}$, which is differentiable on $(0,1/v_c)$. By \eqref{eq:legTraExpectedLogMGfct_1}, for $v>v_c$ we have $\overline{\eta}(v)=(L')^{-1}(1/v)$ and thus by the formula for the derivative of the inverse function we get
		\[\overline{\eta}'(v)=-\frac{1}{v^2}\cdot \frac{1}{L''(\overline{\eta}(v))}.\]
		Because the right-hand side of the latter display is continuous in $v$ and negative, we conclude.
	\end{enumerate}
\end{proof}
We use the standard notation \( L^*: \, \R \to (-\infty, \infty]\) to denote the Legendre transformation
\begin{align*}
  v & \mapsto \sup_{\eta\leq 0} \big( \eta v - L(\eta) \big)
\end{align*}
of $L$.
 Lemma~\ref{le:expectedLogMGfct} entails that
\begin{align}
\label{eq:expec_Leg_LMG}
L^*(1/v) = \frac{\overline{\eta}(v)}{v} - L\big( \overline{\eta}(v) \big).
\end{align}
In the next part of this section, we are interested in a suitable \emph{tilting parameter} $\eta_x^\zeta(v)$ 
such that  
\begin{equation}\label{eq:probTilting}
E_x^{\zeta,\eta_x^{\zeta}(v)}\big[ H_0 \big] = \frac{x}{v},\quad x>0,\ v>0,
\end{equation}
holds true (setting $\eta_x^\zeta(v):=0$ if no such parameter exists). 
For  $\eta_x^\zeta(v)$ fulfilling \eqref{eq:probTilting} we observe that under
$P_x^{\zeta,\eta_x^{\zeta}(v)}$, 
the Brownian motion is tilted to have time-averaged velocity $v$ until it reaches the origin. In the remaining part of this section, see Lemma \ref{le:concEtaEmpLT},  we will show that for suitable $v$ and $x$ large enough, a tilting parameter as postulated in \eqref{eq:probTilting} actually exists. Furthermore, we will show that the random parameter $\eta_x^\zeta(v)$  concentrates around the deterministic  $\overline{\eta}(v)$ defined in \eqref{eq:legTraExpectedLogMGfct_1}. The last result is a perturbation estimate for $\eta_x^\zeta(v)$ in $x,$ cf.\ Lemma \ref{le:PertEta_n}.

\subsection{Concentration inequalities}
We have the following result regarding the existence (or, more precisely, negativity) and concentration properties of the postulated parameter $\eta_x^\zeta(v)$.
\begin{lemma}\label{le:concEtaEmpLT}
\begin{enumerate}
\item \label{item:exist}
For every $v>v_c$ there exists a finite random variable $\mathcal{N}=\mathcal{N}(v)$ such that for all $x\geq\mathcal{N}$ the solution $\eta_x^\zeta(v)<0$ to \eqref{eq:probTilting} exists. 
\item \label{item:conc}
For each $q\in\mathbb{N}$ and each compact interval $V\subset(v_c,\infty)$, there exists $\Cl{const:concEtaEmpLT}:=\Cr{const:concEtaEmpLT}(V,q)\in(0,\infty)$ such that
	\begin{equation}
	\label{eq:concEtaEmpLT_cont}
	\p\Big( \sup_{v\in V}\sup_{x\in[n,n+1)} |  \eta_x^\zeta(v) - \overline{\eta}(v)| \geq \Cr{const:concEtaEmpLT}\sqrt{\frac{\ln n}{n}} \Big) \leq \Cr{const:concEtaEmpLT} n^{-q}\qquad \text{for all }n\in\mathbb{N}.
	\end{equation}
\end{enumerate}
\end{lemma}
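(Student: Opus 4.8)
The plan is to analyze the random function $\eta \mapsto E_x^{\zeta,\eta}[H_0]$ and to show it behaves, uniformly in $x$ on dyadic blocks and in $v$ on the compact interval $V$, like the deterministic limit $\eta \mapsto L'(\eta)^{-1}$ via the law of large numbers for $\overline L_x^\zeta$ from Lemma~\ref{le:expectedLogMGfct}(b), upgraded to a quantitative concentration estimate. The key observation is that $E_x^{\zeta,\eta}[H_0]$ is, up to a factor, the $\eta$-derivative of the cumulant generating function: differentiating $x\,\overline L_x^\zeta(\eta) = \ln E_x[\exp\{\int_0^{H_0}(\zeta(B_s)+\eta)\diff s\}]$ in $\eta$ and using that $\int_0^{H_0}\diff s = H_0$, one gets $\tfrac{d}{d\eta}\big(x\,\overline L_x^\zeta(\eta)\big) = E_x^{\zeta,\eta}[H_0]$. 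Hence \eqref{eq:probTilting}, the equation $E_x^{\zeta,\eta_x^\zeta(v)}[H_0] = x/v$, is precisely the condition $(\overline L_x^\zeta)'(\eta) = 1/v$, the empirical analogue of \eqref{eq:legTraExpectedLogMGfct_1}. So $\eta_x^\zeta(v) = \big((\overline L_x^\zeta)'\big)^{-1}(1/v)$ when this makes sense, and the whole lemma reduces to: (i) the empirical derivative $(\overline L_x^\zeta)'$ is, for large $x$, a strictly increasing smooth function whose range contains $1/v$ for all $v\in V$, and (ii) it is uniformly close to $L'$ with the stated rate.

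For part (a) (existence and negativity), I would argue as follows. By strict convexity of $L$ (Lemma~\ref{le:expectedLogMGfct}(a)) and $1/v < 1/v_c = L'(0-)$ for $v > v_c$, there is $\eta_0 = \eta_0(V) < 0$ and $\delta>0$ with $L'(\eta_0) > \sup_{v\in V} 1/v + \delta$ and $L'(\eta_1) < \inf_{v\in V} 1/v - \delta$ for some $\eta_1 < \eta_0$; since $L'$ is continuous and strictly increasing, $1/v$ lies strictly inside $(L'(\eta_1), L'(\eta_0))$ for all $v\in V$. The LLN \eqref{eq:LLNEmpLogMG}, together with convexity of each $\overline L_x^\zeta$ (so that convergence of convex functions forces convergence of derivatives on the interior), gives $(\overline L_x^\zeta)'(\eta_i) \to L'(\eta_i)$ $\p$-a.s., so for $x \ge \mathcal N$ the value $(\overline L_x^\zeta)'$ takes at $\eta_1$ is $< 1/v$ and at $\eta_0$ is $> 1/v$; by the intermediate value theorem and strict monotonicity of $(\overline L_x^\zeta)'$ (strict convexity of $\overline L_x^\zeta$, which holds because $H_0$ is genuinely random under $P_x$), the solution $\eta_x^\zeta(v) \in (\eta_1,\eta_0) \subset (-\infty,0)$ exists and is unique. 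For the uniformity in $v \in V$ and $x$ in a unit block, note $\overline L_x^\zeta$ differs from $\overline L_{\lfloor x\rfloor}^\zeta$ by a single bounded term $L_x^\zeta(\eta)/\lfloor x\rfloor$, controlled using \eqref{eq:zetaBd}, so block-wise control suffices.

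For part (b) (concentration), the core is a quantitative version of \eqref{eq:LLNEmpLogMG}. Writing $x\,\overline L_x^\zeta(\eta) = \sum_{i=1}^{\lceil x\rceil - 1} L_i^\zeta(\eta) + (\text{remainder})$, with $L_i^\zeta(\eta) = \ln E_{i}[\exp\{\int_0^{H_{i-1}}(\zeta(B_s)+\eta)\diff s\}]$, I would exploit that each $L_i^\zeta$ is $\F_{i-1} \vee \F^{?}$--measurable with the relevant $\sigma$-algebras essentially $\sigma(\zeta(z): z \le i)$, that the increments $L_i^\zeta(\eta)$ are bounded (uniformly in $i$ and locally uniformly in $\eta$, again by \eqref{eq:zetaBd}), and that the $\psi$-mixing hypothesis \eqref{eq:MIX} with $\sum_k \psi(k) < \infty$ yields a Bernstein/Azuma-type bound: $\p(|\,x\,\overline L_x^\zeta(\eta) - x\,L(\eta)| \ge t) \le C\exp(-c t^2/x)$ for $t \lesssim x$, hence $\p(|\overline L_x^\zeta(\eta) - L(\eta)| \ge \Cr{const:concEtaEmpLT}\sqrt{(\ln n)/n}) \le C n^{-q}$ for $x \asymp n$ (choosing the constant large enough to beat $q$). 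Apply this at finitely many fixed $\eta$ values (e.g. $\eta_1, \eta_0$ and a grid in between), transfer from function values to derivative values using convexity and the quantitative three-point inequality $(\overline L_x^\zeta)'(\eta) \le (\overline L_x^\zeta(\eta+h) - \overline L_x^\zeta(\eta))/h$, then invert: since $L''$ is bounded below by a positive constant on the relevant compact $\eta$-interval (so $L'$ and the nearby $(\overline L_x^\zeta)'$ are bi-Lipschitz there), $|\eta_x^\zeta(v) - \overline\eta(v)| = |((\overline L_x^\zeta)')^{-1}(1/v) - (L')^{-1}(1/v)|$ is controlled by $\sup_\eta |(\overline L_x^\zeta)'(\eta) - L'(\eta)|$ up to a constant, giving \eqref{eq:concEtaEmpLT_cont}. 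Uniformity over $v\in V$ comes for free since the inverse of $L'$ is Lipschitz on the compact set $\{1/v : v \in V\}$; uniformity over $x\in[n,n+1)$ again comes from the single-term perturbation bound, and a union bound over the $O(1)$ grid points and over $n$ is harmless.

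The main obstacle I anticipate is the passage from concentration of the \emph{function values} $\overline L_x^\zeta(\eta)$ to concentration of the \emph{derivatives} $(\overline L_x^\zeta)'(\eta)$, uniformly in $\eta$ over a compact interval and with the $\sqrt{(\ln n)/n}$ rate intact. A pointwise bound at a grid combined with the convexity sandwich only gives the derivative rate if one also controls the second derivative $(\overline L_x^\zeta)''$ from above — equivalently a one-sided modulus of continuity for $(\overline L_x^\zeta)'$ — uniformly in $x$; this should follow from a uniform upper bound on $\Var_{P_x^{\zeta,\eta}}(H_0)$ (which is $(\overline L_x^\zeta)''(\eta)$ up to the factor $x$), obtainable from \eqref{eq:zetaBd} since $H_0$ under $P_x^{\zeta,\eta}$ is stochastically dominated, uniformly, by the corresponding object for constant potential $\zeta \equiv -(\es-\ei)$, whose hitting-time moments are explicit. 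A secondary technical point is that the remainder term distinguishing $\overline L_x^\zeta$ from the sum $\sum_{i=1}^{\lceil x\rceil-1} L_i^\zeta$ over integer blocks, and the dependence of $L_i^\zeta$ on $\zeta$ restricted to $[i-1,i]$ versus the half-line, must be handled so that the mixing bound \eqref{eq:MIX} applies cleanly; this is routine but needs care with the $\sigma$-algebras $\F_j, \F^k$.
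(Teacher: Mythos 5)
Your overall strategy is right in outline: characterize $\eta_x^\zeta(v)$ as the root of $(\overline{L}_x^\zeta)'(\eta)=1/v$, establish a quantitative concentration estimate for $(\overline{L}_x^\zeta)'$ around $L'$, and invert using the two-sided bound on the second derivative (Lemma~\ref{le:log mom fct and derivatives}~(b)). The paper also reduces non-integer $x$ to integer $n$ via a single-term perturbation exactly as you sketch, and also derives part~(a) (the paper does it from part~(b) via Borel--Cantelli rather than via LLN~+~IVT, which is marginally cleaner but an inessential difference).

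However, there is a genuine gap in the key step. You propose to prove concentration for the \emph{function values} $\overline{L}_x^\zeta(\eta)$ at grid points (a Hoeffding/Bernstein bound for $\sum_i L_i^\zeta(\eta)$) and then transfer to the \emph{derivative} $(\overline{L}_x^\zeta)'(\eta)$ using convexity and a difference quotient. This transfer necessarily loses a square root in the rate, even with the uniform two-sided second-derivative bound you mention. Concretely, if $|\overline{L}_x^\zeta(\eta_j)-L(\eta_j)|\le\epsilon$ at grid points with spacing $h$, and the second derivatives are bounded by a fixed constant $C$, then the three-point convexity comparison yields $|(\overline{L}_x^\zeta)'(\eta_j)-L'(\eta_j)| \lesssim \epsilon/h + Ch$, which optimizes to order $\sqrt{\epsilon C}$. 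With $\epsilon\asymp\sqrt{(\ln n)/n}$ this gives only $((\ln n)/n)^{1/4}$ for the derivative, not the $\sqrt{(\ln n)/n}$ that \eqref{eq:concEtaEmpLT_cont} asserts. The second-derivative bound does not rescue this; it is intrinsic to passing from values to slopes when the second derivatives of the two functions are not themselves known to be close at the rate $\epsilon$ (which they are not here — $(\overline{L}_n^\zeta)''$ is a fluctuating random quantity).

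The correct approach, which the paper takes (Lemma~\ref{le:concEmpLT}), is to concentrate the \emph{derivative} directly. Since $(\overline{L}_n^\zeta)'(\eta)=\frac{1}{n}\sum_{i=1}^n (L_i^\zeta)'(\eta)$ with $(L_i^\zeta)'(\eta)=E_i^{\zeta,\eta}[\tau_{i-1}]$, and these are bounded (Lemma~\ref{le:log mom fct and derivatives}~(b)), stationary, centered after subtracting $L'(\eta)$, and exponentially mixing (Lemma~\ref{le:expMixLi}, inequality~\eqref{eq:expMix3}), the Hoeffding-type bound of Corollary~\ref{cor:rioDepExp} applies directly to the derivative sums and yields the rate $\sqrt{(\ln n)/n}$ pointwise in $\eta$. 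The remaining uniformity over $\eta\in\triangle$ is then a union bound over the grid $\triangle_n=(\triangle\cap\tfrac1n\Z)\cup\{\undereta,\overeta\}$ of spacing $1/n$, plus a mean-value estimate for the modulus of continuity of $(\overline{L}_n^\zeta)'$ and $L'$: the second-derivative bound gives error $O(1/n)$ across each grid cell, which is dominated by $\sqrt{(\ln n)/n}$. So the second-derivative bound you invoke is indeed used, but as a bridging device in $\eta$ with spacing $1/n$, not as a tool to convert value-concentration into derivative-concentration — for the latter it is not strong enough.
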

\begin{proof}
We recall that due to 
Lemma~\ref{le:log mom fct and derivatives}, the tilting parameter $\eta_x^\zeta(v)$ can alternatively be characterized as the unique solution $\eta_x^\zeta(v) \in (-\infty,0)$ to  
\begin{equation}
\label{eq:EmpLogMGDerivTilt}
\big(\overline{L}^\zeta_x\big)'(\eta_x^\zeta(v)) = \frac{1}{v},
\end{equation}
if the solution exists, and $\eta_x^\zeta(v)=0$ otherwise. 
We start with noting that Part \ref{item:exist} directly follows from Part \ref{item:conc}. Indeed, let $A_n$, $n\in\N$, be the event in the probability on the left-hand side of \eqref{eq:concEtaEmpLT_cont}. Then $\sum_n \p(A_n)<\infty$ for $q\geq 2$. By the first Borel-Cantelli lemma, $\p$-a.s.\ 
 only finitely many of the $A_n$ occur. In combination with   
	the fact that $\overline \eta(v) <0$, cf.\ \eqref{eq:legTraExpectedLogMGfct_1}, this implies that  $\p$-a.s., the value of $\eta_x^\zeta(v)$ can only vanish for  $x>0$ small enough. In particular, we deduce the existence of a $\p$-a.s.\ finite random variable $\mathcal N$ as postulated. \\
	Hence, it remains to show \eqref{eq:concEtaEmpLT_cont}.
For this purpose, in the following lemma we investigate the fluctuations of the  functions through which the parameters
 $\eta_x^\zeta(v)$ and $\overline{\eta}(v)$ are implicitly defined; we will then infer the desired bounds on the fluctuations of the parameters themselves through perturbation estimates for these functions.
\begin{lemma}\label{le:concEmpLT}
	For every compact interval $\triangle\subset(-\infty,0)$ and each $q \in \N,$ there exists a constant $\Cl{const:concEmpLT}=\Cr{const:concEmpLT}(\triangle,q)\in(0,\infty)$ such that
	\begin{equation}
	\label{eq:concEmpLT_cont}
	\p\Big( \sup_{\eta\in\triangle}\sup_{x\in[n,n+1)} \Big| \big( \overline{L}_x^\zeta\big)'(\eta) - L'(\eta) \Big| \geq \Cr{const:concEmpLT}\sqrt{\frac{\ln n}{n}} \Big) \leq \Cr{const:concEmpLT} n^{-q}\qquad \text{for all }n\in\mathbb{N}.
	\end{equation}
\end{lemma}
In order not to hinder the flow of reading, we
 postpone the proof of this auxiliary result to the end of the proof of Lemma~\ref{le:concEtaEmpLT}  and finish the proof of Lemma~\ref{le:concEtaEmpLT} (b) first.  Let $q\in\N$ and $V\subset(v_c,\infty)$ be a compact interval. By Lemma~\ref{le:log mom fct and derivatives}, for each compact $\triangle\subset(-\infty,0)$ we have   $\p$-a.s.,
\begin{align}
    \label{eq:secQDerBd}
    \begin{split}
     	\Cr{constDerLMG}^{-1} \leq \inf_{\eta \in \triangle} L''(\eta) &\leq \sup_{\eta \in \triangle} L''(\eta) \leq 	\Cr{constDerLMG},  \\
     	\Cr{constDerLMG}^{-1} \leq \inf_{x\geq1}\,\inf_{\eta\in\triangle} \big(\overline{L}_x^\zeta\big)''(\eta) &\leq \sup_{x\geq1}\,\sup_{\eta\in\triangle} \big(\overline{L}_x^\zeta\big)''(\eta) \leq 	\Cr{constDerLMG}. 
    \end{split}
\end{align}
Therefore, and because the function $V\ni v\mapsto\overline{\eta}(v)$ is strictly decreasing  by Lemma~\ref{le:expectedLogMGfct}, it is possible to find $N=N(V)\in\mathbb{N}$ and a compact interval $\triangle=\triangle(N,V) \subset (-\infty,0),$ where for notational convenience we write
\begin{equation} \label{eq:triangleEta}
V= [v_*, v^*] \quad \text{ and } \quad\triangle = [\undereta,\overeta],
\end{equation}
 such that, using standard calculus for sets,
\begin{equation*}
    \big(\overline{\eta}(V)- \Cr{const:concEmpLT}	\Cr{constDerLMG}\sqrt{\ln n /n}\big)\cup\big( \overline{\eta}(V)+ \Cr{const:concEmpLT}	\Cr{constDerLMG}\sqrt{\ln n /n} \big) \subset \triangle\quad\text{ for all }n\geq N. 
\end{equation*}
Let $n\geq N$ and assume that the complement of the event on the left-hand side of \eqref{eq:concEmpLT_cont},
\begin{equation} \label{eq:LpertBd}
 \sup_{\eta\in\triangle}\, \sup_{x\in[n,n+1)} \big| \big(\overline{L}_x^\zeta\big)'(\eta) - L'(\eta) \big|< \Cr{const:concEmpLT}\sqrt{\frac{\ln n}{n}},
 \end{equation}
occurs.
On this event, for all $v\in V$ and all $x\in[n,n+1),$
\begin{equation} \label{eq:LbarPertEst}
 \big(\overline{L}_x^\zeta\big)'\big(\overline{\eta}(v)-\Cr{const:concEmpLT}	\Cr{constDerLMG}\sqrt{\ln n/ n}\big) \leq \frac{1}{v} \leq \big(\overline{L}_x^\zeta\big)'\big(\overline{\eta}(v)+\Cr{const:concEmpLT}	\Cr{constDerLMG}\sqrt{\ln n/n}\big)
 \end{equation}
and thus, due to the strict monotonicity of $(\overline{L}_x^\zeta)'$ implied by \eqref{eq:secQDerBd}, there exists a unique $\eta_x^\zeta(v)\in \triangle$ such that $\big(\overline{L}_x^\zeta\big)'(\eta_x^\zeta(v))=1/v$.  Due to \eqref{eq:LbarPertEst}, still assuming \eqref{eq:LpertBd}, we have
\[ \sup_{v\in V}\, \sup_{x\in[n,n+1)} | \overline{\eta}(v) - \eta_x^\zeta(v) |\leq  \Cr{const:concEmpLT}	\Cr{constDerLMG}\sqrt{\frac{\ln n}{n}}. \]
Thus, for $n\geq N$, choosing $\Cr{const:concEtaEmpLT}>2\Cr{const:concEmpLT}	\Cr{constDerLMG}$, the probability in \eqref{eq:concEtaEmpLT_cont} is bounded by the right-hand side of \eqref{eq:concEmpLT_cont}, which finishes the proof.
\end{proof}
It remains to prove Lemma~\ref{le:concEmpLT}.
\begin{proof}[Proof of Lemma~\ref{le:concEmpLT}]
	Applying the strong Markov property, we get 
	\[ x\big( \overline{L}_x^{\zeta} \big)'(\eta) = E_x^{\zeta,\eta}(H_0) =   E_{x}^{\zeta,\eta}\big[ H_{\lfloor x\rfloor} \big]  + E_{\lfloor x\rfloor}^{\zeta,\eta}\big[ H_0 \big] =   E_{x}^{\zeta,\eta}\big[ H_{\lfloor x\rfloor} \big] + \lfloor x\rfloor \big( \overline{L}_{\lfloor x\rfloor}^{\zeta} \big)'(\eta).  \]
	Furthermore,
	$0\leq E_{x}^{\zeta,\eta}\big[ H_{\lfloor x\rfloor} \big]=\big(L_x^\zeta\big)'(\eta)\leq \Cr{constDerLMG}$ by \eqref{eq:L_x'} and Lemma~\ref{le:log mom fct and derivatives} b), and thus also $0\leq \big( \overline{L}_x^\zeta\big)'(\eta)=\frac{1}{x}\sum_{i=1}^x\big(L_i^\zeta\big)'(\eta) \leq \Cr{constDerLMG}$  for all $x\geq 1$ and all $\eta\in\triangle$, $\p$-a.s. As a consequence, we get that for all $x\geq1,$
	\[ \big| \big( \overline{L}_x^{\zeta} \big)'(\eta) - L'(\eta) \big|\leq \big| \big( \overline{L}_{\lfloor x\rfloor}^{\zeta} \big)'(\eta) - L'(\eta) \big| + \frac{2\Cr{constDerLMG}}{\lfloor x\rfloor}. \]
	It is therefore enough to prove 
	\begin{equation}
	    \label{eq:concEmpLT_discrete}
	    \p\Big( \sup_{\eta\in\triangle} \Big| \big( \overline{L}_n^\zeta \big)'(\eta)  - L'(\eta) \Big| \geq \Cr{const:concEmpLT}\sqrt{\frac{\ln n}{n}} \Big) \leq \Cr{const:concEmpLT} n^{-q}\qquad \text{for all }n\in\mathbb{N}.
	\end{equation}
	For each $\eta\in\triangle$, $((L_i^\zeta)'(\eta)-L'(\eta))_{i\in\Z}$ is a family of stationary, centered and bounded random variables. Furthermore, they fulfill the exponential mixing condition \eqref{eq:expMix3} due to Lemma~\ref{le:expMixLi}. Since $\sigma((L_i^\zeta)'(\eta):i\geq k)\subset\sigma(\xi(x):x\geq k-1)$ and \eqref{eq:MIX}, setting $Y_i:=(L_i^\zeta)'(\eta)-L'(\eta)$, the left-hand side in \eqref{eq:rioDepExp} is bounded by some constant $c_2>0$, uniformly for every $i$. Then setting $m_i:=c_1$,  condition \eqref{eq:rioDepExp} is fulfilled and we can apply the Hoeffding-type inequality from Corollary~\ref{cor:rioDepExp} to get $c_2>0$ such that 
	\[ \p\Big(  \Big| \big( \overline{L}_n^\zeta\big)'(\eta)  - L'(\eta) \Big| \geq c_2\sqrt{\frac{\ln n}{n}} \Big) \leq c_2n^{-q-1}\qquad \text{for all }\eta\in\triangle\text{ and  all }n\in\mathbb{N}. \]
	Let $\triangle_n := (\triangle \cap \frac1n \Z)\cup \{\undereta,\overeta\}$, recalling the notation of \eqref{eq:triangleEta}. Because $|\triangle\cap\frac1n \Z|\leq n\cdot \text{diam}(\triangle)+1$, taking advantage of the previous display we infer
	\begin{align*}
	\p\Big( \sup_{\eta\in\triangle_n} &\Big| \big( \overline{L}_n^\zeta\big)'(\eta) - L'(\eta) \Big| \geq c_2\sqrt{\frac{\ln n}{n}} \Big) \\
	&\leq |\triangle_n|\cdot \sup_{\eta\in\triangle_n} \p\Big(  \Big| \big( \overline{L}_n^\zeta \big)'(\eta) - L'(\eta) \Big| \geq c_2\sqrt{\frac{\ln n}{n}} \Big)  \leq c_3(\triangle) n^{-q}, \qquad \text{for all }n\in\mathbb{N}.
	\end{align*}
	By Lemma~\ref{le:log mom fct and derivatives} b), we have $\p$-a.s.\  $\sup_n\sup_{\eta\in\triangle}\big| \big(\overline{L}_n^\zeta(\eta)\big)''\big|\vee \big|L''(\eta)\big|\leq \Cr{constDerLMG}$. Thus, the mean value theorem entails that
	\begin{align*}
	\sup_{\eta\in\triangle} \big| \big( \overline{L}_n^\zeta\big)'(\eta)  - L'(\eta) \big| &\leq \sup_{\eta\in\triangle_n} \big| \big( \overline{L}_n^\zeta\big)'(\eta)  - L'(\eta) \big| + \frac{2\Cr{constDerLMG}}{n},
	\end{align*}
and thus we find $\Cr{const:concEmpLT}>0$ such that  \eqref{eq:concEmpLT_discrete} and hence \eqref{eq:concEmpLT_cont} hold true.
\end{proof}

In what comes below, many results will implicitly depend on the choice of compact intervals $V$ and $\triangle,$ which have already occurred before.
Thus, in order to avoid ambiguity and due to assumption \eqref{eq:VEL}, we will now 
\begin{equation}
\label{eq:defVTriangle}
\begin{gathered}
\text{fix arbitrary compact intervals $V\subset (v_c,\infty)$ and $\triangle=\triangle(V)\subset(-\infty,0)$ such that } \\
\text{$v_0\in\text{int}(V)$ and $\overline{\eta}(V)\subset\text{int}(\triangle)$}.
\end{gathered}
\end{equation}
Furthermore, due to Lemma~\ref{le:concEtaEmpLT},  there exists a $\p$-a.s.\  finite random variable $\Cl[index_prob]{constHn}=\Cr{constHn}(\xi,\Cr{const:concEtaEmpLT}(V,2))$ such that  
\begin{equation}
\label{eq:H_n}
\mathcal{H}_x:=\mathcal{H}_x(V) := \big\{ \eta_x^\zeta(v) \in \triangle \text{ for all }v\in V \big\} \quad\text{ occurs for all }x\geq \Cr{constHn}.
\end{equation}
We write
\begin{equation*}
\big( \overline{L}_x^\zeta \big)^*\Big(\frac{1}{v}\Big) = \sup_{\eta<0}\Big( \frac{\eta}{v} - \overline{L}^\zeta_x(\eta) \Big) = \frac{\eta_x^\zeta(v)}{v} - \overline{L}^\zeta_x(\eta_x^\zeta(v)),\quad x\geq1,
\end{equation*}
for  Legendre transformation
of the weighted averages.
We also recall to set $\eta_x^\zeta(v)=0$, if there is no solution $\eta_x^\zeta(v)\in\triangle$ to \eqref{eq:EmpLogMGDerivTilt}; note that this can only happen on $\mathcal{H}_x^c$. 

In order to show an  invariance principle for the Legendre transformation $( \overline{L}_x^\zeta)^*$ in the following section, 
 we now derive a perturbation result on the tilting parameter $\eta_x^\zeta(v)$ in $x.$

\begin{lemma}
	\label{le:PertEta_n}
	There exists a constant $\Cl{const_eta_pert}>0$ such that  $\p$-a.s., for all $x \in (0,\infty)$ large enough, uniformly in $v\in V$ and $0\leq h\leq x$,
	\begin{equation}
	\label{eq:PertEta_n}
	\big| \eta_{x}^\zeta(v) - \eta_{x+h}^\zeta(v)   | \leq \Cr{const_eta_pert}\frac{h}{x}.
	\end{equation}
\end{lemma}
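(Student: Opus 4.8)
The plan is to linearise the two defining relations $(\overline L_x^\zeta)'(\eta_x^\zeta(v))=1/v$ and $(\overline L_{x+h}^\zeta)'(\eta_{x+h}^\zeta(v))=1/v$ in the variable $\eta$, and to quantify the discrepancy produced by the extra stretch $[x,x+h]$. One works $\p$-a.s.\ on the event from \eqref{eq:H_n} and only for $x\geq \Cr{constHn}$, so that, since $x+h\geq x\geq \Cr{constHn}$, for every $v\in V$ both $\eta_x^\zeta(v)$ and $\eta_{x+h}^\zeta(v)$ lie in $\triangle$ and therefore genuinely solve \eqref{eq:EmpLogMGDerivTilt}. Recalling from the proof of Lemma~\ref{le:concEmpLT} the identity $\sum_{i=1}^{y}(L_i^\zeta)'(\eta)=y\,(\overline L_y^\zeta)'(\eta)=E_y^{\zeta,\eta}[H_0]$ for $y>0$, the relation \eqref{eq:EmpLogMGDerivTilt} reads $E_x^{\zeta,\eta_x^\zeta(v)}[H_0]=x/v$, and likewise with $x$ replaced by $x+h$. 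Writing $R_h(\eta):=E_{x+h}^{\zeta,\eta}[H_0]-E_x^{\zeta,\eta}[H_0]$ and applying the strong Markov property exactly as in the proof of Lemma~\ref{le:concEmpLT} gives $R_h(\eta)=E_{x+h}^{\zeta,\eta}[H_x]\geq 0$; subtracting the two defining relations then yields
\[
E_x^{\zeta,\eta_{x+h}^\zeta(v)}[H_0]-E_x^{\zeta,\eta_x^\zeta(v)}[H_0]=\frac{h}{v}-R_h\big(\eta_{x+h}^\zeta(v)\big).
\]

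Next, I would apply the mean value theorem to the smooth map $\eta\mapsto E_x^{\zeta,\eta}[H_0]=x\,(\overline L_x^\zeta)'(\eta)$ on the subinterval of $\triangle$ delimited by $\eta_x^\zeta(v)$ and $\eta_{x+h}^\zeta(v)$; since its derivative equals $x\,(\overline L_x^\zeta)''(\eta)\geq x\,\Cr{constDerLMG}^{-1}$ on $\triangle$ by \eqref{eq:secQDerBd}, the above identity gives, uniformly in $v\in V$ and $0\leq h\leq x$,
\[
\big|\eta_x^\zeta(v)-\eta_{x+h}^\zeta(v)\big|\ \leq\ \frac{\Cr{constDerLMG}}{x}\Big(\frac{h}{\min V}+R_h\big(\eta_{x+h}^\zeta(v)\big)\Big).
\]
Hence everything reduces to a deterministic bound of the form $R_h(\eta)\leq c_1 h$ valid for all $\eta\in\triangle$, all sufficiently large $x$, and all $0\leq h\leq x$. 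This is the step carrying the probabilistic substance and the one I expect to be the main obstacle: the crude estimates used in the proof of Lemma~\ref{le:concEmpLT} only give $R_h(\eta)=E_{x+h}^{\zeta,\eta}[H_x]\in[0,\Cr{constDerLMG}(h+O(1))]$, which fails to be linear in $h$ as $h\downarrow 0$, so one genuinely needs the sharp behaviour of the tilted hitting time over a short distance.

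To obtain it, I would first treat $0<h\leq 1$. By \eqref{eq:def_P_x_zeta}--\eqref{eq:P_x}, restricted to $\sigma(B_{t\wedge H_x})$ the measure $P_{x+h}^{\zeta,\eta}$ has Radon--Nikodym density proportional to $\exp\{\int_0^{H_x}(\zeta(B_s)+\eta)\diff s\}$ with respect to $P_{x+h}$. Bounding the exponent from above by $\overeta H_x$ (using $\zeta\leq 0$ and $\eta\leq\overeta<0$) in the numerator and from below by $-(\es-\ei-\undereta)H_x$ (using \eqref{eq:zetaBd} and $\eta\geq\undereta$) in the normalisation, and invoking the Brownian hitting-time Laplace transform $E_b[e^{-\lambda H_0}]=e^{-b\sqrt{2\lambda}}$ from \cite[(2.0.1), p.~204]{handbook_brownian_motion} together with its $\lambda$-derivative, one arrives at
\[
R_h(\eta)=E_{x+h}^{\zeta,\eta}[H_x]\ \leq\ \frac{E_{x+h}\big[H_x\,e^{\overeta H_x}\big]}{E_{x+h}\big[e^{-(\es-\ei-\undereta)H_x}\big]}\ \leq\ \frac{e^{\sqrt{2(\es-\ei-\undereta)}}}{\sqrt{2\,|\overeta|}}\,h\ =:\ c_1 h,
\]
with $c_1$ depending only on $\es,\ei$ and $\triangle$. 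For general $0<h\leq x$ I would split $[x,x+h]$ into $\lceil h\rceil$ consecutive subintervals of lengths $d_1,\dots,d_{\lceil h\rceil}\in(0,1]$ summing to $h$, use the strong Markov property to write $R_h(\eta)=E_{x+h}^{\zeta,\eta}[H_x]$ as the corresponding sum of tilted hitting times over distances $d_j$, and bound each by $c_1 d_j$ via the case just treated, so that $R_h(\eta)\leq c_1 h$ (the case $h=0$ being trivial). Substituting this into the displayed inequality and setting $\Cr{const_eta_pert}:=\Cr{constDerLMG}\big((\min V)^{-1}+c_1\big)$ would finish the proof.
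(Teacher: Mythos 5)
Your proof is correct and follows essentially the same route as the paper's: both arguments reduce the claim, via the mean value theorem for $\eta\mapsto(\overline{L}_x^\zeta)'(\eta)$ together with the uniform lower bound on $(\overline{L}_x^\zeta)''$ from \eqref{eq:secQDerBd}, to the key estimate $E_{x+h}^{\zeta,\eta}[H_x]\leq c\,h$ uniformly in $\eta\in\triangle$. The only difference is that you work out this hitting-time bound in detail (Laplace transform for $h\leq 1$, then chaining by the strong Markov property), whereas the paper merely asserts it by analogy with the proof of Lemma~\ref{le:log mom fct and derivatives}.
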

\begin{proof}
	By Lemma~\ref{le:concEtaEmpLT} we can choose $x$ large enough such that  $\eta_y^\zeta(v)\in\triangle$ for all $y\geq x$ and all $v\in V$. For $h=0$, the statement is obvious. For $0<h\leq x$, it suffices to show that there exists $c_1>0$ such that 
	\begin{equation}
	\label{eq:eta_nPert}
	\sup_{\eta\in\triangle} \big| \big( \overline{L}_{x+h}^\zeta  \big)'(\eta) - \big( \overline{L}_{x}^\zeta  \big)'(\eta) \big| \leq c_1\frac{h}{x}.
	\end{equation}
	Indeed, using \eqref{eq:EmpLogMGDerivTilt} we can write 
	\begin{align*}
	\big( \overline{L}_{x+h}^\zeta  \big)'(\eta_{x+h}^\zeta(v)) - \big( \overline{L}_{x}^\zeta \big)'(\eta_{x+h}^\zeta(v)) &= \big( \overline{L}_{x}^\zeta  \big)'(\eta_{x}^\zeta(v)) - \big( \overline{L}_{x}^\zeta \big)'(\eta_{x+h}^\zeta(v))  \\
	&= \big( \overline{L}_{x}^\zeta \big)''(\widetilde{\eta})(\eta_{x}^\zeta(v)-\eta_{x+h}^\zeta(v) )
	\end{align*}
	for some $\widetilde{\eta}\in\triangle$ between $\eta_{x}^\zeta(v)$ and $\eta_{x+h}^\zeta(v)$. By the second display in  \eqref{eq:secQDerBd} we know that   $\p$-a.s.\  $\inf
	_{\eta\in\triangle,x\geq 1}(\overline{L}_x^\zeta\big)''(\eta)\geq \Cr{constDerLMG}^{-1}$. Using this, inequality \eqref{eq:PertEta_n} is a direct consequence of \eqref{eq:eta_nPert} with $\Cr{const_eta_pert}:=c_1\Cr{constDerLMG}$.
	 To prove \eqref{eq:eta_nPert}, recall that for all $\eta\in\triangle$, $x\geq 1,$ and $0< h\leq x,$ by the strong Markov property applied at time $H_x,$
	\begin{align*}
	\big( \overline{L}_{x+h}^\zeta  \big)'(\eta) - \big( \overline{L}_{x}^\zeta  \big)'(\eta)  &= \frac{1}{x+h} \big( E_{x+h}^{\zeta,\eta}\big[ H_x \big] + E_x^{\zeta,\eta}[H_0]\big)- \frac{1}{x} E_x^{\zeta,\eta}[H_0] \\
	&=-\frac{h}{x+h}\big( \overline{L}_{x}^\zeta  \big)'(\eta) + \frac{h}{x+h}\frac1h E_{x+h}^{\zeta,\eta}[H_x].
	\end{align*}
	
	Finally, recall that by  Lemma~\ref{le:log mom fct and derivatives} there exists $\Cr{constDerLMG}=\Cr{constDerLMG}(\triangle)>0$ such that $\p$-a.s.\ we have $\sup_{\eta\in\triangle,x>0}\big|\big( \overline{L}_x^\zeta \big)'(\eta)\big|\leq \Cr{constDerLMG}$. By exactly the same argument used for the proof of the latter inequality (see proof of \eqref{le:log mom fct and derivatives}), one can show that also $\sup_{\eta\in\triangle,x,h>0}\big| \frac1h E_{x+h}^{\zeta,\eta}[H_x]\big|\leq \Cr{constDerLMG}$ holds $\p$-a.s.\ with the same constant $\Cr{constDerLMG}$.  \eqref{eq:eta_nPert} now follows choosing $c_1:=2 \Cr{constDerLMG}$. 
\end{proof}	

\section{Large deviations and perturbation results for the PAM}
The main objective of this section is to establish certain exact large deviation results for hitting times of Brownian motion under the tilted measures introduced above, and then to apply these in order to obtain perturbation results.
For this purpose let
\begin{align}\label{eq:sigmav}
\begin{split}
    V_x^{\zeta,v}(\eta) &:= \frac{\eta}{v} - L_x^\zeta(\eta),\quad x\in\mathbb{R},\\
    \sigma^2_v &:= \text{Var}_\p(V_1^{\zeta,v}(\overline{\eta}(v)) + 2\sum_{i=2}^\infty \text{Cov}_\p\big (V_1^{\zeta,v}(\overline{\eta}(v)),V_i^{\zeta,v}(\overline{\eta}(v) \big),\quad \sigma_v := \sqrt{\sigma^2_v}, \quad v\in V. 
    \end{split}
\end{align}
We start with observing that $\sigma_v^2\in[0,\infty)$ for all $v\in V$. Indeed, $(\widetilde{L}_i)_{i\in\N}$, where $\widetilde{L}_i:=L_i^{\zeta}(\overline{\eta}(v))-\E[L_i^{\zeta}(\overline{\eta}(v))]$, is  a sequence of bounded (see Lemma~\ref{le:log mom fct and derivatives}), centered and mixing (see Lemma~\ref{le:expMixLi}) random variables, giving 
\begin{align}
\sum_{i=1}^\infty \big|\text{Cov}_\p\big (V_1^{\zeta,v}(\overline{\eta}(v)),V_i^{\zeta,v}(\overline{\eta}(v)) \big)\big| &=\sum_{i=1}^\infty \big|\E\big [\widetilde{L}_1\widetilde{L}_i \big]\big| = \sum_{i=1}^\infty \big|\E\big [\widetilde{L}_i\E[\widetilde{L}_1\given\F^{i-1}] \big]\big| <\infty, \label{eq:cov_bounded}
\end{align}
where the last inequality is due to uniform boundedness of $\widetilde{L}_i$ in $i$, \eqref{eq:expMix1} and the summability criterion in \eqref{eq:MIX}. Thus $\sigma_v^2<\infty$. Furthermore,  $\sigma_v^2\geq0$ is due to \eqref{eq:cov_bounded} and \cite[Lemma~1.1]{rio}.

We now introduce the process $W^v_x(t)$ of empirical Legendre transformations
\begin{equation}
    W^v_x(t) := {t\sqrt{x}}\Big( ( \overline{L}_{xt}^\zeta )^*(1/v) - L^*(1/v) \Big),\quad t,x>0,\ v\in V, \label{eq:defW}
\end{equation}
and set $W_0^v(t)=W_x^v(0)=0$ for $t,x>0$, $v\in V$, and obtain a first functional Central limit theorem for it. 
\begin{proposition}
\label{prop:InvarLegTrafoEmpLogMG}
For every $v \in V$, $W_n^v(1)$ converges in $\p$-distribution to a centered Gaussian random variable with variance $\sigma_v^2\geq0$. If $\sigma_v^2>0$, the sequence of processes 
\begin{equation*}
    [0,\infty) \ni t\mapsto \frac{1}{\sigma_v}W_n^v(t),\quad n\in\mathbb{N}, 
\end{equation*}
converges in $\p$-distribution to a standard Brownian motion  in the sense of weak convergence of measures on $C([0,\infty))$ with topology induced by the metric $\rho$ from \eqref{eq:def_metric}.
\end{proposition}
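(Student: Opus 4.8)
The plan is to reduce the functional CLT for the empirical Legendre transforms $W^v_n(t)$ to a functional CLT for the partial sums of the stationary, bounded, $\psi$-mixing sequence $(\widetilde L_i)_{i\in\N}$ with $\widetilde L_i := L_i^\zeta(\overline\eta(v)) - \E[L_i^\zeta(\overline\eta(v))]$, which is exactly the sequence already analyzed in the discussion around \eqref{eq:cov_bounded}. First I would recall from \eqref{eq:expec_Leg_LMG} and the definition of $(\overline L_x^\zeta)^*$ that, on the event $\mathcal H_x$ of \eqref{eq:H_n},
\[
(\overline L_{xt}^\zeta)^*(1/v) - L^*(1/v) = \Big( \frac{\eta^\zeta_{xt}(v)}{v} - \overline L_{xt}^\zeta(\eta^\zeta_{xt}(v)) \Big) - \Big( \frac{\overline\eta(v)}{v} - L(\overline\eta(v)) \Big).
\]
By the envelope theorem / first-order optimality of $\overline\eta(v)$ for $L$ and of $\eta^\zeta_{xt}(v)$ for $\overline L_{xt}^\zeta$, the difference $(\overline L_{xt}^\zeta)^*(1/v) - L^*(1/v)$ equals $-\big(\overline L_{xt}^\zeta(\overline\eta(v)) - L(\overline\eta(v))\big)$ up to a quadratic error of order $|\eta^\zeta_{xt}(v) - \overline\eta(v)|^2$: Taylor-expanding $\eta \mapsto \eta/v - \overline L_{xt}^\zeta(\eta)$ around its maximizer $\eta^\zeta_{xt}(v)$, evaluated at $\overline\eta(v)$, the linear term vanishes and the remainder is $O(|\eta^\zeta_{xt}(v)-\overline\eta(v)|^2)$ with the second-derivative bound from \eqref{eq:secQDerBd}. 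Since $\overline L_{xt}^\zeta(\overline\eta(v)) - L(\overline\eta(v)) = \frac{1}{xt}\sum_{i=1}^{xt}\widetilde L_i$ (using the notational conventions of Section \ref{sec:convention} for non-integer upper limits), we get, uniformly in $t$ on compacts,
\[
W^v_x(t) = -\frac{1}{\sqrt x}\sum_{i=1}^{xt}\widetilde L_i + t\sqrt{x}\cdot O\big(|\eta^\zeta_{xt}(v)-\overline\eta(v)|^2\big).
\]

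Next I would control the error term. By Lemma \ref{le:concEtaEmpLT}(b) together with Borel--Cantelli (taking $q$ large), $\p$-a.s. $\sup_{v\in V}\sup_{x\in[n,n+1)}|\eta^\zeta_x(v)-\overline\eta(v)| \le c\sqrt{\ln n/n}$ for all large $n$, so for $x=n$ the error term is of order $\sqrt n \cdot \frac{\ln n}{n} = \frac{\ln n}{\sqrt n} \to 0$, uniformly in $t$ on any fixed compact interval $[0,M]$ (replacing $x$ by $xt$ and noting $xt\in[nt,nt+1)$-type bounds, with a harmless loss near $t=0$ handled by the boundedness of $\widetilde L_i$). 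Hence $W^v_n$ and $-n^{-1/2}\sum_{i=1}^{n\cdot}\widetilde L_i$ have the same limit in distribution on $C([0,\infty))$, provided one of them converges. For the remaining genuine CLT input, I would invoke an invariance principle for partial sums of bounded, stationary, $\psi$-mixing sequences: the summability $\sum_k \psi(k) < \infty$ from \eqref{eq:MIX} (which, via the exponential mixing of the $L_i^\zeta$ established in Lemma \ref{le:expMixLi}, even gives much more) is classically sufficient for Donsker's theorem to hold with limiting variance $\sigma_v^2$ as in \eqref{eq:sigmav}; see e.g. the functional CLT for mixing sequences in Billingsley or the strong-mixing invariance principles, whose hypotheses are met here because the summands are uniformly bounded. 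This yields $n^{-1/2}\sum_{i=1}^{n\cdot}\widetilde L_i \Rightarrow \sigma_v B$ in $C([0,\infty))$; the sign and the reflection symmetry of Brownian motion absorb the minus sign, giving the claimed convergence of $\sigma_v^{-1}W^v_n$ to standard Brownian motion, and the one-dimensional statement for $W^v_n(1)$ as the $t=1$ marginal (valid also when $\sigma_v^2=0$, where the limit is degenerate and the error estimate alone gives convergence to $0$).

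The main obstacle I anticipate is making the uniform-in-$t$ replacement rigorous near the origin and across the non-integer index range: the conventions of Section \ref{sec:convention} mean $\sum_{i=1}^{xt}$ involves a fractional last term $\widetilde L_{xt}$ of order $1$, which contributes $O(1/\sqrt x)$ and is negligible, but one must also ensure the concentration estimate of Lemma \ref{le:concEtaEmpLT}(b) is applied at the right scale $xt$ rather than $x$ — for small $t$ the bound $\sqrt{\ln(nt)/(nt)}$ degrades, yet the prefactor $t\sqrt n$ shrinks correspondingly, so the product $t\sqrt n \cdot \ln(nt)/(nt) = \ln(nt)/\sqrt n$ stays $o(1)$ uniformly on $[\delta, M]$ and, on $[0,\delta]$, boundedness of $W^v_n$ (again via $|\widetilde L_i|\le \Cr{constDerLMG}$) plus tightness handles the residual. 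A secondary point is confirming that the quadratic Taylor remainder genuinely only sees $|\eta^\zeta_{xt}(v)-\overline\eta(v)|^2$ and not a cross term; this is where one must use that $\overline\eta(v)$ is the maximizer for the \emph{deterministic} $L$ while $\eta^\zeta_{xt}(v)$ is the maximizer for the \emph{random} $\overline L_{xt}^\zeta$, so a careful two-sided expansion (expand $\overline L_{xt}^\zeta$'s transform around $\eta^\zeta_{xt}(v)$, and separately note $L$'s transform is stationary at $\overline\eta(v)$) is needed, but both remainders are quadratic with uniformly bounded second derivatives by \eqref{eq:secQDerBd}.
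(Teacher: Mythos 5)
Your proposal is correct and follows essentially the same route as the paper: the paper also works on the events $\mathcal{H}_{nt}$, Taylor-expands the tilted sum about the maximizer $\eta_{nt}^\zeta(v)$ so the linear term vanishes (this is Lemma~\ref{le:S_xLogbounded}, where the quadratic remainder is controlled exactly by the concentration estimate of Lemma~\ref{le:concEtaEmpLT}), identifies the main term with the stationary partial sums $-\sum_{i\le nt}\widetilde L_i$, and invokes a functional CLT for these via a Hall--Heyde martingale-approximation theorem (Lemma~\ref{le:CLT_S_n}). One imprecision worth tidying: $(\widetilde L_i)$ is not literally $\psi$-mixing, since each $L_i^\zeta$ depends on $\xi$ on all of $[i-1,\infty)$ through the Brownian hitting time $H_{i-1}$, so the CLT hypothesis is instead verified through the exponential approximation of Lemma~\ref{le:expMixLi}; and in the degenerate case $\sigma_v^2=0$ the conclusion $W_n^v(1)\to 0$ needs the variance estimate for $n^{-1/2}\sum_{i\le n}\widetilde L_i$ coming from that CLT, not merely the Taylor-error bound.
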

\begin{proof}
    It is sufficient to show the claim if 
    $(W^v_n(t))_{t\in[0,\infty)}$ is replaced by $(W_n^v(t) \cdot \mathds{1}_{\mathcal{H}_{nt}})_{t \in [0,\infty)},$ $n \in \N,$ with $\mathcal H_{nt}$ as defined in \eqref{eq:H_n}, since the $\p$-probability of $\mathcal H_{nt}$ tends to $1$ for $n\to\infty$ by Lemma~\ref{le:concEtaEmpLT}. In the notation of \eqref{eq:sigmav}, setting
    \begin{equation}
        \label{eq:defSn}
        S_x^{\zeta,v}(\eta):=\sum_{i=1}^x V_i^{\zeta,v}(\eta),\ x\in\R,
    \end{equation}
    on $\mathcal{H}_{nt}$ we have
    \begin{align*}
        \big( \overline{L}_{nt}^\zeta \big)^*\Big(\frac{1}{v}\Big) &=\frac{\eta_{nt}^\zeta(v)}{v} - \overline{L}^\zeta_{nt}(\eta_{nt}^\zeta(v)) = \frac{1}{{nt}} \sum_{i=1}^{nt} V_i^{\zeta,v}(\eta_{nt}^\zeta(v)) = \frac{1}{{nt}} S_{nt}^{\zeta,v}(\eta_{nt}^\zeta(v)).
    \end{align*}
   Thus, we can rewrite the relevant term as a sum of three differences
    \begin{align}
        \label{eq:summands}
        \begin{split}
        {nt}\Big( ( \overline{L}_{nt}^\zeta )^*(1/v) - L^*(1/v) \Big) &=   \Big( S_{nt}^{\zeta,v}(\eta_{nt}^\zeta(v)) - S_{nt}^{\zeta,v}(\overline{\eta}(v)) \Big)
        + \Big( S_{nt}^{\zeta,v}(\overline{\eta}(v)) - \E\big[ S_{nt}^{\zeta,v}(\overline{\eta}(v)) \big]  \Big)\\
        &\quad + \Big( \E\big[ S_{nt}^{\zeta,v}(\overline{\eta}(v)) \big]  -  {nt} L^*(1/v) \Big),
        \end{split}
    \end{align}
    where we note that the third summand vanishes. Indeed, we have
    \begin{align*}
        \E\big[ S_{nt}^{\zeta,v}(\overline{\eta}(v)) \big] &= nt\Big(\frac{\overline{\eta}(v)}{v} - \E[L_{1}^\zeta(\overline{\eta}(v))] \Big) 
         =ntL^*(1/v),
    \end{align*}
    where the last equality is due to \eqref{eq:legTraExpectedLogMGfct_2} and the definition of the Legendre transform. The proof is completed by the use of Lemmas~\ref{le:S_xLogbounded} and \ref{le:CLT_S_n} below, which show that the second summand of \eqref{eq:summands} exhibits the postulated diffusive behavior whereas the latter summand is negligible in that scaling.
    \end{proof}
    \begin{lemma}
        \label{le:CLT_S_n}
        For every $v\in V$,  $\frac{1}{\sqrt{n}}\Big(S_{nt}^{\zeta,v}(\overline{\eta}(v)) - \E\big[ S_{nt}^{\zeta,v}(\overline{\eta}(v)) \big] \Big)$ converges in $\p$-distribution to a centered Gaussian random variable with variance $\sigma_v^2\geq0$. If $\sigma_v^2>0$, the sequence of processes  
        \begin{equation*}
        [0,\infty)\ni t\mapsto \frac{1}{\sigma_v\sqrt{n}}\Big(S_{nt}^{\zeta,v}(\overline{\eta}(v)) - \E\big[ S_{nt}^{\zeta,v}(\overline{\eta}(v)) \big] \Big),\quad n\in\mathbb{N},
        \end{equation*}
        converges in $\p$-distribution to a standard Brownian motion in the sense of weak convergence of measures on $C([0,\infty))$ with topology induced by the metric $\rho$ from \eqref{eq:def_metric}. 
    \end{lemma}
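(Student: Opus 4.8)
The plan is to recognize $S_{nt}^{\zeta,v}(\overline{\eta}(v)) = \sum_{i=1}^{nt} V_i^{\zeta,v}(\overline{\eta}(v))$ as a partial-sum process of the stationary sequence $(V_i^{\zeta,v}(\overline{\eta}(v)))_{i\in\Z}$, and then to invoke a functional central limit theorem (invariance principle) for partial sums of stationary, bounded, $\psi$-mixing random variables. First I would fix $v \in V$ and abbreviate $Y_i := V_i^{\zeta,v}(\overline{\eta}(v)) - \E[V_i^{\zeta,v}(\overline{\eta}(v))]$, noting (i) stationarity of $(Y_i)_{i\in\Z}$, which follows from \eqref{eq:STAT} together with the definition of $L_i^\zeta$; (ii) boundedness, uniformly in $i$, which follows from Lemma~\ref{le:log mom fct and derivatives} and the boundedness of $\zeta$ from \eqref{eq:zetaBd}; (iii) that $(Y_i)$ inherits the exponential mixing property from Lemma~\ref{le:expMixLi} via $\sigma(L_i^\zeta : i \geq k) \subset \sigma(\xi(x): x \geq k-1)$ and \eqref{eq:MIX}. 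Here I must handle the non-integer endpoint of the sum: since $S_{nt}^{\zeta,v}$ in the notation of Section~\ref{sec:convention} equals $\sum_{i=1}^{\lfloor nt \rfloor} Y_i + (\text{fractional term})$, and the fractional term is a single bounded summand $V_{nt}^{\zeta,v}(\overline{\eta}(v)) - \E[\cdots]$, it contributes at most $O(1)$ to $S_{nt}^{\zeta,v}(\overline{\eta}(v)) - \E[S_{nt}^{\zeta,v}(\overline{\eta}(v))]$ and is therefore negligible after division by $\sqrt{n}$, uniformly on compact time intervals; so it suffices to prove the invariance principle for $t \mapsto \frac{1}{\sigma_v \sqrt n} \sum_{i=1}^{\lfloor nt\rfloor} Y_i$.

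Next I would verify that the limiting variance is exactly $\sigma_v^2$. By stationarity, $\Var_\p\big(\sum_{i=1}^n Y_i\big)/n \to \Var_\p(Y_1) + 2\sum_{i=2}^\infty \Cov_\p(Y_1, Y_i) = \sigma_v^2$, where the series converges absolutely by the computation already carried out in \eqref{eq:cov_bounded} (using uniform boundedness, \eqref{eq:expMix1} and the summability of $\psi$ from \eqref{eq:MIX}), and the interchange of limit and sum is justified by dominated convergence against the summable majorant $|\Cov_\p(Y_1,Y_i)|$. In the degenerate case $\sigma_v^2 = 0$, the first claim (convergence of $\frac{1}{\sqrt n}(S_{nt}^{\zeta,v}(\overline\eta(v)) - \E[\cdots])$ at $t=1$ to a centered Gaussian) reduces to convergence to the constant $0$, i.e.\ convergence in probability of $\frac{1}{\sqrt n}\sum_{i=1}^n Y_i$ to $0$, which follows from $\Var_\p(n^{-1/2}\sum_{i=1}^n Y_i) \to 0$ and Chebyshev; so in the remainder I may assume $\sigma_v^2 > 0$.

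For the invariance principle itself I would apply a Donsker-type theorem for mixing sequences: for stationary, bounded sequences satisfying a $\psi$-mixing (or strong mixing with exponential rate) condition and with $\sigma_v^2 > 0$, the rescaled partial-sum process converges weakly in $C([0,1])$ to $(\sigma_v B_t)_{t\in[0,1]}$, $B$ a standard Brownian motion — this is classical, see e.g.\ the central-limit and invariance-principle results for weakly dependent sequences (the hypotheses here are far stronger than needed, as $\psi$-mixing with $\sum_k \psi(k) < \infty$ plus boundedness is amply sufficient for Billingsley/Ibragimov-type invariance principles). Convergence on $C([0,\infty))$ with the metric $\rho$ from \eqref{eq:def_metric} then follows from convergence on each $C([0,j])$, $j\in\N$, since $\rho$-convergence of measures is equivalent to weak convergence of all restrictions to $[0,j]$ (a standard consequence of the product-like structure of $\rho$), and a diagonal/consistency argument upgrades the finite-window statements to the full half-line. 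I expect the main obstacle to be a purely bookkeeping one rather than a conceptual one: namely, carefully checking that the mixing coefficients of $(Y_i)$ as furnished by Lemma~\ref{le:expMixLi} plug into the precise hypotheses of whichever off-the-shelf invariance principle is cited (tightness being the only nontrivial ingredient, and it is guaranteed by boundedness plus the summable mixing rate), and matching the asymptotic variance produced by that theorem with the series definition \eqref{eq:sigmav} of $\sigma_v^2$.
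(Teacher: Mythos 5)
Your overall plan — reduce to integer-indexed partial sums, compute the limiting variance $\sigma_v^2$ via absolute summability of covariances, treat the degenerate case by Chebyshev, and lift from $C([0,j])$ to $C([0,\infty))$ — is correct and close in structure to the paper's argument. The gap lies in the central step, where you would apply a Donsker-type FCLT for stationary bounded $\psi$-mixing sequences directly to $(Y_i)$. But $(Y_i)$ is \emph{not} $\psi$-mixing in the sense those theorems require: each $L_i^\zeta(\eta)$ is a function of $\xi$ on the entire half-line $[i-1,\infty)$ (the Brownian motion started at $i$ can excurse arbitrarily far to the right before hitting $i-1$), so $\sigma(Y_j : j \leq 0)$ already contains information about $\xi(x)$ for arbitrarily large $x$ and overlaps heavily with $\sigma(Y_j : j \geq n)$. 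Hence the $\psi$-mixing coefficient of the sequence $(Y_i)$ between its own past and future $\sigma$-algebras has no reason to decay, and a Billingsley/Ibragimov-type invariance principle for mixing sequences does not apply off-the-shelf. Calling this a ``purely bookkeeping'' matter understates the issue.

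What Lemma~\ref{le:expMixLi} actually delivers is a Gordin-type estimate appropriate to \emph{functionals of a mixing process}: $|\E[Y_0 \given \F^k]|$ decays exponentially, and $Y_0$ is approximable in $L^1$ by $\F_k$-measurable quantities with exponentially small error. These are precisely the two series conditions in \cite[(2.36)]{nolen_random_medium_FKPP}, and the paper's proof takes exactly this route: verify \cite[(2.36)]{nolen_random_medium_FKPP} from Lemma~\ref{le:expMixLi}, apply the Hall--Heyde martingale-approximation theorem \cite[Theorem~2.1]{nolen_random_medium_FKPP} to size-$M$ blocks $\widetilde L_i^{(M)}$, identify the limiting variance via \cite[(2.37)]{nolen_random_medium_FKPP} and \cite[Lemma~1.1]{rio}, and pass to $C([0,\infty))$ via \cite{whitt69}. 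To repair your argument you should either adopt this martingale-approximation route, or first truncate (replace $Y_i$ by an $\F^{i-1}\cap\F_{i-1+m}$-measurable approximant using \eqref{eq:expMix2}), apply a mixing FCLT to the genuinely mixing truncated sequence, and control the error. Either fix is conceptual, not a verification exercise.
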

	\begin{proof} 
		Let $\widetilde{L}_i:=L_i^{\zeta}(\overline{\eta}(v))-\E[L_i^{\zeta}(\overline{\eta}(v))]$, $\widetilde{V}_i:=V_i^{\zeta}(\overline{\eta}(v))-\E[V_i^{\zeta}(\overline{\eta}(v))]$ and $M\in\N$. Further set $\widetilde{L}_i^{(M)}:=\sum_{j=1+(i-1)M}^{iM}\widetilde{L}_j$. Then $(\widetilde{L}_i^{(M)})_{i\in\Z}$ is a sequence of centered, stationary and (by Lemma~\ref{le:log mom fct and derivatives}) bounded random variables. To show the central limit theorem on $C([0,M])$, we will use the method of martingale approximation from \cite{hall_heyde}, which is summarized as a theorem by Nolen in \cite[section~2.3]{nolen_random_medium_FKPP} and turns out to be applicable in our situation. That is, we have to make sure that condition \cite[(2.36)]{nolen_random_medium_FKPP} is fulfilled. Indeed, replacing in \eqref{eq:L_0_mixing_proof} $\F^j$ by $\F_k$ and noting that quantity $A$ in \eqref{eq:L_0_mixing_proof} is $\F_k$-measurable we get 
		\begin{align*}
		\sum_{k=1}^{\infty}|\widetilde{L}_0^{(M)}-\E[\widetilde{L}_0^{(M)}\given \F_k]| &\leq c_1\sum_{k=1}^\infty e^{-k/c_1}<\infty,
		\end{align*}
		giving the convergence of the first series in \cite[(2.36)]{nolen_random_medium_FKPP}. Furthermore, using that $\widetilde{L}_k^{(M)}$ is $\F^{(k-1)M}$-measurable and bounded, also recalling \eqref{eq:MIX}, we get
		\begin{align*}
		\sum_{k=1}^\infty |\E[\widetilde{L}_k^{(M)}\given \F_0]|\leq \sum_{k=1}^\infty \psi(k-1)\E[|\widetilde{L}_0^{(M)}|]<\infty.
		\end{align*} 
		Because the series in \eqref{eq:sigmav} is absolutely convergent, by \cite[Lemma~1.1]{rio} and \cite[(2.37)]{nolen_random_medium_FKPP} we have $\lim_{n\to\infty}\frac1n \E\big[\big(\sum_{k=1}^n\widetilde{L}_k^{(M)}\big)^2\big]=M\cdot \lim_{n\to\infty}\frac1{Mn} \E\big[\big(\sum_{k=1}^{Mn}\widetilde{L}_k\big)^2\big]=M\cdot \sigma_v^2\in[0,\infty)$. Furthermore, if $\sigma_v^2>0$, \cite[Theorem~2.1]{nolen_random_medium_FKPP} entail that the sequence of processes 
		\[ [0,1]\ni t\mapsto X_n^{(M)}(t):=\frac{1}{\sigma_v\sqrt{nM}}\big(\sum_{k=1}^{\lfloor nt\rfloor}\widetilde{L}_{k}^{(M)} + (nt-\lfloor nt\rfloor)\widetilde{L}_{\lfloor nt\rfloor+1}^{(M)} \big),\quad n\in\N, \] 
		converge in $\p$-distribution to a standard Brownian motion $(B_t)_{t\in[0,1]}$ in the sense of weak convergence of measures on $C([0,1])$ with the topology induced by the uniform metric.  
		Then by definition, above  convergence also holds true for $(\widetilde{V}_i)_{i\geq1}$ instead of $(\widetilde{L}_i)_{i\geq1}$. Furthermore, we have the uniform bound  
		\[\sup_{t\in[0,M],n\in\N} \Big| S_{nt}^{\zeta,v}(\overline{\eta}(v)) -\big(\sum_{i=1}^{\lfloor nt/M\rfloor M} V_i^{\zeta,v}+ (nt-\lfloor  nt/M\rfloor M) \sum_{i=1+\lfloor  nt/M\rfloor M}^{M+\lfloor  nt/M\rfloor M}V^{\zeta,v}_{i}\big) \Big| \leq c_2\quad \p\text{-a.s.}\]
		Consequently, the sequence $[0,M]\ni t\mapsto \frac{1}{\sigma_v\sqrt{n}}\Big(S_{nt}^{\zeta,v}(\overline{\eta}(v)) - \E\big[ S_{nt}^{\zeta,v}(\overline{\eta}(v)) \big] \Big)$ has the same weak limit as $\big(\sqrt{M}\cdot X_n^{(M)}(t/M)\big)_{t\in[0,M]}$, $n\in\N$,  which converges to $\big(\sqrt{M}\cdot B(t/M)\big)_{t\in[0,M]}$ and the latter process is a standard Brownian motion on $[0,M]$. Because $M\in\N$ was arbitrary, \cite[first Theorem~2.4 on page 15]{whitt69}  gives weak convergence on $C([0,\infty))$. 
\end{proof}
     To show that the first summand in \eqref{eq:summands} is asymptotically negligible, we use the following result. 
\begin{lemma}
	\label{le:S_xLogbounded}
	There exists a constant $\Cl{const_concSn}\in(0,\infty)$ such that  for every $v\in V$ and $M>0$, 
	\begin{equation*}
	\limsup_{n\to\infty}\frac{1}{\ln n}\sup_{0\leq t\leq M}\Big| S_{nt}^{\zeta,v}(\eta_{nt}^\zeta(v)) - S_{nt}^{\zeta,v}(\overline{\eta}(v)) \Big| \leq \Cr{const_concSn}\quad \p\text{-a.s.}
	\end{equation*}
\end{lemma}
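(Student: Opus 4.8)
The plan is to rewrite the difference $S_{nt}^{\zeta,v}(\eta_{nt}^\zeta(v)) - S_{nt}^{\zeta,v}(\overline{\eta}(v))$ using the fact that $\eta_{x}^\zeta(v)$ is a maximizer of $\eta \mapsto \frac{\eta}{v} - \overline{L}_x^\zeta(\eta) = \frac{1}{x}S_x^{\zeta,v}(\eta)$, so that the first derivative in $\eta$ of $\frac1x S_x^{\zeta,v}$ vanishes at $\eta_x^\zeta(v)$. Concretely, for $x = nt$ with $nt$ large enough that $\eta_{nt}^\zeta(v)\in\triangle$ (which holds for all $t\in[0,M]$ once $n$ is large, by \eqref{eq:H_n} and Lemma~\ref{le:concEtaEmpLT}), a second-order Taylor expansion of $\eta\mapsto S_{nt}^{\zeta,v}(\eta)$ around $\eta_{nt}^\zeta(v)$ gives
\[
S_{nt}^{\zeta,v}(\overline{\eta}(v)) - S_{nt}^{\zeta,v}(\eta_{nt}^\zeta(v)) = \tfrac12 \big(S_{nt}^{\zeta,v}\big)''(\widetilde\eta)\,\big(\overline{\eta}(v) - \eta_{nt}^\zeta(v)\big)^2
\]
for some $\widetilde\eta\in\triangle$ between the two, because the linear term vanishes by \eqref{eq:EmpLogMGDerivTilt}. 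Now $\big(S_{nt}^{\zeta,v}\big)''(\widetilde\eta) = -nt\,\big(\overline{L}_{nt}^\zeta\big)''(\widetilde\eta)$, which is bounded in absolute value by $nt\cdot\Cr{constDerLMG}$ uniformly in $t\in[0,M]$ and $n$ by the second line of \eqref{eq:secQDerBd}. Thus
\[
\sup_{0\leq t\leq M}\Big| S_{nt}^{\zeta,v}(\eta_{nt}^\zeta(v)) - S_{nt}^{\zeta,v}(\overline{\eta}(v)) \Big| \leq \Cr{constDerLMG}\,nM\,\sup_{0\leq t\leq M}\big|\overline{\eta}(v) - \eta_{nt}^\zeta(v)\big|^2.
\]

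It then remains to control $\sup_{0\leq t\leq M}\big|\overline{\eta}(v) - \eta_{nt}^\zeta(v)\big|^2$, and here the concentration estimate of Lemma~\ref{le:concEtaEmpLT}(b) is the right tool: with $q$ chosen large (say $q=2$), a Borel--Cantelli argument over the scales $k\in\N$ gives that $\p$-a.s.\ for all $k$ large enough, $\sup_{v\in V}\sup_{x\in[k,k+1)}|\eta_x^\zeta(v)-\overline{\eta}(v)| \leq \Cr{const:concEtaEmpLT}\sqrt{(\ln k)/k}$. Covering $[0,nM]$ by the unit-length blocks $[k,k+1)$, $k \le nM$, and noting that the bound $\sqrt{(\ln k)/k}$ is (eventually) decreasing, we obtain $\p$-a.s.\ for all $n$ large enough that $\sup_{0\leq t\leq M}|\eta_{nt}^\zeta(v)-\overline{\eta}(v)|^2 \leq c\,\frac{\ln n}{n}$ for a suitable constant $c=c(V,M)$ (for small $t$ one simply uses that $\eta_{nt}^\zeta(v)$ and $\overline{\eta}(v)$ both lie in the bounded set $\triangle$, contributing an $O(1/\ln n)$ term). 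Feeding this into the previous display yields
\[
\sup_{0\leq t\leq M}\Big| S_{nt}^{\zeta,v}(\eta_{nt}^\zeta(v)) - S_{nt}^{\zeta,v}(\overline{\eta}(v)) \Big| \leq \Cr{constDerLMG}\,M\,c\,\ln n
\]
for all $n$ large enough, $\p$-a.s., and dividing by $\ln n$ and taking $\limsup$ gives the claim with $\Cr{const_concSn} := \Cr{constDerLMG}\,M\,c$; if one wants a constant independent of $M$, absorb the $M$-dependence into the statement as it already appears there, or note the $M$ can be traded against the scale in the Borel--Cantelli step.

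The main obstacle is the passage from the pointwise-in-$x$ concentration bound of Lemma~\ref{le:concEtaEmpLT}(b) to a bound that is uniform over the continuum of times $t\in[0,M]$ while retaining the right $\ln n / n$ rate; this is exactly why Lemma~\ref{le:concEtaEmpLT}(b) was stated with a supremum over $x\in[n,n+1)$ rather than at integer points only, so that a single Borel--Cantelli argument over integer scales already covers all real $x$ up to $nM$, and the monotonicity of $k\mapsto\sqrt{(\ln k)/k}$ lets one replace $\ln(nM)/(nM)$-type terms by $\ln n/n$ up to constants. A minor subtlety is the regime $t$ near $0$, where $nt$ may be smaller than the random threshold from Lemma~\ref{le:concEtaEmpLT}(a) and $\eta_{nt}^\zeta(v)$ may be set to $0$ rather than solving \eqref{eq:EmpLogMGDerivTilt}; but there $|S_{nt}^{\zeta,v}(\eta_{nt}^\zeta(v)) - S_{nt}^{\zeta,v}(\overline{\eta}(v))| = O(nt) = O(1)$ for $nt$ bounded, uniformly, so this regime contributes nothing to the $\limsup$ after division by $\ln n$.
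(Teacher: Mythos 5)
Your overall route is the paper's: second-order Taylor expansion of $\eta\mapsto S_{nt}^{\zeta,v}(\eta)$ around the critical point $\eta_{nt}^\zeta(v)$ (so the linear term drops by \eqref{eq:EmpLogMGDerivTilt}), the second-derivative bound from \eqref{eq:secQDerBd}, and the concentration estimate of Lemma~\ref{le:concEtaEmpLT}(b) together with a Borel--Cantelli argument. The execution, however, breaks at the step where you take the suprema of the two Taylor factors separately. The intermediate claim $\sup_{0\leq t\leq M}\big|\overline{\eta}(v) - \eta_{nt}^\zeta(v)\big|^2 \leq c\,\ln n/n$ is false: for $t$ of order $1/n$ the argument $x=nt$ stays bounded, so $|\eta_{nt}^\zeta(v)-\overline{\eta}(v)|$ is of constant order $\p$-a.s., not $O(\sqrt{\ln n/n})$ (and for such $x$ one may even have $\eta_{nt}^\zeta(v)\notin\triangle$, so the preliminary ``$\eta_{nt}^\zeta(v)\in\triangle$ for all $t\in[0,M]$ once $n$ is large'' is also wrong). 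Moreover, bounding the second derivative separately by $\Cr{constDerLMG}\,nM$ discards precisely the cancellation that makes the lemma's constant $M$-independent: your final constant is $\Cr{constDerLMG}\,M\,c$, while the statement requires a $\Cr{const_concSn}$ not depending on $M$. Your parenthetical remarks that the small-$t$ regime gives $O(1/\ln n)$ and that the $M$ can be ``traded against the scale'' point at the fix but do not supply it.

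The correct arithmetic keeps the two factors together pointwise in $x=nt$. For $x$ beyond the $\p$-a.s.\ finite, $n$-independent random threshold (combining $\Cr{constHn}$ with the Borel--Cantelli threshold from Lemma~\ref{le:concEtaEmpLT}), the bounds $\big|(S_x^{\zeta,v})''(\widetilde{\eta})\big|\le \Cr{constDerLMG}\,x$ and $(\eta_x^\zeta(v)-\overline{\eta}(v))^2\le \Cr{const:concEtaEmpLT}^2\,\ln x/x$ multiply so that the $x$'s cancel, giving $\big|S_x^{\zeta,v}(\eta_x^\zeta(v)) - S_x^{\zeta,v}(\overline{\eta}(v))\big|\le \tfrac12\Cr{constDerLMG}\,\Cr{const:concEtaEmpLT}^2\,\ln x \le \tfrac12\Cr{constDerLMG}\,\Cr{const:concEtaEmpLT}^2\,(\ln n + \ln M)$. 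The remaining regime with $x=nt$ below that threshold contributes only an additive $O(1)$ error, since each summand in $S_x^{\zeta,v}$ is $\p$-a.s.\ uniformly bounded on $\triangle$ and there are only $O(1)$ of them. Dividing by $\ln n$ and sending $n\to\infty$, the $\ln M$ term and the $O(1)$ error vanish in the $\limsup$, and one obtains a constant that is independent of $M$, as claimed.
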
     
  \begin{proof}
	There exists a  $\p$-a.s.\  finite time $\Cr{constHn}(\omega)$, defined before \eqref{eq:H_n}, such that  for all $x\geq \Cr{constHn}$ and all $v\in V$ we have $\eta_x^\zeta(v)\in \triangle$. Furthermore, by Lemma~\ref{le:log mom fct and derivatives}, 
	$S_{x}^{\zeta,v}$ is infinitely differentiable on $(-\infty,0)$, so for all $x\geq \Cr{constHn}$ there exists $\widetilde{\eta}_x^\zeta(v) \in [\overline{\eta}(v) \wedge \eta_{x}^\zeta(v), \overline{\eta}(v) \vee \eta_{x}^\zeta(v)]$ such that  
	\begin{align*}
	S_{x}^{\zeta,v}(\overline{\eta}(v)) &= S_{x}^{\zeta,v}(\eta_{x}^\zeta(v)) + \big(S_{x}^{\zeta,v}\big)'(\eta_{x}^\zeta(v)) \big( \overline{\eta}(v) - \eta_{x}^\zeta(v)\big) + \frac{\big(S_{x}^{\zeta,v}\big)''(\widetilde{\eta}_x^\zeta(v))}{2} \big( \overline{\eta}(v) - \eta_{x}^\zeta(v) \big)^2.
	\end{align*}
	Due to \eqref{eq:EmpLogMGDerivTilt},  $(S_x^{\zeta,v})'(\eta_x^{\zeta}(v))=0$ and by  Lemma~\ref{le:log mom fct and derivatives} we have $\sup_{\eta\in\triangle}\sup_{x\geq1}\big|(S_x^{\zeta,v})''(\eta)\big|/x\leq c_1$. 
	By \eqref{eq:concEtaEmpLT_cont} and the first Borel-Cantelli lemma, there exists a finite random variable $\mathcal{N}_2\geq\Cr{constHn}$ such that for $x\geq\mathcal{N}_2$ the complementary event on the left-hand side of \eqref{eq:concEtaEmpLT_cont} occurs, hence
	\[  \sup_{x\geq\Cr{constHn}} \big( \eta_{x}^\zeta(v) - \overline{\eta}(v) \big)^2 \frac{x}{\ln x} \leq \Cr{const:concEtaEmpLT}^2(V,2) \]
	and thus 
	\begin{align}
	\sup_{x\geq\Cr{constHn}} \frac{1}{\ln x}\big| S_{x}^{\zeta,v}(\eta_{x}^\zeta(v)) - S_{x}^{\zeta,v}(\overline{\eta}(v))\big|   &\leq 
	\Cr{const_concSn}
	\label{eq:conc_S_x}
	\end{align}
	with $\Cr{const_concSn}:=\frac{c_1\Cr{const:concEtaEmpLT}^2(V,2)}{2}$. Finally, we have
	\begin{align*}
	\sup_{0\leq  t\leq M}\Big| S_{nt}^{\zeta,v}(\eta_{nt}^\zeta(v)) - S_{nt}^{\zeta,v}(\overline{\eta}(v)) \Big| &\leq  \sup_{0\leq x\leq \Cr{constHn}}  \Big| S_{x}^{\zeta,v}(\eta_{x}^\zeta(v)) - S_{x}^{\zeta,v}(\overline{\eta}(v)) \Big| \\
	&\qquad +\sup_{\Cr{constHn}/n\leq t\leq M}  \Big| S_{nt}^{\zeta,v}(\eta_{nt}^\zeta(v)) - S_{nt}^{\zeta,v}(\overline{\eta}(v))\Big|\\
	&\leq 2\Cr{constHn}c_2 + \Cr{const_concSn}\ln M+  \Cr{const_concSn}\ln n,
	\end{align*}
	where in the last inequality we used that $\p$-a.s., every summand in the definition of $S_n^{\zeta,\eta}$ is uniformly bounded by $c_2$.  The $\p$-a.s.\  finiteness of $\Cr{constHn}$ gives the claim.
\end{proof}

As a by-product of the proof above we get an approximation result of $W_x^v$ be a centered stationary sequence. 
\begin{corollary}
		\label{cor:invarLegTrafo}
		For every $v\in V$ and all $t$ such that  $vt\geq\Cr{constHn}$ we have 		
		\begin{equation*}
		\Big| \sqrt{vt} W_{vt}^v(1) -  \sum_{i=1}^{vt}\big( L(\overline{\eta}(v)) - L_i^{\zeta}(\overline{\eta}(v))   \big)  \Big| \leq \Cr{const_concSn}\ln v + \Cr{const_concSn}\ln t.
		\end{equation*}
\end{corollary}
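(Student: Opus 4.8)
The plan is to read off the claimed identity and bound directly from the computations already carried out in the proofs of Proposition~\ref{prop:InvarLegTrafoEmpLogMG} and Lemma~\ref{le:S_xLogbounded}, specialized to $t=1$ in the notation of \eqref{eq:defW} and with $x=vt$. First I would unfold the definition of $W_x^v$: from \eqref{eq:defW} we have $W_{vt}^v(1)=\sqrt{vt}\big((\overline{L}_{vt}^\zeta)^*(1/v)-L^*(1/v)\big)$, so multiplying by $\sqrt{vt}$ turns the quantity of interest into $vt\big((\overline{L}_{vt}^\zeta)^*(1/v)-L^*(1/v)\big)$, which is precisely the left-hand side of the decomposition \eqref{eq:summands} after replacing $nt$ by $vt$ there.

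Since $vt\geq\Cr{constHn}$, the event $\mathcal{H}_{vt}$ from \eqref{eq:H_n} occurs, so the tilting parameter $\eta_{vt}^\zeta(v)\in\triangle$ exists and $(\overline{L}_{vt}^\zeta)^*(1/v)=\tfrac{1}{vt}S_{vt}^{\zeta,v}(\eta_{vt}^\zeta(v))$, exactly as in the proof of Proposition~\ref{prop:InvarLegTrafoEmpLogMG}. I would then invoke the three-term decomposition \eqref{eq:summands}: its third summand vanishes, as shown there, while in the second summand the deterministic contributions $\overline{\eta}(v)/v$ cancel between $S_{vt}^{\zeta,v}(\overline{\eta}(v))$ and its expectation, so that $S_{vt}^{\zeta,v}(\overline{\eta}(v))-\E[S_{vt}^{\zeta,v}(\overline{\eta}(v))]=\sum_{i=1}^{vt}\big(L(\overline{\eta}(v))-L_i^\zeta(\overline{\eta}(v))\big)$. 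Hence the difference on the left-hand side of the corollary equals exactly the first summand of \eqref{eq:summands}:
\[ \sqrt{vt}\,W_{vt}^v(1)-\sum_{i=1}^{vt}\big(L(\overline{\eta}(v))-L_i^\zeta(\overline{\eta}(v))\big)=S_{vt}^{\zeta,v}(\eta_{vt}^\zeta(v))-S_{vt}^{\zeta,v}(\overline{\eta}(v)). \]

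Finally I would bound the right-hand side by means of the pointwise estimate \eqref{eq:conc_S_x} derived inside the proof of Lemma~\ref{le:S_xLogbounded}, applied at $x=vt\geq\Cr{constHn}$; this gives $\big|S_{vt}^{\zeta,v}(\eta_{vt}^\zeta(v))-S_{vt}^{\zeta,v}(\overline{\eta}(v))\big|\leq\Cr{const_concSn}\ln(vt)=\Cr{const_concSn}\ln v+\Cr{const_concSn}\ln t$, which is the assertion. There is essentially no obstacle here beyond bookkeeping: the only points that deserve care are that the random time $\Cr{constHn}$ in the hypothesis $vt\geq\Cr{constHn}$ is the very same one that both yields the representation $(\overline{L}_{vt}^\zeta)^*(1/v)=\tfrac{1}{vt}S_{vt}^{\zeta,v}(\eta_{vt}^\zeta(v))$ and makes \eqref{eq:conc_S_x} applicable at $x=vt$, and that $\ln(vt)\geq 0$, so that splitting it as $\ln v+\ln t$ genuinely produces an upper bound of the stated shape — which holds as soon as $\Cr{constHn}\geq 1$, as is implicit in \eqref{eq:conc_S_x}.
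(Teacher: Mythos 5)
Your proof is correct and is essentially the paper's own argument: both unfold $\sqrt{vt}\,W_{vt}^v(1)$ via the representation $(\overline{L}_{vt}^\zeta)^*(1/v)=\tfrac{1}{vt}S_{vt}^{\zeta,v}(\eta_{vt}^\zeta(v))$ (valid on $\mathcal{H}_{vt}$, hence for $vt\geq\Cr{constHn}$), isolate $S_{vt}^{\zeta,v}(\eta_{vt}^\zeta(v))-S_{vt}^{\zeta,v}(\overline{\eta}(v))$ as the discrepancy against $\sum_{i=1}^{vt}\big(L(\overline{\eta}(v))-L_i^\zeta(\overline{\eta}(v))\big)$, and then invoke \eqref{eq:conc_S_x}. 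The only cosmetic difference is that you cite the decomposition \eqref{eq:summands} from Proposition~\ref{prop:InvarLegTrafoEmpLogMG} while the paper redoes the short algebra inline; the content is identical.
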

	\begin{proof}
		By the definition of $W^v_x(t)$ and $S_x^{\zeta,v}(\eta)$ from \eqref{eq:defW} and \eqref{eq:defSn}, as well as the definition in  \eqref{eq:expec_Leg_LMG} for the corresponding  Legendre transformations,  we have
		\begin{align*}
		 \sqrt{vt} W_{vt}^v(1) &= vt\Big( \big( \overline{L}_{vt}^\zeta \big)^*(1/v) - L^*(1/v) \Big) =vt\Big( \frac{\eta_{vt}^\zeta(v)}{v} - \overline{L}_{vt}\big(  \eta_{vt}^\zeta(v) \big) - \frac{\overline{\eta}(v)}{v} + L(\overline{\eta}(v)) \Big) \\
		&=S_{vt}^{\zeta,v}\big( \eta_{vt}^\zeta(v) \big) - \sum_{i=1}^{vt} \big( \frac{\overline{\eta}(v)}{v} - L_i^\zeta(\overline{\eta}(v)) \big) + \sum_{i=1}^{vt}\big( L(\overline{\eta}(v)) - L_i^{\zeta}(\overline{\eta}(v))   \big) \\
		&=S_{vt}^{\zeta,v}\big( \eta_{vt}^\zeta(v) \big)  - S_{vt}^{\zeta,v}\big( \overline{\eta}(v) \big)  + \sum_{i=1}^{vt}\big( L(\overline{\eta}(v)) - L_i^{\zeta}(\overline{\eta}(v))   \big). 
		\end{align*}
		Then we can conclude using \eqref{eq:conc_S_x}. 
	\end{proof}
    
\subsection{An exact large deviation result for auxiliary processes}

For $x\geq 0$ and $v>0$ we introduce
\begin{align*}
    Y_v^\approx(x) &:= E_x\Big[ e^{\int_0^{H_0}\zeta(B_s)\diff s}; H_0\in\Big[\frac{x}{v} - K,\frac{x}{v}\Big] \Big], \\
    Y_v^>(x) &:= E_x\Big[ e^{\int_0^{H_0}\zeta(B_s)\diff s}; H_0<\frac{x}{v} - K \Big], \quad \text{and}\\
    Y_v(x) &:= E_x\Big[ e^{\int_0^{H_0}\zeta(B_s)\diff s}; H_0\leq \frac{x}{v} \Big] = Y_v^\approx(x) + Y_v^>(x), 
\end{align*}
where $K>0$ is a constant, defined in \eqref{eq:defK} below. For $v\in V$ and $x\geq 1$ we define
\begin{align*}
    \sigma_x^\zeta(v) :=
    \begin{cases}
    \big|\eta_x^\zeta(v)\big|\sqrt{\text{Var}_x^{\zeta,\eta_x^\zeta(v)}(H_0)},& \text{ on }\mathcal{H}_x,\\
    \sup\limits_{\eta\in\triangle}\left|\eta\right|\sqrt{\text{Var}_x^{\zeta,\sup_{\eta\in\triangle}}(H_0)},& \text{ on }\mathcal{H}_x^c.
    \end{cases}
\end{align*}
 Furthermore, by Lemma~\ref{le:log mom fct and derivatives}, there exists some $C>1$ such that  $\text{Var}_x^{\zeta,\eta}(H_0)=x\big(\overline{L}_x^\zeta\big)''(\eta)\in[x \Cr{constDerLMG}^{-1},x \Cr{constDerLMG}]$ for all $x\geq1$. Thus, there is some constant $\Cl{constSigman}=\Cr{constSigman}(\triangle)>1$ such that 
\begin{equation}
    \label{eq:sigmanIneq}
    \Cr{constSigman}^{-1}\sqrt{x}\leq \sigma_x^\zeta(v) \leq \Cr{constSigman}\sqrt{x}\quad\text{ for all }v\in V,\ x\geq 1,\ \text{on }\mathcal{H}_x.
\end{equation}
We now prove the following result.
\begin{proposition}
    \label{prop:ExactLD}
    Let  $V$ be as in \eqref{eq:defVTriangle}, $\sigma_v$ defined by \eqref{eq:sigmav} and $W^v_x(t)$ as in \eqref{eq:defW}, $v\in V$, and $K>0$ be such that \eqref{eq:defK} holds. Then there exists a constant $\Cl{constY_v}\in(1,\infty)$, 
		such that for all $v\in V$ and all $x\geq 1$, on $\mathcal{H}_x$ we have
		\begin{equation}
		\label{eq: prop: Y_v^bla vergleichbar}
		\sigma^\zeta_x(v) Y_v^>(x)\exp\left\{  xL^*(1/v) + \sqrt{x}W^v_x(1) \right\} \in \left[ \Cr{constY_v}^{-1},\Cr{constY_v} \right].
		\end{equation}
	  	Furthermore,  for all $v\in V$ and all $x\geq 1$, on $\mathcal{H}_{x}$ we have 
		\begin{equation}
		\label{eq:Y_v^approxStab}
		\frac{Y_{v}^\approx(x)}{Y_v^>(x)}\in [\Cr{constY_v}^{-1},\Cr{constY_v}], 
		\end{equation}
		and the sequence $n^{-1/2}\big( \ln Y^\approx_v(n)-nL^*(1/v) \big)$, $n\in\N$, converges to a centered Gaussian random variable with variance $\sigma_v^2\in[0,\infty)$, where $\sigma_v^2$ is defined in \eqref{eq:sigmav} and if $\sigma_v^2>0$,  the sequence  of processes
		\begin{align}
		    [0,\infty)\ni t&\mapsto \frac{1}{\sigma_v\sqrt{n}} \big(\ln Y_v^\approx( nt) +  nt L^*(1/v)\big),\quad n\in\N, \label{eq:InvY_v_approx}   
		\end{align}
		converges in $\p$-distribution to standard Brownian motion in $C([0,\infty))$.
	\end{proposition}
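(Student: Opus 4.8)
The strategy is to turn every assertion of Proposition~\ref{prop:ExactLD} into a statement about the law of the hitting time $H_0$ under the tilted measure $P_x^{\zeta,\eta_x^\zeta(v)}$, and then to read off the invariance principle from Proposition~\ref{prop:InvarLegTrafoEmpLogMG}. \emph{Step 1 (reduction via the tilt).} Fix $v\in V$ and work on $\mathcal H_x$, so that $\eta:=\eta_x^\zeta(v)\in\triangle$ is well defined. Applying the change of measure \eqref{eq:def_P_x_zeta} with this $\eta$ and setting $\Delta:=x/v-H_0$, a direct computation yields
\begin{align*}
Y_v^>(x)\,e^{xL^*(1/v)+\sqrt x\,W_x^v(1)} &= E_x^{\zeta,\eta}\big[e^{\eta\Delta};\,\Delta>K\big],\\
Y_v^\approx(x)\,e^{xL^*(1/v)+\sqrt x\,W_x^v(1)} &= E_x^{\zeta,\eta}\big[e^{\eta\Delta};\,0\le\Delta\le K\big],
\end{align*}
where I have used the identity $xL^*(1/v)+\sqrt x\,W_x^v(1)=x(\overline L_x^\zeta)^*(1/v)=\eta x/v-x\overline L_x^\zeta(\eta)$, immediate from the definition \eqref{eq:defW} of $W_x^v$ and the representation of $(\overline L_x^\zeta)^*$ recalled before \eqref{eq:defVTriangle}. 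By the defining relation \eqref{eq:probTilting} one has $E_x^{\zeta,\eta}[\Delta]=0$, while $\mathrm{Var}_x^{\zeta,\eta}(\Delta)=\mathrm{Var}_x^{\zeta,\eta}(H_0)=x(\overline L_x^\zeta)''(\eta)\in[x\Cr{constDerLMG}^{-1},x\Cr{constDerLMG}]$ by \eqref{eq:secQDerBd}, so $\sigma_x^\zeta(v)=|\eta|\sqrt{\mathrm{Var}_x^{\zeta,\eta}(\Delta)}\asymp\sqrt x$ uniformly on $\mathcal H_x$ and in $v\in V$. Hence it remains to prove $E_x^{\zeta,\eta}[e^{\eta\Delta};\Delta>K]\asymp x^{-1/2}$ and $E_x^{\zeta,\eta}[e^{\eta\Delta};0\le\Delta\le K]\asymp x^{-1/2}$, with constants uniform in $v\in V$ (for $x$ large; for $x$ in a bounded interval the claims follow from continuity in $(x,v)$ and compactness).

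\emph{Step 2 (local CLT for $\Delta$ — the main obstacle).} Writing $H_0=H_{\lfloor x\rfloor}+\sum_{i=1}^{\lfloor x\rfloor}\tau_i$, the key structural fact is that under $P_x^{\zeta,\eta}$ the increments $\tau_i=H_{i-1}-H_i$, $i=1,\dots,\lfloor x\rfloor$, together with the initial piece $H_{\lfloor x\rfloor}$, form an \emph{independent} family given $\zeta$; this follows from the strong Markov property of the tilted diffusion applied successively at the level hitting times. Their Laplace transforms are $E_x^{\zeta,\eta}[e^{-\lambda\tau_i}]=\exp\{L_i^\zeta(\eta-\lambda)-L_i^\zeta(\eta)\}$; in particular the total variance of $H_0$ equals $x(\overline L_x^\zeta)''(\eta)\asymp x$ by \eqref{eq:secQDerBd}, each individual variance is uniformly bounded by $\Cr{constDerLMG}$ (cf.\ Lemma~\ref{le:log mom fct and derivatives}), and each $\tau_i$ is non-lattice with a density that is $L^\infty$- and $L^2$-bounded uniformly in $i$ and $\eta\in\triangle$ (using \eqref{eq:POT} and \eqref{eq:hoelder_cont}). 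Feeding these inputs into a Fourier-analytic local limit theorem for sums of independent, uniformly controlled summands yields a constant $C_0=C_0(V,K)>1$ such that, on $\mathcal H_x$ and for $x$ large, the law of $\Delta$ has a density $p_{x,v}$ with $\|p_{x,v}\|_\infty\le C_0 x^{-1/2}$ and $p_{x,v}(y)\ge C_0^{-1}x^{-1/2}$ for all $y\in[-1,K+1]$. I expect this uniform local CLT — in particular establishing the uniform regularity of the hitting-time densities of the tilted diffusion — to be the technically hardest point. (For the $L^\infty$ bound alone, the Kolmogorov--Rogozin inequality applied to $\Delta$ would already suffice, since the concentration functions of the $\tau_i$ at scale $1$ are bounded away from $1$.)

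\emph{Step 3 (the two-sided bounds \eqref{eq: prop: Y_v^bla vergleichbar} and \eqref{eq:Y_v^approxStab}).} For the upper bound, split $\{\Delta>K\}=\bigcup_{j\ge0}(K+j,K+j+1]$ and use $\|p_{x,v}\|_\infty\le C_0 x^{-1/2}$ to obtain
\[
E_x^{\zeta,\eta}\big[e^{\eta\Delta};\Delta>K\big]\le C_0 x^{-1/2}\sum_{j\ge0}e^{\eta(K+j)}=C_0 x^{-1/2}\frac{e^{\eta K}}{1-e^{\eta}}\asymp\frac{x^{-1/2}}{|\eta|},
\]
since $1-e^{\eta}\asymp|\eta|$ and $e^{\eta K}\asymp1$ for $\eta\in\triangle$; multiplying by $\sigma_x^\zeta(v)\asymp|\eta|\sqrt x$ gives the upper half of \eqref{eq: prop: Y_v^bla vergleichbar}. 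For the lower bound, $E_x^{\zeta,\eta}[e^{\eta\Delta};\Delta>K]\ge e^{\eta(K+1)}\int_K^{K+1}p_{x,v}(y)\,dy\ge C_0^{-1}e^{\eta(K+1)}x^{-1/2}\asymp x^{-1/2}/|\eta|$, and multiplying by $\sigma_x^\zeta(v)$ closes \eqref{eq: prop: Y_v^bla vergleichbar}. Similarly $E_x^{\zeta,\eta}[e^{\eta\Delta};0\le\Delta\le K]\asymp\int_0^K p_{x,v}\asymp K x^{-1/2}$ (the factor $e^{\eta\Delta}\in[e^{\eta K},1]$ is harmless on this range), and comparing with the previous bound shows the ratio in \eqref{eq:Y_v^approxStab} is bounded above and below; taking $\Cr{constY_v}$ large absorbs all constants, and the bounded-$x$ regime is handled as in Step 1.

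\emph{Step 4 (CLT and invariance principle for $\ln Y_v^\approx$).} Combining \eqref{eq: prop: Y_v^bla vergleichbar}, \eqref{eq:Y_v^approxStab} with $\Cr{constSigman}^{-1}\sqrt x\le\sigma_x^\zeta(v)\le\Cr{constSigman}\sqrt x$ from \eqref{eq:sigmanIneq} shows that on $\mathcal H_x$
\[
\ln Y_v^\approx(x)+xL^*(1/v)=-\sqrt x\,W_x^v(1)-\tfrac12\ln x+\mathcal O(1),
\]
uniformly in $v\in V$. Substituting $x=nt$, dividing by $\sigma_v\sqrt n$, and using the elementary identity $\sqrt{nt}\,W_{nt}^v(1)=\sqrt n\,W_n^v(t)$ (read off from \eqref{eq:defW}) gives
\[
\frac{\ln Y_v^\approx(nt)+ntL^*(1/v)}{\sigma_v\sqrt n}=-\frac{1}{\sigma_v}W_n^v(t)+\mathcal O\!\Big(\frac{\ln n}{\sqrt n}\Big)
\]
uniformly for $t$ in compact subsets of $[0,\infty)$, on the event $\bigcap_t\mathcal H_{nt}$ whose $\p$-probability tends to $1$ by Lemma~\ref{le:concEtaEmpLT} (the range of small $t$ being negligible as in the proof of Proposition~\ref{prop:InvarLegTrafoEmpLogMG}). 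Since $\sigma_v^{-1}W_n^v(\cdot)$ converges in $\p$-distribution to a standard Brownian motion on $(C([0,\infty)),\rho)$ when $\sigma_v^2>0$ by Proposition~\ref{prop:InvarLegTrafoEmpLogMG}, and $-B\overset{d}{=}B$, the invariance principle \eqref{eq:InvY_v_approx} follows; evaluating at $t=1$ (and using that $W_n^v(1)$ converges in $\p$-distribution to a centered Gaussian with variance $\sigma_v^2\ge0$, the case $\sigma_v^2=0$ included) gives the asserted Gaussian limit for $n^{-1/2}\big(\ln Y_v^\approx(n)+nL^*(1/v)\big)$, i.e.\ the $t=1$ case of \eqref{eq:InvY_v_approx}.
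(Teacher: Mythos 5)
Your Steps 1, 3 and 4 reproduce the paper's proof essentially verbatim: the tilt and the reduction to $E_x^{\zeta,\eta}\big[e^{\eta\Delta};\cdot\big]$ with $\Delta=x/v-H_0$ are precisely what the paper does in deriving \eqref{eq:lctl_mu_n1}--\eqref{eq:lctl_mu_n2}, up to the rescaling by $\eta/\sigma$, and your Step 4 is the paper's own route to the CLT and invariance principle via \eqref{eq: prop: Y_v^bla vergleichbar}, \eqref{eq:Y_v^approxStab}, the identity $W_{nt}^v(1)=W_n^v(t)/\sqrt t$, and Proposition~\ref{prop:InvarLegTrafoEmpLogMG}. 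The gap is in Step 2: you assert a uniform local limit theorem for $\Delta$, i.e.\ two-sided density bounds of order $x^{-1/2}$ uniformly in $v\in V$, $x\geq 1$ and the environment, without proving it, and you yourself flag this as the hardest point. A local CLT is strictly stronger than a Berry--Esseen bound, and establishing it for the environment-dependent, independent but non-identically-distributed summands $\tau_i$ would require controlling their characteristic functions uniformly at high frequency --- a genuine extra piece of work that is not supplied.

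The fix is that you do not actually need density bounds. In Step 3 the only consequences of the proposed LCLT that you ever invoke are the unit-interval probability bounds $P_x^{\zeta,\eta}\big(\Delta\in(K+j,K+j+1]\big)\lesssim x^{-1/2}$ (fed into the geometric series for the upper bound) and $P_x^{\zeta,\eta}\big(\Delta\in[K,K+1]\big)\gtrsim x^{-1/2}$ (for the lower bound). Both are immediate consequences of a Berry--Esseen bound over convex sets, which is exactly the route the paper takes in Lemma~\ref{le:LCLTHitting}: since the $\widehat\tau_i$ are independent under $P_x^{\zeta,\eta}$ with uniform exponential moments and total variance comparable to $x$ by \eqref{eq:secQDerBd}, the normal-approximation theorem \cite[Theorem~13.3, (13.43)]{rao_normal_approx} yields $\sup_{\mathcal C}\big|\mu_x^{\zeta,v}(\mathcal C)-\Phi(\mathcal C)\big|\le c\,x^{-1/2}$ uniformly over convex $\mathcal C$ after the natural rescaling, and then choosing $K$ large enough so that each relevant interval carries Gaussian mass $\gtrsim x^{-1/2}$ --- this is precisely what condition \eqref{eq:defK} encodes --- gives both the upper and lower bounds with no density estimates whatsoever. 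The ingredients you have assembled in Step 2, namely the variance asymptotics from \eqref{eq:secQDerBd} and the uniform exponential moments of the $\tau_i$, are exactly the hypotheses needed for that theorem, so replacing the LCLT claim by this Berry--Esseen argument closes the gap and recovers the paper's proof.
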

		
	\begin{proof}
	    We start with proving \eqref{eq: prop: Y_v^bla vergleichbar} and for this purpose let $x\geq 1$ such that $\mathcal{H}_{\lfloor x\rfloor}$ occurs.  Write $\eta:=\eta_{x}^\zeta(v)$ and $\sigma:=\sigma_x^\zeta(v),$ and recall the notation introduced in \eqref{eq:hit}, i.e. $\tau_i=H_i-H_{i-1}$, $i=1,\ldots,\lceil x\rceil -1$, and set $\tau_x:= H_{\lceil x\rceil - 1} - H_x,$ $x \in \R \backslash \Z$, (which would be consistent with the definition in \eqref{eq:hit} for $x$ integer) to define $\widehat{\tau}_{i}:={\widehat{\tau}}^{(x)}_{i}:=\tau_{i} - E_x^{\zeta,\eta}\left[ \tau_{i}\right]$. Then $\sum_{i=1}^x E_x^{\zeta,\eta}\left[ \tau_{i} \right]=E_x^{\zeta,\eta}\left[ H_{0} \right]=\frac{x}{v}$. We now rewrite
		\begin{align}
		Y_{v}^\approx(x)&=E_x\Big[ e^{ \int_{0}^{H_{0}}(\zeta(B_s)+\eta)\diff s } \exp\Big\{ -\eta\sum_{i=1}^{x}\widehat{\tau}_{i} \Big\}; \, \sum_{i=1}^{x}\widehat{\tau}_{i}\in\left[ -K,0 \right] \Big] \exp\left\{ -x\frac{\eta}{v} \right\}\notag\\
		&=E_x^{\zeta,\eta}\Big[  \exp\Big\{ -\sigma \frac{\eta}{\sigma}\sum_{i=1}^x\widehat{\tau}_{i} \Big\}; \,  \frac{\eta}{\sigma}\sum_{i=1}^x\widehat{\tau}_{i}\in\Big[ 0,-\frac{K\eta}{\sigma} \Big] \Big]   \exp\left\{ -x\left( \frac{\eta}{v} - \overline{L}_{x}^\zeta(\eta) \right) \right\}. \label{Y_vapprox herleitung}
		\end{align}
		Analogously, we get
		\begin{align*}
		Y_v^>(x)&=E_x^{\zeta,\eta} \Big[ \exp\Big\{ -\sigma \frac{\eta}{\sigma}\sum_{i=1}^x\widehat{\tau}_{i}\Big\}; \,  \frac{\eta}{\sigma}\sum_{i=1}^x\widehat{\tau}_{i} >-\frac{K\eta}{\sigma}  \Big] \exp\left\{ -x\left( \frac{\eta}{v} - \overline{L}_{x}^\zeta(\eta) \right) \right\}.
		\end{align*}
		We define $\mu_x^{\zeta,v}$ as the distribution of $\frac{\eta}{\sigma}\sum_{i=1}^x\widehat{\tau}_i$ under $P_x^{\zeta,\eta}$. Then 
		\begin{align}
		\label{eq:lctl_mu_n1}
		Y_v^\approx(x)&=e^{-x( \frac{\eta}{v} - \overline{L}_{x}^\zeta(\eta) )}\int_0^{\frac{-K\eta}{\sigma}} e^{-\sigma y}\, \diff\mu_x^{\zeta,v}(y)
		\end{align}
		and  
		\begin{align}
		\label{eq:lctl_mu_n2}
		Y_v^>(x) &=e^{-x ( \frac{\eta}{v} - \overline{L}_{x}^\zeta(\eta) )}\int_{\frac{-K\eta}{\sigma}}^{\infty} e^{-\sigma y}\, \diff\mu_x^{\zeta,v}(y).
		\end{align}
		Using Lemma~\ref{le:LCLTHitting} below, the integrals on the right-hand side of \eqref{eq:lctl_mu_n1} and \eqref{eq:lctl_mu_n2},  multiplied by $\sigma$, are bounded from below and above by positive constants. Display \eqref{eq: prop: Y_v^bla vergleichbar} now follows by the definition of $W_x^v,$ and \eqref{eq:Y_v^approxStab} is direct a consequence of \eqref{eq:lctl_mu_n1}-\eqref{eq: LCLTmun2}. The last two statements, are a consequence of \eqref{eq: prop: Y_v^bla vergleichbar}, \eqref{eq:Y_v^approxStab}, $W_{nt}^v(1)=\frac{1}{\sqrt{t}} W_n^v(t)$ and Proposition~\ref{prop:InvarLegTrafoEmpLogMG}.
	\end{proof}
	To complete the previous proof, it remains to prove the following.
	\begin{lemma}\label{le:LCLTHitting}
		Under the conditions of Proposition~\ref{prop:ExactLD},
        there exists a constant $\Cl{constCLTHitting}>1$ such that  for all $v\in V$ and $x\geq 1$, on $\mathcal{H}_x,$
		\begin{equation}
		\label{eq: LCLTmun1}
		\sigma_x^\zeta(v) \int_0^{{-K\eta_x^\zeta(v)}/{\sigma_x^\zeta(v)}} e^{-\sigma_x^\zeta(v) y} \diff\mu_x^{\zeta,v}(y) \in [\Cr{constCLTHitting}^{-1},\Cr{constCLTHitting}]
		\end{equation}
		and 
		\begin{equation}
		\label{eq: LCLTmun2}
		\sigma_x^\zeta(v) \int_{{-K\eta_x^\zeta(v)}/{\sigma_x^\zeta(v)}}^{\infty} e^{-\sigma_x^\zeta(v) y} \diff\mu_x^{\zeta,v}(y) \in [\Cr{constCLTHitting}^{-1},\Cr{constCLTHitting}],
		\end{equation}
		with $\mu_x^{\zeta,v}$ as in the proof of Proposition~\ref{prop:ExactLD}.
	\end{lemma}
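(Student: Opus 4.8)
The plan is to rewrite both quantities as integrals against the Lebesgue density $f_x$ of $H_0$ under the tilted measure $P_x^{\zeta,\eta}$ — throughout we work on $\mathcal H_x$ and abbreviate $\eta:=\eta_x^\zeta(v)\in\triangle$, $\sigma:=\sigma_x^\zeta(v)$ — and then to control $f_x$ via a local central limit theorem. Indeed, $\mu_x^{\zeta,v}$ is the law under $P_x^{\zeta,\eta}$ of $\tfrac\eta\sigma\sum_{i=1}^x\widehat\tau_i=\tfrac\eta\sigma\bigl(H_0-E_x^{\zeta,\eta}[H_0]\bigr)=\tfrac\eta\sigma\bigl(H_0-\tfrac xv\bigr)$ (using \eqref{eq:probTilting}), whence $e^{-\sigma y}$ corresponds to $e^{|\eta|(H_0-x/v)}$, the domain $y\in[0,-K\eta/\sigma]$ to $\{H_0\in[\tfrac xv-K,\tfrac xv]\}$, and $y>-K\eta/\sigma$ to $\{H_0<\tfrac xv-K\}$. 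Writing the resulting $P_x^{\zeta,\eta}$-expectations through $f_x$ and substituting $s\mapsto\tfrac xv-s$, the left-hand sides of \eqref{eq: LCLTmun1} and \eqref{eq: LCLTmun2} become
\begin{equation*}
\sigma\int_0^{K}e^{-|\eta|r}f_x\bigl(\tfrac xv-r\bigr)\,\diff r\qquad\text{and}\qquad\sigma\int_K^{x/v}e^{-|\eta|r}f_x\bigl(\tfrac xv-r\bigr)\,\diff r,
\end{equation*}
respectively (for $x$ large enough that $\tfrac xv-K>0$ for all $v\in V$, which with the choice of $K$ in \eqref{eq:defK} covers all $x\ge1$). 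So the task reduces to bounding these two integrals between positive \emph{deterministic} constants, uniformly in $v\in V=[v_*,v^*]$, in large $x$, and in $\omega\in\mathcal H_x$. Here $\sigma=|\eta|\Sigma$ with $\Sigma^2:=\mathrm{Var}_x^{\zeta,\eta}(H_0)=x(\overline L_x^\zeta)''(\eta)\in[x\Cr{constDerLMG}^{-1},x\Cr{constDerLMG}]$ (cf.\ the discussion preceding \eqref{eq:sigmanIneq}), so $\sigma\asymp\sqrt x$ while $\sigma/\Sigma=|\eta|\in[|\overeta|,|\undereta|]$ stays bounded away from $0$ and $\infty$.

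The crux is a \emph{uniform local central limit theorem}: there should be a deterministic sequence $\varepsilon_x\downarrow0$ with $\sup_{s\in\R}\bigl|\Sigma f_x(s)-\phi\bigl((s-\tfrac xv)/\Sigma\bigr)\bigr|\le\varepsilon_x$ on $\mathcal H_x$ and for all $v\in V$, where $\phi$ is the standard normal density. Under $P_x^{\zeta,\eta}$ one has $H_0=\sum_{i=1}^x\tau_i$ (notation of \eqref{eq:hit}) with the increments $\tau_i$ \emph{independent}: the $P_x$-density of $P_x^{\zeta,\eta}$ on $\sigma(B_{t\wedge H_0})$ is proportional to $\exp\{\int_0^{H_0}(\zeta(B_s)+\eta)\,\diff s\}$, which factorizes over the successive excursions between consecutive integer levels, so the strong Markov property at $H_{\lceil x\rceil-1},\dots,H_1$ decouples them. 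Moreover, by \eqref{eq:LogMG_x} and Lemma~\ref{le:log mom fct and derivatives}, $\mathrm{Var}_x^{\zeta,\eta}(\tau_i)=(L_i^\zeta)''(\eta)\le\Cr{constDerLMG}$ while $\sum_{i=1}^x(L_i^\zeta)''(\eta)=x(\overline L_x^\zeta)''(\eta)\ge x\Cr{constDerLMG}^{-1}$, and $f_x$ (a convolution of bounded hitting-time densities) is bounded by a deterministic constant: the Radon--Nikodym factor defining each $\tau_i$'s density is $\le1$ since $\zeta+\eta\le0$, the normalisation $e^{L_i^\zeta(\eta)}$ is $\ge e^{-\sqrt{2(\es-\ei+|\undereta|)}}>0$ because $\zeta\ge\ei-\es$ and $\eta\ge\undereta$, and the Brownian hitting-time density $t\mapsto(2\pi t^3)^{-1/2}e^{-1/(2t)}$ is bounded; and, again by boundedness of $\xi$ and $\eta$, the requisite Cramér condition holds uniformly. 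A standard local central limit theorem for sums of independent (non-identically distributed) random variables with uniformly bounded variances and densities then yields the displayed estimate. The step I expect to be the main obstacle is carrying this out \emph{uniformly in} $\omega$, i.e.\ checking that every quantity controlling the rate $\varepsilon_x$ — the variances, the third absolute moments, and the Cramér constant — is bounded in terms of $\ei,\es$ and $\triangle$ only, and not of the realization of $\xi$.

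Granting the local CLT, the remainder is bookkeeping. For fixed $r\ge0$ and $x\to\infty$ we have $r/\Sigma\to0$ uniformly, hence $\sigma f_x(\tfrac xv-r)=|\eta|\,\Sigma f_x(\tfrac xv-r)\to|\eta|\phi(0)$, while $\sigma f_x(\tfrac xv-r)\le|\undereta|(\phi(0)+\varepsilon_x)$ for every $r$; dominated convergence (dominating function $\propto e^{-|\overeta|r}$) therefore gives
\begin{equation*}
\sigma\int_0^Ke^{-|\eta|r}f_x\bigl(\tfrac xv-r\bigr)\,\diff r\longrightarrow\phi(0)\bigl(1-e^{-|\eta|K}\bigr)\qquad\text{and}\qquad\sigma\int_K^{x/v}e^{-|\eta|r}f_x\bigl(\tfrac xv-r\bigr)\,\diff r\longrightarrow\phi(0)\,e^{-|\eta|K},
\end{equation*}
each a continuous function of $|\eta|\in[|\overeta|,|\undereta|]$ and hence confined to a fixed compact subinterval of $(0,\infty)$; the convergence is uniform in $v$ and $\omega$ because $\varepsilon_x$ and $|\eta|$ are. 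Finally, the remaining bounded range of $x$ is handled directly: on $\mathcal H_x$ the quantities $|\eta|,\Sigma,\sigma$ lie between deterministic positive constants (by \eqref{eq:sigmanIneq} and $\Sigma^2\in[x\Cr{constDerLMG}^{-1},x\Cr{constDerLMG}]$), both integrals are $\le\sigma\le\Cr{constSigman}\sqrt x$ from above, and from below one bounds $f_x$ on $[\tfrac xv-K,\tfrac xv]$ using the explicit hitting-time density together with the Radon--Nikodym bounds above and compactness in $(x,v,\eta)$. Taking $\Cr{constCLTHitting}$ larger than all these bounds and their reciprocals then establishes both \eqref{eq: LCLTmun1} and \eqref{eq: LCLTmun2}.
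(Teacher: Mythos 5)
Your reformulation---writing both quantities as $\sigma\int e^{-|\eta|r}f_x(x/v-r)\,\diff r$ over $[0,K]$ and $[K,x/v]$ respectively, with $f_x$ the $P_x^{\zeta,\eta}$-density of $H_0$ and the increments $\tau_i$ independent under $P_x^{\zeta,\eta}$---is correct and matches the paper's starting point. But the machinery you then invoke is genuinely different from the paper's, and, as you yourself anticipate, the \emph{local} central limit theorem is where the argument is incomplete. You need $\sup_s|\Sigma f_x(s)-\phi((s-x/v)/\Sigma)|\le\varepsilon_x\downarrow0$ with $\varepsilon_x$ \emph{deterministic} and uniform over $v\in V$ and $\omega\in\mathcal H_x$. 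Although the ingredients you verify (uniformly bounded densities, variances, and exponential moments of the $\tau_i$) are all available, a local CLT for independent but non-identically-distributed triangular arrays with a uniform, moment-determined rate is not an off-the-shelf result; verifying that the Cram\'er-type condition and every constant entering the rate depend only on $\ei,\es,\triangle$ and not on the realization of $\xi$ is exactly the nontrivial work you defer, so the proof is not complete as written.

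The paper sidesteps pointwise density control entirely by using a \emph{global} Berry--Esseen bound over convex sets. Since under $P_x^{\zeta,\eta}$ the $\widehat\tau_i$ are independent, centered, with standardized variance one and uniformly bounded absolute moments, \cite[Theorem~13.3 and (13.43)]{rao_normal_approx} give $\sup_{\mathcal C}|\mu_x^{\zeta,v}(\mathcal C)-\Phi(\mathcal C)|\le c_1 n^{-1/2}$, $n:=\lceil x\rceil$, with $c_1$ depending only on these deterministic moment bounds---so the uniformity in $\omega$ and $v$ comes for free. Choosing $K$ in \eqref{eq:defK} so that $\Phi([0,-K\eta/\sigma])\ge 2c_1^{-1}n^{-1/2}$ gives $\mu_x^{\zeta,v}([0,-K\eta/\sigma])\asymp n^{-1/2}$; combined with $\sigma\asymp\sqrt n$ from \eqref{eq:sigmanIneq} and the fact that $e^{-\sigma y}$ is bounded above and below on the (length-$O(n^{-1/2})$) interval of integration, this yields \eqref{eq: LCLTmun1} immediately. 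For \eqref{eq: LCLTmun2}, rather than your dominated-convergence passage to the limit, the paper decomposes $[-K\eta/\sigma,\infty)$ into consecutive blocks $-\frac{K\eta}{\sigma}[k,k+1]$, applies the same Berry--Esseen estimate on each, and sums a geometric series in $e^{-kK|\eta|}$; the lower bound comes from the first block alone. This is strictly more elementary than your route: it uses only the distribution function, no density. One further caveat: your claim that $x/v-K>0$ for all $x\ge1$ by the choice of $K$ in \eqref{eq:defK} is unjustified, and your bounded-$x$ lower bound on $f_x$ over $[x/v-K,x/v]$ presupposes it. If you want to rescue the local approach, the result you would need to prove is essentially a Petrov-type local CLT with explicit, moment-determined error; this is plausible but is substantially more work than the paper's global shortcut buys you nothing extra here.
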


	\begin{proof}
		We write $n:=\lceil x\rceil$ and recall that under $P_x^{\zeta,\eta}$, the sequence $\Big(\sqrt{n}\frac{\eta_x^\zeta(v)}{\sigma_x^\zeta(v)}\widehat{\tau}_i\Big)_{i=1,\ldots,\lfloor x\rfloor,x}$ is a sequence of independent,  centered random variables. Thus, on $\mathcal{H}_x$ we obtain
		\begin{align*}
		\frac{1}{n} \sum_{i=1}^x\Var_x^{\zeta,\eta}\bigg(\sqrt{n}\frac{\eta_x^\zeta(v)}{\sigma_x^\zeta(v)}\widehat{\tau}_i\bigg) &= \bigg(\frac{\eta_x^\zeta(v)}{\sigma_x^\zeta(v)}\bigg)^2\text{Var}_x^{\zeta,\eta}\bigg(  \sum_{i=1}^x\widehat{\tau}_i \bigg) = \bigg(\frac{\eta_x^\zeta(v)}{\sigma_x^\zeta(v)}\bigg)^2 \text{Var}_x^{\zeta,\eta}(H_0) =1.
		\end{align*}
		Additionally, the $\widehat{\tau}_i$'s have uniform exponential moments. Thus, the conditions of \cite[Theorem 13.3]{rao_normal_approx} are fulfilled and an application of \cite[(13.43)]{rao_normal_approx} yields
		\begin{equation*}
		\sup_{\mathcal{C}}\big| \mu_{x}^{\zeta,v}(\mathcal{C})-\Phi(\mathcal{C}) \big| \leq c_1{n}^{-1/2},
		\end{equation*}
		where the supremum is taken over all Borel-measurable convex subsets of $\R$, $\Phi$ denotes the standard Gaussian measure on $\mathbb{R}$ and $c_1$ only depends on the uniform bound of the exponential moments of the $\widehat{\tau}_i$'s. Without loss of generality, we will assume $c_1>4$. Then, due to  \eqref{eq:sigmanIneq}, by  denoting $\mathcal{C}:=\big[0,-K\eta_{x}^\zeta(v)/{\sigma_x^\zeta(v)}\big]$ we can choose $K>0$ large enough, 
		so that
		\begin{equation}
		    \label{eq:defK}
		    \Phi(\mathcal{C})\geq2c_1^{-1}n^{-1/2}\quad\text{for all } n\in\N \text{ and }v\in V.
		\end{equation}
		We thus get
		\begin{align}
		c_1^{-1}n^{-1/2} &\leq \Phi\left(\mathcal{C} \right) - \big| \mu_x^{\zeta,v}(\mathcal{C})-\Phi(\mathcal{C}) \big| \leq\mu_x^{\zeta,v}\left( \mathcal{C} \right) 	 
		\leq \big| \mu_x^{\zeta,v}(\mathcal{C})-\Phi(\mathcal{C}) \big| + \Phi\left(\mathcal{C}\right) \leq c_2(K)\cdot  n^{-1/2} \label{eq:mu_nBounded}.
		\end{align}
		Because the integrand in \eqref{eq: LCLTmun1} is bounded away from $0$ and infinity on the respective interval of integration (uniformly in $n \in \N$), \eqref{eq: LCLTmun1} is a direct consequence of \eqref{eq:sigmanIneq} and \eqref{eq:mu_nBounded}. For \eqref{eq: LCLTmun2}, we split the integral into a sum:
		\begin{align*}
		&\int_{{-K\eta_x^\zeta(v)}/{\sigma_x^\zeta(v)}}^{\infty} e^{-\sigma_x^\zeta(v) y} \diff\mu_x^{\zeta,v}(y) = \sum_{k=1}^\infty \int_{-k K\eta_x^\zeta(v)/\sigma_x^\zeta(v)}^{-(k+1) K\eta_x^\zeta(v)/{\sigma_x^\zeta(v)}} e^{-\sigma_x^\zeta(v) y} \diff\mu_x^{\zeta,v}(y) \\
		&\leq \sum_{k=1}^\infty \mu_x^{\zeta,v}\left( -\frac{K\eta_x^\zeta(v)}{{\sigma_x^\zeta(v)}}\left[k,k+1  \right] \right)e^{ -k\cdot K|\eta_x^\zeta(v)| } \leq c_2n^{-1/2} \sum_{k=1}^\infty e^{-k\cdot K|\overeta|} \leq \Cr{constCLTHitting} n^{-1/2},
		\end{align*}
		where we recall the notation from \eqref{eq:triangleEta}.
		The lower bound in \eqref{eq: LCLTmun2} can be obtained by noting that
		\begin{align*}
		   \int_{{-K\eta_x^\zeta(v)}/{\sigma_x^\zeta(v)}}^{\infty} e^{-\sigma_x^\zeta(v) y} \diff\mu_x^{\zeta,v}(y) &\geq \int_{{-K\eta_x^\zeta(v)}/{\sigma_x^\zeta(v)}}^{{-2K\eta_x^\zeta(v)}/{\sigma_x^\zeta(v)}} e^{-\sigma_x^\zeta(v) y} \diff\mu_x^{\zeta,v}(y) \\
		   &\geq e^{2K\eta_x^\zeta(v)}\mu_x^{\zeta,v}\big( [{-K\eta_x^\zeta(v)}/{\sigma_x^\zeta(v)}, {-2K\eta_x^\zeta(v)}/{\sigma_x^\zeta(v)}]\big).
		\end{align*} 
		Analogously to \eqref{eq:mu_nBounded}, choosing $\Cr{constCLTHitting}$ large enough, the last expression is bounded from below by $\Cr{constCLTHitting}^{-1}n^{-1/2}$. Combining this with \eqref{eq:sigmanIneq}, we finally arrive at \eqref{eq: LCLTmun2}.
		\end{proof}
		
	We are now in the position to prove the following result.  
	\begin{lemma}
	\label{le:Y_vapprox}
	Under the conditions of Proposition~\ref{prop:ExactLD}, for all $\delta>0$  there exists a constant $\Cl{constY_vapprox}=\Cr{constY_vapprox}(\delta)\in(1,\infty)$ such that for all $v\in V$, $t>0$, on $\mathcal{H}_{vt}$ we have
	\begin{align}\label{eq:comparisons}
	\Cr{constY_vapprox}^{-1}Y_v^\approx(vt)& \leq E_{vt}\Big[ e^{\int_{0}^{t}\zeta(B_s)\diff s}; B_{t}\in [-\delta,0]\Big] 
	\leq E_{vt}\Big[ e^{\int_{0}^{t}\zeta(B_s)\diff s}; B_{t}\leq  0\Big] \leq \Cr{constY_vapprox} Y_v^\approx( vt). 
	\end{align}
\end{lemma}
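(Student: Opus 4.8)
The plan is to condition each Feynman--Kac expectation on $\mathcal F_{H_0}$, the history up to the first hitting time $H_0$ of the origin, and thereby reduce the claim to two ingredients: the already established comparison \eqref{eq:Y_v^approxStab} between $Y_v^\approx$ and $Y_v^>$, and elementary Gaussian estimates for a Brownian motion run over the residual time window, which on the relevant events has length at most $K$. Throughout, recall that the argument is $x=vt$, so $x/v-K=t-K$ and $x/v=t$. The middle inequality in \eqref{eq:comparisons} is immediate since the integrand is nonnegative and $\{B_t\in[-\delta,0]\}\subseteq\{B_t\le 0\}$, so only the two outer inequalities need an argument.

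\emph{Lower bound.} I would first restrict the expectation defining the middle term to the extra event $\{H_0\in[t-K,t]\}$ and then apply the strong Markov property at $H_0$: writing $\int_0^t\zeta(B_s)\,\diff s=\int_0^{H_0}\zeta(B_s)\,\diff s+\int_0^{t-H_0}\zeta(\widetilde B_r)\,\diff r$ with $\widetilde B_r:=B_{H_0+r}$ a Brownian motion started at $0$ and independent of $\mathcal F_{H_0}$, one obtains
\[
E_{vt}\Big[e^{\int_0^t\zeta(B_s)\diff s};B_t\in[-\delta,0]\Big]\ge E_{vt}\Big[e^{\int_0^{H_0}\zeta(B_s)\diff s}\,\mathds{1}_{\{H_0\in[t-K,t]\}}\,G(H_0)\Big],
\]
where $G(s):=E_0\big[e^{\int_0^{t-s}\zeta(\widetilde B_r)\diff r};\widetilde B_{t-s}\in[-\delta,0]\big]$. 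On $\{H_0\in[t-K,t]\}$ we have $t-H_0\in[0,K]$, and since $\zeta\in[-(\es-\ei),0]$ by \eqref{eq:zetaBd} it follows that $G(s)\ge e^{-(\es-\ei)K}\inf_{0\le u\le K}P_0(B_u\in[-\delta,0])=:c_1>0$, the infimum being strictly positive because $u\mapsto P_0(B_u\in[-\delta,0])$ is continuous and positive on $(0,K]$ with value tending to $1/2$ as $u\downarrow 0$. Hence the middle term is $\ge c_1\,Y_v^\approx(vt)$; note this step needs neither $\mathcal H_{vt}$ nor \eqref{eq:Y_v^approxStab}.

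\emph{Upper bound.} Since the Brownian motion starts at $vt>0$, continuity of paths forces $\{B_t\le 0\}\subseteq\{H_0\le t\}$. Conditioning again on $\mathcal F_{H_0}$ and bounding $E_0[e^{\int_0^{u}\zeta(\widetilde B_r)\diff r};\widetilde B_u\le 0]\le 1$ (as $\zeta\le 0$), one gets
\[
E_{vt}\Big[e^{\int_0^t\zeta(B_s)\diff s};B_t\le 0\Big]\le E_{vt}\Big[e^{\int_0^{H_0}\zeta(B_s)\diff s};H_0\le t\Big]=Y_v(vt)=Y_v^\approx(vt)+Y_v^>(vt).
\]
On $\mathcal H_{vt}$, display \eqref{eq:Y_v^approxStab} of Proposition~\ref{prop:ExactLD} gives $Y_v^>(vt)\le\Cr{constY_v}\,Y_v^\approx(vt)$, so the right-hand side is at most $(1+\Cr{constY_v})\,Y_v^\approx(vt)$. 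Setting $\Cr{constY_vapprox}:=\max\{c_1^{-1},1+\Cr{constY_v}\}>1$ then yields \eqref{eq:comparisons}. (In invoking \eqref{eq:Y_v^approxStab} I tacitly take $vt\ge 1$; if $vt<1$, which forces $t$ into a bounded range since $V=[v_*,v^*]$ by \eqref{eq:triangleEta}, all three quantities in \eqref{eq:comparisons} lie between explicit positive constants depending only on $\delta,K,V$ and the bounds on $\zeta$, by crude Gaussian estimates, so the comparison is trivial there.)

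The only mildly delicate points are the bookkeeping of the strong Markov decomposition -- in particular the fact that the residual integrand depends on $H_0$ through the window length $t-H_0$, which is precisely why restricting to $\{H_0\ge t-K\}$ makes the Gaussian factor uniformly bounded below -- and verifying $\inf_{0\le u\le K}P_0(B_u\in[-\delta,0])>0$. There is no genuine analytic obstacle: the substantive work, namely that the ``undershoot'' contribution $Y_v^>$ is not of larger order than $Y_v^\approx$, has already been carried out in Proposition~\ref{prop:ExactLD}.
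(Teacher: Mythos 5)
Your proof is correct and follows essentially the same route as the paper's: the upper bound uses $\{B_t\le 0\}\subset\{H_0\le t\}$, $\zeta\le 0$, and \eqref{eq:Y_v^approxStab} to pass from $Y_v(vt)$ to $Y_v^\approx(vt)$; the lower bound restricts to $\{H_0\in[t-K,t]\}$, applies the strong Markov property at $H_0$, and bounds the residual bridge probability below by a constant depending only on $\delta$ and $K$ (the paper calls this $p(s)\ge c_1(K,\delta)$ for $s\in[0,K]$; your $G(s)\ge e^{-(\es-\ei)K}\inf_{0\le u\le K}P_0(B_u\in[-\delta,0])$ is just a slightly more explicit version of the same estimate). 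Your remark about the range $vt<1$ fills a small gap that the paper silently passes over.
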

\begin{proof}
	The second inequality is obvious.
	Since $\{B_{t}\leq 0\}\subset\{ H_0\leq t \}$ and $\zeta\leq0$, we get $E_{vt}\big[ e^{\int_0^{t}\zeta(B_s)\diff s}; B_{t}\leq 0 \big]\leq Y_v(vt)\leq (1+\Cr{constY_v})Y_v^\approx(vt)$ by Proposition~\ref{prop:ExactLD} 
	and thus the last inequality in \eqref{eq:comparisons} is obtained. Therefore, it remains to show the first inequality. For this purpose, define the function $p(s):=E_0\big[ e^{\int_0^s\zeta(B_r)\diff r};B_s\in[-\delta,0] \big]$ which is bounded from below by $c_1(K,\delta)>0$ for all $s\in[0,K]$. Using the strong Markov property at $H_0$, we finally get
	\begin{align*}
	Y_v^\approx(vt)&= E_{vt}\left[ e^{\int_0^{H_0}\zeta(B_r)\diff r}; H_0\in\left[ t-K,t \right] \right] \\
	&\leq c_1(K,\delta)^{-1} E_{vt}\left[ e^{\int_0^{H_0}\zeta(B_r)\diff r}  p(t-s)_{|s=H_0}; H_0\in\left[ t-K,t \right] \right] \\
	&\leq  c_1(K,\delta)^{-1} E_{vt}\left[ e^{\int_0^{H_0}\zeta(B_r)\diff r}p(t-s)_{|s=H_0}\right]=c_1(K,\delta)^{-1}E_{vt}\left[ e^{\int_0^{t}\zeta(B_r)\diff r};B_{t}\in[-\delta,0] \right].
	\end{align*}
	and the claim follows by choosing $\Cr{constY_vapprox}:=c_1(K,\delta)\vee (1+\Cr{constY_v})$.
\end{proof}
Plugging the relation $\xi(x)=\zeta(x)+\es$, $x\in\R$, into Lemma \ref{le:Y_vapprox} immediately supplies us with the following corollary.
\begin{corollary}
	\label{cor: Y_v approx exp part}
	Let $\Cr{constY_vapprox}$ be as in Lemma~\ref{le:Y_vapprox}. Then for all $v\in V$, $t>0$, on $\mathcal{H}_{vt}$ we have
	\begin{align*}
	\Cr{constY_vapprox}^{-1}e^{\emph{\es}\cdot t}Y_v^{\approx}(vt)  &\leq E_{vt}\left[ e^{ \int_0^{t} \xi(B_s)\diff s };B_{t}\leq 0 \right] \leq \Cr{constY_vapprox} e^{\emph{\es}\cdot t}Y_v^{\approx}(vt).
	\end{align*}
\end{corollary}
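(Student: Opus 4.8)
The plan is to obtain the corollary directly from Lemma~\ref{le:Y_vapprox} by unwinding the definition $\zeta := \xi - \es$. First I would record the elementary pathwise identity: for every continuous Brownian path and every $t>0$,
\[
\int_0^t \xi(B_s)\,\diff s \;=\; \es\cdot t \;+\; \int_0^t \zeta(B_s)\,\diff s,
\]
so that, the factor $e^{\es\cdot t}$ being non-random, it may be pulled out of the Feynman--Kac expectation:
\[
E_{vt}\Big[ e^{\int_0^t \xi(B_s)\,\diff s}; B_t\le 0 \Big] \;=\; e^{\es\cdot t}\, E_{vt}\Big[ e^{\int_0^t \zeta(B_s)\,\diff s}; B_t\le 0 \Big].
\]

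Next I would fix the same $\delta>0$ for which $\Cr{constY_vapprox}$ was chosen in Lemma~\ref{le:Y_vapprox} (e.g.\ $\delta=1$) and invoke the chain \eqref{eq:comparisons}, which holds for all $v\in V$ and $t>0$ on the event $\mathcal{H}_{vt}$. Its last two inequalities give $E_{vt}[e^{\int_0^t \zeta(B_s)\,\diff s}; B_t\le 0] \le \Cr{constY_vapprox}\,Y_v^\approx(vt)$, while its first inequality, together with the trivial inclusion $\{B_t\in[-\delta,0]\}\subset\{B_t\le 0\}$ and $\zeta\le 0$, yields $\Cr{constY_vapprox}^{-1} Y_v^\approx(vt) \le E_{vt}[e^{\int_0^t \zeta(B_s)\,\diff s}; B_t\le 0]$. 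Multiplying both bounds by the deterministic quantity $e^{\es\cdot t}$ and substituting the displayed identity then produces precisely the two-sided estimate claimed, with the same constant $\Cr{constY_vapprox}$, on $\mathcal{H}_{vt}$.

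There is essentially no obstacle here; this is a bookkeeping corollary. The only point warranting a moment's care is that the constant in Lemma~\ref{le:Y_vapprox} a priori depends on the auxiliary parameter $\delta$, so one must commit to a fixed value of $\delta$ before extracting $\Cr{constY_vapprox}$ --- but since the upper bound in \eqref{eq:comparisons} is $\delta$-free and replacing $[-\delta,0]$ by the full half-line only weakens the conditioning event in the lower bound, any choice is harmless and the corollary holds verbatim.
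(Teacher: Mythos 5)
Your proof is correct and follows exactly the same route as the paper, which disposes of this corollary in one sentence by plugging $\xi=\zeta+\es$ into Lemma~\ref{le:Y_vapprox} (\eqref{eq:comparisons} already contains the middle inequality $E_{vt}[\cdot;B_t\in[-\delta,0]]\le E_{vt}[\cdot;B_t\le 0]$, so you don't even need to re-derive it from the inclusion); the remark about $\delta$ being free is accurate since the corollary's statement involves no $\delta$, and the invocation of $\zeta\le 0$ is superfluous but harmless.
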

Using the Feynman-Kac formula \eqref{eq:feynman_kac} we also get the following result. Recall that $u^{u_0}$ denoted the solution to \eqref{eq:PAM} with initial condition $u_0\in\mathcal{I}_{\text{PAM}}$. 
\begin{corollary}
	\label{cor:comp_heavi}
	Let $\Cr{constY_vapprox}=\Cr{constY_vapprox}(\delta)$ be as in Lemma~\ref{le:Y_vapprox}. Then for all $v\in V$, $t>0$, on $\mathcal{H}_{vt}$ we have
	\begin{equation*}
	u^{\mathds{1}_{[-\delta,0]}}(t,vt) \leq u^{\mathds{1}_{(-\infty,0]}}(t,vt) \leq \Cr{constY_vapprox}^2\cdot u^{\mathds{1}_{[-\delta,0]}}(t,vt). 
	\end{equation*}
\end{corollary}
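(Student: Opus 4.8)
The plan is to reduce the claim to the two auxiliary results just established, namely Corollary~\ref{cor: Y_v approx exp part} and Lemma~\ref{le:Y_vapprox}, via the Feynman-Kac representation \eqref{eq:feynman_kac}.

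First, the left-hand inequality is immediate: since $\mathds{1}_{[-\delta,0]}\leq\mathds{1}_{(-\infty,0]}$ pointwise and the expression in \eqref{eq:feynman_kac} is the expectation of $u_0(B_t)$ against a positive weight, it is monotone in the initial condition, so $u^{\mathds{1}_{[-\delta,0]}}(t,vt)\leq u^{\mathds{1}_{(-\infty,0]}}(t,vt)$ for every $t>0$ (in particular without needing to restrict to the event $\mathcal{H}_{vt}$).

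For the right-hand inequality, I would work on $\mathcal{H}_{vt}$ throughout and record, again from \eqref{eq:feynman_kac},
\[ u^{\mathds{1}_{(-\infty,0]}}(t,vt) = E_{vt}\big[e^{\int_0^t\xi(B_s)\diff s};B_t\leq 0\big],\qquad u^{\mathds{1}_{[-\delta,0]}}(t,vt) = E_{vt}\big[e^{\int_0^t\xi(B_s)\diff s};B_t\in[-\delta,0]\big]. \]
Corollary~\ref{cor: Y_v approx exp part} then gives $u^{\mathds{1}_{(-\infty,0]}}(t,vt)\leq \Cr{constY_vapprox}\, e^{\es t}\,Y_v^\approx(vt)$ on $\mathcal{H}_{vt}$. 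On the other hand, multiplying the first inequality of Lemma~\ref{le:Y_vapprox} by $e^{\es t}$ and using the identity $\xi=\zeta+\es$ yields $\Cr{constY_vapprox}^{-1}e^{\es t}Y_v^\approx(vt)\leq u^{\mathds{1}_{[-\delta,0]}}(t,vt)$, equivalently $e^{\es t}Y_v^\approx(vt)\leq \Cr{constY_vapprox}\, u^{\mathds{1}_{[-\delta,0]}}(t,vt)$. Chaining these two bounds produces the factor $\Cr{constY_vapprox}^2$ and finishes the proof.

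There is essentially no hard step here; the only point requiring a moment of care is the bookkeeping of the constant $\Cr{constY_vapprox}=\Cr{constY_vapprox}(\delta)$ from Lemma~\ref{le:Y_vapprox} — it involves the same $\delta$ as in the statement of the corollary and it is used twice, once in each of the two bounds above, which is precisely why the final constant is $\Cr{constY_vapprox}^2$ and not $\Cr{constY_vapprox}$. One should also make sure that Corollary~\ref{cor: Y_v approx exp part} and Lemma~\ref{le:Y_vapprox} are invoked on the same event $\mathcal{H}_{vt}$ on which they hold, which they are.
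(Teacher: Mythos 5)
Your proof is correct and follows the same route the paper implicitly takes: the left inequality is monotonicity of the Feynman--Kac representation in the initial condition, and the right inequality is obtained by sandwiching both $u^{\mathds{1}_{(-\infty,0]}}(t,vt)$ and $u^{\mathds{1}_{[-\delta,0]}}(t,vt)$ between $\Cr{constY_vapprox}^{\mp1}e^{\es t}Y_v^\approx(vt)$ via Lemma~\ref{le:Y_vapprox} (equivalently Corollary~\ref{cor: Y_v approx exp part}), which accounts for the factor $\Cr{constY_vapprox}^2$. One small remark: since Corollary~\ref{cor: Y_v approx exp part} is itself just Lemma~\ref{le:Y_vapprox} multiplied by $e^{\es t}$, you could have chained both bounds directly from the lemma's three-term inequality, but invoking both is harmless.
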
	

The previous results are fundamental for proving perturbation statements in the next section, which themselves will allow us to analyze path probabilities of the branching process.

\subsection{Proof of Theorem~\ref{th:InvPAM}}
Now we are ready to prove our first main result.	

\begin{proof}[Proof of Theorem~\ref{th:InvPAM}]
	We first assume $\sigma_v^2>0$ and consider the case $u_0=\mathds{1}_{(-\infty,0]}$ to show the second part of the claim, i.e.\ that the sequence of processes 
	\begin{equation}
	\label{eq:log_PAM_weak_conv}
	[0,\infty)\ni t\mapsto \frac{1}{ \sqrt{nv\sigma_v^2}} \big(\ln u(nt,vnt) - nt \Lambda(v) \big),\quad n\in\N, 
	\end{equation}
	converges in $\p$-distribution to standard Brownian motion. Because $[0,\infty) \ni t\mapsto \ln u(t,vt)$ might be discontinuous \emph{only} in $0$, we have to make clear what we mean by above convergence.  In fact, we show the invariance principle for a sequence of auxiliary processes $(X_n^v(t))_{t\geq0}$, $n\in\N$, where for every $n\in\N$ and $t\geq\frac{1}{n}$ $X_n^v(t)$ is the same as  in \eqref{eq:log_PAM_weak_conv}, whereas for $t\in[0,1/n]$ the term $\ln u(nt,vnt)$ in \eqref{eq:log_PAM_weak_conv} is replaced by $(1-nt)\ln u(0,0) + nt\ln(1,v)$, making $(X_n^v(t))_{t\geq 0}$ continuous.  Because the difference of the processes in \eqref{eq:log_PAM_weak_conv} and $(X_n^v (t))_{t\geq0}$ converges uniformly  to zero as $n\to\infty$, convergence of the processes in \eqref{eq:log_PAM_weak_conv} to a standard Brownian motion is defined as the convergence of the processes  $(X_n^v (t))_{t\geq0}$, $n\in\N$, to a standard Brownian motion in $C([0,\infty))$ with topology induced by the metric $\rho$ from \eqref{eq:def_metric}.

	By Proposition~\ref{prop:ExactLD} and Corollary~\ref{cor: Y_v approx exp part}, on $\mathcal{H}_{nvt}$ (recall the notation from \eqref{eq:H_n}) we have 
	\begin{align}
	\label{eq:u_sandwich}
	\begin{split}
	-\ln \Cr{constY_v} &\leq \ln \sigma_{ nvt }^\zeta(v) + \ln Y_v^\approx( nvt) +  nvtL^*(1/v) + \sqrt{nvt} W^v_{ vnt}(1) \leq \ln \Cr{constY_v}, 	\quad \text{ and}\\
	-\ln \Cr{constY_vapprox}  &\leq \ln u(nt,vnt) -  {\es\cdot nt} - \ln Y_v^{\approx}( vnt)  \leq \ln \Cr{constY_vapprox}.
	\end{split}
	\end{align}

	Recall that a sequence of processes $t\mapsto A_n(t)$, $n\in\N$, converges in $\p$-distribution to standard Brownian motion if and only if for all $\sigma>0$ the sequence $t\mapsto \sigma^{-1} A_n(\sigma^2 t)$, $n\in\N$, converges in $\p$-distribution to a standard Brownian motion. Applying this to \eqref{eq:InvY_v_approx}, the sequence of processes
	\begin{equation}
	\label{eq:InvY_v_proof_Th1}
	 [0,\infty)\ni t\mapsto \frac{1}{\sqrt{nv\sigma_v^2}} \big(\ln Y_v^\approx( vnt) +  vnt L^*(1/v)\big),\quad n\in\N,
	\end{equation}
	converges in $\p$-distribution to a standard Brownian motion. Further, by the  second line in \eqref{eq:u_sandwich}, 
	\[ -\ln \Cr{constY_vapprox}  \leq \big(\ln u(nt,vnt) - nt(\es-vL^*(1/v)) \big) -  \big( \ln Y_v^\approx(vnt) + vntL^*(1/v) \big)  \leq \ln \Cr{constY_vapprox} \]
	holds. Consequently,  if we can prove that 
	\begin{equation}
	\label{eq:Lyapunov_Legendre}
	\Lambda(v)=\es -v L^*(1/v)\quad\forall v\in V.
	\end{equation}
	the claim follows from \eqref{eq:InvY_v_proof_Th1}.  To prove \eqref{eq:Lyapunov_Legendre}, we set $n=1$ in \eqref{eq:u_sandwich} and note that $\frac{W_{vt}^v(1)}{\sqrt{t}}\tend{t}{\infty}0$ $\p$-a.s.\ for all $v\in V$, because $W_n(1)$ converges in $\p$-distribution to a centered normally distributed random variable by Proposition~\ref{prop:InvarLegTrafoEmpLogMG}.  Using \eqref{eq:lyapunov_alt}, \eqref{eq:u_sandwich} and \eqref{eq:sigmanIneq}, we get \eqref{eq:Lyapunov_Legendre}. 
	
	It remains to show the claim for arbitrary $u_0\in\mathcal{I}_{\text{PAM}}$. Recall that there exist	
	$\delta'\in(0,1)$ and $C'>1$, such that $\delta' \mathds{1}_{[-\delta',0]}(x) \leq  u_0(x)\leq C' \mathds{1}_{(-\infty,0]}(x)$ for all  $x\in\R$. Therefore, using Corollary~\ref{cor:comp_heavi} we have 
	\begin{equation}
	\label{eq:PAM_sandwich}
	\delta'\Cr{constY_vapprox}^{-2} u^{\mathds{1}_{(-\infty,0]}}(t,vt) \leq u^{{u_0}}(t,vt) \leq C'u^{\mathds{1}_{(-\infty,0]}}(t,vt).
	\end{equation}
	where we used that the solution to \eqref{eq:PAM} is linear in its initial condition. 
	Thus, the convergence of \eqref{eq:log_PAM_weak_conv} for arbitrary initial condition $u_0\in\mathcal{I}_{\text{PAM}}$ follows from the one with initial condition $\mathds{1}_{(-\infty,0]}$. This gives the second part of Theorem~\ref{th:InvPAM}. 
	
	It remains to show that $(nv)^{-1/2}\big( \ln u(n,vn)-n\Lambda(v) \big)$ converges in $\p$-distribution to a Gaussian random variable. For $u_0=\mathds{1}_{(-\infty,0]},$ this is a direct consequence of 
	\eqref{eq:u_sandwich} for $t=1$, \eqref{eq:Lyapunov_Legendre} and the second part of Proposition~\ref{prop:ExactLD}. For general $u_0,$ the claim follows from \eqref{eq:PAM_sandwich}.
\end{proof}	

In view of Corollary~\ref{cor:comp_heavi}, Proposition~\ref{prop:lyapunov} and \eqref{eq:Lyapunov_Legendre}, the Lyapunov exponent $\Lambda$, defined in \eqref{eq:lyapunov}, determines the exponential decay (growth, resp.) for solutions to \eqref{eq:PAM} for \emph{arbitrary} initial conditions $u_0\in\mathcal{I}_{\text{PAM}}$ (and not only for those with compact support). 
\begin{corollary} \label{cor:lyapunov_alt}
	For all $v\geq0$ and all $u_0\in\mathcal{I}_{\text{PAM}}$ we have that $\p$-a.s.,
	\begin{equation}
		\label{eq:lyapunov_alt}
		\Lambda (v) = \lim_{t\to\infty} \frac{1}{t} \ln u^{u_0}(t,vt).
	\end{equation}
Furthermore, $\Lambda$ is linear on $[0,v_c]$ and strictly concave on $(v_c,\infty),$ and the convergence in \eqref{eq:lyapunov_alt} holds uniformly on any compact interval $K\subset[0,\infty)$. 
\end{corollary}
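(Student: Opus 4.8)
The plan is to split according to whether $v>v_c$ or $v\in[0,v_c]$, and in either regime to pass from a general $u_0\in\mathcal{I}_{\text{PAM}}$ to the extremal initial data $\mathds{1}_{[-\delta',0]}$ and $\mathds{1}_{(-\infty,0]}$ via the sandwich $\delta'\mathds{1}_{[-\delta',0]}\le u_0\le C'\mathds{1}_{(-\infty,0]}$ and the linearity of the solution map, exactly as in \eqref{eq:PAM_sandwich}. For $v>v_c$ the bulk of the work is already done: all estimates invoked below (Corollaries~\ref{cor:comp_heavi}, \ref{cor: Y_v approx exp part}, \ref{cor:invarLegTrafo}, Proposition~\ref{prop:ExactLD} and the identity \eqref{eq:Lyapunov_Legendre}) are proved for a generic compact interval and their proofs never use the normalization $v_0\in\text{int}(V)$ from \eqref{eq:defVTriangle}, so one may apply them with any compact interval $K\subset(v_c,\infty)$ with $v\in\text{int}(K)$; then \eqref{eq:Lyapunov_Legendre} identifies $\Lambda(v)=\es-vL^*(1/v)$, while Corollaries~\ref{cor:comp_heavi} and \ref{cor: Y_v approx exp part} show $u^{\mathds{1}_{[-\delta',0]}}(t,vt)$, $u^{\mathds{1}_{(-\infty,0]}}(t,vt)$ and $e^{\es t}Y_v^\approx(vt)$ all agree up to multiplicative constants on the events $\mathcal{H}_{vt}$, which hold for all large $t$. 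For local uniformity on $(v_c,\infty)$: for $v$ ranging over a compact $K\subset(v_c,\infty)$ these estimates give $\tfrac1t\ln u^{u_0}(t,vt)=\es-vL^*(1/v)-v\big((\overline{L}_{vt}^\zeta)^*(1/v)-L^*(1/v)\big)+O(\tfrac{\ln t}{t})$ uniformly in $v\in K$, and the middle term tends to $0$ uniformly in $v\in K$ by Corollary~\ref{cor:invarLegTrafo} together with a uniform-in-$\eta$ strengthening of Lemma~\ref{le:expectedLogMGfct}(b), available since the concentration bound of Lemma~\ref{le:concEmpLT} is already uniform over $\eta$ in a compact set.

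The structural claims about $\Lambda$ are then read off from the representation $\Lambda(v)=\es-vL^*(1/v)=\es+\inf_{\eta\le0}\big(vL(\eta)-\eta\big)$, valid for $v>v_c$. As an infimum of affine functions it is concave, and it is strictly concave on $(v_c,\infty)$ since $\Lambda'(v)=L(\overline{\eta}(v))$ is strictly decreasing there (by Lemma~\ref{le:expectedLogMGfct}, $\overline{\eta}$ is strictly decreasing and $L$ strictly increasing on $(-\infty,0)$). Since $\overline{\eta}(v)\uparrow0$ as $v\downarrow v_c$, continuity of $L$ yields $\Lambda(v_c)=\es+v_cL(0)$ --- here $L^*(1/v_c)=-L(0)$, the supremum in $L^*(1/v_c)$ being attained at $\eta=0$ because $1/v_c=L'(0-)$ --- and $\Lambda'(v_c+)=L(0)$. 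On the other hand $\Lambda(0)=\es$ and $\Lambda$ is concave on $[0,\infty)$ by Proposition~\ref{prop:lyapunov}. Concavity now squeezes $\Lambda$ between the chord through $(0,\es)$ and $(v_c,\es+v_cL(0))$ --- which is precisely $v\mapsto\es+vL(0)$ --- and the support line at $v_c$, whose left slope satisfies $\Lambda'(v_c-)\ge\Lambda'(v_c+)=L(0)$, forcing $\Lambda(v)\le\es+vL(0)$ for $v\le v_c$; hence $\Lambda$ is linear, equal to $\es+vL(0)$, on $[0,v_c]$ (and if $v_c=0$ this range is a single point).

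It remains to prove \eqref{eq:lyapunov_alt} for $v\in[0,v_c]$ with $v_c>0$, where the change-of-measure tools do not apply directly; this is the main obstacle. The upper bound is immediate: $u^{u_0}(t,vt)\le C'u^{\mathds{1}_{(-\infty,0]}}(t,vt)=C'e^{\es t}E_{vt}\big[e^{\int_0^t\zeta(B_s)\diff s};B_t\le0\big]\le C'e^{\es t}E_{vt}\big[e^{\int_0^{H_0}\zeta(B_s)\diff s}\big]=C'\exp\{\es t+vt\,\overline{L}_{vt}^\zeta(0)\}$, and $\overline{L}_{vt}^\zeta(0)\to L(0)$ by Lemma~\ref{le:expectedLogMGfct}(b), so $\limsup_{t\to\infty}\tfrac1t\ln u^{u_0}(t,vt)\le\es+vL(0)=\Lambda(v)$. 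For the lower bound, starting from $u^{u_0}(t,vt)\ge\delta'E_{vt}\big[e^{\int_0^t\xi(B_s)\diff s};B_t\in[-\delta',0]\big]$, I would decompose the Brownian path at the hitting time $H_0$ of the origin: fixing an auxiliary speed $v'>v_c$ and writing $s_0:=vt/v'$, the strong Markov property produces, from the paths hitting $0$ inside the time window underlying the definition of $Y_{v'}^\approx(vt)$, a ``run to $0$ at speed $v'$'' factor equal to $e^{\es s_0}Y_{v'}^\approx(vt)$ up to a bounded factor --- for which Proposition~\ref{prop:ExactLD} and \eqref{eq:sigmanIneq} give $\tfrac1t\ln Y_{v'}^\approx(vt)\to-vL^*(1/v')$ on $\mathcal{H}_{vt}$ (using a compact interval containing $v'$) --- times a ``free evolution from $0$'' factor bounded below by $u^{\mathds{1}_{[-\delta',0]}}(t-H_0,0)$ with $t-H_0\ge(1-v/v')t\to\infty$ and exponential rate $\es=\Lambda(0)$. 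Collecting exponents yields $\liminf_{t\to\infty}\tfrac1t\ln u^{u_0}(t,vt)\ge\es-vL^*(1/v')$ for every $v'>v_c$, whence $v'\downarrow v_c$ gives $\ge\es-vL^*(1/v_c)=\es+vL(0)=\Lambda(v)$. The only ingredient here not already available for localized initial data is the case $v=0$, i.e.\ $\lim_{s\to\infty}\tfrac1s\ln E_0\big[e^{\int_0^s\xi(B_r)\diff r};B_s\in[-\delta',0]\big]=\es$: the upper bound is trivial since $\xi\le\es$, and the lower bound follows from Proposition~\ref{prop:lyapunov} (equivalently \cite[§~7.6]{Fr-85}), or directly from the elementary fact that $\p$-a.s., for all large $s$, there is an interval of length $\lceil\log\log s\rceil$ within distance $O(\log s)$ of the origin on which $\xi\ge\es-\e$ --- the motion reaches it, remains there for time $s-o(s)$ at sub-exponential cost, and returns to $[-\delta',0]$, giving $\liminf\ge\es-\e$ and then $\e\downarrow0$. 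Finally, local uniformity on a compact interval of $[0,\infty)$ is assembled from the uniform estimate on its intersection with $(v_c,\infty)$ above together with the fact that the upper and lower bounds just derived are uniform on its intersection with $[0,v_c]$ as well (the upper bound since $\overline{L}_{vt}^\zeta(0)\to L(0)$ and $v\mapsto\es+vL(0)$ is continuous; the lower bound since the above decomposition is uniform in $v$ over compacts once $vt\to\infty$, the small-$v$ corner being absorbed by continuity of $\Lambda$ at $0$).
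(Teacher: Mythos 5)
Your argument for pointwise convergence, for the identity $\Lambda(v)=\es-vL^*(1/v)$ on $[v_c,\infty)$, and for the linear/strictly concave structure of $\Lambda$ is sound, and your path-decomposition lower bound on $[0,v_c]$ (crossing to the origin at speed $v'>v_c$ using $Y_{v'}^\approx(vt)$, then running freely from $0$, then letting $v'\downarrow v_c$) is a valid alternative to the paper's argument; note however that it is not strictly needed, since $u^{u_0}\ge\delta'\,u^{\mathds{1}_{[-\delta',0]}}$ combined with Proposition~\ref{prop:lyapunov} already gives $\liminf_t\frac1t\ln u^{u_0}(t,vt)\ge\Lambda(v)$ for every $v$, and the only thing left to control on $[0,v_c]$ is the $\limsup$ via \eqref{eq:Lambda_lin_bound}, which you do correctly.

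The genuine gap is in the uniform convergence. You assert that the estimates are ``uniform over compacts once $vt\to\infty$'' and that the ``small-$v$ corner'' is ``absorbed by continuity of $\Lambda$ at $0$,'' but continuity of the limit $\Lambda$ does not by itself transfer pointwise convergence to locally uniform convergence: one would need some equicontinuity or monotonicity of the family $v\mapsto\frac1t\ln u^{u_0}(t,vt)$, which is neither asserted nor available. Concretely, two places break down. First, every estimate you invoke on the change-of-measure side (Lemma~\ref{le:concEtaEmpLT}, Proposition~\ref{prop:ExactLD}, Corollary~\ref{cor: Y_v approx exp part}, etc.) is proved for compact $V\subset(v_c,\infty)$ \emph{strictly bounded away from} $v_c$, and the constants in those bounds depend on $V$; so your ``uniform estimate on $K\cap(v_c,\infty)$'' applies only on $K\cap[v_c+\delta,\infty)$ and does not close the gap at $v_c$ itself, where your $[0,v_c]$-argument also needs $v'\downarrow v_c$ and hence again loses uniformity. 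Second, for the lower bound near $v=0$ the path-decomposition needs $vt\to\infty$ as well, and falls apart for $v=O(1/t)$; what is needed there is a comparison of $u(t,vt)$ against $u(t,0)$ for nearby spatial points, which is exactly what the paper's Harnack-type inequality (Lemma~\ref{le:harnack}) combined with the Gaussian estimate~\eqref{eq:Gaussian_est} supplies. The paper's proof of uniformity proceeds differently from yours: it discretizes $K$ with mesh $\varepsilon$, uses pointwise convergence on the finite grid $K\cap\varepsilon\Z$, then interpolates via Harnack and Gaussian bounds to control $u$ at all $y\in K$ simultaneously (displays~\eqref{eq:harnack}--\eqref{eq:Harnack_uniform}), and proves the matching upper bound by a contradiction argument~\eqref{eq:Lambda_uniform_contradict}--\eqref{eq:Harnack2}. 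Some such parabolic comparison argument is indispensable here, and your proposal does not supply a substitute for it.
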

\begin{proof}
	Note that by Proposition~\ref{prop:lyapunov}, $\delta'\mathds{1}_{[-\delta',0]}\leq u_0\leq C'\mathds{1}_{ (-\infty,0] }$ and Corollary~\ref{cor:comp_heavi} we have that \eqref{eq:lyapunov_alt} holds for all $v\in V$ and all compact $V\subset(v_c,\infty),$ so \eqref{eq:lyapunov_alt} is true for all $v>v_c$. The strict concavity of $\Lambda$ on $(v_c,\infty)$ follows from the strict convexity of $L^*(1/v)$, which in turn follows from the strict convexity of $L$ and standard properties of the Legendre transformation. If $v_c=0$, the proof is complete due to $\lim_{t \to \infty} \frac{1}{t}\ln u^{\delta'\mathds{1}_{[-\delta',0]}}=\Lambda(0)=\es$ by Proposition~\ref{prop:lyapunov} and $u^{u_0}\leq e^{\es t}$ for all $u_0\in\mathcal{I}_{\text{PAM}}$. Thus, let us assume $v_c>0$ from now on.  First observe that  $L^*(1/v)$ tends to $L^*(1/v_c)=-L(0)$ as $v\downarrow v_c$. Indeed, due to Lemma~\ref{le:expectedLogMGfct} (d), for each $v>v_c$ there exists a unique $\overline{\eta}(v)\in(-\infty,0),$ characterized via $L'\big(\overline{\eta}(v)\big)=\frac{1}{v}$, such that  
	\( L^*(1/v)=\frac{\overline{\eta}(v)}{v}-L(\overline{\eta}(v)). \)
	Furthermore, $(v_c,\infty)\ni v \mapsto \overline{\eta}(v)$ is continuously differentiable and strictly decreasing, bounded from above by $0.$ In addition, $(-\infty,0)\ni \eta \mapsto L'(\eta)$ is smooth and  strictly monotone and tends to $L'(0-)$ as $\eta\uparrow 0.$ As a consequence, we get that $\eta(v)\uparrow 0$ as $v\downarrow v_c$ and thus $L^*(1/v) \to  L^*(1/v_c)$ as $v\downarrow v_c$. Therefore, we deduce that $\Lambda(v)=\es - vL^*(1/v)$ for all $v\in[v_c,\infty)$. 	
	 Furthermore, for all $u_0\in\mathcal{I}_{\text{PAM}}$, due to \eqref{eq:POT}
	 \begin{equation}
	 	\label{eq:Lambda_lin_bound}
	 	\begin{split}
	 		u^{u_0}(t,vt)&\leq C'e^{\es t} E_{vt}\big[ e^{\int_0^t \zeta(B_s)\diff s};B_t\leq 0 \big] \leq C'e^{\es t} E_{vt}\big[ e^{\int_0^{H_0} \zeta(B_s)\diff s};H_0\leq t \big] \\
	 		&\leq C'\exp\big\{ t\big( \es + v\overline{L}_{vt}(0) \big) \big\}.
	 	\end{split}
	 \end{equation}
	Using Lemma~\ref{le:expectedLogMGfct} (b), taking logarithms, and dividing by $t$, we get that the (due to Proposition~\ref{prop:lyapunov}) concave function $\Lambda$ is bounded from above by the linear function $[0,\infty)\ni v\mapsto \es -vL(0)=\es + vL^*(1/v_c)$  and coincides with this function at $v=0$ as well as $v=v_c,$ and hence on the whole interval $[0,v_c]$.  Using \eqref{eq:Lambda_lin_bound} again, we infer \eqref{eq:lyapunov_alt} for all $v \geq 0$ and $u_0\in\mathcal{I}_{\text{PAM}}$. 

	To show that the convergence is uniform on every compact interval $K\subset [0,\infty)$, for $\varepsilon>0$ arbitrary we consider $\varepsilon\Z:=\{ k\varepsilon:k\in \Z\},$ and for $y\in\R$ set  $\epsnet{y}:=\sup\{ x\in\varepsilon\Z:\, x\leq y \}$. Then  the convergence is uniform on $ K\cap \varepsilon\Z.$ 
	A fortiori, for $t$ large enough,  uniformly in $y\in K$,
	\[
	u(t,t\epsnet{y}) \geq e^{ t( \Lambda(\epsnet{y})-\varepsilon ) }. 
	\]
 Lemma~\ref{le:harnack} then entails that 
 	\begin{equation}
 		\label{eq:harnack}
 		\inf_{z \in [ t\epsnet{y}-1,t\epsnet{y}+1 ] }
 		u(t+1,z) \geq \frac{1}{\Cr{Harnack}}  u(t,t\epsnet{y}) \geq \frac{1}{\Cr{Harnack}} e^{t ( \Lambda(\epsnet{y})-\varepsilon )}. 
 	\end{equation}
	Furthermore, using $0\leq y-\epsnet{y}\leq \varepsilon$, we have 
	\begin{equation}
		\label{eq:Gaussian_est}
		\begin{split}
			P_{yt}(B_{\varepsilon t}\in[t \epsnet{y}-1,t\epsnet{y}+1 ]) &\geq \sqrt{\frac{2}{\pi \varepsilon t}} \inf_{x\in [t\epsnet{y}-1,t\epsnet{y}+1] }e^{-\frac{(x-yt)^2}{2\varepsilon t}} \\
			&\geq \sqrt{\frac{2}{\pi}}\cdot\exp\Big\{  - \frac{(\varepsilon t +1)^2}{2\varepsilon t} - \frac{\ln(\varepsilon t)}{2} \Big\}.
		\end{split}
	\end{equation}
	Using the Feynman-Kac formula in the equality, the Markov property at time $\varepsilon t,$ and \eqref{eq:POT} in the first inequality, we infer that  
	\begin{align}		\label{eq:Harnack_uniform}
	\begin{split}
		u(t+1+\varepsilon t&,yt) = E_{yt}\big[ e^{\int_0^{t+1+\varepsilon t}\xi(B_s)\diff s} u_0(B_{t+1+\varepsilon t}) \big] \\
		&\geq e^{\ei \varepsilon t} \cdot P_{yt}(B_{\varepsilon t}\in[t \epsnet{y}-1,t\epsnet{y}+1 ]) \cdot \inf_{z \in [ t\epsnet{y}-1,t\epsnet{y}+1 ] }u(t+1,z) \\
		&\geq c_1 \exp\Big\{ t \Big( \Lambda(\epsnet{y})+(\ei-3/2)\varepsilon  - \frac{1}{t} - \frac{1}{2\varepsilon t^2}   -   \frac{\ln(\varepsilon t)}{2t}  \Big) \Big\}, 
		\end{split}
	\end{align}
	where in the last inequality we used \eqref{eq:harnack} and \eqref{eq:Gaussian_est}. 
	Setting $\te:=t+1+\varepsilon t$ and $\y:=\frac{\te}{t}y$, we get  
	\begin{align*}
		\label{eq:Lyap_lower_bound}
		 \frac{1}{\te} \ln u(\te,\te y) - \Lambda(y)  &= \frac{t}{\te} \Big( \frac{1}{t} \ln u(\te,t\y) - \Lambda(\epsnet{\y}) \Big) + \frac{t}{\te} \Lambda(\epsnet{\y}) - \Lambda(y). 
	\end{align*}
Since 	\eqref{eq:Harnack_uniform} holds uniformly for all $y\in K$, we infer that
\begin{equation*}
	\inf_{y \in K} \Big( \frac{1}{\te}\ln u(\te,\te y) - \Lambda(y)\Big) \geq \frac{t}{\te}\Big(\frac{\ln c_1}{t} +(\ei-3/2)\varepsilon  - \frac{1}{t} - \frac{1}{2\varepsilon t^2}  -   \frac{\ln(\varepsilon t)}{2t}    \Big)  -  \sup_{y\in K} \Big| \frac{t}{\te} \Lambda(\epsnet{\y}) - \Lambda(y) \Big|.
\end{equation*}
Since $\Lambda$ concave and finite, it is uniformly continuous on compact intervals. As a consequence, since
$\varepsilon>0$ was chosen arbitrarily, 
we deduce the lower bound
\begin{equation}
	\label{eq:lower_bound_lyapunov}
	\liminf_{t\to\infty} \inf_{y \in K}\Big(\frac{1}{t}\ln u(t,ty) -  \Lambda(y)\Big) \geq 0.
\end{equation}
To derive the matching upper bound, we assume that the convergence does not hold uniformly on $K$. Then, due to \eqref{eq:lower_bound_lyapunov}, there exist $\alpha>0$ and  sequences $(t_n)_{n\in\N}\subset [0,\infty)$ and $(y_n)_{n\in\N}\subset K$ such that $t_n\to\infty$ and
	\begin{equation}
		\label{eq:Lambda_uniform_contradict}
		\frac{1}{t_n}\ln  u(t_n,t_ny_n) - \Lambda(y_n)  \geq \alpha\quad\forall n\in\N.
	\end{equation} 
	Retreating to a suitable subsequence, we can assume $y_{n}\tend{n}{\infty} y\in K.$ 
	For $n$ such that $|y_n-y|\leq\varepsilon$, we have, similarly to \eqref{eq:Gaussian_est}, 
	\begin{align*}
		P_{y(t_n+1+\varepsilon t_n)}\big( B_{\varepsilon t_n}\in[ t_ny_n-1,t_ny_n+1 ] \big) &\geq \sqrt{\frac{2}{\pi\varepsilon  t_n} } \inf_{x\in[t_ny_n -1,t_ny_n+1]} e^{-\frac{(x-y(t_n+1+\varepsilon t_n))^2}{2\varepsilon t_n}}  \\
		&\geq \sqrt{\frac{2}{\pi\varepsilon  t_n} } \exp\Big\{- \frac{(1+y)^2}{2\varepsilon t_n}(t_n\varepsilon +1 )^2 \Big\}.
	\end{align*}
	Therefore, taking advantage of \eqref{eq:harnack} again and using  an argument as in the derivation of \eqref{eq:Harnack_uniform}, we infer that
	\begin{align}
		u(t_{n}+1+\varepsilon t_{n}, (t_{n}+1+\varepsilon t_{n})y) &\geq c_1\cdot  u(t_{n},t_{n}y_{n}) \cdot  \sqrt{\frac{1}{\varepsilon  t_n} } \exp\Big\{\varepsilon\ei t_n- \frac{(1+y)^2}{2\varepsilon t_n}(t_n\varepsilon +1 )^2 \Big\} \label{eq:Harnack2}
	\end{align}
for all $n$ such that $|y_n-y|\leq\varepsilon$.  
Now recall that $y_n\tend{n}{\infty} y$, that $\Lambda$ is continuous, as well as $\frac{1}{t}\ln u(t,ty)\tend{t}{\infty}\Lambda(y).$
Therefore, first taking logarithms, dividing by $t_n$, and then taking $n \to \infty,$ the left-hand side in \eqref{eq:Harnack2}  converges to $(1+\varepsilon)\Lambda(y).$ Contrarily, by \eqref{eq:Lambda_uniform_contradict}, the limit of the right-hand side is bounded from below by $ \Lambda(y)+\alpha +\varepsilon\ei - \frac{(1+y)^2}{2}\varepsilon$. 
Choosing $\varepsilon>0$ small enough,  
this leads to a  contradiction. As a consequence, we deduce the uniform convergence on $K$. 
\end{proof}

\subsection{Time perturbation}
In the next step we prove perturbation results, i.e., time and space perturbation Lemmas~\ref{le:time_perturb} and \ref{le:space_perturb}. These statements will be useful when comparing the expected number of particles which are slightly slower or faster than the ones with given velocity. As usual,  $u=u^{u_0}$ denotes the solution to \eqref{eq:PAM} with initial condition $u_0\in\mathcal{I}_{\text{PAM}}$.

\begin{lemma}
	\label{le:time_perturb}
	\begin{enumerate}
		\item Let $u_0\in\mathcal{I}_{\text{PAM}}$ and let $\e: \, (0,\infty) \to (0,\infty)$ be a function such that  $\e(t)\to0$ and $t\e(t)\to\infty$ as $t\to\infty$. Then there exists $\Cl{const_time_pert_genau}=\Cr{const_time_pert_genau}((\e(t))_{t\geq0},u_0)$ such that  $\p$-a.s.,  for all $t$ large enough,
		\begin{equation}
		\label{eq:time_pert_genau}
		\sup_{(v,h)\in\mathcal{E}_t} \Big| \ln  \Big( \frac{u^{u_0}(t+h,vt)}{u^{u_0}(t,vt)} \Big) - h (\text{\emph{\texttt{es}}}-\overline{\eta}(v)) \Big| \leq \Cr{const_time_pert_genau} + \Cr{const_time_pert_genau} |h|\Big( \sqrt{\frac{\ln t}{t}} + \frac{|h|}{t} \Big), 
		\end{equation}
		where $\mathcal{E}_t:=\left\{ (v,h):\ v\in V,\ |h|\leq t\e(t),\frac{vt}{t+h}\in V \right\}$.
		\item For all $\e>0$ and $u_0\in\mathcal{I}_{\text{PAM}}$ there exists a constant $\Cl{const_time_perturbation}>1$ and a $\p$-a.s.\  finite random variable $\Cl[index_prob_cont_time]{T_index_timepert}$ such that for all $t\geq\Cr{T_index_timepert}$, uniformly in $0\leq h\leq t^{1-\e}$, $v\in V$ and $\frac{vt}{t+h}\in V,$
		\begin{equation}
		\label{eq:time_pert_ungenau}
		\Cr{const_time_perturbation}^{-1}e^{h/\Cr{const_time_perturbation}} u^{u_0}(t,vt)  \leq  u^{u_0}(t+h,vt)  \leq \Cr{const_time_perturbation}e^{\Cr{const_time_perturbation}h} u^{u_0}(t,vt).
		\end{equation}
	\end{enumerate}
\end{lemma}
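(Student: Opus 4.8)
The plan is to prove part~(a) first and deduce part~(b) from it. Throughout I would work on the $\p$-full-measure event on which $\mathcal{H}_x$ occurs for all $x\ge\Cr{constHn}$ (see \eqref{eq:H_n}) and on which, by Lemma~\ref{le:concEtaEmpLT}(b) with $q=2$ and Borel--Cantelli, $\sup_{v\in V}\sup_{x\in[n,n+1)}|\eta_x^\zeta(v)-\overline{\eta}(v)|\le\Cr{const:concEtaEmpLT}\sqrt{\ln n/n}$ for all $n$ large; this fixes a $\p$-a.s.\ finite random threshold beyond which everything below holds. The first move is a reduction to the Heaviside datum: by \eqref{eq:PAM_sandwich} one has $\delta'\Cr{constY_vapprox}^{-2}u^{\mathds{1}_{(-\infty,0]}}(t,vt)\le u^{u_0}(t,vt)\le C'u^{\mathds{1}_{(-\infty,0]}}(t,vt)$ on $\mathcal{H}_{vt}$, so passing from $u_0$ to $\mathds{1}_{(-\infty,0]}$ changes $\ln u$ only by a quantity bounded uniformly in $v\in V$ and $t$. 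Next, chaining Corollary~\ref{cor: Y_v approx exp part}, \eqref{eq:Y_v^approxStab}, \eqref{eq: prop: Y_v^bla vergleichbar}, the Feynman--Kac identity \eqref{eq:F-K}, the two-sided bound \eqref{eq:sigmanIneq}, and $\Lambda(v)=\es-vL^*(1/v)$ from \eqref{eq:Lyapunov_Legendre}, I would obtain the representation
\begin{equation*}
\ln u^{\mathds{1}_{(-\infty,0]}}(s,y)=s\,\Lambda(y/s)-\sqrt{y}\,W^{y/s}_{y}(1)-\tfrac12\ln y+O(1),
\end{equation*}
uniformly for $s,y>0$ with $y/s\in V$ and $y\ge\Cr{constHn}$. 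Applying this at $(s,y)=(t,vt)$ and at $(s,y)=(t+h,vt)$ — observing that with $v':=vt/(t+h)$ one has $v'(t+h)=vt$, so the index $y=vt$ is the \emph{same} in both cases — the $\tfrac12\ln(vt)$ terms cancel and
\begin{equation*}
\ln\frac{u(t+h,vt)}{u(t,vt)}=\big[(t+h)\Lambda(v')-t\Lambda(v)\big]-\sqrt{vt}\,\big[W^{v'}_{vt}(1)-W^{v}_{vt}(1)\big]+O(1),
\end{equation*}
with the $O(1)$ uniform over $(v,h)\in\mathcal{E}_t$.

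For the deterministic bracket I would substitute $\Lambda(v)=\es-\overline{\eta}(v)+vL(\overline{\eta}(v))$ (from \eqref{eq:expec_Leg_LMG} and \eqref{eq:Lyapunov_Legendre}) and use $(t+h)v'=vt$ together with $L'(\overline{\eta}(v))=1/v$ (Lemma~\ref{le:expectedLogMGfct}(d)). A direct rearrangement turns $(t+h)\Lambda(v')-t\Lambda(v)$ into $h(\es-\overline{\eta}(v))-(t+h)\Delta\eta+vt\big(L(\overline{\eta}(v'))-L(\overline{\eta}(v))\big)$ with $\Delta\eta:=\overline{\eta}(v')-\overline{\eta}(v)$; a second-order Taylor expansion of $L$ at $\overline{\eta}(v)$ (with $L''$ bounded on the compact $\triangle$), together with $|\Delta\eta|\le\|\overline{\eta}'\|_{\infty,V}\,|v'-v|$ and $|v'-v|=v|h|/(t+h)=O(|h|/t)$ — here $v,v'\in V$ forces $t+h\asymp t$ — collapse the remaining terms to $O(h^2/t)$. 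Hence $(t+h)\Lambda(v')-t\Lambda(v)=h(\es-\overline{\eta}(v))+O(h^2/t)$, which produces the $|h|\cdot|h|/t$ part of \eqref{eq:time_pert_genau}.

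The crux is the fluctuation bracket. A naive bound of each of $W^{v'}_{vt}(1)$, $W^{v}_{vt}(1)$ via Corollary~\ref{cor:invarLegTrafo} costs an $O(\ln t)$ error and is useless once $h=O(1)$; instead I would exploit that $W^{w}_{vt}(1)=\sqrt{vt}\big((\overline L^\zeta_{vt})^*(1/w)-L^*(1/w)\big)$ and that $1/v'$, $1/v$ are nearby dual points of the \emph{same} realisation of $\overline L^\zeta_{vt}$. On $\mathcal{H}_{vt}$, for $1/\lambda$ ranging over the segment between $v$ and $v'$ (a subset of $V$), both $\lambda\mapsto(\overline L^\zeta_{vt})^*(\lambda)$ and $\lambda\mapsto L^*(\lambda)$ are differentiable with derivatives $\eta^\zeta_{vt}(1/\lambda)$ and $\overline{\eta}(1/\lambda)$ respectively (envelope theorem, using strict convexity of $\overline L^\zeta_{vt}$ from \eqref{eq:secQDerBd}), whence
\begin{equation*}
\sqrt{vt}\,\big[W^{v'}_{vt}(1)-W^{v}_{vt}(1)\big]=vt\int_{1/v}^{1/v'}\big(\eta^\zeta_{vt}(1/\lambda)-\overline{\eta}(1/\lambda)\big)\,\diff\lambda,
\end{equation*}
whose modulus is at most $vt\,|1/v'-1/v|\,\sup_{w\in V}|\eta^\zeta_{vt}(w)-\overline{\eta}(w)|$. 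On the chosen event the supremum is $O(\sqrt{\ln(vt)/(vt)})$, and $|1/v'-1/v|=O(|h|/t)$, so this bracket is $O\big(|h|\sqrt{\ln t/t}\big)$. Adding the three contributions yields \eqref{eq:time_pert_genau}; the general $u_0$ has been absorbed into the $O(1)$, and the case $h<0$ is identical since $v,v'\in V$ still gives $t+h\asymp t$.

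Finally, (b) is a corollary of (a): apply (a) with the admissible choice of function $s\mapsto s^{-\e/2}$, so that $\{0\le h\le t^{1-\e}:\,v,\,vt/(t+h)\in V\}\subset\mathcal{E}_t$ for $t$ large. Then \eqref{eq:time_pert_genau} reads $\big|\ln\tfrac{u(t+h,vt)}{u(t,vt)}-h(\es-\overline{\eta}(v))\big|\le\Cr{const_time_pert_genau}+h\,\kappa(t)$ with $\kappa(t):=\Cr{const_time_pert_genau}\big(\sqrt{\ln t/t}+t^{-\e}\big)\to0$; since $\overline{\eta}(V)$ is a compact subset of $(-\infty,0)$, $\es-\overline{\eta}(v)$ is bounded above and below by positive constants uniformly in $v\in V$, so once $t$ is large enough that $\kappa(t)$ falls below half of $\inf_{v\in V}(\es-\overline{\eta}(v))$, both inequalities in \eqref{eq:time_pert_ungenau} follow for a suitable $\Cr{const_time_perturbation}>1$, with $\Cr{T_index_timepert}$ the random threshold from (a) (enlarged if necessary). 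I expect the fluctuation bracket to be the main obstacle: the off-the-shelf estimate is off by a logarithm, and only the ``same environment at nearby velocities'' structure, combined with the $O(\sqrt{\ln t/t})$ concentration of $\eta^\zeta_{vt}(\cdot)$ around $\overline{\eta}(\cdot)$, rescues the bound — with the uniformity of all error constants over $v,v'\in V$ the remaining bookkeeping point.
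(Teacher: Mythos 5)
Your argument is correct, and it rests on the same pillars as the paper's proof -- reduction to the Heaviside datum via Corollary~\ref{cor:comp_heavi}, the exact large-deviation package of Proposition~\ref{prop:ExactLD}, Lemma~\ref{le:Y_vapprox} and Corollary~\ref{cor: Y_v approx exp part}, and the concentration of $\eta_x^\zeta(\cdot)$ around $\overline{\eta}(\cdot)$ from Lemma~\ref{le:concEtaEmpLT} -- but you organize the central estimate differently. The paper never passes through a global asymptotic representation of $\ln u(s,y)$; it bounds the ratio $Y_{v'}^\approx(vt)/Y_v^\approx(vt)$ directly and reduces matters to the three-term sum \eqref{eq: time perturbation sums}, handling the quadratic piece by Taylor-expanding $S_{vt}^{\zeta,v}$ around $\eta_{vt}^\zeta(v)$ (where the first derivative vanishes) and the linear piece by comparing $\eta_{vt}^\zeta(v')$ with $\overline{\eta}(v)$. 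You instead split into a deterministic bracket $(t+h)\Lambda(v')-t\Lambda(v)$, resolved by elementary calculus with $L$, $\overline{\eta}$ and the identity $(t+h)v'=vt$, and a fluctuation bracket $\sqrt{vt}\,\big(W^{v'}_{vt}(1)-W^{v}_{vt}(1)\big)$, which you write as $vt\int_{1/v}^{1/v'}\big(\eta^\zeta_{vt}(1/\lambda)-\overline{\eta}(1/\lambda)\big)\,\diff\lambda$ via the envelope theorem and bound by $|h|\sup_{w\in V}|\eta^\zeta_{vt}(w)-\overline{\eta}(w)|$. The two decompositions are algebraically equivalent (your two brackets sum, up to $O(1)$ and the factor $\es h$, to the logarithm of the second fraction in \eqref{eq:prodApproxRel}), but your integral representation makes it particularly transparent why the naive $O(\ln t)$ bound from Corollary~\ref{cor:invarLegTrafo} can be sharpened to $O\big(|h|\sqrt{\ln t/t}\big)$, and you correctly identify this as the crux. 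Your derivation of (b) as a genuine corollary of (a) -- choosing the admissible function $s\mapsto s^{-\e/2}$ and absorbing the $o(1)\cdot h$ error into the linear rates, using that $\es-\overline{\eta}(v)$ is bounded away from $0$ and $\infty$ on $V$ -- is also tighter than the paper's treatment, which essentially repeats the computation of (a).
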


\begin{proof}
$(a)$	Note that it suffices to show the claim for $u_0=\mathds{1}_{(-\infty,0]}$. Indeed, 
for all $u_0\in\mathcal{I}_{\text{PAM}}$ we have $\delta'\mathds{1}_{[-\delta',0]}\leq u_0\leq C'\mathds{1}_{(-\infty,0]}$. Using Corollary~\ref{cor:comp_heavi}, we infer that for all
$u_0\in\mathcal{I}_{\text{PAM}}$, all $v\in V,$ and all $t$ large enough
\begin{equation*}
	\delta'\Cr{constY_vapprox}^{-2} u^{\mathds{1}_{(-\infty,0]}}(t,vt) \leq u^{{u_0}}(t,vt) \leq C'u^{\mathds{1}_{(-\infty,0]}}(t,vt).
\end{equation*}
where $u^{u_0}$ denotes the solution to \eqref{eq:PAM} with initial
condition $u_0$. 

by the same argument as at the end of the proof of Theorem~\ref{th:InvPAM}, the solutions to \eqref{eq:PAM} for different initial conditions $u_0\in\mathcal{I}_{\text{PAM}}$ differ at most by a multiplicative constant.
	Let $t$ be large enough such that $\mathcal{H}_{vt}$ occurs for all $v\in V$, which is possible by \eqref{eq:H_n}.
	Letting $(v,h)\in \mathcal{E}_t$ and writing $v':=\frac{vt}{t+h}\in V,$
	we infer that
	\begin{align}
	\frac{u^{\mathds{1}_{(-\infty,0]}}(t+h,vt)}{u^{\mathds{1}_{(-\infty,0]}}(t,vt)} &= \frac{E_{vt}\left[ e^{\int_0^{t+h}\xi(B_s)\diff s}; B_{t+h}\leq 0 \right]}{E_{vt}\left[ e^{\int_0^{t}\xi(B_s)\diff s}; B_{t}\leq 0 \right]} =e^{\es\cdot h}\frac{E_{vt}\left[ e^{\int_0^{t+h}\zeta(B_s)\diff s}; B_{t+h}\leq 0 \right]}{E_{vt}\left[ e^{\int_0^{t}\zeta(B_s)\diff s}; B_{t}\leq 0 \right]}
	\label{eq:frac_time_pert}. 
	\end{align}
	Using Lemma~\ref{le:Y_vapprox}, on $\mathcal{H}_{vt},$ the last fraction divided by
	 $\frac{Y_{v'}^\approx(vt)}{Y_{v}^\approx(vt)}$ is bounded away from $0$ and infinity for all $t$ large enough.
As in the derivation of \eqref{Y_vapprox herleitung}, the term $\frac{Y_{v'}^\approx(vt)}{Y_{v}^\approx(vt)}$ can be written as
	\begin{align} \label{eq:prodApproxRel}
	&\frac{  E_{vt}^{\zeta,\eta_{vt}^\zeta(v')}\Big[  \exp\Big\{ -\eta_{vt}^\zeta(v')\sum_{i=1}^{vt}\widehat{\tau}_{i} \Big\}; \sum_{i=1}^{vt}\widehat{\tau}_{i}\in\Big[ -K,0 \Big] \Big]     }{  E_{vt}^{\zeta,\eta_{vt}^\zeta(v)}\Big[  \exp\Big\{ -\eta_{vt}^\zeta(v)\sum_{i=1}^{vt}\widetilde{\tau}_{i} \Big\}; \sum_{i=1}^{vt}\widetilde{\tau}_{i}\in\Big[ -K,0 \Big] \Big]     } \times \frac{\exp\Big\{ -vt\Big( \frac{\eta_{vt}^\zeta(v')}{v'} - \overline{L}_{vt}^\zeta(\eta_{vt}^\zeta(v')) \Big) \Big\}}{\exp\Big\{ -vt\Big( \frac{\eta_{vt}^\zeta(v)}{v} - \overline{L}_{vt}^\zeta(\eta_{vt}^\zeta(v)) \Big) \Big\}},
	\end{align}
	where $\widehat{\tau}_i=\tau_i - E_{vt}^{\zeta,\eta_{vt}^\zeta(v')}[\tau_i]$ and $\widetilde{\tau_i}:=\tau_i - E_{vt}^{\zeta,\eta_{vt}^\zeta(v)}[\tau_i]$. But now, since $v'\in V,$ as in the proof of Proposition~\ref{prop:ExactLD}, the first fraction of the previous display is bounded from below and above by positive constants, for all $t$ large enough.
	Indeed, setting $x=vt$ in \eqref{eq:lctl_mu_n1}, the denominator in \eqref{eq:prodApproxRel} equals the integral in  \eqref{eq:lctl_mu_n1} 
	and, replacing $v$ by $v'$ in \eqref{eq:lctl_mu_n1}, the  numerator equals the integral in \eqref{eq:lctl_mu_n1}.
	The claim then follows due to   Lemma~\ref{le:LCLTHitting} and using \eqref{eq:sigmanIneq}. Therefore, taking logarithms in \eqref{eq:frac_time_pert} and recalling the definition  of $S_{vt}^{\zeta,v}(\eta)$ in \eqref{eq:defSn}, according to the previous considerations it suffices to show that the logarithm of the second fraction in \eqref{eq:prodApproxRel} plus $\overline{\eta}(v)\cdot h$, i.e.
	\begin{align}
	&\big(S_{vt}^{\zeta,v}(\eta_{vt}^{\zeta}(v)) - S_{vt}^{\zeta,v}(\eta_{vt}^{\zeta}(v'))   \big)+ \big(  S_{vt}^{\zeta,v}(\eta_{vt}^{\zeta}(v')) - S_{vt}^{\zeta,v'}(\eta_{vt}^{\zeta}(v'))\big) + \overline{\eta}(v)\cdot h,\label{eq: time perturbation sums}
	\end{align}
satisfies  bound on the right-hand side of  \eqref{eq:time_pert_genau}, uniformly in $(v,h)\in\mathcal{E}_t$. 
	Recall that  $\frac{1}{v'}=\frac{1}{v}\left( 1+\frac{h}{t} \right)$, thus the second summand in \eqref{eq: time perturbation sums} is $-h\cdot \eta_{vt}^\zeta(v')$. The triangular inequality entails
	\begin{align}
	|\eta_{vt}^\zeta(v') - \overline{\eta}(v)| &\leq |\eta_{vt}^\zeta(v')-\overline{\eta}(v')|+|\overline{\eta}(v') - \overline{\eta}(v)| \label{eq: conv speed eta_n(v')-eta(v)},
	\end{align}
	and so by Lemma~\ref{le:concEtaEmpLT}, uniformly  for $v'\in V$ and $t$ large enough, the first term on the right-hand side of \eqref{eq: conv speed eta_n(v')-eta(v)} can be upper bounded by $\Cr{const:concEtaEmpLT}\sqrt{\frac{\ln vt}{vt}}$, $\p$-a.s.\ 
	Furthermore, by Lemma~\ref{le:expectedLogMGfct} d) we know that $\overline{\eta}$ is continuously differentiable and strictly decreasing, having  uniform positive bounds of the derivative on every bounded subinterval of $(v_c,\infty)$.
	Hence, $\overline{\eta}(\cdot)$ is Lipschitz continuous on $V$  and we therefore get that the second summand in \eqref{eq: conv speed eta_n(v')-eta(v)} can be upper bounded by $c_1|v-v'|=c_1v\frac{|h|}{|t+h|}=c_1v\frac{|h|}{t}\cdot\frac{t}{|t+h|}\leq c_2 \frac{|h|}{t}$, uniformly for all $v,v'\in V$ and all $t$ large enough, where the last inequality is due to $|h|/t\leq \varepsilon(t)\to 0$. Therefore, the absolute value of the sum of the second and third summand in \eqref{eq: time perturbation sums} 
	is upper bounded by $\Cr{const_time_pert_genau}|h|\big( \sqrt{\frac{\ln t}{t}} + \frac{|h|}{t} \big)$ with $\Cr{const_time_pert_genau}:=c_2\vee \Cr{const:concEtaEmpLT}$. 
	
	It remains to show that the first summand in \eqref{eq: time perturbation sums} tends to $0$ as $t$ tends to $\infty$. 
	We write
	\begin{align*}
	S_{vt}^{\zeta,v}(\eta_{vt}^{\zeta}(v')) &= S_{vt}^{\zeta,v}(\eta_{vt}^{\zeta}(v)) + \big( \eta_{vt}^{\zeta}(v') - \eta_{vt}^{\zeta}(v) \big) \big(S_{vt}^{\zeta,v}\big)'(\eta_{vt}^{\zeta}(v))  + \frac{1}{2}\big( \eta_{vt}^{\zeta}(v') - \eta_{vt}^{\zeta}(v) \big)^2 \big(S_{vt}^{\zeta,v}\big)''(\widetilde{\eta})
	\end{align*}
	for some $\widetilde{\eta} \in [\eta_{vt}^{\zeta}(v') \wedge \eta_{vt}^{\zeta}(v), \eta_{vt}^{\zeta}(v') \vee \eta_{vt}^{\zeta}(v)]$. Recall that $S_{vt}^{\zeta,v}(\eta)=vt\big(\frac{\eta}{v}-\overline{L}_{vt}^\zeta(\eta)\big)$ and by definition $\big(\overline{L}_{vt}^\zeta\big)'\big(\eta_{vt}^\zeta(v)\big) = \frac{1}{v}$. Thus,  
	\(	\big(S_{vt}^{\zeta,v}\big)'(\eta_{vt}^{\zeta}(v))=0.\)
	Furthermore, $\big(S_{vt}^{\zeta,v}\big)''(\eta) = -{vt}\big( \overline{L}_{vt}^\zeta \big)''(\eta)$ and the function $\big( \overline{L}_{vt}^\zeta \big)''$ is uniformly bounded away from $0$ and infinity by Lemma~\ref{le:log mom fct and derivatives} on $V.$ Thus, by the characterizing equation \eqref{eq:EmpLogMGDerivTilt} and the implicit function theorem, on $\mathcal{H}_{vt}$, the function $\eta_{vt}^\zeta(\cdot)$ is  differentiable with uniformly bounded first derivative, i.e.
	\begin{equation}
	\label{eq:conc_eta_vt}
	\big| \eta_{vt}^{\zeta}(v') - \eta_{vt}^{\zeta}(v)  \big| \leq c_3|v'-v| \leq c_3v\frac{|h|}{t(1-\e(t))}.
	\end{equation} 
	Thus, on $\mathcal{H}_{vt}$, the first summand in \eqref{eq: time perturbation sums} can be bounded by $c_4 \cdot t \cdot \frac{h^2}{t^2}= c_4\cdot \frac{h^2}{t},$ 
	uniformly in $(v,h)\in\mathcal{E}_t$, for all $t$ large enough.	This implies $a).$	

$(b)$	The first part of the  proof  is similar to that of $a)$; indeed, dividing by $u^{u_0}(t,vt)$ and taking logarithms in \eqref{eq:time_pert_ungenau}, one arrives at \eqref{eq: time perturbation sums} again. The second part then consists of showing that \eqref{eq: time perturbation sums} is  lower and upper bounded by two strictly increasing linear functions for all $0< h\leq t^{1-\varepsilon}$. Following the same computations as in the proof of $a),$ for $v\in V$ and $h>0$, such that $v'\in V$, we have up to some additive constant, which is independent of $h$, that 
	\begin{align*}
	\ln \frac{u^{u_0}(t+h,vt)}{u^{u_0}(t,vt)} &\asymp c_t\frac{vt}{2}(v-v')^2 -\eta_{vt}^\zeta(v')\cdot h + \es\cdot h,
	\end{align*} 
	where $c_t$ is a function which for $t$ large enough is positive and bounded away from $0$ and infinity. Because $\eta_{vt}^\zeta(v')<0$, the latter expression is bounded from below by $h/\Cr{const_time_perturbation}$ and bounded from above by $\Cr{const_time_perturbation}\cdot h$ for our choice of parameters, hence we can conclude. 
\end{proof}	

	\subsection{Space perturbation}
	While in the previous section we have been investigating the effects of time perturbations of $u$ and related quantities, here we will consider space perturbations.
	As before, let $u^{u_0}$ denotes the solution to \eqref{eq:PAM} with initial condition $u_0\in\mathcal{I}_{\text{PAM}}$. 
    
	\begin{lemma}
		\label{le:space_perturb}
		Let $\e(t)$ be a positive function such that $\e(t)\to0$ and
		$\frac{t\e(t)}{\ln t}\to\infty$  as $t\to\infty$. Then for all $\e>0$ there
		exists $C(\e)>0$ such that $\p$-a.s., for all $u_0\in\mathcal{I}_{\text{PAM}}$ we have
		\begin{enumerate}
			\item[(a)]
			\begin{equation}
			\label{eq:space_pert_genau}
			\hspace{-10mm}\limsup_{t\to\infty}\ \sup\left\{  \left|
			\frac{1}{h}\ln\left( \frac{u(t,vt+h)}{u(t,vt)} \right) - L\big(
			\overline{\eta}(v) \big)  \right|: (v,h)\in\mathcal{E}_t\right\} \leq \e,
			\end{equation}
			where $\mathcal{E}_t:=\left\{ (v,h):\ v,v+\frac{h}{t}\in V,\ C(\e)\ln
			t\leq |h|\leq t\e(t)  \right\}$.
			
			\item[(b)] Let $\e(t)$ be a positive function such that $\varepsilon(t)\to0$. Then there exists a constant $\Cl{const_space_perturbation}<\infty$
			and a $\p$-a.s.~finite random variable
			$\Cl[index_prob_cont_time]{T_index_spacepert}$ such that  for all
			$t\geq\Cr{T_index_spacepert}$, uniformly in $0\leq h\leq t\varepsilon(t)$, $v\in V$, $v+\frac{h}{t}\in V$ and $u_0\in\mathcal{I}_{\text{PAM}}$ we have
			\begin{equation}
			\label{eq:space_pert_ungenau}
			\Cr{const_space_perturbation}^{-1}e^{-\Cr{const_space_perturbation}h}\cdot
			u(t,vt) \leq u(t,vt+h) \leq
			\Cr{const_space_perturbation}e^{-h/\Cr{const_space_perturbation}}\cdot u(t,vt).
			\end{equation}
		\end{enumerate}
	\end{lemma}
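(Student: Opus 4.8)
The strategy mirrors that of the time perturbation Lemma~\ref{le:time_perturb}, replacing time shifts by space shifts. As there, it suffices to treat $u_0 = \mathds{1}_{(-\infty,0]}$, since Corollary~\ref{cor:comp_heavi} shows all initial conditions in $\mathcal{I}_{\text{PAM}}$ differ only by a multiplicative constant, which disappears after taking logarithms (and thus after dividing by $h$ for part (a), once $|h| \ge C(\e)\ln t$ absorbs the additive constant; for (b) it is absorbed into $\Cr{const_space_perturbation}$). So assume $u_0 = \mathds{1}_{(-\infty,0]}$ throughout. By Corollary~\ref{cor: Y_v approx exp part} and Lemma~\ref{le:Y_vapprox}, on $\mathcal{H}_{vt}\cap\mathcal{H}_{vt+h}$ (which occurs for all $v,v+h/t\in V$ once $t$ is large, by \eqref{eq:H_n}) we have
\[
\frac{u(t,vt+h)}{u(t,vt)} \asymp \frac{Y_{v''}^\approx(vt+h)}{Y_v^\approx(vt)},
\qquad v'':=\frac{vt+h}{t},
\]
up to multiplicative constants bounded away from $0$ and $\infty$. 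Here I would take care that $Y^\approx$ is defined via the hitting time of $0$ started from the spatial argument, so $Y_{v''}^\approx(vt+h)$ has the Brownian motion started at $vt+h$ and travelling at empirical speed $v''$. Using the representation \eqref{eq:lctl_mu_n1}, the ratio of the two $Y^\approx$'s equals the ratio of the corresponding Laplace-type integrals against $\mu^{\zeta,v''}_{vt+h}$ and $\mu^{\zeta,v}_{vt}$ — which, again by Lemma~\ref{le:LCLTHitting} and \eqref{eq:sigmanIneq}, is sandwiched between positive constants — times the ratio of the exponential prefactors $\exp\{-(vt+h)(\eta_{vt+h}^\zeta(v'')/v'' - \overline{L}^\zeta_{vt+h}(\eta_{vt+h}^\zeta(v'')))\}$ and $\exp\{-vt(\eta_{vt}^\zeta(v)/v - \overline{L}^\zeta_{vt}(\eta_{vt}^\zeta(v)))\}$.

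Thus, up to an additive $O(1)$ error, $\ln(u(t,vt+h)/u(t,vt))$ equals
\[
(vt+h)\big(\overline{L}^\zeta_{vt+h}\big)^*(1/v'') - vt\,\big(\overline{L}^\zeta_{vt}\big)^*(1/v),
\]
i.e. $S^{\zeta,v''}_{vt+h}(\eta^\zeta_{vt+h}(v'')) - S^{\zeta,v}_{vt}(\eta^\zeta_{vt}(v))$ in the notation of \eqref{eq:defSn}. I would then split this difference in three steps analogous to \eqref{eq: time perturbation sums}: first replace the tilting parameter $\eta^\zeta_{vt+h}(v'')$ by $\eta^\zeta_{vt+h}(v)$ (a second-order-in-$|v''-v| = |h|/t$ error, hence $O(h^2/t)=o(h)$ by the Taylor argument used in \eqref{eq:conc_eta_vt}, using that $(S^\zeta_x)'$ vanishes at the tilting parameter and $(S^\zeta_x)''/x$ is bounded above and below by Lemma~\ref{le:log mom fct and derivatives}); second, pass from $S^{\zeta,v}_{vt+h}(\eta^\zeta_{vt+h}(v))$ to $S^{\zeta,v}_{vt}(\eta^\zeta_{vt}(v))$ — here the index of the sum changes by $h$ terms while the tilting parameter changes by $O(h/t)$ via Lemma~\ref{le:PertEta_n}, and each summand $V^{\zeta,v}_i$ is bounded, so this contributes the leading term. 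Concretely, by the perturbation estimate \eqref{eq:PertEta_n}, $\big|\eta^\zeta_{vt+h}(v)-\eta^\zeta_{vt}(v)\big| \le \Cr{const_eta_pert} h/(vt)$, and expanding $S^{\zeta,v}_{vt+h}(\eta^\zeta_{vt+h}(v)) = S^{\zeta,v}_{vt}(\eta^\zeta_{vt+h}(v)) + \sum_{i=vt+1}^{vt+h}V^{\zeta,v}_i(\eta^\zeta_{vt+h}(v))$, the sum of $h$ bounded terms is $\sum_{i=vt+1}^{vt+h}(\eta^\zeta_{vt+h}(v)/v - L^\zeta_i(\eta^\zeta_{vt+h}(v)))$. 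Dividing by $h$, this is an empirical average of $h$ terms of the stationary mixing sequence $(\overline\eta(v)/v - L^\zeta_i(\overline\eta(v)))_i$ (after controlling the $O(h/t)$ perturbation of the argument, and the remaining difference $S^{\zeta,v}_{vt}(\eta^\zeta_{vt+h}(v)) - S^{\zeta,v}_{vt}(\eta^\zeta_{vt}(v))$, again an $O(h^2/t)$ Taylor term). By the ergodic theorem / law of large numbers for mixing sequences (cf. Lemma~\ref{le:expectedLogMGfct}(b) and the Hoeffding-type bound of Corollary~\ref{cor:rioDepExp} used in Lemma~\ref{le:concEmpLT}), this average converges to $\E[\overline\eta(v)/v - L^\zeta_1(\overline\eta(v))] = \overline\eta(v)/v - L(\overline\eta(v)) = L^*(1/v)$; but we want $L(\overline\eta(v))$, so one uses instead the cleaner identity: the increment $\sum_{i=vt+1}^{vt+h} L^\zeta_i(\overline\eta(v))/h \to L(\overline\eta(v))$ by the same LLN, while the $\overline\eta(v)/v$ pieces cancel against a corresponding term. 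The upshot for (a) is $\frac1h\ln(u(t,vt+h)/u(t,vt)) \to -L(\overline\eta(v))$... and here I should double-check the sign: since $\zeta\le 0$ and moving the starting point further right should decrease $u$, the correct limit is indeed $L(\overline\eta(v))$ once signs in \eqref{eq:defSn}–\eqref{eq:expec_Leg_LMG} are tracked carefully — I will reconcile this with the statement \eqref{eq:space_pert_genau}. The uniformity over $\mathcal{E}_t$ and the choice of $C(\e)$: the $O(1)$ and $O(h^2/t) = O(h\cdot\e(t)) = o(h)$ and $O(h\sqrt{\ln t/t})$ errors, once divided by $h\ge C(\e)\ln t$, are all $\le\e$ for $t$ large and $C(\e)$ large enough; the deterministic LLN fluctuation of size $O(\sqrt{h\ln h})$ (from Corollary~\ref{cor:rioDepExp} with a union bound over a net of $v$'s and $h$'s, as in Lemma~\ref{le:concEmpLT}), divided by $h\ge C(\e)\ln t$, is also $\le\e$ $\p$-a.s. for large $t$ via Borel–Cantelli.

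For part (b), no division by $h$ is available, but we only need two-sided \emph{exponential} bounds $\Cr{const_space_perturbation}^{-1}e^{-\Cr{const_space_perturbation}h} \le u(t,vt+h)/u(t,vt)\le \Cr{const_space_perturbation}e^{-h/\Cr{const_space_perturbation}}$ for $0\le h\le t\e(t)$ and $v,v+h/t\in V$. From the three-step decomposition, $\ln(u(t,vt+h)/u(t,vt))$ equals, up to a $\p$-a.s. finite additive constant and $o(h)$ errors, $\sum_{i=vt+1}^{vt+h}(\overline\eta(v)/v - L^\zeta_i(\overline\eta(v)))$-type increment, plus $-h\eta^\zeta_{vt+h}(v)\cdot(\text{something})$; using \eqref{eq:zetaBd} one has $L^\zeta_i(\eta)\in[-\sqrt{2(\es-\ei-\eta)}, \eta]$ (via \cite[(2.0.1)]{handbook_brownian_motion} and $\zeta\le 0$), so each increment term lies in a fixed compact subinterval of $(-\infty,0)$ bounded away from $0$ when $\overline\eta(v)$ ranges over the compact $\overline\eta(V)\subset(-\infty,0)$, giving the required strictly-negative linear upper bound; the lower bound follows because the increment terms are bounded below (uniformly) as well. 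I would formalize this as: the leading term $\asymp -h\,\ell(v)$ with $\ell(v)\in[\Cr{const_space_perturbation}^{-1},\Cr{const_space_perturbation}]$ uniformly over $v\in V$, exactly as \eqref{eq:time_pert_ungenau} was deduced from \eqref{eq: time perturbation sums} at the end of the proof of Lemma~\ref{le:time_perturb}(b). The $\p$-a.s.\ finite random time $\Cr{T_index_spacepert}$ is where $\mathcal{H}_{vt}$ for all $v\in V$ holds, where the Lemma~\ref{le:PertEta_n} perturbation estimate kicks in, and where the LLN error is controlled.

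\textbf{Main obstacle.} The delicate point is the uniformity and the sign/constant bookkeeping in passing from the $Y^\approx$-ratio to the Legendre-transform difference and then to the empirical average: one must simultaneously (i) control the perturbation of the tilting parameter in \emph{both} the speed variable ($v\to v''$, size $|h|/t$) and the spatial index ($vt\to vt+h$, also via Lemma~\ref{le:PertEta_n}), showing their combined second-order contribution is $o(h)$ uniformly on $\mathcal{E}_t$; and (ii) get the deterministic fluctuation of the $h$-term empirical average down to $o(h)$, for which the threshold $|h|\ge C(\e)\ln t$ in (a) is exactly what is needed — the $\sqrt{h\ln h}/h$ term is small precisely when $h$ is at least logarithmic, which is why $C(\e)\ln t$ appears in the definition of $\mathcal{E}_t$. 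Part (b) is easier since no such lower threshold on $h$ is imposed and only crude exponential (not sharp $\frac1h$-asymptotic) bounds are required.
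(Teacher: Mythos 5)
The paper does not prove this lemma: it explicitly defers the proof to the companion article \cite{CeDrSc-20}, stating it is central to the results there. Hence no in-paper proof exists against which to compare. That said, your plan is the natural one and mirrors the proof of the analogous time-perturbation Lemma~\ref{le:time_perturb}, using Corollary~\ref{cor: Y_v approx exp part}, Proposition~\ref{prop:ExactLD}, Lemmas~\ref{le:concEtaEmpLT} and~\ref{le:PertEta_n}, and the Hoeffding-type bound of Corollary~\ref{cor:rioDepExp}. The overall structure is correct, but there are two bookkeeping points you flag without fully resolving, and one step in your explicit decomposition is missing.

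First, the sign: from \eqref{eq:lctl_mu_n1} together with Lemma~\ref{le:LCLTHitting} and \eqref{eq:sigmanIneq}, one has $\ln Y_v^\approx(x) = -S_x^{\zeta,v}\bigl(\eta_x^\zeta(v)\bigr) + O(1)$, so the relevant quantity is $-\bigl(S_{vt+h}^{\zeta,v''}(\eta_{vt+h}^\zeta(v'')) - S_{vt}^{\zeta,v}(\eta_{vt}^\zeta(v))\bigr)$; your display has the opposite sign.

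Second, and more substantively: your three-step decomposition changes the argument $\eta_{vt+h}^\zeta(v'')\to\eta_{vt+h}^\zeta(v)$ (step~1) and then jumps to ``pass from $S^{\zeta,v}_{vt+h}$ to $S^{\zeta,v}_{vt}$'' (step~2). But after step~1 you still have the superscript $v''$. The omitted step, changing $S^{\zeta,v''}_{vt+h}(\eta)\to S^{\zeta,v}_{vt+h}(\eta)$, is precisely where the cancellation you invoke lives: since $S_x^{\zeta,v}(\eta)=\frac{x\eta}{v}-\sum_{i=1}^x L_i^\zeta(\eta)$ and $v''=(vt+h)/t$, one has
\[
S_{vt+h}^{\zeta,v''}(\eta) - S_{vt+h}^{\zeta,v}(\eta) = (vt+h)\eta\Bigl(\frac{1}{v''}-\frac{1}{v}\Bigr) = -\frac{h\eta}{v},
\]
which exactly cancels the $\sum_{i=vt+1}^{vt+h}\frac{\eta}{v}=\frac{h\eta}{v}$ arising in step~2. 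Equivalently (and more transparently), because $(vt+h)/v'' = t = vt/v$, the $\eta$-linear parts of $S^{\zeta,v''}_{vt+h}$ and $S^{\zeta,v}_{vt}$ both equal $t\eta$ and drop out directly, leaving $-\bigl(S_{vt+h}^{\zeta,v''}(\eta_1)-S_{vt}^{\zeta,v}(\eta_1)\bigr)=\sum_{i=vt+1}^{vt+h}L_i^\zeta(\eta_1)$ with $\eta_1=\eta_{vt}^\zeta(v)$, up to the two quadratic Taylor remainders of size $O(h^2/t)=O(h\varepsilon(t))=o(h)$. You should spell this out rather than gesturing at ``the $\overline\eta(v)/v$ pieces cancel against a corresponding term''.

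Third, a small point on the Borel--Cantelli step: the a.s.\ fluctuation bound for $\sum_{i=vt+1}^{vt+h}\bigl(L_i^\zeta(\overline\eta(v))-L(\overline\eta(v))\bigr)$ cannot be obtained by applying $|T_k|\le C\sqrt{k\ln k}$ at $k=vt$ and $k=vt+h$ separately and subtracting (that gives a bound of order $\sqrt{t\ln t}$, which divided by $h$ is not small). You must bound the increment $T_{vt+h}-T_{vt}$ directly via Hoeffding, then union-bound over the $\mathrm{poly}(t)$ choices of (a discretized) $v$ and $h$, and use that $h\ge C(\varepsilon)\ln t$ with $C(\varepsilon)$ large enough makes the resulting tail summable; your phrasing ``$O(\sqrt{h\ln h})$'' is not quite the right form, but your identification of the role of $C(\varepsilon)\ln t$ is correct. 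With these fixes the argument for (a) goes through, and (b) follows as you sketch since the additive $O(1)$ and the $o(h)$ Taylor errors can be absorbed into $\Cr{const_space_perturbation}$ while the leading term $\sum_{i=vt+1}^{vt+h}L_i^\zeta(\overline\eta(v))$ lies between $-\Cr{constDerLMG}h$ and $-\Cr{constDerLMG}^{-1}h$ uniformly by Lemma~\ref{le:log mom fct and derivatives}(b).
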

The proof of Lemma~\ref{le:space_perturb} can be found in the companion article \cite{CeDrSc-20}; indeed, it is central to the results of \cite{CeDrSc-20} and we hence prefer not to duplicate it here due to space constraints. 

\subsection{Approximation results}
In this section we mainly show how moment generating functions can be used in order to approximate quantities related to the solution to \eqref{eq:PAM} and BBMRE. As usual, for $u_0\in\mathcal{I}_{\text{PAM}}$ let $u^{u_0}$ be the solution to \eqref{eq:PAM} with initial condition $u_0$. 
\begin{lemma}
	\label{le:ApproxPAMbyLMG}
	There exists a constant $\Cl{const approx to pam with speed v0}>0$ and a $\p$-a.s.\  finite random variable  $\Cl[index_prob_cont_time]{random index approx to pam spped v0}$ such that  for all $u_0\in\mathcal{I}_{\text{PAM}}$ and  $t\geq\Cr{random index approx to pam spped v0},$
	\begin{equation}
	\label{eq: cor approx to pam speed v0}
	\Big| \ln u^{u_0}(t,v_0t) - \sum_{i=1}^{v_0t}\big( L_i^\zeta(\overline{\eta}(v_0)) - L(\overline{\eta}(v_0)) \big) \Big| \leq \Cr{const approx to pam with speed v0} \ln t.
	\end{equation}
\end{lemma}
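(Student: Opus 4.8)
The goal is to show that $\ln u^{u_0}(t,v_0t)$ is, up to an error of order $\ln t$, equal to the centered sum $\sum_{i=1}^{v_0t}\big(L_i^\zeta(\overline{\eta}(v_0)) - L(\overline{\eta}(v_0))\big)$. I would obtain this by chaining together results already established: Corollary~\ref{cor: Y_v approx exp part}, which relates $u^{u_0}(t,vt)$ (via the Feynman--Kac representation) to $e^{\es t}Y_v^\approx(vt)$ up to a multiplicative constant on $\mathcal{H}_{vt}$; Proposition~\ref{prop:ExactLD} together with \eqref{eq:Y_v^approxStab} and \eqref{eq: prop: Y_v^bla vergleichbar}, which controls $\ln Y_v^\approx(vt)$ in terms of $vtL^*(1/v) + \sqrt{vt}\,W_{vt}^v(1)$ up to $\tfrac12\ln\sigma_{vt}^\zeta(v)$; and Corollary~\ref{cor:invarLegTrafo}, which identifies $\sqrt{vt}\,W_{vt}^v(1)$ with the centered sum $\sum_{i=1}^{vt}\big(L(\overline{\eta}(v)) - L_i^\zeta(\overline{\eta}(v))\big)$ up to an error $\Cr{const_concSn}(\ln v + \ln t)$.

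First I would fix $v=v_0\in\text{int}(V)$ and note that by \eqref{eq:H_n} there is a $\p$-a.s.\ finite random variable $\Cr{constHn}$ such that $\mathcal{H}_{v_0t}$ occurs for all $t\geq \Cr{constHn}/v_0$; set $\Cr{random index approx to pam spped v0}:=\Cr{constHn}/v_0$. On this event, Corollary~\ref{cor:comp_heavi} and the sandwich $\delta'\mathds{1}_{[-\delta',0]}\leq u_0\leq C'\mathds{1}_{(-\infty,0]}$ reduce the claim to $u_0=\mathds{1}_{(-\infty,0]}$ at the cost of an additive $\mathcal{O}(1)$ in $\ln u^{u_0}$. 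For the Heaviside case, Corollary~\ref{cor: Y_v approx exp part} gives $\big|\ln u^{\mathds{1}_{(-\infty,0]}}(t,v_0t) - \es t - \ln Y_{v_0}^\approx(v_0t)\big|\leq \ln\Cr{constY_vapprox}$. Next, \eqref{eq:Y_v^approxStab} lets me replace $\ln Y_{v_0}^\approx(v_0t)$ by $\ln Y_{v_0}^>(v_0t)$ up to $\ln\Cr{constY_v}$, and then \eqref{eq: prop: Y_v^bla vergleichbar} yields
\[
\big|\ln Y_{v_0}^>(v_0t) + \tfrac12\ln\Var_{v_0t}^{\zeta,\eta_{v_0t}^\zeta(v_0)}(H_0)\cdot\text{(sign bookkeeping)} + v_0tL^*(1/v_0) + \sqrt{v_0t}\,W_{v_0t}^{v_0}(1)\big|\leq\ln\Cr{constY_v},
\]
where I should be careful: the precise statement is $\sigma_{v_0t}^\zeta(v_0)Y_{v_0}^>(v_0t)\exp\{v_0tL^*(1/v_0)+\sqrt{v_0t}W_{v_0t}^{v_0}(1)\}\in[\Cr{constY_v}^{-1},\Cr{constY_v}]$, so taking logarithms gives an $\mathcal{O}(1)$-controlled identity modulo $\ln\sigma_{v_0t}^\zeta(v_0)$, which by \eqref{eq:sigmanIneq} is $\tfrac12\ln(v_0t) + \mathcal{O}(1)$ and hence is $\mathcal{O}(\ln t)$. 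Combining and using Corollary~\ref{cor:invarLegTrafo} to write $\sqrt{v_0t}\,W_{v_0t}^{v_0}(1) = \sum_{i=1}^{v_0t}\big(L(\overline{\eta}(v_0)) - L_i^\zeta(\overline{\eta}(v_0))\big) + \mathcal{O}(\ln t)$, together with the identity $\es - v_0L^*(1/v_0) = \Lambda(v_0) = 0$ from \eqref{eq:Lyapunov_Legendre} and $\Lambda(v_0)=0$ (definition of $v_0$), the term $\es t - v_0tL^*(1/v_0)$ vanishes, and I am left precisely with $-\sum_{i=1}^{v_0t}\big(L(\overline{\eta}(v_0)) - L_i^\zeta(\overline{\eta}(v_0))\big) = \sum_{i=1}^{v_0t}\big(L_i^\zeta(\overline{\eta}(v_0)) - L(\overline{\eta}(v_0))\big)$ plus an error that is $\mathcal{O}(\ln t)$. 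Absorbing all the $\mathcal{O}(1)$ and $\mathcal{O}(\ln t)$ contributions into a single constant $\Cr{const approx to pam with speed v0}$ (valid for $t$ large, i.e.\ $t\geq\Cr{random index approx to pam spped v0}$ after possibly enlarging the latter) finishes the proof.

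The main obstacle, I expect, is bookkeeping rather than any genuinely new estimate: one must track the sign conventions in the sums $\sum_{i=1}^{x}$ for non-integer $x$ (Section~\ref{sec:convention}), make sure the $\tfrac12\ln\sigma_{v_0t}^\zeta(v_0)$ contribution is correctly identified as $\mathcal{O}(\ln t)$ via \eqref{eq:sigmanIneq} rather than inadvertently conflated with the diffusive term, and verify that Corollary~\ref{cor:invarLegTrafo}'s error $\Cr{const_concSn}(\ln v_0 + \ln t)$ is indeed $\mathcal{O}(\ln t)$ for fixed $v_0$. A secondary point worth a sentence is that the reduction to $u_0=\mathds{1}_{(-\infty,0]}$ via Corollary~\ref{cor:comp_heavi} requires $\mathcal{H}_{v_0t}$, which is exactly why the statement is only claimed for $t$ beyond a $\p$-a.s.\ finite random time.
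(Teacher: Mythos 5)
Your proposal is correct and follows essentially the same chain of estimates as the paper's proof: the sandwich $\delta'\mathds{1}_{[-\delta',0]}\leq u_0\leq C'\mathds{1}_{(-\infty,0]}$ combined with Lemma~\ref{le:Y_vapprox}, then Proposition~\ref{prop:ExactLD} with Corollary~\ref{cor:invarLegTrafo}, and finally \eqref{eq:sigmanIneq} for the $\ln\sigma_{v_0t}^\zeta(v_0)$ term and $L^*(1/v_0)=\es/v_0$ (from $\Lambda(v_0)=0$). The only slip is cosmetic — you wrote ``$\tfrac12\ln\sigma_{v_0t}^\zeta(v_0)$'' where \eqref{eq: prop: Y_v^bla vergleichbar} actually produces $\ln\sigma_{v_0t}^\zeta(v_0)$, but since you then correctly identify $\ln\sigma_{v_0t}^\zeta(v_0)=\tfrac12\ln(v_0t)+\mathcal{O}(1)$ via \eqref{eq:sigmanIneq} and absorb it into the $\mathcal{O}(\ln t)$ error, the argument stands.
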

\begin{proof}
	By \eqref{eq:Lyapunov_Legendre} and $\Lambda(v_0)=0$, we have $L^*(1/v_0)=\frac{\es}{v_0}$. Also, by \eqref{eq:PAM_initial} and monotonicity of the solution to \eqref{eq:PAM} in its initial condition, we have $u^{\delta'\mathds{1}_{[-\delta',0]}} \leq u^{u_0}\leq u^{ C'\mathds{1}_{ (-\infty,0]  } }$.  Thus, on the one hand, by the many-to-few lemma (Proposition \ref{prop:many-to-few}) and Lemma~\ref{le:Y_vapprox}, for all $u_0\in\mathcal{I}_{\text{PAM}}$ and $t$ such that $v_0t\geq\Cr{constHn}$, where $\Cr{constHn}$ was defined in \eqref{eq:H_n},  we have 
	\begin{equation*}
	\left| \ln  u^{u_0}(t,v_0t) - \left( \ln Y_{v_0}^\approx(v_0t) + v_0tL^*(1/v_0) \right) \right| \leq \ln (\Cr{constY_vapprox}/\delta').
	\end{equation*}
	On the other hand, due to Proposition~\ref{prop:ExactLD} and Corollary~\ref{cor:invarLegTrafo},  there exists a finite random variable $\mathcal{N}$, such that for all $t\geq \mathcal{N}$ we have 
	\begin{equation*}
	\Big| \ln Y_{v_0}^\approx({v_0t}) + v_0tL^*(1/v_0) +  \ln\sigma_{v_0t}^\zeta(v_0) - \sum_{i=1}^{v_0t}\big( L_i^{\zeta}(\overline{\eta}(v_0)) - L(\overline{\eta}(v_0)) \big) \Big| \leq \ln \Cr{constY_v} + \Cr{const_concSn}\ln v_0 +  \Cr{const_concSn} \ln t.
	\end{equation*}
	Finally, by \eqref{eq:sigmanIneq}, $|\ln \sigma_{v_0t}^\zeta(v_0) - \frac{1}{2}\ln t|\leq \ln \Cr{constSigman} + \frac{1}{2}|\ln v_0|$ for all $t$ such that  $v_0 t\geq\Cr{constHn}$.
	Combining this with the previous two display, inequality \eqref{eq: cor approx to pam speed v0} follows with $\Cr{random index approx to pam spped v0}:=(\mathcal{N}\vee \Cr{constHn})/v_0$ and $\Cr{const approx to pam with speed v0}$ suitable. 
\end{proof}

We introduce the so-called \emph{breakpoint inverse}
\begin{equation}
\label{eq:defT_n1}
T_x^{u_0,a}:=\inf\big\{ t\geq0: u^{u_0}(t,x)\geq a  \big\},\quad x\in\R,\ a \in [0,\infty),\ u_0\in\mathcal{I}_{\text{PAM}},
\end{equation}
and abbreviate
\begin{equation}
\label{eq:defT_n}
T_x^{(a)}:=T_x^{\mathds{1}_{(-\infty,0]},a}.
\end{equation} 
Next, we state an important approximation result for $T_x^{u_0,a}$, $x\geq 0$, in terms of the centered logarithmic moment generating functions.
\begin{lemma}
	\label{le:ApproxT_n}
	There exists a constant $\Cl{const approx of Tn}<\infty$ and a $\p$-a.s.\  finite random variable $\Cl[constant]{const prob approx of Tn} = \Cr{const prob approx of Tn}(a,u_0)$, $a>0,u_0\in\mathcal{I}_{\text{PAM}}$, such that  for all $x\geq 1,$ 
	\begin{equation}
	\label{eq:ApproxT_n}
	\Big| T_x^{u_0,a} - \frac{1}{v_0 L(\overline{\eta}(v_0))}\sum_{i=1}^{x} L_i^{\zeta}(\overline{\eta}(v_0)) \Big| \leq \Cr{const prob approx of Tn} +  \Cr{const approx of Tn}\ln x.
	\end{equation}
	Additionally, for each $u_0\in \mathcal{I}_{\text{PAM}}$ and  $a>0,$
	\begin{equation}
	\label{eq:LimitTn/n}
	\lim_{x\to\infty} \frac{T_x^{u_0,a}}{x}=\frac{1}{v_0}\quad \p\text{-a.s}.
	\end{equation}
\end{lemma}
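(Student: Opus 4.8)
The plan is to derive \eqref{eq:ApproxT_n} from the approximation result of Lemma~\ref{le:ApproxPAMbyLMG}, which links $\ln u^{u_0}(t,v_0t)$ to the sum $\sum_{i=1}^{v_0t}(L_i^\zeta(\overline\eta(v_0)) - L(\overline\eta(v_0)))$ up to an error of order $\ln t$, and then invert the relation $t \mapsto \ln u^{u_0}(t,v_0t)$ to pass to the breakpoint inverse $T_x^{u_0,a}$. The starting observation is that $T_x^{u_0,a}$ is, roughly, the time at which the front reaches the site $x$, so $x \approx v_0 T_x^{u_0,a}$; more precisely one should first establish \eqref{eq:LimitTn/n}, i.e. $T_x^{u_0,a}/x \to 1/v_0$ almost surely. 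This follows from the first-order asymptotics of the front $\munder(t)/t \to v_0$ (recorded in the excerpt after \eqref{eq:lyapunov}) together with monotonicity of $t \mapsto u^{u_0}(t,x)$; alternatively it is a direct consequence of Lemma~\ref{le:ApproxPAMbyLMG} combined with the law of large numbers $\frac1x\sum_{i=1}^x L_i^\zeta(\overline\eta(v_0)) \to L(\overline\eta(v_0))$ from \eqref{eq:LLNEmpLogMG}, since then $\ln u^{u_0}(t,v_0t)$ is $o(t)$ and one reads off that the front moves at speed $v_0$.

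The core of the argument is the following. Fix $x$ large and write $\te := T_x^{u_0,a}$; by definition $u^{u_0}(\te, x) = a$ (using continuity of $u$ in $t$ for $t>0$, which holds by the regularity discussed around \eqref{eq:feynman_kac}, or working with the inequality $u^{u_0}(\te,x) \geq a$ and $u^{u_0}(\te - \delta, x) < a$ and letting $\delta \downarrow 0$). We want to express $x$ in terms of $\te$. Write $x = v_0 \te + h$ where, by \eqref{eq:LimitTn/n}, $h = o(\te)$; in fact we need the sharper control $|h| \leq C\ln \te$. To get this, apply the space-perturbation estimate \eqref{eq:space_pert_ungenau} of Lemma~\ref{le:space_perturb}(b) to move from site $x = v_0\te + h$ back to site $v_0\te$: this gives $|\ln u^{u_0}(\te,x) - \ln u^{u_0}(\te, v_0\te)| \leq \Cr{const_space_perturbation}|h| + \ln\Cr{const_space_perturbation}$, hence $|\ln u^{u_0}(\te,v_0\te)| = |\ln a - (\text{that difference})| \leq \Cr{const_space_perturbation}|h| + O(1)$. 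On the other hand Lemma~\ref{le:ApproxPAMbyLMG} gives $|\ln u^{u_0}(\te,v_0\te)| \leq |\sum_{i=1}^{v_0\te}(L_i^\zeta(\overline\eta(v_0)) - L(\overline\eta(v_0)))| + \Cr{const approx to pam with speed v0}\ln\te$, and this centered sum is itself $O(\sqrt{\te \ln\te})$ — but that bound is too weak. Instead, the better route is: from Lemma~\ref{le:ApproxPAMbyLMG} applied at $(\te, v_0\te)$ and the fact that $\ln u^{u_0}(\te,x) = \ln a$, combined with the space perturbation, we obtain
\[
\Big| \sum_{i=1}^{v_0\te}\big(L_i^\zeta(\overline\eta(v_0)) - L(\overline\eta(v_0))\big) + h\cdot\big(\es - \overline\eta(v_0) - \text{correction}\big) \Big| \leq \Cr{const approx to pam with speed v0}\ln\te + O(1),
\]
and since (by the space-perturbation computation, or by recalling $u^{u_0}(t,x) = e^{\es t} Y^{\approx}$-type quantities) the coefficient of $h$ is bounded away from zero, we can solve for $h$ and estimate $|h| \leq c\,\big(|\sum_{i=1}^{v_0\te}(L_i^\zeta - L)| + \ln\te\big)$. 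This would still leave the $\sqrt{\te\ln\te}$ term unless one uses a cleaner inversion. The clean way: feed $x$ (not $v_0\te$) directly. Since $u^{u_0}(\te, x) = a$, Lemma~\ref{le:ApproxPAMbyLMG} would be ideal if it were stated at $(\te, x)$ rather than $(\te, v_0\te)$; but $x = v_0\te + O(\ln\te)$ by \eqref{eq:LimitTn/n} is not yet sharp enough to apply it. So one applies \eqref{eq:space_pert_ungenau} to replace $\ln u^{u_0}(\te, v_0\te)$ by $\ln u^{u_0}(\te, x) = \ln a$ at the cost of $\Cr{const_space_perturbation}|x - v_0\te|$, then the combination of this bound with Lemma~\ref{le:ApproxPAMbyLMG} yields a \emph{self-improving} inequality for $|x - v_0\te|$: it appears on the left with coefficient bounded below, and on the right only through $\ln\te$ and bounded terms (the centered sum having cancelled against $\ln u^{u_0}(\te,v_0\te)$ up to its own $\ln$-error). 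Resolving this gives $|x - v_0\te| \leq C\ln\te \leq C'\ln x$.

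With $x = v_0 T_x^{u_0,a} + O(\ln x)$ in hand, we substitute into Lemma~\ref{le:ApproxPAMbyLMG} once more, now legitimately at the site $x$: using that $\sum_{i=1}^{v_0 T_x} L_i^\zeta(\overline\eta(v_0))$ and $\sum_{i=1}^{x} L_i^\zeta(\overline\eta(v_0))$ differ by at most $\Cr{constDerLMG}\cdot|x - v_0 T_x| = O(\ln x)$ (each summand is uniformly bounded, cf.\ Lemma~\ref{le:log mom fct and derivatives}), and that $\ln u^{u_0}(T_x, x) = \ln a = O(1)$, we get
\[
\Big| \sum_{i=1}^{x}\big(L_i^\zeta(\overline\eta(v_0)) - L(\overline\eta(v_0))\big) \Big| \leq \Cr{const approx of Tn}' \ln x + \Cr{const prob approx of Tn}.
\]
Rearranging, $\sum_{i=1}^x L_i^\zeta(\overline\eta(v_0)) = x\,L(\overline\eta(v_0)) + O(\ln x)$, hence $\frac{1}{v_0 L(\overline\eta(v_0))}\sum_{i=1}^x L_i^\zeta(\overline\eta(v_0)) = \frac{x}{v_0} + O(\ln x)$, and since we also have $T_x^{u_0,a} = \frac{x}{v_0} + O(\ln x)$, subtracting gives \eqref{eq:ApproxT_n}. (Here we use $L(\overline\eta(v_0)) \neq 0$: indeed $L(0) = \es/v_0 > 0$ by \eqref{eq:expec_Leg_LMG} with $L^*(1/v_0) = \es/v_0$ from $\Lambda(v_0)=0$ and \eqref{eq:Lyapunov_Legendre}, and more generally $L(\overline\eta(v_0))$ is finite and the sign/non-degeneracy follows from the strict monotonicity of $L$ and the explicit lower bound in the proof of Lemma~\ref{le:expectedLogMGfct}(c).) Finally \eqref{eq:LimitTn/n} is immediate from \eqref{eq:ApproxT_n} divided by $x$, together with the strong law \eqref{eq:LLNEmpLogMG}: $\frac1x \sum_{i=1}^x L_i^\zeta(\overline\eta(v_0)) \to L(\overline\eta(v_0))$, so $T_x^{u_0,a}/x \to \frac{L(\overline\eta(v_0))}{v_0 L(\overline\eta(v_0))} = \frac1{v_0}$.

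The main obstacle I anticipate is the \textbf{bootstrap step} establishing $|x - v_0 T_x^{u_0,a}| = O(\ln x)$ sharply: a naive application of Lemma~\ref{le:ApproxPAMbyLMG} together with a crude bound on the centered sum only yields $O(\sqrt{x\ln x})$, which is not good enough. One must exploit that the \emph{same} centered sum appears both in the approximation of $\ln u^{u_0}(T_x, v_0 T_x)$ and — after the space perturbation — is pinned by $\ln a = O(1)$, so that it cancels and only the genuinely logarithmic error terms survive; the non-degeneracy of the linear-in-$h$ coefficient (essentially $\es - \overline\eta(v_0) > 0$) is what lets one solve for $h$. Getting the direction of the perturbation inequalities right (and checking that \eqref{eq:space_pert_ungenau} applies, i.e.\ that the relevant $\mathcal H_{v_0 T_x}$-type events hold for $x$ large, via the Borel--Cantelli argument already used for \eqref{eq:H_n}) is the delicate bookkeeping; everything else is routine once this is in place.
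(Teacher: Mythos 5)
There is a genuine gap, and it sits exactly at the step you yourself flag as the main obstacle. Your bootstrap claim $|x-v_0T_x^{u_0,a}|\le C\ln x$ is false, and so is the intermediate inequality $\big|\sum_{i=1}^{x}(L_i^\zeta(\overline\eta(v_0))-L(\overline\eta(v_0)))\big|\le C\ln x$ that you derive from it: this centered sum has genuinely Gaussian fluctuations of order $\sqrt{x}$ (Lemma~\ref{le:CLT_S_n}), and if it were $O(\ln x)$ then $\sigma_{v_0}^2=0$ and Theorem~\ref{th:InvFrontPAM} would be vacuous. The "cancellation" you invoke does not occur. What is pinned to $O(1)$ is $\ln u(\te,x)=\ln a$ at the \emph{front}, whereas $\ln u(\te,v_0\te)$ is not $O(\ln\te)$ --- by Lemma~\ref{le:ApproxPAMbyLMG} it \emph{equals} the centered sum up to an $O(\ln\te)$ error. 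Feeding this into the space perturbation gives $L(\overline\eta(v_0))\,(x-v_0\te)\approx \ln a-\ln u(\te,v_0\te)$, i.e.\ $x-v_0\te\approx -\tfrac{1}{L(\overline\eta(v_0))}\sum_{i=1}^{v_0\te}(L_i^\zeta-L)+O(\ln\te)=O(\sqrt{\te\ln\te})$; the sum survives on the right-hand side rather than cancelling. There is a second, independent obstruction: even after correcting the order, your route produces the sum indexed up to $v_0T_x$, not up to $x$, and re-indexing costs a sum of $|x-v_0T_x|\sim\sqrt{x}$ centered bounded terms, which is of order $x^{1/4}$ and not $O(\ln x)$. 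So the space-perturbation-at-fixed-time strategy cannot reach the precision asserted in \eqref{eq:ApproxT_n}, which is exactly the precision needed later (e.g.\ for \eqref{eq:approx_R}).

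The paper's proof sidesteps both problems by perturbing in \emph{time} at the fixed spatial point $x=v_0t$. Setting $h_t:=\tfrac{1}{v_0L(\overline\eta(v_0))}\sum_{i=1}^{v_0t}(L(\overline\eta(v_0))-L_i^\zeta(\overline\eta(v_0)))$, one first shows $|h_t|\le c\sqrt{t\ln t}$ a.s.\ (Hoeffding-type inequality plus Borel--Cantelli), then evaluates $\ln u^{u_0}(t-h_t+\alpha\ln t,\,v_0t)$ using Lemma~\ref{le:ApproxPAMbyLMG} together with the time-perturbation Lemma~\ref{le:time_perturb}(a). The key identity $\es-\overline\eta(v_0)=v_0L(\overline\eta(v_0))$ makes the $h_t$-contributions cancel exactly, leaving a main term $\alpha(\es-\overline\eta(v_0))\ln t$ plus errors of order $\ln t$ (the quadratic error $|h|^2/t$ in Lemma~\ref{le:time_perturb}(a) is precisely $O(\ln t)$ for $|h|\sim\sqrt{t\ln t}$). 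Choosing $\alpha$ large positive or negative forces $u$ above $a$ or below $a/(2C')$, which sandwiches $T_{v_0t}^{u_0,a}$ between $t-h_t\pm C\ln t$; since the spatial point never moves, the sum $\sum_{i=1}^{x}$ appears with the correct index from the start. Finally, \eqref{eq:LimitTn/n} is read off from \eqref{eq:ApproxT_n} and \eqref{eq:LLNEmpLogMG}, as you note. If you want to salvage your outline, replace the space perturbation by this time perturbation and drop the bootstrap entirely; the a priori bound you need on $h_t$ is $O(\sqrt{t\ln t})$, not $O(\ln t)$.
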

\begin{proof} We set $t=x/v_0$ and let 
	\[ h_t:=\frac{1}{v_0 L( \overline{\eta}(v_0) )}\sum_{i=1}^{v_0t}\big(  L( \overline{\eta}(v_0) - L_{i}^\zeta(\overline{\eta}(v_0))  \big). \]
	We first note that due to Lemma~\ref{le:expMixLi}, the family
	$ (L(\overline{\eta}(v_0))-L_i^\zeta(\overline{\eta}(v_0)))_{i\in\Z}$ satisfies the assumptions of Corollary~\ref{cor:rioDepExp} with all $m_i$ equal to some large enough finite constant and thus $\sum_n \p(|h_n|\geq C\sqrt{n\ln n})<\infty$ for some $C>1$ large enough. The first Borel-Cantelli lemma then readily supplies us with $|h_n|<C\sqrt{n\ln n}$ $\p$-a.s.\  for all $n$ large enough. To control non-integer $t$, we recall
	\begin{align*}
	h_{t}-h_{\lfloor t\rfloor} &= \frac{1}{v_0L(\overline{\eta}(v_0))} \big( v_0(t-\lfloor t\rfloor)L(\overline{\eta}(v_0)) - \ln E_{v_0 t}\big[ e^{\int_0^{H_{v_0\lfloor t\rfloor}}(\zeta(B_s)+\overline{\eta}(v_0))\diff s} \big] \big)
	\end{align*}
	and hence $\p$-a.s.\ that  $|h_t-h_{\lfloor t\rfloor}|\leq  1 + \frac{\sqrt{2(\es-\ei-\overline{\eta}(v_0))}}{|L(\overline{\eta}(v_0))|}$ by \eqref{eq:POT} and \cite[(2.0.1), p.~204]{handbook_brownian_motion}, thus giving 
	\begin{equation} \label{eq:htBd}
	|h_t|<c_1\sqrt{t\ln t}\ \p\text{-a.s.\  for all $t$ large enough.}
	\end{equation}
	
	To show the desired inequality, we note that \eqref{eq:ApproxT_n} is equivalent to 
	\begin{equation}
	\label{eq:ApproxT_n2}
	\big| T_{v_0t}^{u_0,a} -  (t - h_t )  \big| \leq \Cr{const prob approx of Tn} +  \Cr{const approx of Tn}\ln (v_0t).
	\end{equation}
	For proving the latter, observe that it is sufficient to show that we can choose   $\Cr{const approx of Tn}>0$ as well as a $\p$-a.s.\ finite random variable $\mathcal{T},$ such that 
	\begin{align}
	\label{eq:T_x_proof}
	\begin{split}
	u^{ \delta' \mathds{1}_{ [-\delta',0] } }(t-h_t+\Cr{const approx of Tn}\ln t,v_0t) &\geq a \quad \text{ and }\quad
	u^{ \mathds{1}_{ (-\infty,0] } }(t-h_t-\Cr{const approx of Tn}\ln t,v_0t) < \frac{a}{2C'},  \, \forall t\geq\mathcal{T}.
	\end{split}
	\end{align}
	with $\delta',C'$ from \eqref{eq:PAM_initial}. Indeed, due to \eqref{eq:PAM_initial}, the first inequality in \eqref{eq:T_x_proof} implies $T_{v_0t}^{u_0,a} \leq  T_{v_0t}^{ \delta' \mathds{1}_{ [-\delta',0] } ,a} \leq t-h_t+\Cr{const approx of Tn}\ln t$ for $t\geq\mathcal{T}$. To use the second inequality, first note that  $T_{v_0t}^{u_0,a}\geq T_{v_0t}^{ C'\mathds{1}_{(-\infty,0]}, a }= T_{v_0t}^{ \mathds{1}_{(-\infty,0]}, a/C' }$. Then,  using  $u^{\mathds{1}_{(-\infty,0]}}(s,x)=\Eprob_{x}^\xi\big[ N^\leq(s,0) \big]$ and Lemma~\ref{le:MomPAMIncrease}, \eqref{eq:T_x_proof} implies $T_{v_0t}^{ \mathds{1}_{(-\infty,0]}, a/C' }\geq  t-h_t-\Cr{const approx of Tn}\ln t$ for all $t\geq\mathcal{T}$. For $t<\mathcal{T}$ on the other hand, we can use that $\p$-a.s., the family $T_{v_0t}^{u_0,a},$ $t<\mathcal{T},$  as well as the $L_i^\zeta(\overline{\eta}(v_0))$ are uniformly bounded, allowing us to upper bound the remaining cases of \eqref{eq:ApproxT_n2} by some finite random variable $\Cr{const prob approx of Tn}$. 
	
	Thus, in order to show \eqref{eq:T_x_proof}, note that  for $\alpha\in\R$ and uniformly in $u_0\in\mathcal{I}_{\text{PAM}}$ we have that
	\begin{align*}
	&\ln u^{u_0}(t-h_t+\alpha\ln t,v_0t) \\ & = \ln \Big( \frac{u^{u_0}(t-h_t+\alpha\ln t,v_0t)}{u^{u_0}(t,v_0t)} \Big) +  \sum_{i=1}^{v_0t}\big( L_i^\zeta(\overline{\eta}(v_0)) - L(\overline{\eta}(v_0)) \big) + a_t \\
	&=(-h_t + \alpha\ln t)(\es-\overline{\eta}(v_0))  + v_0L(\overline{\eta}(v_0))h_t +   b(\alpha,t)
	=\alpha(\es-\overline{\eta}(v_0))\ln t + b(\alpha,t),
	\end{align*}
	for some error terms $a_t$ and $b(\alpha,t)$ fulfilling $|a_t|\leq \Cr{const approx to pam with speed v0}\ln t$ and
	\[ |b(\alpha,t)| \leq \Cr{const approx to pam with speed v0}\ln t + \Cr{const_time_pert_genau} +  \Cr{const_time_pert_genau} \cdot |\alpha\ln t -h_t|\cdot \Big( \sqrt{\frac{\ln t}{t}} + \frac{| \alpha \ln t -h_t |}{t} \Big) \]
	for all $t$ large enough. 
	Indeed, the first equality is due to Lemma~\ref{le:ApproxPAMbyLMG}, the second due to the time perturbation Lemma~\ref{le:time_perturb}, the last one due to the identity $\es-\overline{\eta}(v_0)= v_0L(\overline{\eta}(v_0)).$ 
	Then due to $|h_t|\leq c_1\sqrt{t\ln t}$ for large $t$ (cf.\ \eqref{eq:htBd}), choosing $\Cr{const approx of Tn}=\alpha>\frac{2\Cr{const_time_pert_genau}\cdot C^2}{\es-\overline{\eta}(v_0)}$ the latter term tends to infinity, supplying us with \eqref{eq:T_x_proof}. 
	
	To complete the proof, equation \eqref{eq:LimitTn/n} is  a direct consequence of \eqref{eq:ApproxT_n} and  \eqref{eq:LLNEmpLogMG}.	
\end{proof}
Recall the definition $\munder^{u_0,a}=\munder^{\xi,u_0,a}$  from  \eqref{eq:def_munder}. 
\begin{corollary}
	\label{cor:breakpoint_limit}
	For all $u_0\in\mathcal{I}_{\text{PAM}}$ and  $a>0$ we have 
	\begin{equation}
	\label{eq:LLN_breakpoint_PAM}
	\frac{\munder^{u_0,a}(t)}{t} \tend{t}{\infty} v_0\quad\p\text{-a.s.}
	\end{equation}
\end{corollary}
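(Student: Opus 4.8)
The plan is to transfer the law of large numbers \eqref{eq:LimitTn/n} for the breakpoint inverse $T_x^{u_0,a}$ to the breakpoint $\munder^{u_0,a}$, exploiting that the two quantities are, up to monotonicity in time, inverse to each other. Write $u=u^{u_0}$. For each fixed $t$ the level set $\{x\in\R:u(t,x)\geq a\}$ is bounded above, since $u(t,\cdot)\to 0$ at $+\infty$ by \eqref{eq:feynman_kac} and \eqref{eq:PAM_initial}; hence $\munder^{u_0,a}(t)<\infty$. Moreover $s\mapsto u(s,x)$ is continuous on $(0,\infty)$, so $u(T_x^{u_0,a},x)\geq a$ whenever $T_x^{u_0,a}<\infty$. \emph{Upper bound.} Fix $\e>0$ and, using \eqref{eq:LimitTn/n}, pick $\p$-a.s.\ a random $x_1$ with $T_x^{u_0,a}\geq x(\tfrac1{v_0}-\e')$ for all $x\geq x_1$, where $\e'>0$ is a small constant to be chosen. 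If $\munder^{u_0,a}(t)\geq (v_0+\e)t$ for some $t$, then by definition of the supremum the level set contains a point $x^*\geq (v_0+\tfrac\e2)t$ with $u(t,x^*)\geq a$, so $T_{x^*}^{u_0,a}\leq t$; for $t$ large enough that $x^*\geq x_1$ this forces $t\geq x^*(\tfrac1{v_0}-\e')\geq (v_0+\tfrac\e2)t(\tfrac1{v_0}-\e')>t$ once $\e'$ is small (depending on $\e,v_0$), a contradiction. Hence $\limsup_{t\to\infty}\munder^{u_0,a}(t)/t\leq v_0+\e$ $\p$-a.s.; this step uses only \eqref{eq:LimitTn/n}.

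\emph{Lower bound.} Fix $\e\in(0,v_0)$, set $x_t:=\lceil(v_0-\e)t\rceil$, and choose $\e'>0$ with $(v_0-\e)(\tfrac1{v_0}+\e')<1$. By \eqref{eq:LimitTn/n}, $\p$-a.s.\ there is a random $t_0$ such that for all $t\geq t_0$ we have $T_{x_t}^{u_0,a}\leq x_t(\tfrac1{v_0}+\e')<t$, and therefore $u(T_{x_t}^{u_0,a},x_t)\geq a$ with $T_{x_t}^{u_0,a}\leq t$. To pass from this to $u(t,x_t)\geq a$ -- which then gives $\munder^{u_0,a}(t)\geq x_t\geq (v_0-\e)t$ -- I would invoke the monotonicity of $s\mapsto u^{u_0}(s,x_t)$ in the time variable provided by Lemma~\ref{le:MomPAMIncrease} (applicable since $x_t>0$). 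This yields $\liminf_{t\to\infty}\munder^{u_0,a}(t)/t\geq v_0-\e$ $\p$-a.s. Intersecting the conclusions of the two steps over a sequence $\e\downarrow 0$ gives \eqref{eq:LLN_breakpoint_PAM}.

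\emph{Main obstacle.} The only substantive point is the last one: passing from ``$u$ reaches level $a$ at position $x_t$ at some time $\leq t$'' to ``$u(t,x_t)\geq a$'', which requires that $s\mapsto u^{u_0}(s,x_t)$ be (eventually) non-decreasing -- exactly the content of Lemma~\ref{le:MomPAMIncrease}. Should one prefer to avoid it, the same conclusion follows by chaining the time-perturbation estimate \eqref{eq:time_pert_ungenau} of Lemma~\ref{le:time_perturb}(b) over $O(t^{\e})$ successive time increments of length $\sim s^{1-\e}$, each of which is non-decreasing since $\Cr{const_time_perturbation}^{-1}e^{s^{1-\e}/\Cr{const_time_perturbation}}\geq 1$ for $s$ large; this costs one boundary factor, which is absorbed by replacing the level $a$ by $\Cr{const_time_perturbation}a$ and using that \eqref{eq:LimitTn/n} holds for every positive level. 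Here one must keep the running velocities $x_t/s$, $s\in[T_{x_t}^{u_0,a},t]$, inside $V$, which is possible as long as $\e<v_0-\inf V$, using $v_0\in\mathrm{int}(V)$.
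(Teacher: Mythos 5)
Your upper bound is a contradiction-style rephrasing of the paper's one-liner, which simply writes $\munder(t)/t = (\munder(t)/T^{u_0,a}_{\munder(t)})\cdot(T^{u_0,a}_{\munder(t)}/t)$, uses $T^{u_0,a}_{\munder(t)}\leq t$, and invokes \eqref{eq:LimitTn/n}. Correct, but the paper's phrasing is shorter; the content is the same.

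The lower bound is where you genuinely diverge. The paper's argument is entirely different and much shorter: for $v\in(0,v_0)$ the Lyapunov exponent satisfies $\Lambda(v)>0$ (concavity, $\Lambda(0)=\es>0$, $\Lambda(v_0)=0$), and by Corollary~\ref{cor:lyapunov_alt} this exponential rate holds for every $u_0\in\mathcal{I}_{\text{PAM}}$, so $u^{u_0}(t,vt)\to\infty$ and hence $\munder^{u_0,a}(t)\geq vt$ eventually. This requires no inversion of $T_x$ at all and no time-perturbation chaining. Your route — use \eqref{eq:LimitTn/n} for $T_{x_t}$ and then propagate the level-$a$ crossing forward in time — does work, but two points need to be tightened. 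First, Lemma~\ref{le:MomPAMIncrease} does not give monotonicity of $s\mapsto u(s,x)$; it gives $u(s,x)\leq 2u(t,x)$ for $s\leq t$, i.e.\ factor-$2$ quasi-monotonicity, and crucially it is stated and proved only for $u_0=\mathds{1}_{(-\infty,0]}$ (its proof uses the tail set $\{B_t\leq 0\}$ in a way that does not survive the compactly supported lower barrier $\delta'\mathds{1}_{[-\delta',0]}$). So "the monotonicity \dots provided by Lemma~\ref{le:MomPAMIncrease}" is not accurate for general $u_0$. Second, your fallback via chaining Lemma~\ref{le:time_perturb}(b) is the correct repair, and your observations there are right: steps of length $\sim s^{1-\e}$ are individually non-decreasing for $s$ large, the one boundary factor $\Cr{const_time_perturbation}$ is absorbed by starting from a higher level $\Cr{const_time_perturbation}a$ (since \eqref{eq:LimitTn/n} is level-independent), and the running velocities $x_t/s$ stay in $V$ for $\e$ small because $x_t/T^{u_0,\cdot}_{x_t}\to v_0$ and $x_t/t=v_0-\e$, both interior to $V$. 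So the alternative is sound, just noticeably longer than the paper's Lyapunov-exponent argument, which is what you trade away by avoiding Corollary~\ref{cor:lyapunov_alt}.
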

\begin{proof}
	For an upper bound, we have $\limsup_{t\to\infty} \frac{\munder^{u_0,a}(t)}{t} = \limsup_{t\to\infty} \frac{\munder^{u_0,a}(t)}{T_{ \munder^{u_0,a}(t) }^{ u_0,a }}\frac{ T_{ \munder^{u_0,a}(t) }^{ u_0,a }  }{ t } \leq v_0$, where the last inequality is due to $T_{ \munder^{u_0,a}(t) }^{ u_0,a } \leq t$ and \eqref{eq:LimitTn/n}. To get a lower bound, we 
	can use the properties of the Lyapunov exponent from Proposition~\ref{prop:lyapunov}, giving $\liminf_{t\to\infty} \frac{\munder^{u_0,a}(t)}{t} \geq v$ for all $v\in(0,v_0)$, and we can conclude.
\end{proof}
\begin{lemma}
	\label{le:T_mtAlmostLinear}
	For every $a>0$ there exists a constant $\Cl{constant lemma T_mt eq t}=\Cr{constant lemma T_mt eq t}(a)>0$ and a $\p$-a.s.\  finite random variable $\Cl[index_prob_cont_time]{time lemma T_mt eq t} = \Cr{time lemma T_mt eq t}(a)$ such that  for all $u_0\in\mathcal{I}_{\text{PAM}}$ and  $t\geq\Cr{time lemma T_mt eq t},$
	\begin{equation}
	\label{eq:T_mtAlmostLinear}
	t - \Cr{constant lemma T_mt eq t} \leq T^{u_0,a}_{\munder^{u_0,a}(t)} \leq t.
	\end{equation}
\end{lemma}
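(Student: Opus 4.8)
The upper bound in \eqref{eq:T_mtAlmostLinear} is a matter of definitions. For $t$ large enough the set $\{x\in\R:u^{u_0}(t,x)\ge a\}$ is non-empty (since $u^{u_0}(t,0)\to\infty$) and bounded above (since $u^{u_0}(t,x)\to 0$ as $x\to\infty$), and $u^{u_0}(t,\cdot)$ is continuous for $t>0$; hence $y_t:=\munder^{u_0,a}(t)$ is a finite real number with $u^{u_0}(t,y_t)=a$. In particular $t$ lies in the set over which the infimum defining $T^{u_0,a}_{y_t}$ is taken, which already gives $T^{u_0,a}_{y_t}\le t$ for all $t$ large.

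For the lower bound the plan is to produce a constant $C=C(a)$ such that $u^{u_0}(s,y_t)<a$ for every $s\in[0,t-C]$; this forces $T^{u_0,a}_{y_t}>t-C$ and completes the proof with $\Cr{constant lemma T_mt eq t}:=C$. First I would record the geometric input. Using the sandwich $\delta'\Cr{constY_vapprox}^{-2}\,u^{\mathds 1_{(-\infty,0]}}\le u^{u_0}\le C'\,u^{\mathds 1_{(-\infty,0]}}$ at arguments $\asymp v_0t$ (which follows from \eqref{eq:PAM_initial}, linearity of \eqref{eq:PAM} in the initial condition and Corollary~\ref{cor:comp_heavi}, valid on the events $\mathcal H_x$ from \eqref{eq:H_n}), the quantity $\munder^{u_0,a}(t)$ is trapped between two Heaviside breakpoints $\munder^{\mathds 1_{(-\infty,0]},\,c_1a}(t)$ and $\munder^{\mathds 1_{(-\infty,0]},\,c_2a}(t)$ with $c_1,c_2$ fixed, so by Corollary~\ref{cor:breakpoint_limit} one gets $y_t/t\to v_0\in\mathrm{int}(V)$ \emph{uniformly} in $u_0\in\mathcal I_{\mathrm{PAM}}$. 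Consequently there is a $\p$-a.s.\ finite random time $\Cr{time lemma T_mt eq t}=\Cr{time lemma T_mt eq t}(a)$ so that, for $t\ge\Cr{time lemma T_mt eq t}$, one has $y_t/t\in V$, $y_t/(t-C)\in V$, $y_t\ge\Cr{constHn}\vee 1$ (so $\mathcal H_{y_t}$ occurs) and $t\ge\Cr{T_index_timepert}$.

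Next I would combine two substantive facts. Monotonicity in time: $u^{u_0}\le C'u^{\mathds 1_{(-\infty,0]}}$, and by Lemma~\ref{le:MomPAMIncrease} the map $s\mapsto u^{\mathds 1_{(-\infty,0]}}(s,x)=\Eprob_x^\xi[N^\leq(s,0)]$ is non-decreasing, whence for all $0\le s\le t-C$
\begin{equation*}
u^{u_0}(s,y_t)\ \le\ C'\,u^{\mathds 1_{(-\infty,0]}}(t-C,y_t).
\end{equation*}
Time perturbation: on $\mathcal H_{y_t}$, Corollary~\ref{cor:comp_heavi} together with \eqref{eq:PAM_initial} gives $u^{\mathds 1_{(-\infty,0]}}(t,y_t)\le(\Cr{constY_vapprox}^2/\delta')\,u^{u_0}(t,y_t)=(\Cr{constY_vapprox}^2/\delta')\,a$, while Lemma~\ref{le:time_perturb}(b) applied with base time $t-C$ and increment $h=C$ (admissible since $C\le(t-C)^{1-\varepsilon}$, $y_t/(t-C)\in V$ and $y_t/t\in V$ for $t$ large) yields
\begin{equation*}
u^{\mathds 1_{(-\infty,0]}}(t-C,y_t)\ \le\ \Cr{const_time_perturbation}\,e^{-C/\Cr{const_time_perturbation}}\,u^{\mathds 1_{(-\infty,0]}}(t,y_t).
\end{equation*}
Chaining the two displays gives $u^{u_0}(s,y_t)\le\big(C'\Cr{const_time_perturbation}\Cr{constY_vapprox}^2/\delta'\big)\,e^{-C/\Cr{const_time_perturbation}}\,a$ for all $s\in[0,t-C]$; since the prefactor of $a$ tends to $0$ as $C\to\infty$, fixing $\Cr{constant lemma T_mt eq t}=C=C(a)$ so large that it is $<1$ gives $u^{u_0}(s,y_t)<a$ on $[0,t-C]$, as desired, and $C$ depends only on $a$ and on the fixed $\Cr{const_time_perturbation},\Cr{constY_vapprox},\delta',C'$, hence is uniform in $u_0$.

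The step I expect to be the real (if modest) obstacle is passing from ``$u^{u_0}(\cdot,y_t)$ small at the single time $t-C$'' to ``$u^{u_0}(\cdot,y_t)$ small on all of $[0,t-C]$'': the time-perturbation lemma cannot bridge a gap of order $t$ (its admissible increments have size at most $t^{1-\varepsilon}$), so the monotonicity in time of the Heaviside solution, Lemma~\ref{le:MomPAMIncrease}, is genuinely needed to reduce to the single time $t-C$. The other, purely bookkeeping, point is making the random time $\Cr{time lemma T_mt eq t}$ uniform over $u_0\in\mathcal I_{\mathrm{PAM}}$, which the Heaviside sandwich above takes care of; everything else reduces to direct invocations of Corollary~\ref{cor:comp_heavi} and Lemma~\ref{le:time_perturb}.
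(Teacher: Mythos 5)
Your proof is correct and follows essentially the same route as the paper's: the upper bound is definitional, and the lower bound chains the quasi-monotonicity of the Heaviside solution (Lemma~\ref{le:MomPAMIncrease}), the time-perturbation estimate of Lemma~\ref{le:time_perturb}(b), and the initial-condition sandwich via Corollary~\ref{cor:comp_heavi} to make $u^{u_0}(s,\munder(t))<a$ on all of $[0,t-C]$. The only slip is that Lemma~\ref{le:MomPAMIncrease} does not say $s\mapsto u^{\mathds{1}_{(-\infty,0]}}(s,x)$ is non-decreasing but only that $u^{\mathds{1}_{(-\infty,0]}}(s,x)\le 2\,u^{\mathds{1}_{(-\infty,0]}}(t,x)$ for $s\le t$; this costs a harmless factor of $2$ that is absorbed into the choice of $C(a)$, exactly as in the paper.
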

\begin{proof}
	By definition, the inequality $T^{u_0,a}_{\munder^{u_0,a}(t)}\leq t$ follows directly. To show $t-\Cr{constant lemma T_mt eq t}\leq T^{u_0,a}_{\munder^{u_0,a}(t)}$, recall that due to \eqref{eq:LLN_breakpoint_PAM} we can use time perturbation. Indeed, 
	by defining $\Cr{constant lemma T_mt eq t}:=\Cr{const_time_perturbation}\ln\big( \Cr{const_time_perturbation}\cdot 3C'/a \big)$ with $C'$ from \eqref{eq:PAM_initial} and  $\Cr{const_time_perturbation}$ from  Lemma~\ref{le:time_perturb} b), for all $t$ large enough
	\begin{align*}
	u^{\mathds{1}_{(-\infty,0]}}(t- \Cr{constant lemma T_mt eq t},\munder^{u_0,a}(t)) &\leq \Cr{const_time_perturbation} e^{-\Cr{constant lemma T_mt eq t} / \Cr{const_time_perturbation}} u^{u_0}(t,\munder^{u_0,a}(t)) < \frac{a}{2C'}
	\end{align*}
	and thus, recalling $u^{\mathds{1}_{(-\infty,0]}}(s,x)=\Eprob_{x}^\xi\big[ N^\leq(s,0) \big]$ and Lemma~\ref{le:MomPAMIncrease}, we get the lower bound $T^{u_0,a}_{\munder^{u_0,a}(t)} \geq T^{C'\mathds{1}_{(-\infty,0]},a}_{\munder^{u_0,a}(t)} = T^{\mathds{1}_{(-\infty,0]},a/C'}_{\munder^{u_0,a}(t)} \geq t-\Cr{constant lemma T_mt eq t}$ for all $t$ large enough and we can conclude. 
\end{proof}

Recall definition \eqref{eq:defT_n} for $T_x^{(a)}$. 
\begin{corollary}
	\label{cor:T_mt_sandwich}
	There exists $\overline{K} \in (1,\infty)$ such that $\p$-a.s., for all $a>0$ and for all  $x$  large enough
	\begin{equation}
	\label{eq:T_mt_sandwich}
	\sup_{|y|\leq 1} T^{(a)}_{x+y}-\overline{K}\leq T^{(a)}_x\leq \inf_{|y|\leq 1}T^{(a)}_{x+y}+\overline{K}.
	\end{equation}
\end{corollary}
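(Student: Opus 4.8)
The plan is to combine the space- and time-perturbation estimates of Lemmas~\ref{le:space_perturb}(b) and~\ref{le:time_perturb}(b) with the linear growth $T_z^{(a)}/z\to 1/v_0$ from \eqref{eq:LimitTn/n}; throughout I would work only with the fixed initial condition $\mathds{1}_{(-\infty,0]}$, writing $u:=u^{\mathds{1}_{(-\infty,0]}}$. First I would record the elementary facts that, by the Feynman--Kac representation \eqref{eq:F-K}, $t\mapsto u(t,x)$ is continuous and non-decreasing on $(0,\infty)$ (dominated convergence, using $\xi\ge\ei>0$ and $P_x(B_t=0)=0$) with $u(0+,x)=0$ for $x>0$, so that finiteness of $T_z^{(a)}$ -- which is part of \eqref{eq:LimitTn/n} -- forces $u(T_z^{(a)},z)=a$. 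I would also upgrade \eqref{eq:LimitTn/n} to hold $\p$-a.s.\ simultaneously for all $a>0$ (intersect over $a\in\Q_{>0}$ and interpolate using monotonicity of $a\mapsto T_z^{(a)}$). Because $|y|\le1$ is only a bounded shift, this yields: on one $\p$-full event, for every $a>0$ and all $x$ large (depending on $a$ and $\omega$), the times $T_{x+y}^{(a)}$ exceed the random thresholds $\Cr{T_index_spacepert}$ and $\Cr{T_index_timepert}$ (the latter for, say, $\e=\tfrac12$ and $u_0=\mathds{1}_{(-\infty,0]}$), and all ``speeds'' of the form $\tfrac{x}{T_{x+y}^{(a)}},\ \tfrac{x+y}{T_{x+y}^{(a)}},\ \tfrac{x}{T_{x+y}^{(a)}+\overline K}$ (for any fixed $\overline K$) lie in a prescribed small neighbourhood of $v_0$, hence in $\mathrm{int}(V)$ -- all of this uniformly in $|y|\le1$.

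Next I would establish the core comparison: applying Lemma~\ref{le:space_perturb}(b) with shift $h=|y|\le1$, once with base point $x$ and once with base point $x+y$ so as to obtain bounds in both directions, gives that for all admissible $t$ comparable to $x/v_0$ and all $|y|\le1$,
\[
c_*\,u(t,x)\ \le\ u(t,x+y)\ \le\ c_*^{-1}\,u(t,x),\qquad c_*:=\Cr{const_space_perturbation}^{-1}e^{-\Cr{const_space_perturbation}}\in(0,1).
\]
Fixing $|y|\le1$, for the right inequality in \eqref{eq:T_mt_sandwich} I would set $t:=T_{x+y}^{(a)}$, so $u(t,x+y)=a$ and the core bound gives $u(t,x)\ge c_*a$; time-perturbing by a \emph{fixed} amount $\overline K$ via Lemma~\ref{le:time_perturb}(b) yields
\[
u(t+\overline K,x)\ \ge\ \Cr{const_time_perturbation}^{-1}e^{\overline K/\Cr{const_time_perturbation}}c_*\,a\ \ge\ a
\qquad\text{for}\qquad
\overline K:=1+\Cr{const_time_perturbation}\big(\ln\Cr{const_time_perturbation}-\ln c_*\big),
\]
a constant in $(1,\infty)$ depending only on $\Cr{const_space_perturbation}$ and $\Cr{const_time_perturbation}$ and \emph{not} on $a$. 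Hence $T_x^{(a)}\le T_{x+y}^{(a)}+\overline K$, and taking the infimum over $|y|\le1$ gives $T_x^{(a)}\le\inf_{|y|\le1}T_{x+y}^{(a)}+\overline K$. The left inequality is symmetric: set $t:=T_x^{(a)}$, so $u(t,x)=a$, get $u(t,x+y)\ge c_*a$ from the core bound, time-perturb to $u(t+\overline K,x+y)\ge a$ with the same $\overline K$, conclude $T_{x+y}^{(a)}\le T_x^{(a)}+\overline K$, and take the supremum over $|y|\le1$.

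The point worth stressing is why one cannot simply subtract two instances of Lemma~\ref{le:ApproxT_n}: the difference of the main terms, $\sum_{i=1}^{x+1}L_i^\zeta(\overline{\eta}(v_0))-\sum_{i=1}^{x}L_i^\zeta(\overline{\eta}(v_0))$, is indeed $O(1)$, but the two error terms of size $\asymp\ln x$ occurring there do not cancel, so only the finer perturbation lemmas deliver the $O(1)$ control needed across a bounded shift. Given that, the only real obstacle is bookkeeping: checking that the hypotheses of both perturbation lemmas hold \emph{uniformly in} $|y|\le1$ along the random times $T_{x+y}^{(a)}$ -- which is exactly what \eqref{eq:LimitTn/n} provides -- and arranging that the resulting $\overline K$ carries no dependence on $a$, so that a single value of $\overline K$ serves all $a>0$ simultaneously.
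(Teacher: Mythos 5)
Your proof is correct and follows essentially the same route as the paper: combine the two-sided space perturbation (Lemma~\ref{le:space_perturb}(b)) with the two-sided time perturbation (Lemma~\ref{le:time_perturb}(b)), using \eqref{eq:LimitTn/n} to guarantee that the speeds $x/T_{x+y}^{(a)}$ etc.\ lie in $V$ for $x$ large, and pick $\overline K$ depending only on $\Cr{const_time_perturbation}$ and $\Cr{const_space_perturbation}$. Your choice of $\overline K$ differs from the paper's by an inessential additive constant (the paper has $\ln(2\Cr{const_time_perturbation}\Cr{const_space_perturbation})$ where you have $\ln(\Cr{const_time_perturbation}\Cr{const_space_perturbation})$); this reflects the different argument below.

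One minor error: your preliminary claim that $t\mapsto u(t,x)$ is \emph{non-decreasing} is false — the Feynman--Kac integrand is multiplied by $\mathds{1}_{\{B_t\le 0\}}$, so there is no monotone coupling, and the paper can only assert a factor-$2$ quasi-monotonicity in $t$ (Lemma~\ref{le:MomPAMIncrease}). Fortunately you never actually use monotonicity; continuity alone (which does hold, since $P_x(B_{t_0}=0)=0$) is enough to conclude $u(T_z^{(a)},z)\geq a$, which is all the space-perturbation step requires. But you should be aware that this non-monotonicity is exactly why the paper's argument for the left inequality goes through Lemma~\ref{le:MomPAMIncrease}: showing $u(T_{x+y}^{(a)}-\overline K,x)<a/2$ is not by itself enough to deduce $T_x^{(a)} > T_{x+y}^{(a)}-\overline K$, since $u(\cdot,x)$ could in principle have crossed $a$ earlier and dipped back down; the factor-$2$ bound is what rules this out. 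Your argument cleverly avoids the issue by proving the \emph{dual} inequality $T_{x+y}^{(a)}\leq T_x^{(a)}+\overline K$ — exhibiting a single time $T_x^{(a)}+\overline K$ at which $u(\cdot,x+y)\geq a$ already bounds the infimum from above — so you can dispense with Lemma~\ref{le:MomPAMIncrease} entirely. That is a genuine, if small, streamlining of the paper's proof.
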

\begin{proof}
	We set $\overline{K}:=1+\Cr{const_time_perturbation}\big(\ln(2\Cr{const_time_perturbation}\Cr{const_space_perturbation})+\Cr{const_space_perturbation}\big)$. Then due to \eqref{eq:LimitTn/n}, $\p$-a.s.\ for all $x$ large enough we have
	\[ \frac{x+y'}{T^{(a)}_{x+y''}\pm\overline{K}}\in V\quad \forall\ y',y''\in [-1, 1]. \]
	This allows us to apply the inequalities \eqref{eq:time_pert_ungenau} and \eqref{eq:space_pert_ungenau} for $u_0=\mathds{1}_{(-\infty,0]}$. Indeed, for all $|y|\leq 1,$
	\begin{align*}
	u(T^{(a)}_{x+y}-\overline{K},x) &\leq \Cr{const_space_perturbation}e^{\Cr{const_space_perturbation}}u(T^{(a)}_{x+y}-\overline{K},x+y)  \leq \Cr{const_space_perturbation}e^{\Cr{const_space_perturbation}}\Cr{const_time_perturbation}e^{-(\overline{K}-1)/\Cr{const_time_perturbation}}u(T^{(a)}_{x+y}-1,x+y) <\frac{a}{2},
	\end{align*}
	where the first inequality is due to \eqref{eq:space_pert_ungenau}, the second one due to  \eqref{eq:time_pert_ungenau} and the last one uses $u(T^{(a)}_{x+y}-1,x+y)<a$. By Lemma~\ref{le:MomPAMIncrease} we get $u(t,x)<a$ for all $t\leq T^{(a)}_{x+y}-\overline{K}$ and thus the left-hand side in \eqref{eq:T_mt_sandwich}. Analogously, first applying \eqref{eq:space_pert_ungenau} and then \eqref{eq:time_pert_ungenau}, we have 
	\begin{align*}
	u(T^{(a)}_{x+y}+\overline{K},x) &\geq \Cr{const_space_perturbation}^{-1}e^{-\Cr{const_space_perturbation}} u(T^{(a)}_{x+y}+\overline{K},x+y) \geq \Cr{const_space_perturbation}^{-1}e^{-\Cr{const_space_perturbation}}\Cr{const_time_perturbation}^{-1}e^{\overline{K}/\Cr{const_time_perturbation}}u(T^{(a)}_{x+y},x+y) \geq a
	\end{align*}
	for all $|y|\leq 1$, giving the right-hand side of \eqref{eq:T_mt_sandwich}.
\end{proof}

\begin{corollary}
	\label{cor:tightnessPAM}
	Let $\munder^{a}(t)=\munder^{\xi,\mathds{1}_{(-\infty,0]},a}(t)$, $a>0$, be defined in \eqref{eq:def_munder}. Then for all $0<\e\leq M$ there exists $C=C(\e,M)$ such that  $\p$-a.s.\  for all $t$ large enough
	\[ 0\leq \munder^{\e}(t) - \munder^{M} (t) \leq C. \]
\end{corollary}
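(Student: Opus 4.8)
The plan is to prove the (trivial) lower bound directly and to obtain the upper bound by a two–step scheme: first a sublinear a priori bound on $\munder^\e(t)-\munder^M(t)$ that brings the difference into the window $|h|\le t\,\e(t)$ in which the sharp perturbation estimates apply, and then a bootstrap that uses the exponential spatial decay of Lemma~\ref{le:space_perturb}(b) to reduce the gap to $O(1)$. Throughout write $u=u^{\mathds{1}_{(-\infty,0]}}$. For the lower bound, since $\e\le M$ we have $\{x:u(t,x)\ge M\}\subseteq\{x:u(t,x)\ge\e\}$, and taking suprema gives $\munder^{M}(t)\le\munder^{\e}(t)$ for every $t\ge 0$, which is the left inequality of the claim.

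For Step~1 the idea is to compare the breakpoint inverses at the two levels. By Lemma~\ref{le:ApproxT_n}, $T_x^{(a)}=T_x^{\mathds{1}_{(-\infty,0]},a}$ equals the \emph{level‑independent} principal term $\frac{1}{v_0 L(\overline{\eta}(v_0))}\sum_{i=1}^{x}L_i^\zeta(\overline{\eta}(v_0))$ up to an error $\Cr{const prob approx of Tn}(a)+\Cr{const approx of Tn}\ln x$; since the principal term is the same for $a=\e$ and $a=M$, it cancels and $|T_x^{(\e)}-T_x^{(M)}|=O(\ln x)$. Combining this with Lemma~\ref{le:T_mtAlmostLinear} (which yields $t-O(1)\le T_{\munder^{a}(t)}^{(a)}\le t$), with $\munder^{a}(t)/t\to v_0$ from \eqref{eq:LLN_breakpoint_PAM}, and with the almost‑sure control $\big|\sum_{i=1}^{x}L_i^\zeta(\overline{\eta}(v_0))-xL(\overline{\eta}(v_0))\big|=O(\sqrt{x\ln x})$ (obtained from the Hoeffding‑type bound of Corollary~\ref{cor:rioDepExp} together with Borel--Cantelli, exactly as in the derivation of \eqref{eq:ApproxT_n}), one deduces $\munder^{\e}(t)-\munder^{M}(t)=O(\sqrt{t\ln t})$ $\p$‑a.s.\ for all $t$ large. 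In particular, fixing once and for all $\e(t):=1/\ln t$ (which tends to $0$, so Lemma~\ref{le:space_perturb}(b) and Lemma~\ref{le:time_perturb}(b) are available for it), we get $\munder^{\e}(t)-\munder^{M}(t)\le t\,\e(t)$ for all $t$ large.

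For Step~2, first bound $u(t,\munder^{M}(t))$ by a constant. By Lemma~\ref{le:T_mtAlmostLinear} there is a non‑random $\Cr{constant lemma T_mt eq t}(M)$ with $T_{\munder^{M}(t)}^{(M)}\ge t-\Cr{constant lemma T_mt eq t}(M)$, hence $u(t-\Cr{constant lemma T_mt eq t}(M)-1,\munder^{M}(t))<M$; applying \eqref{eq:time_pert_ungenau} forward over the constant time $\Cr{constant lemma T_mt eq t}(M)+1$ (legitimate since $\munder^{M}(t)/t\to v_0\in\text{int}(V)$) gives $u(t,\munder^{M}(t))\le c_2$ with $c_2:=\Cr{const_time_perturbation}e^{\Cr{const_time_perturbation}(\Cr{constant lemma T_mt eq t}(M)+1)}M$, a constant depending only on $M$. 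Now set $C=C(\e,M):=\Cr{const_space_perturbation}\,\big|\ln(\Cr{const_space_perturbation}c_2/\e)\big|+1$ and suppose some $x\ge\munder^{M}(t)+C$ had $u(t,x)\ge\e$. Then $x\le\munder^{\e}(t)\le\munder^{M}(t)+t\e(t)$ by Step~1, so $h:=x-\munder^{M}(t)\in[C,t\e(t)]$, and since $(\munder^{M}(t)+h)/t\to v_0\in\text{int}(V)$ uniformly in $h\le t\e(t)$, the estimate \eqref{eq:space_pert_ungenau} applies and yields $\e\le u(t,x)\le\Cr{const_space_perturbation}e^{-h/\Cr{const_space_perturbation}}u(t,\munder^{M}(t))\le\Cr{const_space_perturbation}e^{-C/\Cr{const_space_perturbation}}c_2<\e$ by the choice of $C$ — a contradiction. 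Hence $u(t,x)<\e$ for all $x\ge\munder^{M}(t)+C$, so $\munder^{\e}(t)<\munder^{M}(t)+C$; combined with the lower bound this proves the corollary for all $t$ larger than the (finite) maximum of the random thresholds coming from Lemmas~\ref{le:ApproxT_n}, \ref{le:T_mtAlmostLinear}, \ref{le:time_perturb}, \ref{le:space_perturb} and \eqref{eq:LLN_breakpoint_PAM}.

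The main obstacle is precisely the interplay with Step~1: the sharp perturbation bounds \eqref{eq:time_pert_ungenau} and \eqref{eq:space_pert_ungenau} only hold in the regime $|h|\le t\,\e(t)$ with $\e(t)\to 0$, so they cannot be applied to $\munder^{\e}(t)-\munder^{M}(t)$ until one already knows this difference is $o(t)$ \emph{with an explicit rate} — which is exactly what the level–independence of the principal term in Lemma~\ref{le:ApproxT_n} buys us. The remaining work (keeping the various $\p$‑a.s.\ finite random times and random additive constants consistent, and checking the admissibility conditions $v,(\munder^{M}(t)+h)/t\in V$ for the perturbation lemmas) is routine and uses only $\munder^{M}(t)/t\to v_0\in\text{int}(V)$.
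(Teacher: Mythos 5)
Your argument is correct, but it is considerably more elaborate than necessary, and the extra machinery is self‑imposed. The paper's proof is a one‑liner: it applies the second inequality of \eqref{eq:space_pert_ungenau} anchored at $\munder^{\e}(t)-C$ with the \emph{fixed} shift $h=C$ (so $vt+h=\munder^{\e}(t)$), i.e.
\[
\e \le u\big(t,\munder^{\e}(t)\big) \le \Cr{const_space_perturbation}\, e^{-C/\Cr{const_space_perturbation}}\, u\big(t,\munder^{\e}(t)-C\big),
\]
and then solves for $C=\Cr{const_space_perturbation}\ln(\Cr{const_space_perturbation} M/\e)$ so that the right‑hand side forces $u(t,\munder^{\e}(t)-C)\ge M$, hence $\munder^{M}(t)\ge\munder^{\e}(t)-C$. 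The only admissibility conditions to check are $(\munder^{\e}(t)-C)/t,\,\munder^{\e}(t)/t\in V$ and $C\le t\e(t)$, all of which follow at once from $\munder^{\e}(t)/t\to v_0\in\mathrm{int}(V)$ (Corollary~\ref{cor:breakpoint_limit}) after picking, say, $\e(t)=1/\ln t$. In contrast, you anchor the space perturbation at $\munder^{M}(t)$ and let $h=x-\munder^{M}(t)$ range over $[C,\,t\e(t)]$, which forces you to first establish the a priori sublinear bound $\munder^{\e}(t)-\munder^{M}(t)\le t\e(t)$ via Lemma~\ref{le:ApproxT_n}, the fluctuation control on $\sum_i L_i^\zeta(\overline\eta(v_0))$, and Lemma~\ref{le:T_mtAlmostLinear}, and additionally to bound $u(t,\munder^{M}(t))$ by a constant through a separate time‑perturbation step. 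All of this is avoidable: the ``main obstacle'' you describe in the final paragraph is an artefact of choosing $\munder^{M}(t)$ as the anchor; anchoring at $\munder^{\e}(t)-C$ makes the shift deterministic and the admissibility check trivial. Your Step~1 is also more complicated than it need be even on its own terms --- applying \eqref{eq:ApproxT_n} to the increment $T^{(\e)}_{\munder^{\e}(t)}-T^{(\e)}_{\munder^{M}(t)}$ directly, together with the uniform two‑sided bounds on $L_i^\zeta(\overline\eta(v_0))$ from Lemma~\ref{le:log mom fct and derivatives}(b), gives $\munder^{\e}(t)-\munder^{M}(t)=O(\ln t)$ without invoking the $O(\sqrt{t\ln t})$ partial‑sum fluctuation bound at all --- but in any case the whole step is superfluous.
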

\begin{proof}
	The first inequality is clear. By Corollary~\ref{cor:breakpoint_limit}, we can use the second inequality from Lemma~\ref{le:space_perturb} b) and get the claim by defining $C:=\Cr{const_space_perturbation} \ln\big( \Cr{const_space_perturbation}\cdot M/\varepsilon \big)$ with $\Cr{const_space_perturbation}$ from Lemma~\ref{le:space_perturb} b) to get $u^{\mathds{1}_{(-\infty,0]}}(t,\munder^\varepsilon(t)-C)\geq M$ and thus $\munder^M(t)\geq \munder^\varepsilon(t)-C$ for all $t$ large enough.  
\end{proof}

\subsection{Proof of Theorem~\ref{th:InvFrontPAM}}
Using the preparatory results from the previous sections, it is now possible to obtain an invariance principle for the front of the solution to \eqref{eq:PAM}. Roughly speaking, up to some error which can be controlled by the results from the previous sections, we have $\munder(t)\approx \ln u(t,v_0t)$ and can then use the invariance principle from Theorem~\ref{th:InvPAM} to conclude.

\begin{proof}[Proof of Theorem~\ref{th:InvFrontPAM}]
	Let $u_0\in\mathcal{I}_{\text{PAM}}$, $a>0$ and abbreviate $u=u^{u_0}$ and $\munder:=\munder^{u_0,a}$. We first assume $\sigma_{v_0}^2>0$. Then we have to show that the sequence of processes
	\begin{equation}
	\label{eq:breampoint_inv}
	[0,\infty) \ni t\mapsto \frac{\munder(nt) - v_0nt}{\sqrt{n\widetilde{\sigma}_{v_0}^2}},\quad n\in\N,
	\end{equation} 
	where $\widetilde{\sigma}_{v_0}^2>0$ is given in \eqref{eq:def_sigma_tilde} below, 
	converges in $\p$-distribution to standard Brownian motion in the Skorohod space  $D([0,\infty))$.  Notice that $[0,\infty)\ni t\mapsto\munder(t)$ might not be c\`{a}dl\`{a}g \emph{only} in $0$. To avoid this issue, above convergence is defined as convergence of the sequence of processes in \eqref{eq:breampoint_inv} where we set $\munder(t) \equiv 0$ for $t$ such that $\munder(t)\leq 0$, making it c\`{a}dl\`{a}g. 
	
	Due to the limiting behavior and the continuity of $x\mapsto u(t,x)$ for $t>0$, the value $r(t):=\munder(t)-v_0t$ is  the largest solution to 
	\[ \ln u(t,v_0t+r(t)) = -\ln 2. \]
	We define 
	\[\mathcal{L}(t,h):= \ln \frac{u(t,v_0t+h)}{u(t,v_0t)},\quad t>0,\ h\in\R, \]
	\[ U(t):=-\ln u(t,v_0t)-\ln 2=\mathcal{L}(t,r(t)),\quad t\geq 0,\]
	let 
	\begin{equation*}
	\delta \in \big( 0, |L(\overline{\eta}(v_0))| \big)
	\end{equation*}
	and $\varepsilon(t)$ be a positive function such that $\varepsilon(t)\to0$ and $\varepsilon(t)t^{1/2}\to\infty$. Then by Lemma~\ref{le:space_perturb}, there is $C(\varepsilon)>0$ such that  for $t$ large enough and all $h\in\R$ fulfilling $C(\varepsilon)\ln t\leq |h|\leq \varepsilon(t)t$ and $v_0+\frac{h}{t}\in V$ we have 
	\begin{align}
	-\big(|L(\overline{\eta}(v_0))|+\delta\big)h \leq \mathcal{L}(t,h) \leq -\big(|L(\overline{\eta}(v_0))|-\delta\big)h. \label{eq:h_between_L}
	\end{align}
	Now, multiplying \eqref{eq:ApproxT_n} by $v_0$, replacing $x$ by $\munder(t)$ in \eqref{eq:ApproxT_n} and recalling that $t-\Cr{constant lemma T_mt eq t}\leq T_{\munder(t)}\leq t$ by Lemma~\ref{le:T_mtAlmostLinear}, we get 
	\begin{equation} \label{eq:anotherAppr}
	\Big|(\munder(t)-v_0t) - \frac{1}{L(\overline{\eta}(v_0))}\sum_{i=1}^{\munder(t)}\big( L_i^\zeta(\overline{\eta}(v_0))-L(\overline{\eta}(v_0)) \big)\Big|\leq v_0\Cr{const prob approx of Tn} + \Cr{constant lemma T_mt eq t}  + v_0 \Cr{const approx of Tn}\ln (\munder(t))
	\end{equation}
	for all $t$ large enough. Next, recall that $\frac{\munder(t)}{t}\to v_0$ by Corollary~\ref{cor:breakpoint_limit} and that the standardized sum $\frac{1}{\sqrt{n}}\sum_{i=1}^{nt}\big( L_i^\zeta(\overline{\eta}(v_0))-L(\overline{\eta}(v_0)) \big)$ converges in distribution to a non-degenerate Gaussian random variable by Lemma~\ref{le:CLT_S_n}. As a consequence, in combination with \eqref{eq:anotherAppr}, we infer that $|r(t)|=|\munder(t)-v_0t|\in [C(\varepsilon)\ln t, \varepsilon(t)t]$ with probability tending to $1$ as $t$ tends to infinity.  This and \eqref{eq:h_between_L} implies 
	\begin{equation}
	\label{eq:r(t)_near_U}
	r(t) \in \Big[ \frac{U(t)}{|L(\overline{\eta}(v_0))|\mp\delta}, \frac{U(t)}{|L(\overline{\eta}(v_0))|\pm\delta}\Big],
	\end{equation}
	with probability tending to $1$  as $t$ tends to  $\infty$, where the upper sign is chosen if $U(t)>0$ and the lower sign if $U(t)<0$. If $\sigma_{v_0}^2>0$, due to $\Lambda(v_0)=0$ and Theorem~\ref{th:InvPAM}, the sequence of processes 
		\begin{equation}
		\label{eq:InvUt}
		[0,\infty)\ni t\mapsto \frac{1}{\sqrt{nv_0\sigma_{v_0}^2}}U(nt),\quad n\in\N,
		\end{equation}
		converges in $\p$-distribution to standard Brownian motion. 
		Because \eqref{eq:r(t)_near_U}  holds for all $\delta>0$ small enough,  Theorem~\ref{th:InvFrontPAM} is a direct consequence of the convergence in distribution of \eqref{eq:InvUt} by choosing 
		\begin{align}
		\label{eq:def_sigma_tilde}
		\widetilde{\sigma}_{v_0}&:=\frac{\sqrt{\sigma_{v_0}^2v_0}}{|L(\overline{\eta}(v_0))|}.
		\end{align}
		where $\sigma_{v_0}^2$ is defined in \eqref{eq:sigmav}. This gives the second part of Theorem~\ref{th:InvFrontPAM}. If $\sigma_{v_0}^2=0$, we can proceed analogously and the first part of Theorem~\ref{th:InvFrontPAM} follows from the first part of Theorem~\ref{th:InvPAM} and \eqref{eq:r(t)_near_U}.
\end{proof}

\section{Log-distance of the fronts of the solutions to PAM and F-KPP}

We finally prove our last main result, Theorem~\ref{th: log-distance breakpoint median}. In Sections~\ref{sec:first_mom_lead} and \ref{sec:sec_mom_lead}, we will assume that $u_0=w_0=\mathds{1}_{(-\infty,0]}$. Indeed, using a comparison argument in the proof of Theorem~\ref{th: log-distance breakpoint median}, it will turn out that this is actually sufficient for or purposes. It should also be mentioned here that the tools we employ are inherently probabilistic. As a consequence, and for notational convenience, we will mostly formulate the respective results in terms of the BBMRE in what follows below; the correspondence to the results in PDE terms is immediate from \eqref{eq:F-K} and Remark \ref{rem:McKeanHeaviside}.

In the case $u_0=w_0=\mathds{1}_{(-\infty,0]}$, using Markov's inequality we infer
\[\prob_x^\xi\left(N^\leq (t,0)\geq1\right)\leq \Eprob_x^\xi\left[N^\leq(t,0)\right] \]
and thus $\munder(t)\geq m(t)$ for all $t\geq0$, which establishes the first inequality in \eqref{eq: log-distance breakp median}. The rest of this section will be dedicated to deriving the second inequality in \eqref{eq: log-distance breakp median}, i.e., that the front of the randomized F-KPP equations lags behind the front of the solution of the parabolic Anderson model at most logarithmically.
We introduce some notation and, recalling the notation $X^\nu$ introduced before \eqref{eq:NtA}, start with considering certain ‘‘well-behaved'' particles 
\begin{equation}
\label{eq:defLeadingPart}
\begin{split}
N_{s,u,t}^{\mathcal{L},a} := \big|\big\{ \nu\in N(s): X^\nu_s\leq 0, H^\nu_{k}\geq u-T^{(a)}_{k}-5\chi_1(\munder^{(a)}(t)) \ \forall k\in\{1,\ldots,\lfloor &\munder^{(a)}(t)\rfloor\}\big\}\big|, \\
&a>0,\ s,t,u\geq 0;
\end{split}
\end{equation}
\eqref{eq:defLeadingPart} 
here, $H^\nu_k:=\inf\{ t\geq0: X^\nu_t=k \}$, the random variable $T^{(a)}_k$ has been defined in \eqref{eq:defT_n}, and 
\begin{align}
\chi_b(x)&:=\Cr{const prob approx of Tn}+b(1+\overline{K}+ \Cr{constant lemma T_mt eq t})+\Cr{const approx of Tn}(\ln x\vee 1),\quad  x\in(0,\infty),\ b\in\R, \label{eq:defChi}
\end{align}
where $\Cr{const prob approx of Tn}$ and  $\Cr{const approx of Tn}$ have been defined in Lemma~\ref{le:ApproxT_n}, $\overline{K}$ is taken from Corollary~\ref{cor:T_mt_sandwich}, the constant  $\Cr{constant lemma T_mt eq t}$ from Lemma~\ref{le:T_mtAlmostLinear}. 
 We abbreviate $N_t^\mathcal{L}:=N_{t,t,t}^\mathcal{L}$ and call the particles contributing to  $N_t^\mathcal{L}$ \emph{leading particles at time $t$}. 
Cauchy-Schwarz immediately gives
\begin{equation} \label{eq:CS}
\prob_x^\xi\left( N^\leq(t,0)\geq 1 \right) \geq \prob_x^\xi\left( N_t^{\mathcal{L}}\geq 1 \right) \geq \frac{\Eprob_x^\xi\left[N_t^{\mathcal{L}}\right]^2}{\Eprob_x^\xi\big[(N_t^{\mathcal{L}})^2\big]}.
\end{equation}
The next two sections are dedicated to deriving an upper bound for the denominator and a lower bound for the numerator 
of the right-hand side, both for $x$ in a neighborhood of $\munder(t)$. 

\subsection{First moment of leading particles}
\label{sec:first_mom_lead}
The biggest chunk of this section will consist of  proving the following first moment bound on the number of leading particles.
Recall the notation $\munder^{(a)}(t)$ from \eqref{eq:munder}.

\begin{lemma}
	\label{le: first moment leading particles inequ}
	For all $a>0$ there exists $\gamma_1=\gamma_1(a) \in (0,\infty)$ such that $\p$-a.s., for all $t$ large enough
	\begin{equation*}
	\inf_{x\in[\munder^{(a)}(t)-1,\munder^{(a)}(t)+1]}\Eprobth_{x}^\xi\big[ N_t^{\mathcal{L},a} \big] \geq t^{-\gamma_1}.
	\end{equation*}
\end{lemma}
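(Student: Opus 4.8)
The plan is to lower-bound the expected number of leading particles by the many-to-one formula \eqref{eq:f-k-1} and then restrict the Brownian expectation to a carefully chosen ``channel'' of trajectories. Concretely, writing $n := \lfloor\munder^{(a)}(t)\rfloor$ and using that a particle contributing to $N^{\mathcal{L},a}_t$ must (i) have $X^\nu_t \le 0$ and (ii) satisfy $H^\nu_k \ge t - T^{(a)}_k - 5\chi_1(\munder^{(a)}(t))$ for all $k\le n$, I would apply \eqref{eq:f-k-1} with $\varphi_2 \equiv 0$ and a piecewise-linear lower envelope $\varphi_1$ that forces the Brownian motion started near $\munder^{(a)}(t)$ to reach each level $k$ no earlier than time $t - T^{(a)}_k - 5\chi_1(\cdot)$. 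This gives
\[
\Eprob^\xi_x\big[N^{\mathcal{L},a}_t\big] \ge E_x\Big[\exp\Big\{\int_0^t \xi(B_r)\diff r\Big\}; \varphi_1(s)\le B_s \le 0 \ \forall s\in[0,t]\Big].
\]
The main work is to show this restricted Feynman--Kac quantity is at least $t^{-\gamma_1}$.

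The key steps, in order. \textbf{Step 1:} Use the approximation Lemma~\ref{le:ApproxT_n} to control the random times $T^{(a)}_k$: $\p$-a.s.\ for $k$ large, $T^{(a)}_k$ is within $\Cr{const prob approx of Tn} + \Cr{const approx of Tn}\ln k$ of $\frac{1}{v_0 L(\overline\eta(v_0))}\sum_{i=1}^k L_i^\zeta(\overline\eta(v_0))$, and by \eqref{eq:LimitTn/n} we have $T^{(a)}_k \approx k/v_0$; in particular $t - T^{(a)}_k$ is comparable to $t(1 - k/\munder^{(a)}(t))$ up to logarithmic fluctuations, so the channel $\varphi_1$ essentially tracks the straight line from $(0,\munder^{(a)}(t))$ to $(t,0)$, with a logarithmic-width tube around it (the $5\chi_1$ slack providing room). \textbf{Step 2:} Decompose the time interval into the hitting excursions $[H_{k},H_{k-1}]$ and, via the strong Markov property, reduce the restricted expectation to a product $\prod_{k=1}^{n} \rho_k$, where each factor $\rho_k$ is the (tilted) probability-weighted cost of crossing one unit of space within an allotted time window of width of order the local slack. \textbf{Step 3:} Bound each factor $\rho_k$ from below: since the allotted windows have total slack of order $\chi_1(\munder^{(a)}(t)) = O(\ln t)$ distributed across $n = O(t)$ crossings, a typical crossing gets slack of constant order, and by the change-of-measure estimates of Section~2 (the tilted measures $P_x^{\zeta,\eta}$ and the Legendre-transform machinery, Proposition~\ref{prop:ExactLD}, Lemma~\ref{le:LCLTHitting}) each $\rho_k \ge e^{-c}$ for a uniform constant $c$, except possibly for the $O(\ln t)$ crossings near the endpoints where we pay an extra polynomial-in-$t$ factor. \textbf{Step 4:} Multiply: $\prod_k \rho_k \ge e^{-cn}\cdot e^{\es t}\cdot(\text{polynomial corrections})$, and combine with the fact that the Feynman--Kac weight contributes the compensating factor $\exp\{\es t + \sum_i(L_i^\zeta(\overline\eta(v_0)) - L(\overline\eta(v_0)))\} \approx \exp\{-v_0 t L^*(1/v_0) + \es t\} = e^{\Lambda(v_0)t} = 1$ by \eqref{eq:Lyapunov_Legendre} and $\Lambda(v_0)=0$, so the exponential terms cancel and only a polynomial loss $t^{-\gamma_1}$ survives. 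The uniformity in $x\in[\munder^{(a)}(t)-1,\munder^{(a)}(t)+1]$ follows from Corollary~\ref{cor:T_mt_sandwich}, which says shifting the starting point by at most $1$ changes the relevant breakpoint-inverse times by at most the constant $\overline{K}$ (already absorbed into $\chi_1$).

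\textbf{Main obstacle.} The delicate point is Step 3 together with the bookkeeping of logarithmic slack: one must argue that although the total time-slack $5\chi_1(\munder^{(a)}(t))$ available over all $n\asymp t$ space-unit crossings is only logarithmic in $t$, the line $t - T^{(a)}_k - 5\chi_1(\cdot)$ still lies (for all $k$ simultaneously, $\p$-a.s.\ for large $t$) above the actual ``expected crossing schedule'' under the tilted measure, so that each local window is genuinely non-empty and the product of per-step costs does not decay faster than polynomially. This requires combining the a.s.\ fluctuation bound $|h_t| \le c_1\sqrt{t\ln t}$ from \eqref{eq:htBd} (controlling the deviation of $\sum_{i\le k} L_i^\zeta(\overline\eta(v_0))$ from its mean) with the concentration estimate \eqref{eq:concEtaEmpLT_cont} for the tilting parameters, and then checking that the resulting error is dominated by the $O(\ln t)$ slack built into $\chi_1$ — i.e.\ that the definition of $\chi_1$ in \eqref{eq:defChi} was chosen with exactly the right constants. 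Handling the endpoint crossings (near $k=1$ and $k=n$), where the local velocity degenerates and the tilting parameter leaves the compact interval $\triangle$, is a secondary technical nuisance that contributes the final value of $\gamma_1$.
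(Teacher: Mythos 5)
Your general framing --- many-to-one, a channel constraint, and a change of measure via $P_x^{\zeta,\eta}$ together with the Legendre-transform machinery --- is compatible with the paper's strategy, and Step~1 and the uniformity observation via Corollary~\ref{cor:T_mt_sandwich} are correct in spirit. There is, however, a genuine gap in Steps~2--3 that would cause the argument to fail.

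The constraint defining $N^{\mathcal L,a}_t$ is cumulative: the hitting time $H_k$ of each level $k\le n:=\lfloor\munder^{(a)}(t)\rfloor$ must satisfy $H_k\ge t-T^{(a)}_k-5\chi_1(\munder^{(a)}(t))$. After centering and tilting, this is the requirement that the process $\widehat H_k^{(n)}-\widehat R_k^{(n)}$ stays above a barrier of height $-O(\ln t)$ for all $k$ simultaneously --- a ballot-problem constraint, not a per-step one. The probability that a centered random walk of $\sim t$ increments stays above $-C\ln t$ is of order $\ln t/\sqrt t$, and this (not exponential cancellation) is the source of the polynomial loss in the lemma. Your Step~3 is internally inconsistent on precisely this point: you say the slack $5\chi_1=O(\ln t)$ is ``distributed across $n=O(t)$ crossings'' with ``a typical crossing getting slack of constant order,'' but $O(\ln t)/O(t)=o(1)$, not $\Theta(1)$. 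If you actually impose per-crossing windows of width $o(1)$, the product of per-step probabilities decays exponentially in $t$, not polynomially; if you do not impose per-crossing windows, the cumulative constraint does not factor into a product $\prod_k\rho_k$ at all, because $\{H_k\ge\cdots\text{ for all }k\}$ is not a product event over $k$.

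The paper's proof handles exactly this. It first normalizes the Feynman--Kac expression by $\Eprob_{\lfloor\munder(t)\rfloor}^\xi[N^\le(\overline t,0)]$ (which converts the quotient into a probability under the tilted measure) and reduces the lemma to the ballot-type estimate \eqref{le: first moment main eq}, namely $P_n^{\zeta,\overline\eta(v_0)}\big(H_0\in[T_n-2\overline K,T_n-\overline K-1],\ H_k\ge T_n-T_k-5\chi_0(n)\ \forall k\big)\ge n^{-\gamma}$. Because the barrier $\widehat R_k^{(n)}$ is itself random (a functional of $\xi$), a textbook reflection argument does not apply directly: the paper introduces two auxiliary Brownian motions, a uniform random splitting index $\Sigma_n$, a glued process $\beta^{(n)}_k$, and essential infima over half-environments, in order to decouple the random walk from the random barrier and still extract the polynomial lower bound (Lemma~\ref{le:help_result_1st_mom_lead_part}). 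This ballot/gluing mechanism is the essential idea absent from your proposal. Your Step~4's exponential cancellation via $\Lambda(v_0)=0$ is necessary but far from sufficient --- it only tells you the bound is $e^{o(t)}$, whereas the lemma requires a quantitative $t^{-\gamma_1}$.
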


\begin{proof}
	Let $a>0$. To simplify notation, we will omit the index $a>0$ in the quantities involved and write $N_{s,u,t}^{\mathcal{L}}:=N_{s,u,t}^{\mathcal{L},a}$, $T^{(a)}_x:=T_x$, $\munder^{(a)}(t):=\munder(t)$ from now on. 
	
	Let $A_{u,t}:=\{H_{k}\geq u-T_{k}-5\chi_1(\munder(t)) \ \forall k\in \{1,\ldots,\lfloor m(t)\rfloor\}\}$, 
	let  $K$ be such that \eqref{eq:defK} holds and set $\overline{t}:=T_{\lfloor \munder(t)\rfloor}$. We obtain for all $t$ large enough that 
	\begin{align} \label{eq:firstfirst}
	\begin{split}
	&\inf_{x\in [\munder(t)-1,\munder(t)+1]}\Eprob_{x}^\xi \left[ N_t^{\mathcal{L}} \right]\geq \frac{\inf_{x\in [\munder(t)-1,\munder(t)+1]}\Eprob_{x}^\xi\left[ N_t^{\mathcal{L}} \right]}{2\Eprob_{\lfloor\munder(t)\rfloor}^\xi\left[ N^\leq(\overline{t},0)  \right]} \geq  \frac{c}{2}\frac{\Eprob_{\lfloor m(t)\rfloor}^\xi\left[N_{t-1,t,t+1}^\mathcal{L}\right]}{\Eprob_{\lfloor m(t)\rfloor}^\xi\left[ N^\leq\left(\overline{t},0\right)  \right]} \\
	&=  \frac{c}{2}\frac{E_{\lfloor m(t)\rfloor}\left[ e^{ \int_0^{t-1}\xi(B_s) \diff s }  ; B_{t-1}\leq 0, A_{t+1,t} \right]}{E_{\lfloor m(t)\rfloor}\Big[ e^{ \int_0^{\overline{t}}\xi(B_s)\diff s };B_{\overline{t}}\leq 0 \Big] } \geq c_1 \frac{ E_{\lfloor m(t)\rfloor}\left[ e^{ \int_0^{t-1}\zeta(B_s) \diff s }  ; B_{t-1}\leq 0, A_{t+1,t} \right] }{E_{\lfloor m(t)\rfloor}\Big[ e^{\int_0^{\overline{t}}\zeta(B_s)\diff s}; B_{\overline{t}}\leq 0 \Big]};
	\end{split}
	\end{align}
here, the first inequality follows from the definition of $T_{\lfloor \munder(t)\rfloor}$, the  second inequality is due to Lemma~\ref{le:leadingPart}, the equality follows using Proposition \ref{prop:many-to-few} and the last inequality is due to $\xi=\zeta+\es$, as well as \eqref{eq:T_mt_sandwich} which gives $\overline{t}=T_{\lfloor \munder(t)\rfloor}\leq T_{\munder(t)}+\overline{K}\leq t+\overline{K}$. The numerator can be bounded from below by
\begin{align*}
&E_{\lfloor m(t)\rfloor}\left[ e^{ \int_0^{t-1}\zeta(B_s) \diff s }  ; H_0\in[t-3\overline{K}-\Cr{constant lemma T_mt eq t},t-1], B_{t-1}\leq 0, A_{t+1,t} \right] \\
&\geq E_{\lfloor\munder(t)\rfloor} \Big[ e^{\int_0^{H_0}\zeta(B_s)\diff s} \left.E_0\big[ e^{\int_0^{r}\zeta(B_s)\diff s};B_{t-1-r}\leq 0 \big]\right|_{r=t-1-H_0}; H_0\in[t-3\overline{K}-\Cr{constant lemma T_mt eq t},t-1], A_{t+1,t} \Big] \\ 
&\geq c_2E_{\lfloor\munder(t)\rfloor} \Big[ e^{\int_0^{H_0}\zeta(B_s)\diff s} ;H_0\in[t-3\overline{K}-\Cr{constant lemma T_mt eq t},t-1], A_{t+1,t} \Big],
\end{align*}
where the second inequality is due to $\zeta\geq -(\es-\ei)$ and $P_0(B_s\leq0)\geq1/2$ for all $s\geq0$. Now using the inclusion $\{ B_{\overline{t}}\leq0 \}\subset\{ H_0\leq\overline{t} \}$ in combination with $\zeta\leq0$, we infer $E_{\lfloor \munder(t)\rfloor}\big[ e^{\int_0^{\overline{t}}\zeta(B_s)\diff s};B_{\overline{t}}\leq 0 \big]\leq E_{\lfloor \munder(t)\rfloor}\big[ e^{\int_0^{H_0}\zeta(B_s)\diff s};H_0\leq \overline{t} \big]$. Thus, recalling $\overline{\eta}(v_0)<0$ and  \eqref{eq:def_P_x_zeta}, we can continue to lower bound \eqref{eq:firstfirst} via
\begin{align*}
&\inf_{x\in [\munder(t)-1,\munder(t)+1]}\Eprob_{x}^\xi \left[ N_t^\mathcal{L} \right] \geq c_3\frac{ E_{\lfloor\munder(t)\rfloor}^{\zeta,\overline{\eta}(v_0)} \Big[ e^{-\overline{\eta}(v_0) H_0}; H_0\in[t-3\overline{K}-\Cr{constant lemma T_mt eq t},t-1], A_{t+1,t} \Big] }{ E_{\lfloor \munder(t)\rfloor}^{\zeta,\overline{\eta}(v_0)}\big[ e^{-\overline{\eta}(v_0) H_0};H_0\leq \overline{t} \big] } \\
&\qquad\geq c_4 \frac{ P_{\lfloor\munder(t)\rfloor}^{\zeta,\overline{\eta}(v_0)} \Big( H_0\in[t-3\overline{K}-\Cr{constant lemma T_mt eq t},t-1],\ H_{k}\geq t+1-T_{k}-5\chi_1(\munder(t)), \, \forall  1\leq k\leq\lfloor\munder(t)\rfloor \Big) }{ P_{\lfloor \munder(t)\rfloor}^{\zeta,\overline{\eta}(v_0)}\big( H_0\leq \overline{t} \big) } \\
&\qquad\geq c_4 P_{\lfloor\munder(t)\rfloor}^{\zeta,\overline{\eta}(v_0)} \Big( H_0\in[\overline{t}-2\overline{K},\overline{t}-\overline{K}-1],\ H_{k}\geq \overline{t}-T_{k}-5\chi_0(\munder(t)), \, \forall 1\leq k\leq\lfloor\munder(t)\rfloor \Big),
\end{align*}
where the last inequality is due to $t\geq T_{\munder(t)}\geq \overline{t}-\overline{K}$ and $\overline{t}\geq T_{\munder(t)}-\Cr{constant lemma T_mt eq t}-\overline{K}$ (by \eqref{eq:T_mt_sandwich} and \eqref{eq:T_mtAlmostLinear}). 
Now as we recall that $\frac{\lfloor\munder(t)\rfloor}{t}\to v_0$, abbreviating $\eta=\overline{\eta}(v_0)$, $n:=\lfloor \munder(t)\rfloor$ and thus $\overline{t}=T_n$, 
 we see that in order to finish the proof, it suffices to show that there exists $\gamma \in (0,\infty)$ such that $\p$-a.s.,
  for all $n\in\N$ large enough,
	\begin{equation}
	\label{le: first moment main eq}
	P_{n}^{\zeta,\eta}\big(  H_0\in[T_n-2\overline{K},T_n-\overline{K}-1], H_k\geq T_n-T_k-5\chi_0(n) \; \forall k\in \left\{1,\ldots,n\right\}  \big) \geq n^{-\gamma}.
	\end{equation}
Using the notation
	\begin{align}
	\widehat{H}^{(n)}_k &:= H_k - E_n^{\zeta,\eta}\left[H_k\right] \quad
	\text{ as well as } \quad
	R_k^{(n)}:=T_n - T_k-E_n^{\zeta,\eta}\left[ H_k \right], \label{eq:def_H_hat_and_R}
	\end{align}
	the  probability in \eqref{le: first moment main eq} can be rewritten as 
	\begin{align} \label{eq:nonStatExp}
	P_{n}^{\zeta,\eta}\big( \widehat{H}^{(n)}_0\in[R_0^{(n)}-2\overline{K},R_0^{(n)}-\overline{K}-1], \widehat{H}^{(n)}_k \geq R_k^{(n)} - 5\chi_0(n)  \ \forall k\in \left\{1,\ldots,n\right\}  \big).
	\end{align}
	In order to facilitate computations, we approximate the sequence $(R_k^{(n)})$ by a stationary one, setting
	\begin{align}
	\rho_i &:=\frac{L_i^{\zeta}(\eta)}{v_0 L(\eta)} - (L_i^\zeta)'(\eta)  =\frac{1}{v_0 L(\eta)} \big( L_i^\zeta(\eta) - L(\eta)\big) - \big( E_n^{\zeta,\eta}[\tau_{i-1}] - \E[ E_n^{\zeta,\eta}[\tau_{i-1}]] \big)\label{eq:def_rho_in} \ \text{ and } \ \\
	{\widehat{R}_k^{(n)}}&:=\sum_{i=k+1}^{n} \rho_i,\quad k< n,
	\label{eq:def_R_hat}
	\end{align}
	where $\tau_{i-1}=H_{i-1}-H_i$, and in the equality we used $E_n^{\zeta,\eta}[\tau_{i-1}]=(L_i^\zeta)'(\eta)$ and $\E[ E_n^{\zeta,\eta} [H_k] ]=\frac{n-k}{v_0}$. Applying inequality \eqref{eq:ApproxT_n} from Lemma~\ref{le:ApproxT_n} and using	the identity  $\E[ E_n^{\zeta,\eta} [H_k] ]=\frac{n-k}{v_0},$ we get that $\p$-a.s., 
	\begin{equation}
	\label{eq:approx_R}
	\begin{split}
	\big| R_k^{(n)} - \widehat{R}_k^{(n)} \big| &\leq 2\left(\Cr{const prob approx of Tn} +  \Cr{const approx of Tn}\ln n\right) \quad\text{for all $n\in\N$ and each $k\in\{0,\ldots,n\}.$}
	\end{split}
	\end{equation}
	From now on we will write $\chi:=\chi_0$. Then by \eqref{eq:approx_R}, the probability in \eqref{eq:nonStatExp} can be lower bounded by 
	\begin{align} \label{eq:PHitEst}
	&P_n^{\zeta,\eta}\big( \widehat{H}^{(n)}_0\in[R_0^{(n)}-2\overline{K},R_0^{(n)}-\overline{K}-1]; \widehat{H}^{(n)}_k \geq \widehat{R}_k^{(n)} - 3\chi(n)  \ \forall k\in \left\{1,\ldots,n\right\} \big).
	\end{align}
	Now, for every $n$, enlarging the underlying probability space if necessary, we introduce two  processes $(B_t^{(i,n)})_{t\geq0}$, $i=1,2$, which are independent from everything else and Brownian motions under $P_n$, starting in $n$, and, without further formal definition, we tacitly assume in the following that the tilting of the probability measure $P_n^{\zeta,\eta}$ of our original Brownian motion {\em also applies to} $(B_t^{(i,n)})_{t\geq0}$, $i=1,2,$ in the obvious way. 
	For $i=1,2$, let  $H^{(i,n)}_k:=\inf\{t\geq0:B_t^{(i,n)}=k\}$, $k\in\Z$, be the corresponding hitting times, $\widehat{H}_k^{(i,n)}:={H}_k^{(i,n)}-E_n^{\zeta,\eta}[{H}_k^{(i,n)}]$ and let $\Sigma_n$ be a random variable which, under $P_n$, is uniformly distributed  on $\{1,\ldots,n-1\}$ and independent of everything else. We define 
	\begin{align}
	\beta_k^{(i,n)}&:=\widehat{H}^{(i,n)}_k - \widehat{R}_k^{(n)},\quad k=n-1,n-2,\ldots,\quad i=1,2, \notag \\
	\beta_k^{(n)}&:=
	\begin{cases}
	\beta^{(1,n)}_k,& \Sigma_n\leq k<n,\\
	\beta^{(1,n)}_{\Sigma_n} + \big( \beta^{(2,n)}_k - \beta^{(2,n)}_{\Sigma_n} \big),&  k< \Sigma_n.
	\end{cases}\notag 
	\end{align}
	The $\xi$-adaptedness of the process $(\widehat{R}_k^{(n)})_{k<n}$ implies that the processes $(\beta^{(i,n)}_k)_{k<n}$, $i=1,2$, are $P_n^{\zeta,\eta}$-independent and have the same distribution as $(\beta_k^{(n)})_{k<n}$. We can therefore rewrite \eqref{eq:PHitEst} as 
	\begin{align}
	P_n^{\zeta,\eta}\left( \beta_k^{(n)} \geq - 3\chi(n)  \ \forall k\in \left\{1,\ldots,n\right\}, \beta_0^{(n)} \in I_n \right)\label{eq: prob beta above},
	\end{align}
	where $I_n:=\big[R_0^{(n)}-\widehat{R}_0^{(n)}-2\overline{K},R_0^{(n)}-\widehat{R}_0^{(n)}-\overline{K}-1\big].$ Due to \eqref{eq:approx_R} we have that
	$\p$-a.s.\  for all $n$ large enough, $R_0^{(n)}-\widehat{R}_0^{(n)}-2\overline{K}  \geq -3\chi(n),$ i.e. 
	\begin{equation} \label{eq:InInclusion}
	I_n\subset [-3\chi(n),\infty).
	\end{equation} 
	For each $k\in\{0,\ldots, n\}$ we introduce
	\begin{align*}
	\overline{\beta}^{(1,n)}_k &:=\beta^{(1,n)}_{n-1-k}-\beta^{(1,n)}_{n-1},\qquad
	\overline{\beta}^{(2,n)}_k :=\beta^{(2,n)}_{k} - \beta^{(2,n)}_0,
	\end{align*}
	and note that
	\begin{equation}
	\label{eq: beta alternative Darst. Endwert}
	\beta_0^{(n)}=\overline{\beta}_{n-1-\Sigma_n}^{(1,n)}-\overline{\beta}_{\Sigma_n}^{(2,n)} + \beta_{n-1}^{(1,n)}.
	\end{equation}
	An illustration	of the various processes introduced above is given in  Figure~\ref{fig:betapath} below. 
	Now the key to bound the probability in \eqref{eq: prob beta above} is the following lemma.

	\begin{lemma}
		\label{le:help_result_1st_mom_lead_part}
		\begin{enumerate}
			\item There exists $\gamma'<\infty$ such that  $\p$-a.s.\  for all $n$ large enough, 
			\begin{align*}
			P_n^{\zeta,\eta}\big(  \overline{\beta}^{(1,n)}_k \geq 0 \ \forall 0\leq k\leq n,\ \overline{\beta}^{(1,n)}_n\geq n^{1/4}\big) & \geq n^{-\gamma'}, \quad \text{ and }\\
			P_n^{\zeta,\eta}\big(  \overline{\beta}^{(2,n)}_k \geq 0 \ \forall 0\leq k\leq n,\ \overline{\beta}^{(2,n)}_n\geq n^{1/4}\big) & \geq n^{-\gamma'}. 
			\end{align*}
			\item There exists  $C(\gamma')>0$ such that  $\p$-a.s.\  for all $n$ large enough,
			\begin{align}
			P_n^{\zeta,\eta}\Big( \max_{1\leq k\leq n,i\in\{1,2\}} \big| \beta_k^{(i,n)}-\beta_{k-1}^{(i,n)} \big| \leq C(\gamma')\ln n \Big) 
			&\geq 1- n^{-3\gamma'}. \label{eq: help result first mom b}
			\end{align}
			\item Let $\delta\in(0,1)$. There exists  $c>0$ such that  for all $x\geq 1$ and all $n\in\mathbb{Z},$
			\begin{align*}
			P_n^{\zeta,\eta}\big( \beta_{n-1}^{(1,n)} \in [x,x+\delta] \big) \geq c\delta e^{-x/c}.
			\end{align*}				
		\end{enumerate}
	\end{lemma}
	Before proving Lemma \ref{le:help_result_1st_mom_lead_part} we will finish the current proof in order not to interrupt the Reader. To this end let
	\[ 
	J_n:=\sup \big \{k \in \{1,\ldots, n-1\} \, : \, I_n- \overline{\beta}_{n-k+1}^{(1,n)}+\overline{\beta}_{k}^{(2,n)}\subset [0,2 C(\gamma')\ln n] \big\},
	\]
	where as always $\sup \emptyset:=-\infty$. We have
	\begin{align}
	&\left\{ \beta_k^{(n)} \geq -3\chi(n)\ \forall 0\leq k\leq n-1,\ \beta_0^{(n)}\in I_n \right\} \notag\\
	&\qquad\quad \supset \Big(\big\{ \overline{\beta}^{(1,n)}_k \geq 0 \ \forall 0\leq k\leq n,\ \overline{\beta}^{(1,n)}_n\geq n^{1/4} \big\} \cap \big\{ \max_{1\leq k\leq n} \big| \overline{\beta}_k^{(1,n)}-\overline{\beta}_{k-1}^{(1,n)} \big| \leq C(\gamma')\ln n \big\} \Big.  \notag\\
	&\qquad\qquad \cap \big\{ \overline{\beta}^{(2,n)}_k \geq 0 \ \forall 0\leq k\leq n,\ \overline{\beta}^{(2,n)}_n\geq n^{1/4} \big\} \cap  \big\{ \max_{1\leq k\leq n} \big| \overline{\beta}_k^{(2,n)}-\overline{\beta}_{k-1}^{(2,n)} \big| \leq C(\gamma')\ln n \big\}  \notag \\
	&\qquad\qquad \Big.  \cap\big\{ \beta_{n-1}^{(1,n)} \in I_n - \overline{\beta}_{n-1-\Sigma_n}^{(1,n)} +\overline{\beta}_{\Sigma_n}^{(2,n)}   \big\} \cap \{ \Sigma_n = J_n\} \Big) \label{eq:betaimplication}. 
	\end{align}
	Indeed, due to \eqref{eq: beta alternative Darst. Endwert}, the fifth event on the right-hand side of \eqref{eq:betaimplication} entails that $\beta_0^{(n)}\in I_n$ must hold.
	On the last two events on the right-hand side of \eqref{eq:betaimplication} we have $\beta^{(1,n)}_{n-1}\geq 0$ and thus the first event on the right-hand side of \eqref{eq:betaimplication} implies  that $\beta^{(n)}_k$ is non-negative for $k\geq\Sigma_n$. The third event then implies monotonicity at times $k<\Sigma_n$. Since $I_n\subset [-3\chi(n),\infty)$ due to \eqref{eq:InInclusion}, this gives the first condition on the left-hand side of \eqref{eq:betaimplication}. 
	\begin{figure}[t]
		\centering
		\includegraphics[width=0.6\linewidth]{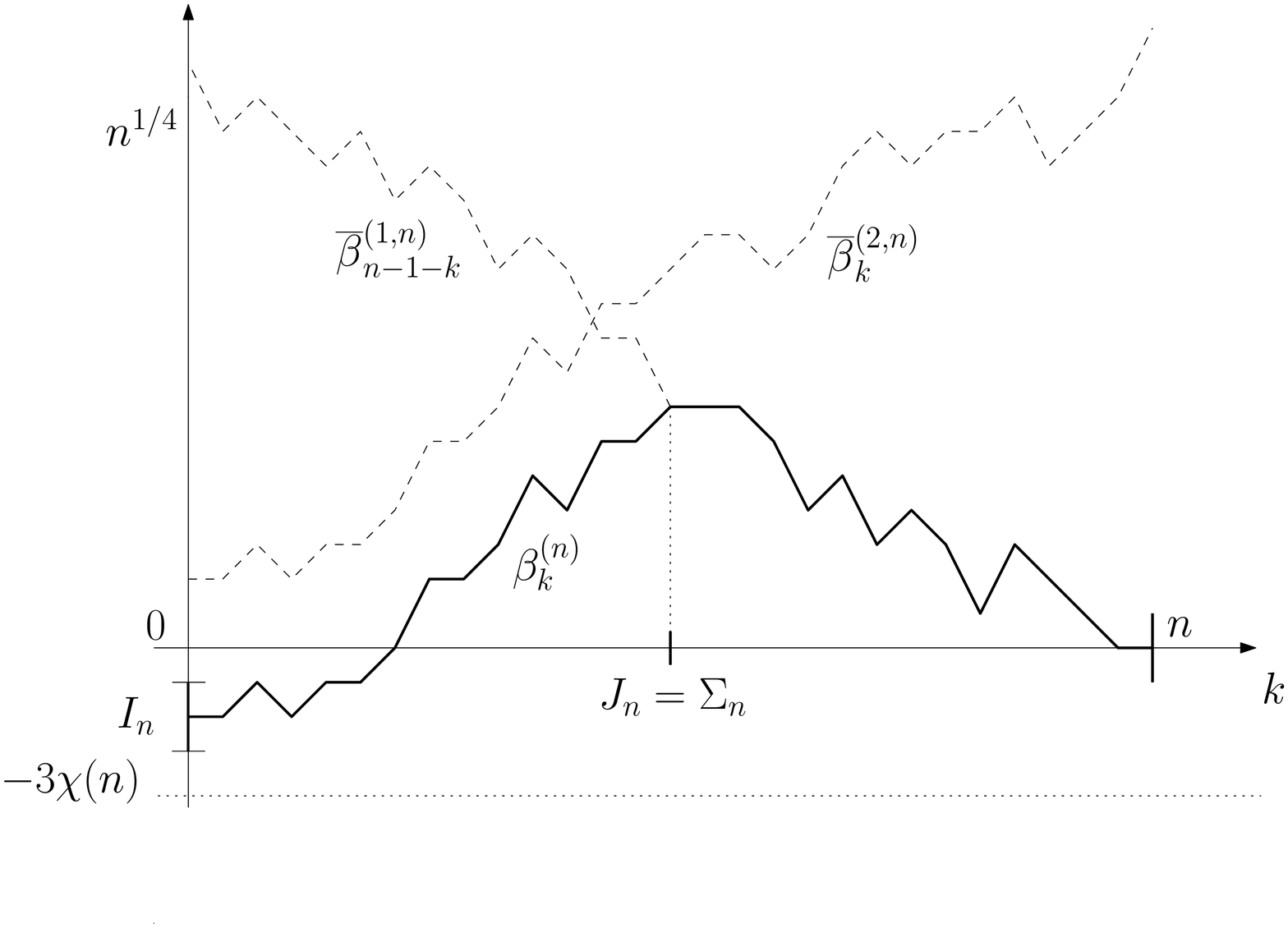}
		\caption{Illustration of \eqref{eq:betaimplication}.}
		\label{fig:betapath}
	\end{figure}
	Now the first and third event on the right hand-side of \eqref{eq:betaimplication} are independent under $P_n^{\zeta,\eta}$ and their probabilities are bounded from below by $n^{-\gamma'}$ due to Lemma~\ref{le:help_result_1st_mom_lead_part} a). Thus, as a consequence of Lemma~\ref{le:help_result_1st_mom_lead_part} b), for $n$ large enough the probability of the first four events is bounded from below by $n^{-2\gamma'}-n^{-3\gamma'}$. 
	Furthermore, the first four events imply that $J_n\in\{1,\ldots,n-1\}$. Thus, due to Lemma~\ref{le:help_result_1st_mom_lead_part}, conditionally on the occurrence of the first four events, the probability of the last two events on the right-hand side in \eqref{eq:betaimplication} can be bounded from below by $cn^{-1}e^{-C(\gamma')\ln n/c}\geq n^{-\gamma''}$ for $n$ large enough. The proof of \eqref{le: first moment main eq} and thus of Lemma~\ref{le: first moment leading particles inequ} is completed by the choice $\gamma_1>2\gamma'+\gamma''$.
\end{proof}

\begin{proof}[{Proof of Lemma~\ref{le:help_result_1st_mom_lead_part}}]
	
$(b)$ Because $H_k^{(1,n)} - H_{k-1}^{(1,n)} \overset{d}{=}\tau_{k-1}$ under $P_n^{\zeta,\eta}$, by recalling $\E\big[E_n^{\zeta,\eta}[\tau_k]\big]=\frac{1}{n}$ and the definition of $\widehat{R}_k^{(n)}$ from \eqref{eq:def_rho_in}, we have $\beta_{k}^{(1,n)}-\beta_{k-1}^{(1,n)}= \tau_{k-1} - \frac{L_{k}^\zeta(\eta)}{v_0L(\eta)}$. Now $L_k^\zeta(\eta)$ is $\p$-a.s.\  bounded by Lemma~\ref{le:log mom fct and derivatives}. Furthermore, for all $\theta$ such that  $|\theta|\leq |\overeta|$ (where $\triangle=[\undereta,\overeta]$), we have
	\[ 0\leq E_n^{\zeta,\eta}\big[ e^{\theta \tau_{k-1}} \big] = E_k^{\zeta,\eta}\big[ e^{\theta \tau_{k-1}} \big] \leq \left( 	E_{k}\big[  e^{(-\es+\ei+\undereta)H_{k-1} } \big] \right)^{-1}=e^{\sqrt{2(\es-\ei+|\eta_{*}|)}}<\infty, \]
	where the last equation is due to \cite[(2.0.1), p.~204]{handbook_brownian_motion}. I.e., $\tau_k$ has uniform exponential moments under $P_n^{\zeta,\eta}$ and thus \eqref{eq: help result first mom b} follows by a union bound in combination with the exponential Chebyshev inequality.
	
	$(c)$	We have $\beta_{n-1}^{(1,n)}=H_{n-1}^{(1,n)} -E_n^{\zeta,\eta}[H_{n-1}^{(1,n)}] - \widehat{R}_{n-1}^{(n)}=H_{n-1}^{(1,n)} - E_n^{\zeta,\eta}[\tau_{n-1}]-
	\rho_n$, thus recalling definition \eqref{eq:def_rho_in},   the event in (c) is equivalent to $\{ H_{n-1}^{(1,n)}\in[x,x+\delta]+\frac{L_n^\zeta(\eta)}{v_0L(\eta)} \}$. Because $\frac{L_n^\zeta(\eta)}{v_0L(\eta)}$ is uniformly bounded and non-negative, it suffices to check that for every $C>0,$ there exists $c>0$ such that  $\inf_{y \in [0, C]}P_n^{\zeta,\eta}(H_{n-1}^{(1,n)}\in [x+y,x+y+\delta])\geq c\delta e^{-x/c}$ for all $x\geq 1$. Indeed, recalling \eqref{eq:def_P_x_zeta}, we can lower bound
	\begin{align*}
	&P_n^{\zeta,\eta}(H_{n-1}^{(1,n)}\in [x+y,x+y+\delta]) \ge
	E_n\big[ e^{-(\es-\ei-\undereta)H_{n-1}}; H_{n-1}^{(1,n)}\in [x+y,x+y+\delta] \big]  \\
	&\quad \geq e^{-(\es-\ei-\undereta)(x+y+\delta)} P_n\big( H_{n-1}^{(1,n)} \in[x+y,x+y+\delta]\big) \geq \frac{\delta  e^{-(\es-\ei-\undereta)(x+y+\delta)}}{\sqrt{2\pi (x+y+\delta)^3}}e^{-\frac{1}{2(x+y)}}, 
	\end{align*}
	where the last inequality is due to \cite[(2.0.2), p.~204]{handbook_brownian_motion}. 
	Now since the latter term can be lower bounded by $c\delta e^{-x/c}$, uniformly in $y\in[0,C],$ the claim follows. 
		
$(a)$  We will prove the second inequality,  and explain the modifications that are necessary to show the first one at the end of the proof.  For later reference it will serve our purposes to exclude some potentially bad behavior of the process  $(\widehat{R}_k^{(n)} - \widehat{R}_{k-1}^{(n)})_k$. To do so,  we take advantage of the next claim, the proof of which we provide after concluding the proof of Lemma~\ref{le:help_result_1st_mom_lead_part}.
\begin{claim}
	\label{cl:rho}
	For each $n\in\Z$, the sequence
	$(\rho_i)_{i\in\Z}$  consists of $\p$-centered and $\p$-stationary random variables, and the family $(\rho_i)_{i\in\Z}$ is bounded $\p$-a.s. In addition, $\rho_i$ is $\F^{i-1}$-adapted and there exists  $\Cl{const_pho}>0$ such that $\p$-a.s.,  for all $k,n\in\Z$, $k<n$, we have  
	\begin{equation}
	\label{eq: exponential decay incremts}
	\big| \E\big[ \rho_{n-k} | \F^n \big] \big| \leq \Cr{const_pho}\cdot \big(  \psi(k/2)+e^{-k/\Cr{const_pho}}\big).
	\end{equation}
	Furthermore, there exists $\overline{\sigma}\in[0,\infty)$ such that  $n^{-1/2}\sum_{l=1}^n \rho_l$ and $n^{-1/2}\sum_{l=1}^n \rho_{-l}$ converge in $\p$-distribution to $\overline{\sigma}X$ as $n\to\infty$, where $X\sim\mathcal{N}(0,1)$ is a standard Normal random variable. 
\end{claim}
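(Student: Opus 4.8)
The plan is to verify the four assertions of Claim~\ref{cl:rho} in turn, each reducing to properties of the logarithmic moment generating functions $L_i^\zeta$ established earlier (Lemma~\ref{le:log mom fct and derivatives} and Lemma~\ref{le:expMixLi}) together with the mixing hypothesis \eqref{eq:MIX}. Recall from \eqref{eq:def_rho_in} that
\[
  \rho_i = \frac{1}{v_0 L(\overline\eta(v_0))}\bigl( L_i^\zeta(\overline\eta(v_0)) - L(\overline\eta(v_0)) \bigr) - \bigl( (L_i^\zeta)'(\overline\eta(v_0)) - \E[(L_i^\zeta)'(\overline\eta(v_0))] \bigr),
\]
where we used $E_n^{\zeta,\eta}[\tau_{i-1}]=(L_i^\zeta)'(\eta)$ and $\E[E_n^{\zeta,\eta}[\tau_{i-1}]]=\E[(L_i^\zeta)'(\overline\eta(v_0))]$; note in particular that $\rho_i$ does not actually depend on $n$, which is why the claim is uniform in $n$. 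Centeredness is then immediate since both bracketed terms have vanishing $\p$-expectation (for the first, $\E[L_i^\zeta(\overline\eta(v_0))]=L(\overline\eta(v_0))$ by \eqref{eq:expectedLogMG} and \eqref{eq:STAT}). Stationarity of $(\rho_i)_{i\in\Z}$ follows from \eqref{eq:STAT} together with the fact that $L_i^\zeta(\eta)$ is a fixed measurable functional of the shifted potential $(\zeta(x))_{x\le i}$, more precisely $L_i^\zeta(\eta)$ is $\F_i$-measurable by its definition in \eqref{eq:LogMG_x} and, by the strong Markov property, depends only on the increments of $\xi$ on $[i-1,i]$ --- so $\rho_i$ is $\F^{i-1}$-adapted as claimed. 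Almost-sure boundedness of the family is a direct consequence of the uniform bounds $\sup_{\eta\in\triangle}\sup_{x\ge 1}\lvert L_x^\zeta(\eta)\rvert \vee \lvert (L_x^\zeta)'(\eta)\rvert \le \Cr{constDerLMG}$ supplied by Lemma~\ref{le:log mom fct and derivatives} (evaluated at the single point $\overline\eta(v_0)\in\triangle$), and $L(\overline\eta(v_0))\ne 0$ because $L$ is strictly negative on $(-\infty,0)$ by Lemma~\ref{le:expectedLogMGfct}(c).

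For the conditional decay estimate \eqref{eq: exponential decay incremts}, I would write $\rho_{n-k}$ as an affine combination of $L_{n-k}^\zeta(\eta)$ and $(L_{n-k}^\zeta)'(\eta)$ (both centered) and estimate $\bigl\lvert\E[\,\cdot\,\mid\F^n]\bigr\rvert$ for each. The term $\F^n = \sigma(\xi(z):z\ge n)$ lies ``to the right'' of $n-k$, while $L_{n-k}^\zeta(\eta)$ is measurable with respect to $\F_{n-k}\subset\F_{n-k}$; the gap between the index set of the random variable and the conditioning $\sigma$-algebra is of order $k$. Here one has to be slightly careful because $L_{n-k}^\zeta(\eta)$ involves a Brownian hitting time that can in principle travel far to the right, so I would first split according to whether the Brownian motion defining $L_{n-k}^\zeta(\eta)$ ever reaches level $n-k/2$: on that (exponentially unlikely) event, contribute the term $e^{-k/\Cr{const_pho}}$ using Gaussian hitting-time tail bounds (e.g.\ \cite[(2.0.2), p.~204]{handbook_brownian_motion}) and the uniform boundedness of $\rho$; on the complementary event, $L_{n-k}^\zeta(\eta)$ is (up to negligible error) a function of $(\xi(z): z\le n-k/2)$, hence $\F_{n-k/2}$-measurable, and \eqref{eq:MIX} applied with $j=n-k/2$, $k_{\text{mix}}=n$ yields a bound by $\E[\lvert\cdot\rvert]\,\psi(k/2)$. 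Combining the two gives \eqref{eq: exponential decay incremts}; the same argument applies verbatim to $(L_{n-k}^\zeta)'(\eta)$ since it has the same measurability structure (differentiation under the expectation, by Lemma~\ref{le:log mom fct and derivatives}, preserves it). This step is essentially a rerun of the mixing argument for the $L_i^\zeta$ from Lemma~\ref{le:expMixLi}, and I expect it to be the technical heart of the claim.

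Finally, the central limit theorem for $n^{-1/2}\sum_{l=1}^n\rho_l$ (and for $n^{-1/2}\sum_{l=1}^n\rho_{-l}$, which has the same law by stationarity) follows from the general CLT for stationary, bounded, mixing sequences already invoked in the proof of Lemma~\ref{le:CLT_S_n}: the sequence $(\rho_l)_{l\in\Z}$ is stationary, centered, bounded (parts just proved) and satisfies the $\psi$-mixing condition \eqref{eq:MIX} --- indeed $\sigma(\rho_l:l\ge k)\subset\F^{k-1}$ so the mixing coefficients are controlled --- hence the martingale-approximation method of \cite{hall_heyde} as summarized in \cite[section~2.3]{nolen_random_medium_FKPP} applies, and \cite[Lemma~1.1]{rio} guarantees that $\overline\sigma^2 := \lim_{n\to\infty} n^{-1}\E\bigl[(\sum_{l=1}^n\rho_l)^2\bigr] = \Var_\p(\rho_1) + 2\sum_{i=2}^\infty\Cov_\p(\rho_1,\rho_i)$ exists in $[0,\infty)$; the series converges absolutely exactly as in \eqref{eq:cov_bounded}, replacing $\widetilde L_i$ there by $\rho_i$ and using \eqref{eq: exponential decay incremts} in place of \eqref{eq:expMix1}. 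Setting $\overline\sigma := \sqrt{\overline\sigma^2}\ge 0$ then gives convergence to $\overline\sigma X$ with $X\sim\mathcal N(0,1)$ (the degenerate case $\overline\sigma=0$ being permitted), which is precisely the last assertion of the claim.
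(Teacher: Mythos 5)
Your proposal is correct and follows essentially the same route as the paper, which simply observes that boundedness, stationarity and adaptedness are inherited from $(L_i^\zeta(\eta))_i$ and $((L_i^\zeta)'(\eta))_i$ via Lemma~\ref{le:log mom fct and derivatives}, that \eqref{eq: exponential decay incremts} is exactly \eqref{eq:expMix1} and \eqref{eq:expMix3} of Lemma~\ref{le:expMixLi} applied with $i=n-k$, $j=n$, and that the CLT follows because $(\rho_i)$ satisfies the same hypotheses as $(\widetilde L_i)$ in the proof of Lemma~\ref{le:CLT_S_n}; your second paragraph merely re-derives Lemma~\ref{le:expMixLi} instead of citing it, and your identification of $\overline\sigma^2$ via \cite[Lemma~1.1]{rio} matches \eqref{eq:cov_bounded}. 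One local correction: your justification of adaptedness is wrong as stated --- $L_i^\zeta(\eta)$ is \emph{not} $\F_i$-measurable and does \emph{not} depend only on $\xi$ restricted to $[i-1,i]$, since the Brownian motion started at $i$ may wander arbitrarily far to the right before hitting $i-1$ (a point you yourself rely on in the next paragraph when running the mixing argument). The correct reason is that the path up to $H_{i-1}$ stays in $[i-1,\infty)$, so $L_i^\zeta(\eta)$ and $(L_i^\zeta)'(\eta)$ are measurable with respect to $\F^{i-1}=\sigma(\xi(z):z\ge i-1)$; this is the adaptedness actually claimed and actually used, so the slip does not propagate.
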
	
Now due to \eqref{eq: exponential decay incremts}, $(\rho_i)_{i\in\Z}$ fulfills the  conditions of Corollary~\ref{cor:rioDepExp}. As a consequence we deduce that 
for  $k\in\N$ and $x\geq 0$ we have
\(\p\big( \sum_{l=1}^{k}{\rho}_{l} \geq x \big) \leq c_1e^{-\frac{x^2}{c_1k}},\)
which, using stationarity, can be extended to the maximal inequality (e.g.\ by \cite[Theorem 1]{max_bernstein}) 
\begin{align}
\p\Big( \max_{ 0 \leq k\leq y} \big( \widehat{R}_{r+k}^{(n)}-\widehat{R}_r^{(n)} \big) \geq x \Big) &=
\p\Big( \max_{ 0 \leq k\leq y}\sum_{l=0}^{k}{\rho}_{l} \geq x \Big) \leq c_2e^{-\frac{x^2}{c_2y}}\quad\forall r,y\in\Z,\ x\geq 0. \label{eq:max_rhat}
\end{align}
Furthermore, recalling \eqref{eq:def_H_hat_and_R}, \eqref{eq:def_rho_in} and \eqref{eq:def_R_hat}, the increments of the process $(\overline{\beta}^{(2,n)}_k)_k$ can be written as  
\begin{equation*}
\begin{split}
\overline{\beta}^{(2,n)}_k-\overline{\beta}^{(2,n)}_{k-1} &= \big(H_k-H_{k-1} - E_n^{\zeta,\eta}[H_k-H_{k-1}]\big) - \Big( \sum_{i=k+1}^n  \rho_i - \sum_{i=k}^n \rho_i \Big) \\
&=\big(-\tau_{k-1} + (L_k^\zeta)'(\eta)\big) - \Big( - \frac{L_k^\zeta(\eta)}{v_0L(\eta)} + (L_k^\zeta)'(\eta) \Big) =  \frac{L_k^\zeta(\eta)}{v_0L(\eta)} - \tau_{k-1}.
\end{split}
\end{equation*}
$\p$-a.s., by Lemma~\ref{le:log mom fct and derivatives} b), the last fraction in the previous display is positive and uniformly bounded away from zero and infinity, whereas under $P_n^{\zeta,\eta}$, $\tau_{k-1}$ is an absolutely continuous random variable with positive density on $(0,\infty)$. 
	Therefore, for the constant 
	\begin{equation*}
	a:=\frac{1}{4} \sup_{k\in\Z} \essinf_{\xi} \big(\overline{\beta}_k^{(2,n)}-\overline{\beta}^{(2,n)}_{k-1}\big),
	\end{equation*}
	we have $\essinf_{k,n\in\Z:\ k\leq n,\ \xi} P_n^{\zeta,\eta}(\overline{\beta}_k^{(n)}-\overline{\beta}_{k-1}^{(n)}\geq 2a)\geq \delta$ for some universal constant 
	$\delta\in(0,1)$. We now split the environment into $\overline{\xi}(j):=(\xi(l))_{l\geq j}$ and $\underline{\xi}(j):=(\xi(l))_{l<  j}$ and set $t_0=t_{-1}:=0$ as well as $t_i:=2^i$ for $i\geq 1$. Furthermore, we introduce two constants: $\overline{c}>0$, which is defined in \eqref{eq:def_c_overline} below, and $\overline{C}>0$, which is independent of $\overline{c}$ and will be chosen large enough such that the sums in \eqref{eq:sum_finite1} and \eqref{eq:sum_finite2} below are finite.  For $i\geq1$, we define the random variables
	\begin{align} \label{eq:ZiDef}
	\begin{split}
	Z_i^{(n)}
	&:=\underset{\overline{\xi}(t_{i+1})}{\text{ess inf}}\ \underset{x\geq a t_{i-1}^{1/2}}{\inf} P_{n}^{\zeta,\eta}\big( \overline{\beta}^{(2,n)}_{t_i}\geq a t_i^{1/2},\overline{\beta}_k^{(2,n)} \geq t_{i}^{1/4}\ \forall k\in\{t_{i-1},\ldots, t_i \} \ \big|\ \overline{\beta}_{t_{i-1}}^{(2,n)}= x \big) \\
	&=\underset{\overline{\xi}(t_{i+1})}{\text{ess inf}}\  P_{n}^{\zeta,\eta}\big( \overline{\beta}^{(2,n)}_{t_i}\geq a t_i^{1/2},\overline{\beta}_k^{(2,n)} \geq t_{i}^{1/4}\ \forall k\in\{t_{i-1},\ldots, t_i \} \ \big|\ \overline{\beta}_{t_{i-1}}^{(2,n)}= a t_{i-1}^{1/2} \big) ,
	\end{split}
	\end{align}
	where \ 
	$\underset{\overline{\xi}(x)}{\text{ess inf}}$ means taking the essential infimum with respect to $\overline{\xi}(x),$
	and where the second equality is due to the monotonicity of the first probability in \eqref{eq:ZiDef} as a function in $x.$
	Thus, as a random variable, $Z_i^{(n)}$ is measurable with respect to $\F_{t_{i+1}}$. Now since $\overline{\beta}_k^{(2,n)}$ is $\F^k$-measurable, we have that $Z_i^{(n)}$ is $(\F^{t_{i-1}}\cap\F_{t_{i+1}})$-measurable. Setting $i(n):=\ln _2\big( \lfloor \big(\overline{C}\ln n\big)^2\rfloor \big)$, 
	we further define
	\begin{align*}
	Y^{(n)} &:= P_n^{\zeta,\eta} \Big( \overline{\beta}^{(2,n)}_k\geq 0\ \forall \lfloor \overline{C} \ln n \rfloor\leq k\leq t_{i(n)}, \ \overline{\beta}^{(2,n)}_{t_{i(n)}} \geq a \lfloor \overline{C} \ln n \rfloor\  \Big|\ \overline{\beta}^{(2,n)}_{\lfloor \overline{C} \ln n \rfloor} = 2a \lfloor \overline{C} \ln n \rfloor \Big).
	\end{align*}
	Writing $j(n):=\lceil \log_2(n) \rceil$, due to the Markov property of the process $\overline{\beta}^{(2,n)}$ under $P_n^{\zeta,\eta}$, we have $\p$-almost surely that for all $n$ large enough, 
	\begin{align}
	P_n^{\zeta,\eta}\left( \overline{\beta}_n^{(2,n)}\geq n^{1/4},\overline{\beta}_k^{(2,n)}\geq 0\  \forall k\leq n \right)  &\geq \prod_{k=1}^{\lfloor \overline{C} \ln n\rfloor} P_n^{\zeta,\eta}\big(  \overline{\beta}^{(2,n)}_k - \overline{\beta}^{(2,n)}_{k-1}\geq 2a \big) \cdot Y^{(n)}\cdot \prod_{i=i(n)+1}^{j(n)} Z_i^{(n)} \notag \\
	&\geq  \delta^{\lfloor \overline{C}\ln n\rfloor}\cdot  Y^{(n)} \cdot  \exp\Big\{ \sum_{i=i(n)+1}^{j(n)} \ln Z_i^{(n)} \mathds{1}_{B_i^{(n)}} \Big\}, \label{eq:lower_bound}
	\end{align}
	where the event 
	\begin{align*}
	B_i^{(n)} &:= \Big\{ \max_{{r\in[t_{i-1},t_i],\ 0\leq k\leq \frac{5}{2} a t_i^{1/2}/ \overline{c}}} \big( \widehat{R}_{r+k}-\widehat{R}_r \big)< at_{i-1}^{1/2} / 16 \Big\}
	\end{align*}
	occurs $\p$-almost surely for all $i\in[i(n),j(n)]$ and all $n$ large enough. 
	Indeed, by \eqref{eq:max_rhat} we have 
	\begin{align}
	\sum_n \sum_{i=i(n)}^{j(n)} \p\big((B_i^{(n)})^c\big) &\leq \sum_n \sum_{i=i(n)}^{j(n)}t_{i-1} \p\Big( \max_{0\leq k \leq  \frac{5}{2}a t_i^{1/2} / \overline{c}} \big( \widehat{R}_{k}-\widehat{R}_0 \big) \geq at_{i-1}^{1/2} / 16 \Big)\notag \\
	&\leq c_3\sum_n  n\sum_{i=i(n)}^{j(n)} e^{-a^2\overline{c} t_{i-1}^{1/2}/c_3} \leq c_4\sum_n  n\log_2(n) e^{-a^2 \overline{C} \ln n/c_4} <\infty, \label{eq:sum_finite1} 
	\end{align}
	where the last inequality holds true for $\overline{C}$ large enough. Thus, the Borel-Cantelli lemma implies  that $\p$-a.s., for all $n$ large enough the events $B_i^{(n)}$ occur for all $i\in[i(n),j(n)]$.  
	Furthermore, it is possible to show that $\p$-almost surely, for all $n$ large enough we have $Y^{(n)}\geq n^{-\gamma''}$. We postpone a proof of this fact, because it uses the same arguments as the following paragraph and we will describe necessary adaptations afterwards, cf.\ page~\pageref{page:Y_bounded}.  Thus, for the time being it remains to show that there exists $\widetilde{c}>0$ such that  $\p$-almost surely, for all $n$ large enough, 
	\begin{equation}\label{eq: limsup log Z_i bounded}
	\sum_{i=i(n)}^{j(n)}\ln(Z_i^{(n)})\mathds{1}_{B_i^{(n)}}  \geq -\widetilde{c}\cdot j(n).
	\end{equation}
	The second inequality in Lemma~\ref{le:help_result_1st_mom_lead_part} (a) then follows from \eqref{eq:lower_bound} with $\gamma'>\overline{C}\ln(1/\delta) + \gamma''+\widetilde{c}/\ln(2)$. 
	
	In order to establish \eqref{eq: limsup log Z_i bounded}, it is enough to show that there exist $c'',\theta>0$, independent of $\widetilde{c}$, such that for all $i$ large enough, 
	\begin{equation}\label{eq: exp mom of log Z_i exists}
	\sup_n\E\Big[ e^{-\theta \ln(Z_i^{(n)})\mathds{1}_{B_i^{(n)}}} \Big]\leq c''.
	\end{equation}
	Indeed, if \eqref{eq: exp mom of log Z_i exists} holds, setting $\widetilde{Z}_i^{(n)}:=\ln(Z_i^{(n)})\mathds{1}_{B_i^{(n)}}$, by Markov's inequality we have
	\begin{align*}
	\p&\Big(  \sum_{i=i(n)}^{j(n)}\widetilde{Z}_{i}^{(n)} < - \widetilde{c} \cdot j(n) \Big) \leq \p\Big( \sum_{k=0}^3 \sum_{i=\lceil i(n)/4\rceil}^{\lfloor \frac{j(n)}{4}\rfloor -1 }  \widetilde{Z}_{4i+k}^{(n)} < -\widetilde{c}\cdot  j(n)  \Big)  \\
	&\quad \leq \sum_{k=0}^3 \p\Big( \sum_{i=\lceil i(n)/4\rceil}^{\lfloor \frac{j(n)}{4}\rfloor-1}  \widetilde{Z}_{4i+k}^{(n)} <- \widetilde{c}\cdot j(n)/4 \Big)
	\leq 4 e^{-\theta \widetilde{c}\cdot j(n)/4} \max_{k=1,\ldots,4} \E\Big[ e^{-\theta \sum_{i=\lceil i(n)/4\rceil}^{\lfloor \frac{j(n)}{4}\rfloor-1} \widetilde{Z}_{4i+k}^{(n)}} \Big].
	\end{align*}
	We will only estimate the above expectation for the case $k=0;$ the cases $k \in \{1,2,3\}$  can be estimated similarly. Now $\widetilde{Z}_{4i}^{(n)}$  is $\F^{t_{4i-1}}$-measurable, hence, also recalling $t_{4i-1}-t_{4i-2}=2^{4i-2}$, by \eqref{eq:MIX} we have 
	\begin{equation*}
	\E\big[e^{-\theta  \widetilde{Z}_{4i}^{(n)}} | \F_{t_{4i-2}} \big] \leq (1+\psi(2^{4i-2}))\E[e^{-\theta  \widetilde{Z}_{4i}^{(n)}}].
	\end{equation*}
	Since furthermore $\widetilde{Z}_{4(i-1)}^{(n)}$ is $\F^{t_{4i-2}}$-measurable, we obtain via iterated conditioning that 
	\begin{align*}
	\E\Big[ e^{-\theta \sum_{i=\lceil i(n)/4\rceil}^{\lfloor \frac{j(n)}{4}\rfloor}  \widetilde{Z}_{4i}^{(n)}} \Big] &= \E\Big[ \E\big[ \cdots \E\big[\E\big[ e^{-\theta \sum_{i=\lceil i(n)/4\rceil}^{\lfloor \frac{j(n)}{4}\rfloor} \widetilde{Z}_{4i}^{(n)}} \given \F_{t_{4j(n)-2}}\big] \given \F_{t_{4j(n)-6}} \big] \cdots \given \F_{t_{2}} \big] \Big] \\
	& \leq \prod_{i=\lceil i(n)/4\rceil}^{\lfloor \frac{j(n)}{4}\rfloor}(1+\psi(2^{4i-2}))\E[e^{-\theta  \widetilde{Z}_{4i}^{(n)}}] \leq (c_6 c'')^{j(n)},
	\end{align*}
	for some $c_6>0$ and $n$ large enough. Choosing $\widetilde{c}$ large enough, by a Borel-Cantelli argument similar to the proof of Lemma~\ref{le:concEtaEmpLT},  inequality \eqref{eq: limsup log Z_i bounded} would follow.

	Thus, in order to show \eqref{eq: exp mom of log Z_i exists}, note that 
	because 
	\begin{equation}
	\label{eq:equal_distr}
	Z_i^{(n)}=Z_i^{(n)}(\xi(\cdot))=Z_i^{(n-k)}(\xi(\cdot+k))\overset{d}{=}Z_i^{(n-k)}(\xi(\cdot))=Z_i^{(n-k)},
	\end{equation}
	we can drop the supremum in \eqref{eq: exp mom of log Z_i exists}. 
	In the following, we first choose $i$ large enough (and from then on fixed) such that several estimates in the remaining part of the proof hold, and afterwards we adapt $n=n(i)$ to ensure $0\leq i\leq \lceil \log_2(n) \rceil$. For simplicity, we write $Z_i:=Z_i^{(n)}$, $\beta_k:=\overline{\beta}_k^{(2,n)}$, $\widehat{H}_k:=H_k-E_n^{\zeta,\eta}[H_k]$,  $\widehat{R}_k:=\widehat{R}_k^{(n)}$ and  define  
	\begin{align*} 
	\overline{\rho}_k^{(j)}&:=\underset{\overline{\xi}(j)}{\text{ess sup}} \ \rho_k, \quad 
	\overline{R}_k^{(j)}:=\sum_{l=0}^{k} \overline{\rho}_l^{(j)},\quad  0\leq k\leq j.
	\end{align*}
	Furthermore, 
	Thus, $\overline{\rho}_k^{(j)}$ is $\F_j$-measurable and $\overline{R}_{k+l}^{(j)}-\overline{R}_k^{(j)}$ is $(\F^k\cap \F_j)$-measurable for all $l\geq0$. Let $M_R := \esssup \rho_0$ and $L:=at_i^{1/2},$ and note that the latter choice corresponds to diffusive scaling. Then we define 
	\begin{equation}
	\label{eq:r_0_s_0}
	\begin{split}
	r_0&:=t_{i-1} ,\quad m:=\frac{L}{16 M_R}, \quad
	s_0:=\big(\inf\big\{ k\geq r_0:\ \overline{R}_k^{(k+m)} -\overline{R}_{r_0}^{(k+m)}\geq L/8 \big\}- 1\big)  \wedge t_i,
	\end{split}
	\end{equation}
	and for $j\geq 1$ let 
	\begin{align} \label{eq:m}
	\begin{split}
	r_{j} &:= s_{j-1} + \Big \lceil \frac{L}{8 M_R} \Big \rceil,\\
	s_{j} &:= \big(\inf\big\{ k\geq r_{j} : \overline{R}_k^{(k+m)} - \overline{R}_{r_{j}}^{(k+m)} \geq L/8 \big\}-1\big) \wedge \left( r_{j} + (t_i-t_{i-1}) \right).
	\end{split}
	\end{align}
	Heuristically, $s_j$ is the first time after which the process $\overline{R}$ (and thus $\widehat{R}$) increases at least by the amount $L/8$ after time $r_j$. Such large increments of  $\widehat{R}$ are potentially  troublesome, since as a consequence, the process $\beta$ might decrease too much and cause the event in the definition of $Z_i$ to have too small  probability. In order to cater for this inconvenience, we start noting that by definition, $s_j-r_j$ is bounded by $t_i-t_{i-1}$ and   $\F_{s_{j}+m}$-measurable, and $r_{j+1}-(s_j+m)\geq m.$ Thus, by condition \eqref{eq:MIX}, for every non-negative measurable function  $f$    we notice for later reference that
	\begin{equation}
	\label{eq:almostIndep}
	\E[ f(s_j-r_j) \, |\, \F^{r_{j+1}} ] \leq \big(1+\psi(m)\big)\E[f(s_j-r_j)].
	\end{equation} 
	Next, we define 
	\begin{align*}
	\mathcal{G}_j &:= \Big\{ \inf_{r_j\leq k\leq s_j}(\widehat{H}_k-\widehat{H}_{r_j}) \geq -L/8,\ \beta_{s_j}\geq 2L \Big\}, \quad
	\mathcal{G}_j' := \Big\{ \inf_{s_{j}\leq k \leq r_{j+1}}(\widehat{H}_k-\widehat{H}_{s_j})\geq -L/8 \Big\}, \\
	J&:=\inf\big\{ j: s_j-r_j = t_i-t_{i-1} \big\}\wedge \inf\{j:s_j\geq t_i\}, \quad \text{as well as} \quad
	\mathcal{G}:=\bigcap_{j=0}^J\mathcal{G}_j \cap \bigcap_{j=0}^{J-1}\mathcal{G}_j',
	\end{align*}	
	and claim that 
	\begin{equation}
	\label{eq: Zi bounded below}
	Z_i \geq P_n^{\zeta,\eta}\big( \mathcal{G} \ | \ \beta_{t_{i-1}} = a t_{i-1}^{1/2} \big).
	\end{equation}
	Indeed, on $[r_0,s_0]$, the process $\overline{R}$ (and thus also $\widehat{R}$) increases by at most $L/8$, and  the process $\widehat{H}$ decreases by at most $L/8$ on $\mathcal{G}_0$. Moreover, for $j\geq 1$, on $[s_{j-1},r_j]$, the process $\widehat{R}$ increases by at most $L/8$, and $\widehat{H}$ decreases by at most $L/8$ on $\mathcal{G}_j'$. Finally, on $[r_j,s_j]$, the process $\overline{R}$ (and thus $\widehat{R}$) increases by at most $L/8$, and on $\mathcal{G}_j$, $\widehat{H}$ decreases by at most $L/8$ and $\beta_{s_j}\geq 2L$. All in all, conditioning on $\beta_{t_{i-1}}=at_{i-1}^{1/2}=L/\sqrt{2}\geq L/2$, we have  
	$\beta_k\geq L/4\geq t_{i}^{1/4}$ for $k\in[r_0,s_0]$ and $\beta_k\geq L$ for all $k\in[s_0,s_J]$. Since by definition, $s_J\geq t_i$, we get $\beta_{t_i}\geq L=at_i^{1/2}$, implying \eqref{eq: Zi bounded below}.
	
	Furthermore, we can continue to lower bound
	\begin{equation}
	P_n^{\zeta,\eta}\big( \mathcal{G} \ | \ \beta_{t_{i-1}} = a t_{i-1}^{1/2} \big) \ge
	P_n^{\zeta,\eta}\left( \mathcal{G}_0 \ | \ \beta_{r_0}=L/\sqrt{2} \right)\prod_{j=0}^{J-1}P_n^{\zeta,\eta}\left( \mathcal{G}_j' \right)\prod_{j=1}^{J} P_n^{\zeta,\eta}\left( \mathcal{G}_j \ | \ \beta_{r_j}=2L \right). \label{eq:prob_bound_below}
	\end{equation}
	To see this, successively condition on $\beta_{r_j}\geq 2L,$ $j =1,\ldots,J,$ and use the Markov property of the process $\widehat{H}$ as well as the fact that $x\mapsto P_n^{\zeta,\eta}\left( \mathcal{G}_j \, |\, \beta_{r_j} = x \right)$ is increasing. Then use the fact that under $P_n^{\zeta,\eta}$, the event  $\mathcal{G}_j'$ is independent of $\beta_{r_j}$ by the independence of the increments of $\widehat{H}$, $j=0,\ldots, J-1$. 
	
	In order to lower bound \eqref{eq:prob_bound_below}, observe that under $P_n^{\zeta,\eta}$, the sequence $(\widehat{H}_{k+1}-\widehat{H}_{k})_{k\geq r_j}$ consists of independent and centered random variables, whose $P_n^{\zeta,\eta}$-moment generating function is finite in a neighborhood of zero. Thus, the  central limit theorem 
entails that for $i$ large enough we have $P_n^{\zeta,\eta}(\mathcal{G}_j')\geq1/2$ for all relevant choices of $j.$ 
	Moreover, 
	\begin{equation*}
	P_n^{\zeta,\eta}\left( \mathcal{G}_j \ | \ \beta_{r_j}=2L \right)\geq P_n^{\zeta,\eta} \Big( \widehat{H}_{s_j}-\widehat{H}_{r_j}\geq 5L/2, \inf_{r_j\leq k\leq s_j}(\widehat{H}_k - \widehat{H}_{r_j})\geq -L/8 \Big).
	\end{equation*}
	We see that both events are nondecreasing in the (independent) increments of $\widehat{H}$. By Harris' inequality (\cite[Theorem 2.15]{boucheron_conc_inequ}) we get
	\begin{align*}
	P_n^{\zeta,\eta}& \Big( \widehat{H}_{s_j}-\widehat{H}_{r_j}\geq 5L/2, \inf_{r_j\leq k\leq s_j}(\widehat{H}_k - \widehat{H}_{r_j})\geq -L/8 \Big) \\
	&\geq P_n^{\zeta,\eta} \big( \widehat{H}_{s_j}-\widehat{H}_{r_j}\geq 5L/2\big)\cdot P_n^{\zeta,\eta} \Big( \inf_{r_j\leq k\leq s_j}(\widehat{H}_k - \widehat{H}_{r_j})\geq -L/8 \Big).
	\end{align*}
	Recalling $s_j-r_j\leq t_i-t_{i-1}= \frac{L^2}{2a^2}$, a Gaussian scaling yields $P_n^{\zeta,\eta} \Big( \inf_{r_j\leq k\leq s_j}(\widehat{H}_k - \widehat{H}_{r_j})\geq -L/8 \Big)\geq c_7>0$. 
	To bound the first factor, we recall that by \eqref{eq:expMix2} and \eqref{eq:expMix4}, we have $\p$-a.s.\ 
	\[0\leq \overline{\rho}_{r_j+l}^{(r_j+k+m)} -\rho_{r_j+l}  \leq  c_8 (\psi(m/2)+e^{-m/c_8})\quad\text{for all }l\leq k\leq t_i/2.   \]
	Because $m=\frac{L}{16 M_R}$ and $t_i/2 = \frac{L^2}{2a^2}$, due to \eqref{eq:MIX} we finally get  for all $i$ (and thus $L$) large enough (due to  $\psi(x)\cdot x\to0$ ($x\to\infty$), which itself is due to summability of $\psi(k)$), that
	\begin{equation}
	\label{eq:rho_diff}
	\begin{split}
	0&\leq \big(\overline{R}_{r_j+k}^{(r_j+k+m)}-\overline{R}_{r_j}^{(r_j+k+m)}\big) - \big( \widehat{R}_{r_j+k}-\widehat{R}_{r_j} \big) = \sum_{l=1}^k \big(\overline{\rho}_{r_j+l}^{(r_j+k+m)}  - \rho_{r_j+l} \big) \\
	&\leq c_8L^2( \psi(L/16M_R)+e^{-L/c_8})\leq  L/16\quad \text{for all }k\in \Big \{0,\ldots, \frac{L^2}{2a^2} \Big \}. 
	\end{split}
	\end{equation}
	
	By $s_j-r_j\leq t_i-t_{i-1}= \frac{L^2}{2a^2}$ and \eqref{eq:m}, we see that $s_j-r_j\geq L/16$ for all $i$ large enough. 
	Recall that under $P_n^{\zeta,\eta}$, the sequence $\widehat{H}_{s_j}-\widehat{H}_{r_j}$ is a sum of independent centered random variables, whose moment generating function is uniformly bounded in a neighborhood of the origin. Then by \cite[(1)]{hoeffding_reversed}, we can apply \cite[Theorem~4]{hoeffding_reversed} in the following manner: 
	Let  $c'>0$ be as in \cite[Theorem 4]{hoeffding_reversed} and in the notation of the latter theorem we choose $k=s_j-r_j$,  $\alpha=\essinf_{\zeta,k<n} E_n^{\zeta,\eta}[(\widehat{H}_k-\widehat{H}_{k-1})^2]>0$, $M=|\eta|/2$, $u_1=\ldots =u_k=1$ and 
	\begin{equation}
	\label{eq:def_c_overline}
	\overline{c}:=c'\cdot\frac{|\eta|}{2}\cdot \alpha
	\end{equation}
	Then a lower Bernstein-type inequality from \cite[Theorem~4]{hoeffding_reversed}
	gives  that on $B_i^{(n)}$ we have
	\begin{equation}
	\label{eq:Bernstein_low}
	P_n^{\zeta,\eta} \big( \widehat{H}_{s_j}-\widehat{H}_{r_j}\geq 5L/2\big) \geq c_9e^{-\frac{L^2}{c_9 (s_j-r_j)}}.
	\end{equation}
	Note that the condition in $B_i^{(n)}$ makes \cite[Theorem~4]{hoeffding_reversed}  applicable by ensuring 'enough' summands $s_j-r_j$ and is the main reason we have to introduce the sets $B_i^{(n)}$.
	We will write $c=c_7\wedge c_9$ from now on. 	
	Using \eqref{eq: Zi bounded below} in combination with the lower bounds for the factors of \eqref{eq:prob_bound_below} just derived, the term in \eqref{eq: exp mom of log Z_i exists} can be bounded from above by
	\begin{align}
	\E&\big[ e^{-\theta \ln(Z_i)} \big] \leq c_{10}\E\Big[ \exp\Big\{  \theta J\ln(2/c)  +\theta\sum_{j=0}^J \frac{L^2}{c(s_j-r_j)}  \Big\} \Big] \notag \\
	&\quad\leq c_{10} \sum_{k=0}^\infty \E\Big[ \Big(\frac{2}{c}\Big)^{\theta k}   \exp\Big\{ \theta \frac{L^2}{c(s_k-r_k)} \mathds{1}_{s_k-r_k=t_i-t_{i-1}}  +\theta\sum_{j=0}^{k-1} \frac{L^2}{c(s_j-r_j)}\mathds{1}_{s_j-r_j<t_i-t_{i-1}}  \Big\} \Big] \notag \\
	&\quad\leq c_{10} \sum_{k=0}^\infty \Big( \frac{2}{c} \Big)^{\theta k}\big(1+\psi(m)\big)^k e^{\frac{2\theta L^2}{ct_{i-1}}} \cdot  \prod_{j=0}^{k-1}\E\Big[ \exp\Big\{ \frac{\theta L^2}{c(s_j-r_j)} \Big\}\mathds{1}_{s_j-r_j < t_i/2} \Big], \label{eq: sum converge}
	\end{align}
	where we recall $m$ from \eqref{eq:m}, and the last inequality is due to \eqref{eq:almostIndep} in combination with $t_i-t_{i-1}=t_i/2.$
	For the latter expectation we have
	\begin{align}
	&\E\left[ \exp\Big\{ \frac{\theta L^2}{c(s_j-r_j)} \Big\}\mathds{1}_{s_j-r_j < t_{i}/2} \right] = \int_0^\infty \p\Big( e^{ \frac{\theta L^2}{c(s_j-r_j)} }\mathds{1}_{s_j-r_j < t_{i}/2} \geq x \Big) \diff x \notag\\
	&\leq e^{\frac{2\theta L^2}{ct_i}}\p\Big( s_j-r_j<t_i/2 \Big) + \int_{e^{\frac{2\theta L^2}{ct_i}}}^{\infty} \p\Big( e^{ \frac{\theta L^2}{c(s_j-r_j)} } \geq x \Big) \diff x\label{eq: int expectation}.
	\end{align}
Substituting $x=e^{\frac{\theta L^2}{cy}}$, the second summand can be written as
	\begin{align*}
	\int_{0}^{t_i/2} \frac{\theta L^2}{cy^2} e^{\frac{\theta L^2}{cy}}\p\left( s_j-r_j \leq y \right) \diff y.
	\end{align*}
	In order to obtain an upper bound, we start with the probability inside the integral and get
	\begin{align*}
	\p\left( s_j-r_j \leq y \right) &= \p\Big( \max_{ 1 \leq k\leq y} \sum_{l=1}^{k}\overline{\rho}_{r_j+l}^{(r_{j}+k+m)} \geq L/8 \Big) \leq \p\Big( \max_{ 1 \leq k\leq y} \sum_{l=1}^{k}{\rho}_{r_j+l} \geq L/16 \Big)\\
	&\leq c_{11}e^{-\frac{L^2}{c_{11} y}}, \quad \forall\  y \in [0,t_i/2];
	\end{align*}
	here the first inequality is due to \eqref{eq:rho_diff} and the last inequality due to \eqref{eq:max_rhat}. 
	
	Putting these bounds together, the second summand in \eqref{eq: int expectation} can be bounded from above by
	\begin{align*}
	\int_{0}^{t_i/2} \frac{\theta L^2}{cy^2} e^{\frac{ \theta L^2}{cy}} c_{10}e^{-\frac{L^2}{yc_{10}}
	} \diff y  &= c_{11}\int_{0}^{t_i/2} \frac{\theta L^2}{cy^2} e^{\frac{ L^2}{cy}(\theta-c_{12})}  \diff y \leq c_{13}\int_0^{1/2a^2} \frac{\theta}{z^2}e^{\frac{1}{cz}(\theta-c_{12})} \diff z \\
	&\leq c_{14}\theta \int_{0}^\infty e^{\frac{x(\theta-c_{11})}{c}} \diff x.
	\end{align*}
	Now the latter term can be made arbitrarily close to zero by choosing $\theta>0$ small enough. Furthermore, again choosing $\theta>0$ small enough, the first term in \eqref{eq: int expectation} is strictly smaller than one by the central limit theorem from Claim~\ref{cl:rho}. Thus,  $\theta>0$ can be chosen small enough such that for all  $i$ large enough, the sum on the right-hand side in \eqref{eq: sum converge} converges, with a finite upper bound independent of $i.$ 
		
	\label{page:Y_bounded}
	To show $Y^{(n)}\geq n^{-\gamma''}$, we adapt the strategy in the proof of \eqref{eq: limsup log Z_i bounded}, i.e.\ set $L:=2a \overline{C}\ln n $,  $r_0=\lfloor \overline{C}\ln n\rfloor$ and $J:=\inf\{j:s_j\geq \lfloor (\overline{C} \ln n)^2\rfloor  \}$ and keep the other  definitions as in \eqref{eq:r_0_s_0} and \eqref{eq:m}. Then by the same argument below display \eqref{eq: exp mom of log Z_i exists}, $Y^{(n)}\geq n^{-\gamma''}$ for some suitable $\gamma''>0$  follows if $\E[ e ^{-\theta Y^{(n)}} ]\leq c$ for some constant $c>0$, some small $\theta>0$ and all $n$ large enough. 
	But this follows (as in the argument leading to the definition of $B_i^{(n)}$), if the process $(\widehat{R}_k - \widehat{R}_{k-1})_k$ does not decrease too fast, see the Borel-Cantelli argument below display \eqref{eq:Bernstein_low}, which itself is a consequence of
	\begin{align}
	\sum_n (\overline{C} \ln n)^2\cdot \p \big( \sup_{0\leq k\leq L/8\overline{c}} \big(\widehat{R}_k-\widehat{R}_0\big)\geq \frac{L}{8} \big) &\leq c_{15}\sum_n (\overline{C} \ln n)^2 e^{-a\overline{C}\ln n/c_{15}} <\infty. \label{eq:sum_finite2}
	\end{align}
	This completes the proof of the second inequality in  Lemma~\ref{le:help_result_1st_mom_lead_part} a). 
		
	We will now explain how to adapt the latter arguments for the proof of the first inequality in a). We define  $\beta_k=\overline{\beta}^{(1,n)}_{k}-\overline{\beta}^{(1,n)}_{n-1}$. In the definition of  $Z_i^{(n)}$, we have to take the essential infimum over $\overline{\xi}(n-t_{i-2})$ and have to replace the subscripts $k$ of $\beta_k$ by $n-k$, i.e. ‘‘running backwards'' from $n$. Thus,  $Z_i^{(n)}$ is $(\F^{n-t_i}\cap\F_{n-t_{i-2}})$-measurable. It is then enough to consider the case $n=0$ due to the argument in \eqref{eq:equal_distr}. Writing $Z_i:=Z_i^{(0)}$, $\beta_k:=\beta_k^{(0)}$, $\widehat{H}_k:=H_k-E_0^{\zeta,\eta}[H_k]$,  $\widehat{R}_k:=\widehat{R}_k^{(0)}$ and defining 
	\begin{align*} 
	\overline{\rho}_k^{(j)}&:=\underset{\overline{\xi}(j)}{\text{ess sup}} \ \rho_k,\qquad \overline{R}_k^{(j)}:=\sum_{l=k+1}^{0} \overline{\rho}_l^{(j)},\quad  k<0,\ k\in\Z,
	\end{align*}
	we have to adapt the definitions of $r_j$ and $s_j$ by the expressions 
	\begin{align*}
	r_0&:=-t_{i-1}, \qquad 	s_0:=\big(\sup\big\{ k\leq r_0:\ \widehat{R}_k - \widehat{R}_{r_0}\geq L/8 \big\}+ 1\big)  \vee (-t_i), \\
	r_{j} &:= s_{j-1} - \Big \lceil \frac{L}{8 M_R} \Big \rceil, \quad s_{j}':= s_{j-1} -  \frac{L}{16 M_R},\quad j\geq 1,\\
	s_{j} &:= \big(\sup\big\{ k\leq r_{j} : \overline{R}_k^{(s'_{j})} - \overline{R}_{r_{j}}^{(s_{j}')} \geq L/8 \big\}+1\big) \vee \left( r_{j} - (t_i-t_{i-1}) \right),\quad j\geq 1,\\
	J&:=\inf\{j: s_j-r_j=-t_i \}\vee\sup\{ j: s_j\leq -t_i \}.
	\end{align*}
	The remaining part of the proof essentially follows the same steps as for the second inequality in a).
\end{proof}	
\begin{proof}
	[Proof of Claim~\ref{cl:rho}]
	Boundedness, stationarity and adaptedness are direct consequences of the corresponding properties of the sequences $(L_i^\zeta(\eta))_{i\in\Z}$ and $((L_i^\zeta)'(\eta))_{i\in\Z}$ and Lemma~\ref{le:log mom fct and derivatives}. Display \eqref{eq: exponential decay incremts} is due to Lemma~\ref{le:expMixLi}. To show that the central limit theorem, we note that the sequence $(\rho_i)_{i\in\Z}$ fulfills the same conditions as the sequence $(\widetilde{L}_i)_{i\in\Z}$ in the proof of Lemma~\ref{le:CLT_S_n}   
\end{proof}
	
\subsection{Second moment of leading particles}
\label{sec:sec_mom_lead}
Recall the notation $N_{t}^{\mathcal{L}}$ from below \eqref{eq:defChi} and that of $\munder(t)$ from \eqref{eq:munder}. For the second moment of the leading particles we now prove the following upper bound.
 \begin{lemma}
 	\label{le:SecMomLeadingPart}
 	For every function $F$ fulfilling \eqref{eq:p_k} and for every $a>0$, there exists $\gamma_2=\gamma_2(F,a)<\infty$ such that  $\p$-a.s.,  for all $t$ large enough,
 	\begin{equation}
 	\label{eq:SecMomLeadingPartIneq}
 	\sup_{x\in[\munder^{(a)}(t)-1,\munder^{(a)}(t)+1]}\Eprobth_{x}^\xi\big[ \big(N_t^{\mathcal{L},a}\big)^2 \big] \leq t^{\gamma_2}.
 	\end{equation}
 \end{lemma}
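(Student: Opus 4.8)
The strategy is to apply the many-to-two formula \eqref{eq:f-k-2} with $\varphi_1(s) := -\infty$ on $[0,s_0]$ for suitable $s_0$ (so that the lower barrier is vacuous up to the point where particles must return to the origin) and $\varphi_2$ encoding the upper constraint implicit in the definition \eqref{eq:defLeadingPart}, namely that the genealogy of a leading particle must hit each level $k$ not before time $t - T_k^{(a)} - 5\chi_1(\munder^{(a)}(t))$. Equivalently, $\{\nu \in N(t): X_s^\nu \le 0, \nu \text{ leading}\}$ is contained in the event that the genealogical path of $\nu$ stays below a deterministic (given $\xi$) curve $\varphi_2$ with $\varphi_2(s) \approx$ the inverse of $s \mapsto T_x^{(a)}$ shifted by a logarithmic amount. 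Then the first term of \eqref{eq:f-k-2} is exactly $\Eprob_x^\xi[N_t^{\mathcal L,a}]$, so by the crude upper bound $u(t,x) = \Eprob_x^\xi[N^\le(t,0)] \le u(t, \munder(t)) \cdot (\text{const})$ valid for $x$ near $\munder(t)$ — which follows from the space-perturbation Lemma~\ref{le:space_perturb}(b) together with $u(t,\munder(t)) \asymp a$ — this term is at most polynomial in $t$, in fact at most a constant. The real work is bounding the second (integral) term in \eqref{eq:f-k-2}.

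\textbf{Bounding the integral term.} The integral term splits the time interval at the last common ancestor time $s$ of two leading particles: it is $(m_2-2)\int_0^t E_x[\exp\{\int_0^s \xi\} \xi(B_s) \mathds 1_{\{B \text{ below } \varphi_2 \text{ on }[0,s]\}} \cdot G(B_s, t-s)^2]\diff s$ where $G(y, t-s) = E_y[\exp\{\int_0^{t-s}\xi\} \mathds 1_{\{\text{shifted barrier constraint}\}}]$ is itself (by \eqref{eq:f-k-1}) the expected number of leading-type descendants emanating from $(s, B_s)$. The plan is to bound $G(y, t-s)$ using the PAM estimates from Section~3: since $G(y,t-s) \le E_y[\exp\{\int_0^{t-s}\xi\}; B_{t-s} \le 0] = u(t-s, y)$, and since by the very definition of "leading" a contributing particle at position $y \approx \munder^{(a)}(t')$ at time $s$ (where $t'$ is such that $T_y^{(a)} \approx t - s$) has its first-moment contribution controlled, one gets $G(y, t-s) \le \Cr{const_space_perturbation} e^{\Cr{const_space_perturbation}} u(t-s, y)$ and then uses Corollary~\ref{cor: Y_v approx exp part} / Lemma~\ref{le:Y_vapprox} to write $u(t-s,y) \approx e^{\es(t-s)} Y_v^\approx(\cdot)$, together with Lemma~\ref{le:ApproxPAMbyLMG} and the approximation $T_y^{(a)} \approx \frac{1}{v_0 L(\overline\eta(v_0))}\sum_{i=1}^y L_i^\zeta(\overline\eta(v_0))$ from Lemma~\ref{le:ApproxT_n}, to reduce everything to a sum of i.i.d.-like $L_i^\zeta$ terms. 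The factor $E_x[\exp\{\int_0^s\xi\}\xi(B_s)\mathds 1_{\{\ldots\}}]$ is, up to the bounded factor $\xi(B_s) \le \es$, at most $u(s,x)$, but one needs it restricted to $B_s$ near the relevant level; a Harnack-type argument (Lemma~\ref{le:harnack}) or a direct Gaussian estimate handles the localization. Multiplying the two PAM factors and the remaining middle factor, the exponential rates should cancel up to an error that is $O(\ln t)$ in the exponent, uniformly in $s$, because $T_{\munder(t)}^{(a)} \approx t$ and the "leading particle" barrier is precisely the one that forces this cancellation. Integrating over $s \in [0,t]$ contributes at most a polynomial (in fact linear) factor in $t$, so the whole integral term is at most $t^{\gamma_2}$ for a suitable finite $\gamma_2 = \gamma_2(F,a)$ (the dependence on $F$ entering only through $m_2$).

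\textbf{The main obstacle.} The delicate point is proving that the product $G(B_s, t-s)^2 \cdot (\text{prefactor at time } s)$ has the correct exponential order \emph{uniformly in $s \in [0,t]$ and in the spatial variable $B_s$ on the event that the barrier constraint holds}. The barrier $\varphi_2$ is random (it depends on $\xi$ through the $T_k^{(a)}$), so one cannot just quote a deterministic large-deviation statement; instead one must combine the $\p$-a.s.\ approximation $|T_x^{(a)} - \frac{1}{v_0 L(\overline\eta(v_0))}\sum_{i\le x} L_i^\zeta(\overline\eta(v_0))| \le \Cr{const prob approx of Tn} + \Cr{const approx of Tn}\ln x$ of Lemma~\ref{le:ApproxT_n} with the fact that the barrier shift $5\chi_1(\munder(t))$ is itself only logarithmic. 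This means the barrier constraint costs only a polynomial factor rather than an exponential one — the content of the lemma — but making this rigorous requires careful bookkeeping of all the logarithmic error terms ($\ln t$ from $\chi$, $\ln t$ from Lemma~\ref{le:ApproxPAMbyLMG}, $\ln t$ from the space/time perturbation lemmas, $\ln t$ from the Gaussian localization) and checking that none of them is secretly larger, e.g.\ that the middle factor's restriction to the barrier is compatible with the independence needed to split the expectation. I expect the cleanest route is to first prove the analogous bound for $\Eprob_{\munder(t)}^\xi[(N_{t,t,t}^{\mathcal L,a})^2]$ using \eqref{eq:f-k-2} directly, and then extend to $x \in [\munder(t)-1, \munder(t)+1]$ via Corollary~\ref{cor:T_mt_sandwich} and the perturbation lemmas, exactly as was done for the first moment in the proof of Lemma~\ref{le: first moment leading particles inequ}.
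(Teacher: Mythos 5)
Your skeleton is the paper's: apply the many-to-two formula with the barrier extracted from the definition of a leading particle, observe that the barrier forces a descendant branch started at $(s,y)$ with $y$ on the barrier to have only $t-s\lesssim T_y^{(a)}+O(\ln t)$ units of time left, so that its expected number of descendants at the origin is polynomially bounded via the time-perturbation lemma, and integrate over $s$. Two remarks on the setup, though. First, the constraint $H_k^\nu\ge t-T_k^{(a)}-5\chi_1(\munder^{(a)}(t))$ is a \emph{lower} barrier on the position ($X_s^\nu>\varphi_t(s)$ for a non-increasing c\`adl\`ag step function $\varphi_t$, cf.\ Claim~\ref{claim:sec_mom_function}), so in \eqref{eq:f-k-2} it is $\varphi_1$ that is nontrivial and $\varphi_2\equiv+\infty$, not the reverse. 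Second, the supremum over $x\in[\munder^{(a)}(t)-1,\munder^{(a)}(t)+1]$ is absorbed directly by the space-perturbation lemma; no separate reduction to $x=\munder^{(a)}(t)$ is needed.

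The genuine gap is in your treatment of the integral term. The prefactor $E_x\big[e^{\int_0^s\xi(B_r)\diff r}\xi(B_s)\mathds{1}_{\{\cdots\}}(\cdots)\big]$ is \emph{not} bounded by $u(s,x)$ up to constants: $u(s,x)$ carries the indicator $\{B_s\le0\}$, whereas in \eqref{eq:f-k-2} the variable $B_s$ is the branching location, which on the relevant event sits near the barrier level $\varphi_t(s)\gg0$; the crude bound for the prefactor alone is of order $e^{\es s}$, exponentially large, while $u(s,x)\le a$ for $x$ near $\munder^{(a)}(t)$ and $s<t$. Consequently, ``multiplying the two PAM factors and the middle factor and letting the exponential rates cancel'' does not close as a separate-suprema argument. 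The step that actually produces the cancellation is to \emph{not} separate: bound only one of the two squared inner factors by its barrier-constrained polynomial estimate $\sup_{y\ge\varphi_t(s)}\Eprob_y^\xi[N^{\leq}(t-s,0)]\le t^{\gamma'}$, and recombine the other factor with the prefactor via the Markov property,
\[
E_x\Big[e^{\int_0^s\xi(B_r)\diff r}\,E_{y}\big[e^{\int_0^{t-s}\xi(B_r)\diff r};B_{t-s}\le0\big]_{|y=B_s}\Big]=\Eprob_x^\xi\big[N^{\leq}(t,0)\big],
\]
which is $O(1)$ for $x$ near the front. The integrand is then (constant)$\,\times\,t^{\gamma'}$ uniformly in $s$, and the $s$-integration costs one more power of $t$. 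With this in place, your ``main obstacle'' largely dissolves: the only facts needed about the random barrier are that $y\ge\varphi_t(s)$ forces $t-s\le T_y^{(a)}+\overline{K}+5\chi_1(\munder^{(a)}(t))$ whenever $s$ is at distance at least $O(\ln t)$ from $t$ (Claim~\ref{claim: hilflemma in 2nd moment proof}), and that the remaining region $s\in[t-O(\ln t),t]$ is handled by the trivial bound $\Eprob_y^\xi[N^{\leq}(t-s,0)]\le e^{\es(t-s)}=t^{O(1)}$; no pathwise cancellation of exponential rates along the whole trajectory is required.
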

 
 \begin{proof}
 	We omit the superscript $a$ in the quantities involved and use the same abbreviations as in the beginning of the proof of Lemma~\ref{le: first moment leading particles inequ}.
 	
 	We want to show \eqref{eq:SecMomLeadingPartIneq} with the help of the second-moment formula~\eqref{eq:f-k-2}. To this end, define the function $\varphi^\xi_t:[0,t]\to\mathbb{Z}\cup\{-\infty\}$,
 	\begin{align*}
 	\varphi_t(s)&:=\varphi_t^\xi(s) 
 	:= \lfloor \munder(t)\rfloor \wedge\sup \big\{ k\in\Z: s\in \big [t-T_{k+1}-5\chi_1(\munder(t)),t-T_{k}-5\chi_1(\munder(t)) \big) \big\},
 	\end{align*}
 	where $\sup \emptyset :=-\infty$ and $\chi_1$
	has been defined in \eqref{eq:defChi}. 
 	Due to $T_k=0$ for all $k\leq0$ (recall the notation $T_k$ from \eqref{eq:defT_n1} and \eqref{eq:defT_n}), we have $1\leq\varphi_t(0)\leq \lfloor\munder(t)\rfloor.$ Furthermore, $\varphi_t(t)=-\infty$, because $T_k\geq0$ and $\chi_1^\xi(\munder(t))\geq0$. To apply \eqref{eq:f-k-2}, the following upper bound will prove useful.
 	\begin{claim}
 		\label{claim:sec_mom_function}
 		We have 
 		\begin{equation}
 		\label{eq:repr_lead_part}
 		N_t^{\mathcal{L}}\leq \left|\left\{ \nu\in N(t):X_t^\nu\leq 0,X_s^\nu >   \varphi_t(s) \ \forall s\in\left[0,t \right] \right\}\right|
 		\end{equation}
 		and $\p$-a.s.\ for all $t$ large enough, the function $[0,t]\ni s\mapsto\varphi_t(s)$ is a non-increasing, c\`{a}dl\`{a}g step function. 
 	\end{claim}
 In order not to hinder the flow of reading, we postpone the proof of the latter claim to the end of the proof of Lemma~\ref{le:SecMomLeadingPart}. 
  	\begin{figure}[h!]
 		\centering
 		\includegraphics[height=0.25\linewidth]{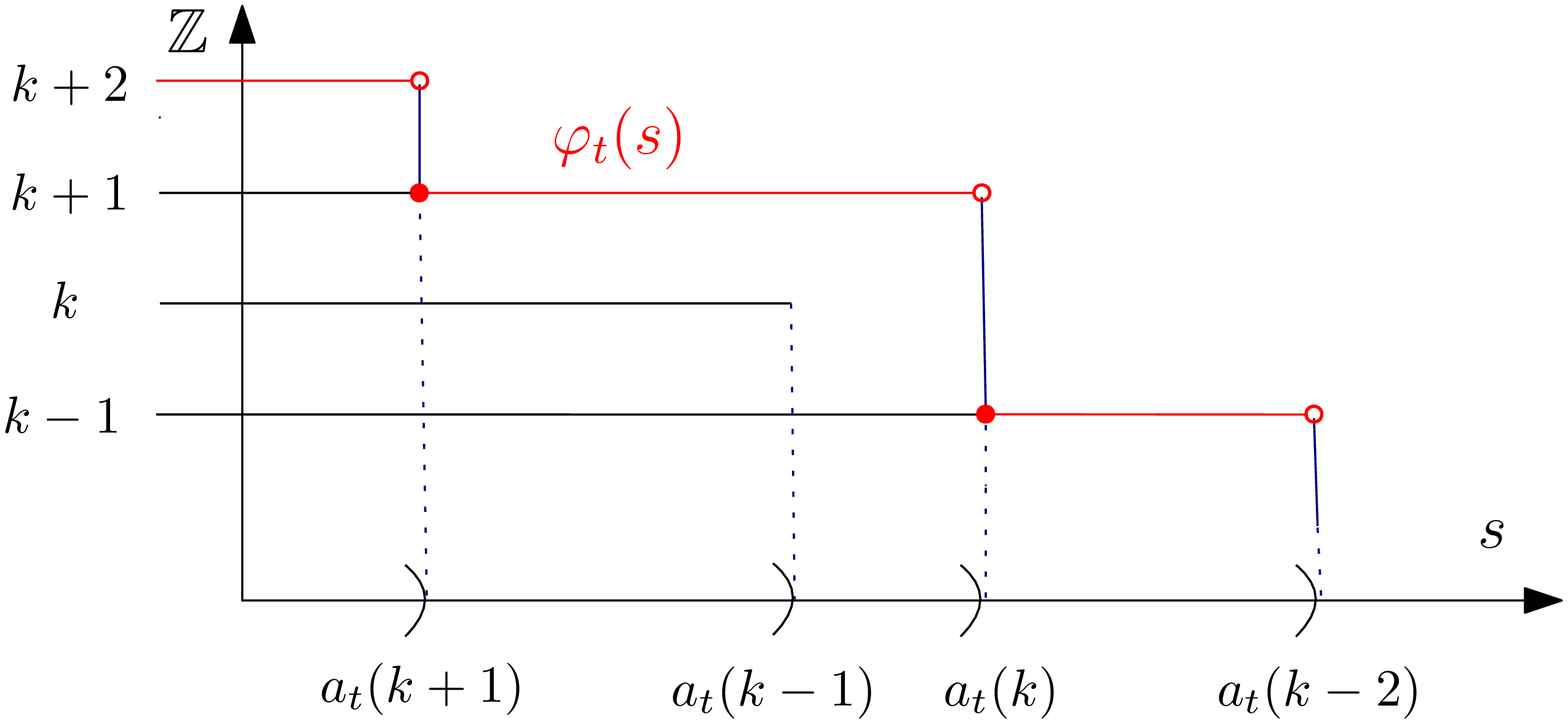}
 		\caption{Illustration of $\varphi_t$, which is the red line. We denote $a_t(k):=t-T_k-5\chi^\xi(\munder(t))$. Note that the sequence $(a_t(k))_{k\in\Z}$ does not have to be monotone and thus the interval  $[a_t(k+1),a_t(k))$ might be empty. In this case the graph of $\varphi_t$ jumps  at least two steps at time $s=a_t(k)$.}
 	\end{figure}
 	By the Feynman-Kac formula (cf.\ Proposition \ref{prop:many-to-few}) and \eqref{eq:repr_lead_part}, we have
 	\begin{align}
	\begin{split}
 	\Eprob_{x}^\xi\big[ (N_t^{\mathcal{L}})^2 \big] &\leq  \Eprob_{x}^\xi\left[ N_t^{\mathcal{L}} \right] + (m_2-2)\int_0^{t} E_{x}\left[ e^{\int_0^s\xi(B_r)\diff r}
 	\xi(B_s)\mathds{1}_{ \left\{B_r\geq\varphi_{t}(r)\ \forall r\in[0,s]\right\}  } \right. \label{eq:sec_mom_lead_fk} \\
 	&\qquad\qquad\qquad\left. \times \left( E_{y}\left[ e^{\int_0^{t-s}\xi(B_r)\diff r} \mathds{1}_{ \left\{B_r\geq\varphi_{t}(r+s)\ \forall r\in[0,t-s]\right\}, B_{t-s}\leq 0  } \right] \right)_{|_{y=B_s}}^2 \right] \diff s.
	\end{split}
 	\end{align}
 	For the first summand we have 
 	\begin{align}
 	\sup_{x\in [\munder(t)-1,\munder(t)+1]}\Eprob_{x}^\xi\left[ N_t^{\mathcal{L}} \right] &\leq \sup_{x\in [\munder(t)-1,\munder(t)+1]}\Eprob_{x}^\xi\left[ N^\leq(t,0) \right]  \leq c_1\Eprob_{\munder(t)+1}^\xi\left[ N^\leq(t,0) \right] \leq  \frac{c_1}{2}, \label{eq: second mom. leading part. first moment bound}
 	\end{align}
 	where the first inequality is due to the first inequality in  \eqref{eq:space_pert_ungenau} and the last one due to the definition of $\munder(t)$.  	Recall that the Markov property provides us with
 	\[ E_x\left[ e^{\int_0^s\xi(B_r)\diff r}
 	E_{y}\big[ e^{\int_0^{t-s}\xi(B_r)\diff r} \mathds{1}_{ \left\{B_{t-s}\leq 0  \right\} } \big]_{|_{y = B_s}}  \right] =\Eprob_x^\xi[N^\leq(t,0)]. \]
 Using $\xi\leq\es$ and the two previous displays, the second summand in \eqref{eq:sec_mom_lead_fk} can thus be bounded from above by
 	\begin{align}  \label{eq:moment_integral}
	\begin{split}
&\es \cdot (m_2-2)\sup_{x\in [\munder(t)-1,\munder(t)+1]} \Eprob_x^\xi[N^\leq(t,0)]  \cdot \int_0^t \sup_{y\geq\varphi_t(s)} E_y\big[ e^{\int_0^{t-s}\xi(B_r)\diff r}; B_{t-s}\leq 0 \big]\diff s\\
 	& \quad \leq \frac{\es(m_2-2)c_1}{2} \int_0^t \sup_{y\geq\varphi_t(s)} \Eprob_y^\xi[N^\leq(t-s,0)]\diff s.
	\end{split}
 	\end{align}
 	It thus suffices to upper bound $\sup_{y\geq\varphi_t(s)} \Eprob_y^\xi[N^\leq(t-s,0)]$ by a polynomial in $t.$
 	We treat different areas for $s$ and $y$ separately and we will need an additional claim, the proof of which will be provided after this  proof. It guarantees that the assumptions of the time perturbation Lemma~\ref{le:time_perturb} are satisfied in our setting. 
 	\begin{figure}[t]
 		\centering
 		\includegraphics[width=0.6\linewidth]{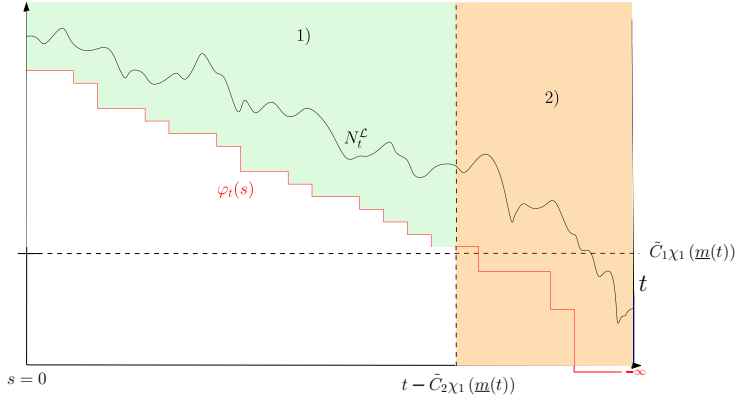}
 		\caption[]{Leading particles and the different areas in the proof of Lemma~\ref{le:SecMomLeadingPart}.} 
 	\end{figure}

 	\begin{claim}
 		\label{claim: hilflemma in 2nd moment proof}
 		There exists $\widetilde{C}_1\in (0,\infty)$ such that  $\p$-a.s.\  for all $t$ large enough and all $y\geq \widetilde{C}_1 \chi_1(
 		\munder(t) )$ we have $\frac{y}{T_y-1},\frac{y}{T_{y} + \overline{K}+ 5\chi_1\left( \munder(t) \right)}\in V$, where $V$ is defined in \eqref{eq:defVTriangle}.
 		Furthermore, there exists $\widetilde{C}_2=\widetilde{C}_2(\widetilde{C}_1) \in (0,\infty)$ such that  $\p$-a.s.\   for all $t$ large enough and all $s\in[ 0,t-\widetilde{C}_2\chi_1( \munder(t)))$ we have $\varphi_t(s)\geq \widetilde{C}_1 \chi_1(
 		\munder(t))$.
 	\end{claim}
 We choose $\gamma'>5\Cr{const_time_perturbation}\Cr{const approx of Tn}$. Then, 	recalling the definition of $\chi_1$ from \eqref{eq:defChi} and that $\p$-a.s.\ $\frac{\munder(t)}{t}\to v_0$ by Corollary~\ref{cor:breakpoint_limit}, for  $t$ large enough, the statements from Claim~\ref{claim: hilflemma in 2nd moment proof} hold true, we have $\Cr{const_time_perturbation}e^{\Cr{const_time_perturbation}(\overline{K}+1+5\chi_1(\munder(t)))}  \leq t^{\gamma'}$ and also $T_{\lfloor y\rfloor+1}\leq T_y+\overline{K}$ for all $y\geq\widetilde{C}_1\chi_1(\munder(t))$ by Corollary~\ref{cor:T_mt_sandwich}. \\ 
 $1)$ Let $s\in\big[0,t-\widetilde{C}_2\chi_1\left( \munder(t) \right)\big)$ and $y\geq \varphi_t(s)$. Then by Claim~\ref{claim: hilflemma in 2nd moment proof}, $y\geq \widetilde{C}_1 \chi_1\left( 	\munder(t) \right)$ and thus $\frac{y}{T_y-1},\frac{y}{T_y +\overline{K}+ 5\chi_1\left( \munder(t) \right)}\in V$.  By definition of $\varphi_t$ we have $s\geq t-T_{\lfloor y\rfloor+1}-5\chi_1\left( \munder(t) \right)$ and thus $T_y + \overline{K}+5\chi_1\left( \munder(t) \right)\geq T_{\lfloor y\rfloor+1}+5\chi_1\left( \munder(t) \right)\geq t-s$. Thus, by  Lemma~\ref{le:MomPAMIncrease}, we infer that $\Eprob_y^\xi\left[ N^\leq(t-s,0) \right]\leq 2\Eprob_y^\xi\left[ N^\leq\left( T_y + \overline{K}+5\chi_1\left( \munder(t) \right),0 \right)\right],$ and then the second inequality in \eqref{eq:time_pert_ungenau} entails that for all $t$ large enough,
 	\begin{align}
 	\sup_{\substack{0\leq s<t-\widetilde{C}_2\chi_1( \munder(t))\\ y\geq \varphi_t(s)}} \Eprob_y^\xi[N^\leq(t-s,0)] &\leq 2\Cr{const_time_perturbation}e^{\Cr{const_time_perturbation}(\overline{K}+1+5\chi_1(\munder(t)))}\sup_{y\in\R} \Eprob_y^\xi\left[ N^\leq((T_y-1)\vee 0,0) \right] 
 	\leq t^{\gamma'}. \label{eq:domain1}
 	\end{align}	
 	$2)$ The remaining part of the domain above the graph of $\varphi_t$ not controlled by $1)$ is a subset of 
 	\[\big\{ (s,y)\in [0,t]\times\mathbb{R}: t-\widetilde{C}_2\chi_1\left( \munder(t) \right)\leq s\leq t \big\}.\]
 	Recalling the definition of $\chi_1$ from \eqref{eq:defChi}    and that $\p$-a.s., $\frac{\munder(t)}{t}\to v_0$ by  Corollary~\ref{cor:breakpoint_limit}, choosing $\gamma''>\es\widetilde{C}_2\Cr{const approx of Tn}$, on the the above domain we get that $\p$-a.s., for all $t$ large enough,
 	\begin{align}
 	\Eprob_{y}^\xi\left[ N^\leq \left( t-s,0 \right) \right]\leq 2\Eprob_{y}^\xi\big[ N^\leq \big( \widetilde{C}_2\chi_1\left(\munder(t)\right),0 \big) \big]\leq 2e^{ \es \widetilde{C}_2\chi_1\left(\munder(t)\right) }\leq t^{\gamma''}.
 	\label{eq:domain2}
 	\end{align} 	
	
 	To conclude the proof, defining $\gamma_2:=1\vee\gamma'\vee\gamma''+1$, inequalities \eqref{eq: second mom. leading part. first moment bound}, \eqref{eq:moment_integral} and the estimates \eqref{eq:domain1} and \eqref{eq:domain2} for the term  $E_y^\xi\left[ N^\leq (t-s,0) \right]$  entail the statement of Lemma~\ref{le:SecMomLeadingPart}.
 \end{proof}

 \begin{proof}
 	[Proof of Claim~\ref{claim:sec_mom_function}] Let $a_t(k):=t-T_k-5\chi_1^\xi(\munder(t))$ and recall the definition of $N_t^\mathcal{L}$:
 	\[N_t^\mathcal{L}=\left| \big\{
 	\nu\in N(t):\ X_t^\nu \leq 0,\ H_k^\nu \geq a_t(k)\ \forall\ k\in\{1,\ldots,\lfloor \munder(t)\rfloor\} \big\} \right|.\]
 	To prove \eqref{eq:repr_lead_part}, note that $H_k^\nu \geq a_t(k)$ if and only if $X_s^\nu>k$ for all $s <a_t(k)$.  But  the property $X_s^\nu>k$ for all $s\in[0,a_t(k))$ and all $k\in\{1,\ldots,\lfloor \munder(t)\rfloor\}$ implies that $X_s^\nu>\sup\{ k\in\Z:\ s\in[a_t(k+1),a_t(k)) \}\wedge\lfloor \munder(t)\rfloor = \varphi_{t}(s)$ for all $s\in[0,t]$ and thus  \eqref{eq:repr_lead_part} is shown. The property of $\varphi_t$ being a c\`{a}dl\`{a}g step-function is a direct consequence of the use of left-closed, right-open intervals in the definition of $\varphi_t$. It remains to show  that $s\mapsto\varphi_{t}(s)$ is non-increasing. 
 	For this purpose, let us first prove by induction in $k=\lfloor\munder(t)\rfloor,\lfloor\munder(t)\rfloor-1,\ldots$  that for all $t$ large enough and all $k\leq\munder(t),$ 
 	\begin{equation}
 	\label{eq:phi_t_non-incr_1}
 	[0, a_t(k-1)) \subset \bigcup_{l=k}^{\lfloor\munder(t)\rfloor} [a_t(l),a_t(l-1))
 	\end{equation}
 	holds. By Corollary~\ref{cor:T_mt_sandwich} and Lemma~\ref{le:T_mtAlmostLinear}, there exist $\overline{K},\Cr{constant lemma T_mt eq t}>0$ such that $t-T_{\lfloor \munder(t)\rfloor-1 }\leq t- T_{\munder(t)}+ \overline{K}\leq \Cr{constant lemma T_mt eq t}+\overline{K}$ and thus $ a_{t}(\lfloor \munder(t)\rfloor)\leq 0$ for all $t$ large enough. Assume now that \eqref{eq:phi_t_non-incr_1} holds for some $k\leq\munder(t)$. Then
 	\[ \left[0, a_t(k-2) \right) \subset \left[0,a_t(k-1)\right) \cup \left[ a_t(k-1), a_t(k-2) \right) \subset \bigcup_{l=k-1}^{\lfloor\munder(t)\rfloor} [a_t(l),a_t(l-1)), \]
 	where the last inclusion is due to induction hypothesis. Thus, we have shown \eqref{eq:phi_t_non-incr_1}. Now let $0\leq s_1\leq s_2$.  Assume there exists $k_2$ such that  $s_2\in \left[ a_t(k_2),a_t(k_2-1) \right)$. Then by \eqref{eq:phi_t_non-incr_1}, there exists $k_1\in\Z$ with $k_2\leq k_1\leq\lfloor\munder(t)\rfloor$, 
 	such that $s_1\in\left[ a_t(k_1),a_t(k_1-1) \right)$. By definition we get $\varphi_t(s_1)\geq\varphi_t(s_2)$. If no such $k_2$ exists,  then  $\varphi_t(s_2)=-\infty\leq\varphi_{t}(s_1)$. 
 \end{proof}

 \begin{proof}[Proof of Claim~\ref{claim: hilflemma in 2nd moment proof}]
 	We write $V=[v_*,v^*]$. Since $\p$-a.s.\ we have $\frac{y}{T_y}\to v_0\in \text{int}(V)$ by \eqref{eq:LimitTn/n}, it follows that $\frac{y}{T_y-1},\frac{y}{T_y}\in V$ for all $y$ large enough. Among others, there exists $\varepsilon=\varepsilon(v_*,v^*,v_0)>0$ and $\mathcal{N}'(\xi)$ such that $v_*(1+\varepsilon)\leq\frac{y}{T_y}\leq (1-\varepsilon)v^*$ for all $y\geq\mathcal{N}'$. Choosing $\widetilde{C}_1>\frac{5 v^*}{\varepsilon}$, this implies $ 1\leq \frac{T_y +\overline{K}+ 5\chi_1\left( \munder(t) \right)}{T_y}\leq 1+\e$ for all $y\geq \widetilde{C}_1\cdot \chi_1\left( \munder(t) \right)$ and all $t$ large enough. Thus, we get
 	\[ v_* \leq  \frac{y}{T_y}\cdot\frac{T_y}{T_y+\overline{K} + 5\chi_1\left( \munder(t) \right)} \leq v^*\quad\text{for all }y\geq\widetilde{C}_1\chi_1(\munder(t))\text{ and all $t$ large enough}. \]
 	This gives the first part of the Claim~\ref{claim: hilflemma in 2nd moment proof}. 	
 	For the second part, recall that $T_y\leq \frac{1}{\underline{v}}y$ for all $y\geq\mathcal{N}'$. Furthermore, by the definition of $\varphi_t$ we have $\varphi_t(s)\geq \lfloor y\rfloor+1$ for all $s\in\left[ 0,t-T_{ \lfloor y\rfloor}-5\chi_1\left(\munder(t)\right) \right)$. Choosing $y:=\lfloor \widetilde{C}_15\chi_1\left( \munder(t) \right) \rfloor$ and $\widetilde{C}_2>\frac{\widetilde{C}_1}{\underline{v}}+1$, this implies that for $t$ large enough we get $\varphi_t(s)\geq \widetilde{C}_15\chi_1\left(\munder(t)\right)$ for all $s\in\big[ 0,t-\widetilde{C}_25\chi_1( \munder(t)  )\big)$.
 \end{proof}
	
\subsection{Proof of Theorem~\ref{th: log-distance breakpoint median}}
	Recall $\munder(t)=\sup\{x\in\R: u(t,x)\geq 1/2\}$ and $m(t)=\sup\{x\in\R: w(t,x)\geq 1/2\}$, where $u$ and $v$ are the solutions to \eqref{eq:PAM} and $w$ to \eqref{eq:KPP1}, respectively. We start with an amplification result.
	\begin{lemma}
	    \label{le:amplification}
	    For every $(p_k)_{k\in\N}$ fulfilling \eqref{eq:p_k} there exists $\Cl{const_ampl}=\Cr{const_ampl}((p_k))>1$ and $t_0>0$ such that  $\p$-a.s.,  for all $t\geq t_0,$
	    \begin{equation*}
	        \sup_{x\in\R} \probth_x^\xi\left( N(t,[x-1,x+1])\leq \Cr{const_ampl}^{t} \right) \leq \Cr{const_ampl}^{-t}.
	    \end{equation*}
	\end{lemma}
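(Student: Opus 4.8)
The plan is to reduce the statement, via a second‑moment (Paley--Zygmund) argument, to a \emph{uniform} lower bound of constant order for the probability that a particle starting near $x$ has at least $c^{s_0}$ descendants near $x$ after a fixed time $s_0$, and then to amplify this constant to $1-\Cr{const_ampl}^{-t}$ by exploiting that the total population is exponentially large with overwhelming probability.

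\emph{Steps 1--2 (confined first/second moments, uniform positive probability).} Fix $R\ge 3$ and let $\mu_1=\mu_1(\xi,x,R)$ be the principal Dirichlet eigenvalue of $\tfrac12\partial_{xx}+\xi$ on $(x-R,x+R)$, with eigenfunction $\phi_1>0$, $\|\phi_1\|_2=1$. Since $\ei\le\xi\le\es$ one has $\mu_1\in[\ei-\pi^2/(8R^2),\es]$, and by elliptic/Harnack bounds that are uniform over potentials bounded by $\es$ there are $c_R,C_R\in(0,\infty)$, independent of $x$ and $\xi$, with $\phi_1\le C_R$ and $\phi_1\ge c_R$ on $[x-R/2,x+R/2]$; choose $R$ so large that $\mu_1\ge\ei/2>0$. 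Writing $Z_s^y:=|\{\nu\in N(s):X^\nu_r\in[x-R,x+R]\ \forall r\le s,\ X^\nu_s\in[x-1,x+1]\}|$, the many‑to‑one formula \eqref{eq:f-k-1} and the spectral expansion of the Dirichlet heat semigroup give, for $y\in[x-R/2,x+R/2]$ and $s\ge1$, that $\Eprob_y^\xi[Z_s^y]\in[\widetilde c_R e^{\mu_1 s},\widetilde C_R e^{\mu_1 s}]$. Applying \eqref{eq:f-k-2} with the constant barriers $\varphi_1\equiv x-R$, $\varphi_2\equiv x+R$ and the extra endpoint constraint $X^\nu_s\in[x-1,x+1]$ — exactly as in the proof of Lemma~\ref{le:SecMomLeadingPart} — and using the same bounds, the integral over the splitting time of the two spines is dominated near $0$ and one obtains $\Eprob_y^\xi[(Z_s^y)^2]\le C_R' e^{2\mu_1 s}$. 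Hence, by Paley--Zygmund, there is $\theta_0=\theta_0(R)\in(0,1)$, independent of $x,\xi$, of $y\in[x-R/2,x+R/2]$ and of $s\ge1$, with $\prob_y^\xi(Z_s^y\ge\tfrac12\widetilde c_R e^{\mu_1 s})\ge\theta_0$. Fixing $s_0$ so large that $\ell:=\lceil\tfrac12\widetilde c_R e^{\mu_1 s_0}\rceil\ge2$, a particle in $[x-1,x+1]$ produces, over any interval of length $s_0$, at least $\ell\ge2$ descendants again in $[x-1,x+1]$ with genealogy confined to $[x-R,x+R]$, with conditional probability $\ge\theta_0$, and these events are conditionally independent across particles given $\xi$. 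Iterating over blocks of length $s_0$ only yields a Galton--Watson lower bound with offspring in $\{0,\ell\}$, which has \emph{positive} extinction probability; thus Steps~1--2 alone give, for a single seed near $x$, only a \emph{constant} lower bound $\theta_1>0$ on the probability of carrying $\ge \Cr{const_ampl}^{\,u}$ descendants in $[x-1,x+1]$ at time $u$ (survival times concentration). This is why a further amplification is needed.

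\emph{Step 3 (amplification via the Yule‑large population).} Since $p_0=0$ and a particle at $y$ splits into at least two particles at rate $\xi(y)(1-p_1)\ge\ei(1-p_1)=:\beta>0$, the population size dominates a Yule process of rate $\beta$, so $\sup_x\prob_x^\xi(|N(\varepsilon t)|<e^{\beta\varepsilon t/2})\le e^{-\beta\varepsilon t/4}$. Moreover, by \eqref{eq:f-k-1} and a Gaussian tail bound, $\prob_x^\xi(\exists\nu\in N(\varepsilon t):|X^\nu_{\varepsilon t}-x|>Kv_0\varepsilon t)\le e^{(\es-K^2v_0^2/2)\varepsilon t}\le e^{-\varepsilon t}$ for $K=K(\es,v_0)$ large, where $v_0$ is the speed (recall $m(\tau)/\tau\to v_0$, cf.\ \cite[Thm.~7.6.1]{Fr-85} and Corollary~\ref{cor:breakpoint_limit}). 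Thus, with probability $\ge1-2e^{-c_0\varepsilon t}$ there are $M:=\lceil e^{\beta\varepsilon t/2}\rceil$ particles $\nu_1,\dots,\nu_M$ at time $\varepsilon t$, all within distance $Kv_0\varepsilon t$ of $x$; given $\xi$ their subtrees over $[\varepsilon t,t]$ are independent. For each $i$, combining a confined first/second‑moment estimate in a corridor of length $\Theta(\varepsilon t)$ (a descendant of $\nu_i$ enters $[x-R/2,x+R/2]$ by time $2\varepsilon t$ with probability $\ge e^{-c_2\varepsilon t}$) with Steps~1--2 over the remaining time $\ge t/2$ gives $\prob(\text{subtree of }\nu_i\text{ has}\ge\Cr{const_ampl}^{\,t}\text{ descendants in }[x-1,x+1]\text{ at time }t)\ge e^{-c_2\varepsilon t}\theta_1=:q_t$ for a suitable $\Cr{const_ampl}>1$. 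Choosing $\varepsilon>0$ so small that $\beta/2>c_2$, we get $Mq_t\ge\theta_1 e^{(\beta/2-c_2)\varepsilon t}\to\infty$, and a Chernoff bound over the $M$ conditionally independent trials shows that at least $\tfrac12 Mq_t$ of the $\nu_i$ succeed with probability $\ge1-\exp(-\tfrac18 Mq_t)$. On that event $N(t,[x-1,x+1])\ge\Cr{const_ampl}^{\,t}$; summing the (at most exponentially small) error probabilities and possibly decreasing $\Cr{const_ampl}>1$ completes the proof.

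\emph{Main obstacle.} The crux is the \emph{uniformity} in Steps~1--2: the eigenvalue and eigenfunction bounds must be uniform in the spatial shift $x$ and in the realisation $\xi$, which is where the mere pointwise bound $\ei\le\xi\le\es$ is only just enough — one genuinely has to control the weight $e^{\int_0^s\xi(B_r)\diff r}$ along Brownian paths confined to an interval, not merely replace $\xi$ by $\es$, and the two‑spine integral must be estimated with matching exponential rates for the first and second moments. A second delicate point, in Step~3, is to carry out the ``spread out, then return and multiply near $x$'' lower bound for the branching process with an explicit rate $c_2$ that is undercut by $\beta/2$; this forces both the short initial horizon $\varepsilon t$ and a confined first/second‑moment analysis in a corridor of width growing linearly in $t$.
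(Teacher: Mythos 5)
Your overall strategy mirrors the paper's: first produce many particles, then use a Galton--Watson (survival) argument to show that a fixed fraction of them seed exponentially large confined populations, and finally carry those populations to $[x-1,x+1]$. The paper achieves spatial uniformity by a simple coupling to binary branching with constant rate $\ei(1-p_1)$, which it uses throughout; your Steps 1--2 instead use Dirichlet spectral theory with uniform Harnack bounds, which is a valid (if heavier) route to the same ``positive probability of exponential growth in a window'' conclusion.

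The gap is in Step~3, at the line ``Choosing $\varepsilon>0$ so small that $\beta/2>c_2$.'' As you set things up, $c_2$ is \emph{not} $\varepsilon$-dependent, so choosing $\varepsilon$ small cannot make this inequality hold. Concretely: you return a descendant of $\nu_i$, located at distance at most $Kv_0\varepsilon t$ from $x$, to $[x-R/2,x+R/2]$ over a window of length $\varepsilon t$ (from $\varepsilon t$ to $2\varepsilon t$). The Feynman--Kac first moment of such returns is of order $e^{(\ei-K^2v_0^2/2)\varepsilon t}$, and the second moment is of the same order, so Paley--Zygmund yields $q_t\asymp e^{(\ei-K^2v_0^2/2)\varepsilon t}$, i.e.\ $c_2=K^2v_0^2/2-\ei$. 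Since you need $K^2v_0^2/2>\es+1$ for the earlier union bound, you get $c_2>\es+1-\ei$, while $\beta/2=\ei(1-p_1)/2<\ei\le\es$. Hence $\beta/2>c_2$ forces $\ei(3-p_1)>K^2v_0^2>2(\es+1)$, which fails whenever $\es(1-p_1)\le 2$ (and in particular for all potentials with $\es\le 2$), so the amplification breaks down for a genuine class of admissible $\xi$. The fix is to enlarge the return window to a fixed fraction of $t$ (say $[\,\varepsilon t,t/2\,]$), making the cost $\sim e^{-K^2v_0^2\varepsilon^2 t/(2\gamma)}$, i.e.\ $c_2=O(\varepsilon)$, which \emph{is} beaten by the Yule rate $\beta\varepsilon/2$ for small $\varepsilon$; the remaining growth window $[t/2,t]$ still gives exponential blow-up via Steps~1--2. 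This is exactly the quantitative balance the paper's proof performs: by confining the Phase-1 particles to $[-\varepsilon t,\varepsilon t]$ and letting the return take time $t/3$, the return cost is $O(\varepsilon^2)$ and is dominated by the $\ln c>0$ Galton--Watson rate for $\varepsilon$ small.
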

	\begin{proof}
	    For the proof it is enough to show the claim for binary branching with  rate $\xi(x)\equiv {\ei'}:=\ei(1-p_1)$ (which is the rate of first branching into more than one particle) by a straightforward coupling argument. Due to the spatial homogeneity of $\ei'$ it is enough to show 
	    \begin{equation*}
	        \prob_0^{\ei'}\left( N(t,[-1,1])\leq \Cr{const_ampl}^{t} \right) \leq \Cr{const_ampl}^{-t}
	    \end{equation*}
	    for all $t\geq t_0$, where $\prob_0^{\ei'}$ is the probability measure under which the branching Brownian motion starts with one particle in $0$ and has constant branching rate ${\ei'}>0$. Then for every $\varepsilon>0$ there exists $\delta=\delta(\e)>0$ such that 
	    \begin{equation}
	    \label{eq:many_part_delta}
	    \prob_0^{\ei'} \big( |N(t/3,[-\varepsilon,\varepsilon])||\geq \delta t \big) \geq 1-e^{-\delta t}
	    \end{equation}
	    for all $t$ large enough. 
	    Indeed, the probability that the initial particle does not leave the interval $[-\e t/2,\e t/2]$ before time $t/3$ is at least $1-e^{-c_1\e^2 t}$. If this happens, the particle produces more than $t{\ei'} /4$ offsprings with probability $1-e^{-c_2t}$ before time $t/3$, while each of these offsprings does not leave the interval $[-\e t,\e t]$ before time $t/3$ with probability  at least $1-e^{-c_1\e^2 t/2}$. Combining these observations and choosing $\delta(\varepsilon)>0$ small enough provides us with \eqref{eq:many_part_delta}. For a particle $\nu\in N(t/3)$ let $D^\nu(t/3+s)$ be the set of offsprings of $\nu$ in the interval $[X_{t/3}^\nu-1,X_{t/3}^\nu+1]$ at time $t/3+s$, $s\geq 0$. We will show the existence of some $p>0$ and $c>1$ such that 
	    \begin{equation}
	    \label{eq:breeding_enough}
	    \prob_0^{\ei'}\big( |D^\nu(t/3+s)| \geq c^{s}\big)\geq p
	    \end{equation}
	    for all $s$ large enough. 
	    To obtain \eqref{eq:breeding_enough}, let $r>0$ be such that 
	    \[ \inf_{y\in[-1,+1]} \Eprob_y^{\ei'} \big[ N(r,[-1,+1]) \big]=:\mu>1, \]
	    (the feasibility of such a choice of $r$ is a direct consequence of the Feynman-Kac formula). For $\nu\in D_\varepsilon(t/3)$ consider the following process under $\prob^{\ei'}_0$, conditionally on $X_{t/3}^\nu$:
	    \begin{itemize}
	    	\item the process starts with one particle at position $X_{t/3}^\nu$;
	    	\item between times $r(n-1)$ and $rn$, $n\in\N$, the process evolves as a branching Brownian motion with branching rate ${\ei'}$;
	    	\item at times $ rn$, $n\in\N$, particles outside of the interval $\big[X_{t/3}^\nu-1,X_{t/3}^\nu+1\big]$ are killed.
	    \end{itemize}
    	Using the Markov property, one readily observe that the number of particles of the latter process stochastically dominates the number of particles of a Galton-Watson process $(L_n)_{n\in\N}$ which starts with one particle and whose offspring distribution has expectation $\mu$. Then by \cite[Theorem~1, section~I.5]{athreya_ney_72}, the Galton-Watson process has positive probability to survive, i.e. $P(L_n>0\ \forall n\in\N)=:p_1>0$. Conditioned on surviving, there exists $c>1$ such that 
	    \begin{equation*}
	    P \big(L_k\geq c^k\, |\, L_n>0\ \forall n\in\N \big)\geq\frac{1}{2}\quad\text{for all }k\in\N.
	    \end{equation*}
	    One can see that for every $\nu\in N(t/3),$ inequality \eqref{eq:breeding_enough} holds true with the choice $p:=p_1/2$ for all $s \in r \cdot \N$. By a straightforward comparison argument, this extends to all $s\geq0.$ Therefore, we can now apply \eqref{eq:many_part_delta} and \eqref{eq:breeding_enough} in order to deduce
	    \begin{equation}
	    \label{eq:many_part}
	    \prob_0^{\ei'}\big( N(2t/3,[-\e t,\e t])\geq c^t \big) \geq 1-e^{c'(\e)t}. 
	    \end{equation}
	    Furthermore, we have 
	    \begin{equation*}
	    P_{\varepsilon t} \big(X_{t/3} \in [-1,1]\big) \geq c_3 t^{-1/2} e^{-3\e^2t/2} \geq \Big( \frac{1+c}{2} \Big)^{-t/3},
	    \end{equation*}
	    for all $t$ large enough 
	    $\e>0$ small enough and $c>1$ suitable, where for the last inequality we used that $\e$ does not depend on $c$. The latter inequality and a large deviation statement then gives for all $t\geq t_4\geq t_3$
	    \begin{equation}
	    \label{eq:many_part_in_interval}
	    \prob_0^{\ei'} \Big( N(t,[-1,1]) \geq \frac12 c^t\Big( \frac{1+c}{2} \Big)^{-t/3} \, \big \vert \,  N(2t/3,[-\e t,\e t])\geq c^t \Big) \geq 1-e^{-c_4 t}.
	    \end{equation}
	    Thus, for $t\geq t_0$, where $t_0$ is chosen large enough, by \eqref{eq:many_part} and \eqref{eq:many_part_in_interval}, in combination with $c>1,$ we infer the desired result. 
	\end{proof}
	
With the help of Lemmas~\ref{le: first moment leading particles inequ}, \ref{le:SecMomLeadingPart}, and \ref{le:amplification}, it is now possible to state a crucial result for the proof of Theorem~\ref{th: log-distance breakpoint median}.
	
\begin{proposition}
    \label{prop:LogDist}
	For every $q>0$, $F$ satisfying \eqref{eq:p_k} and $a>0$, there exist a constant $\Cr{constant theorem log dist}=\Cr{constant theorem log dist}(q,F)\in(0,\infty)$, 
	a $\p$-a.s.\ finite $C=C(t)=C(t,q,F,\xi)>0$ and a $\p$-a.s.\  finite random variable $\Cl[index_prob_cont_time]{time theorem log dist}=\Cr{time theorem log dist}(a,q,F,\xi)$  
	such that  for all  $t\geq\Cr{time theorem log dist},$ we have $C(t)\leq \Cr{constant theorem log dist}$ 
	and
	\begin{equation*}
	\probth^\xi_{\munder^{a}(t)-C\ln t}\left( N^\leq(t,0)\neq\emptyset \right)\geq 1-2t^{-q}.
	\end{equation*}
\end{proposition}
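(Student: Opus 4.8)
The plan is to combine the second-moment (Paley--Zygmund / Cauchy--Schwarz) estimate \eqref{eq:CS} on leading particles with the amplification Lemma~\ref{le:amplification} via a two-stage argument, exactly the standard scheme of first producing \emph{one} particle to the left of the origin with polynomially small probability, and then amplifying this into a high-probability statement. First I would fix $q>0$, $F$ satisfying \eqref{eq:p_k} and $a>0$, and let $\gamma_1=\gamma_1(a)$ and $\gamma_2=\gamma_2(F,a)$ be the constants from Lemmas~\ref{le: first moment leading particles inequ} and \ref{le:SecMomLeadingPart}. Then for $x\in[\munder^a(t)-1,\munder^a(t)+1]$ and $t$ large, \eqref{eq:CS} together with these two lemmas gives
\begin{equation*}
\probth_x^\xi\big(N_t^{\mathcal{L},a}\geq 1\big)\geq \frac{\Eprobth_x^\xi[N_t^{\mathcal{L},a}]^2}{\Eprobth_x^\xi[(N_t^{\mathcal{L},a})^2]}\geq \frac{t^{-2\gamma_1}}{t^{\gamma_2}}=t^{-2\gamma_1-\gamma_2},
\end{equation*}
and since a leading particle at time $t$ is in particular a particle with $X_t^\nu\leq 0$, this yields $\probth_x^\xi(N^\leq(t,0)\neq\emptyset)\geq t^{-\gamma}$ with $\gamma:=2\gamma_1+\gamma_2$, uniformly for $x$ in that unit window around $\munder^a(t)$.

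Next I would bootstrap from the window $[\munder^a(t)-1,\munder^a(t)+1]$ down to $\munder^a(t)-C\ln t$. The idea is to run the BBMRE from $\munder^a(t)-C\ln t$ for a time $s$ proportional to $\ln t$; by Lemma~\ref{le:amplification} applied at scale $s$, with $\p$-probability at least $1-\Cr{const_ampl}^{-s}$ there are at least $\Cr{const_ampl}^{s}$ particles in a unit interval around $\munder^a(t)-C\ln t$ — but this does not move mass to the right. Instead, the cleaner route (and the one matching the architecture of the paper) is: the amplification lemma guarantees that from \emph{any} starting point one obtains $\Cr{const_ampl}^s$ particles in a bounded neighbourhood; choosing the geometry so that this neighbourhood reaches up into the window $[\munder^a(t)-1,\munder^a(t)+1]$ after time $s=c\ln t$ requires $C\ln t \lesssim (\text{spreading speed})\cdot s$, which is consistent for $C$ large. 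Concretely, one shows that with $\p$-probability $\geq 1-t^{-q}$ there exist, at time $c\ln t$, at least $t^{\kappa}$ independent descendant subtrees each rooted in the unit window around $\munder^a(t)$ (using Lemma~\ref{le:amplification} to get many particles, the Harnack-type control and \eqref{eq:LLN_breakpoint_PAM} to locate them, and $\frac{\munder^a(t)}{t}\to v_0$ to keep ratios in $V$). Then, conditionally on $\xi$ and on this event, the probability that \emph{none} of these $t^\kappa$ subtrees sends a particle to the left of $0$ by the remaining time $t-c\ln t$ is at most
\begin{equation*}
\big(1-(t-c\ln t)^{-\gamma}\big)^{t^\kappa}\leq \exp\big\{-t^\kappa (t-c\ln t)^{-\gamma}\big\}\leq t^{-q}
\end{equation*}
provided $\kappa>\gamma$, which is arranged by taking $c$ (hence the number of amplification doublings, hence $\kappa=c\ln \Cr{const_ampl}$) large enough relative to $\gamma$ and $q$. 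A union bound over the two bad events gives the claimed $1-2t^{-q}$, and setting $C=C(t)$ to be the (bounded, $\p$-a.s.\ eventually $\leq \Cr{constant theorem log dist}$) displacement actually used, with $\Cr{time theorem log dist}$ the $\p$-a.s.\ finite time after which all the invoked a.s.\ statements hold, completes the argument.

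The main obstacle is the bookkeeping in the amplification-and-transport step: one must verify that all the velocity ratios appearing when relocating the $t^\kappa$ descendant particles into the target window stay inside the compact interval $V$ from \eqref{eq:defVTriangle}, so that Lemmas~\ref{le:time_perturb}, \ref{le:space_perturb} and Corollary~\ref{cor:breakpoint_limit} remain applicable, and that the number of available subtrees $t^\kappa$ can be made to dominate any prescribed power $\gamma=2\gamma_1+\gamma_2$ of $t$ \emph{simultaneously} with keeping the time overhead $c\ln t$ negligible — i.e.\ the interplay between the exponents $\kappa$, $\gamma$ and $q$. Everything else (the first/second moment input, the final exponential-decay union bound) is routine once this geometric scaffolding is in place.
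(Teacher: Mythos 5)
Your skeleton (Cauchy--Schwarz on leading particles, amplification into $t^{q}$ independent subtrees, then $(1-t^{-\gamma})^{t^{q}}\leq t^{-q}$) is the right one, but the middle step --- getting from the starting point $\munder^{a}(t)-C\ln t$ to a configuration where Lemmas~\ref{le: first moment leading particles inequ} and \ref{le:SecMomLeadingPart} apply --- contains a genuine gap. You propose to transport the $t^{\kappa}$ amplified particles rightward by $C\ln t$ into the window $[\munder^{a}(t)-1,\munder^{a}(t)+1]$ and then give each subtree the remaining time $t-c\ln t$. Two things go wrong. First, Lemma~\ref{le:amplification} only produces $\Cr{const_ampl}^{s}$ particles in the \emph{unit} interval around the starting point; it says nothing about particles displaced by $C\ln t$, so the ``spreading speed'' step is not backed by any lemma in the paper and would need a separate quantitative estimate. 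Second, and more seriously, even granting the transport, the moment bounds are proved only for the \emph{matched} space--time pair: the window must be $[\munder^{a}(s)-1,\munder^{a}(s)+1]$ when the time horizon is $s$. Subtrees rooted near $\munder^{a}(t)$ with horizon $s=t-c\ln t$ sit roughly $v_{0}c\ln t$ \emph{ahead} of $\munder^{a}(s)$, so neither lemma applies to them as stated, and the success probability of such a subtree degrades by a further polynomial factor that your exponent bookkeeping does not account for.

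The paper's resolution avoids transport altogether. Choose $r=c_{1}\ln t$ so that $\Cr{const_ampl}^{r}=t^{q}$, and use the time- and space-perturbation Lemmas~\ref{le:time_perturb} and \ref{le:space_perturb} to show that the front itself recedes by a logarithmic amount over this window: $\munder^{a}(t-r)=\munder^{a}(t)-C(t)\ln t$ with $C(t)\leq \Cr{constant theorem log dist}$. One then starts the process at $x:=\munder^{a}(t-r)$, amplifies \emph{in place} for time $r$ to obtain $t^{q}$ particles in $[x-1,x+1]$, and applies the first and second moment bounds at the matched pair $\big(\munder^{a}(t-r),\,t-r\big)$, for which they are literally stated. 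The logarithmic displacement in the proposition is thus not a distance the particles must traverse; it is the amount by which the front moves during the amplification phase. If you replace your transport step by this identification, the rest of your argument (the union bound, and the reduction to $q>2(\gamma_{1}+\gamma_{2})$, from which smaller $q$ follow trivially) goes through.
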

\begin{proof}
	For simplicity, we write $\munder(t):=\munder^{a}(t)$.	
    Without loss of generality, it is enough to show the claim for all $q>2(\gamma_1+\gamma_2)$, where $\gamma_1=\gamma_1(a)$ and $\gamma_2=\gamma_2(a)$ are defined in  Lemmas~\ref{le: first moment leading particles inequ} and \ref{le:SecMomLeadingPart}, respectively. Let further $\Cr{const_ampl}$ and $t_0$ be as in Lemma~\ref{le:amplification}, and $c_1$ be such that for $r:=c_1\ln t$ we have $\Cr{const_ampl}^{-r} = t^{-q}$.
    
	We claim that there exist 
	$\Cr{constant theorem log dist},C(t)$ and $\Cr{time theorem log dist}$ as above  such that $\munder(t-r)=\munder(t)-C(t)\ln t$, $C(t)\leq \Cr{constant theorem log dist}$ and the conclusions of Lemmas~\ref{le: first moment leading particles inequ} and \ref{le:SecMomLeadingPart} hold for all $t\geq \Cr{time theorem log dist}$. Indeed, writing $u(t,x)=\Eprob_x^\xi[N^\leq(t,0)]$, by the time and space perturbation Lemmas~\ref{le:time_perturb} and \ref{le:space_perturb}, defining $c_2:=\Cr{const_time_perturbation}\vee\Cr{const_space_perturbation}$, we deduce that
	\begin{align}
		    u(t-r,\munder(t)-\Cr{constant theorem log dist}\ln t)&\geq c_2^{-1}e^{\Cr{constant theorem log dist}\ln t/c_2} u(t-r,\munder(t)) \geq c_2^{-2} e^{\Cr{constant theorem log dist}\ln t/c_2-r/c_2} u(t,\munder(t)) 
		    \geq a, \label{eq:time_and_space_pert1}
	\end{align}
	where for the last inequality we choose $\Cr{constant theorem log dist}$ and $\Cr{time theorem log dist}=\Cr{time theorem log dist}(\varepsilon)$ large enough such that the last inequality holds for all $t\geq\Cr{time theorem log dist}$. As a consequence, we infer 
		\begin{equation}
		\label{eq:time_and_space_pert2}
		\munder(t-r)\geq\munder(t)-\Cr{constant theorem log dist}\ln t.
		\end{equation}
	By \eqref{eq:time_pert_ungenau}, we also get $u(t,\munder(t-r))\geq \Cr{const_time_perturbation}^{-1}e^{r/\Cr{const_time_perturbation}}u(t-r,\munder(t-r))\geq a$ for all $t$ large enough, i.e.\ 
	\[
		\munder(t-r)\leq\munder(t).
		\] 
	Thus, combining the two previous displays, we can find $C(t)\leq \Cr{constant theorem log dist}$ such that $\munder(t-r)=\munder(t)-C(t)\ln t$. Now let $x:=\munder(t)-C(t)\ln t=\munder(t-r)$. Conditioning on whether until time $r$ there are more or less than $\Cr{const_ampl}^r$ particles in $[x-1,x+1]$, we get for all $t\geq\Cr{time theorem log dist},$
	\begin{align*}
		\prob_x^\xi\left( N^\leq(t,0)\geq 1 \right) &\geq 1-\prob_x^\xi\big( N(r,[x-1,x+1])\leq \Cr{const_ampl}^r \big) - \sup_{y\in[x-1,x+1]}\Big( \prob_y^\xi\left( N^\leq(t-r,0)=0 \right) \Big)^{\Cr{const_ampl}^r} \\
		&\geq 1- \Cr{const_ampl}^{-r} - \sup_{y\in[x-1,x+1]}\Big( \prob^\xi_{y}\big( N^{\mathcal{L},a}_{t-r}=0  \big)\Big)^{\Cr{const_ampl}^r},
	\end{align*}
	using Lemma \ref{le:amplification} in the last inequality.
	Now using Cauchy-Schwarz as in \eqref{eq:CS}, in combination with Lemmas~\ref{le: first moment leading particles inequ} and \ref{le:SecMomLeadingPart}, we infer
	\begin{align*}
		\sup_{y\in[x-1,x+1]}\big( \prob^\xi_{y}( N^{\mathcal{L},a}_{t-r}=0 )\big)^{\Cr{const_ampl}^r} &\leq \left( 1-t^{-2\gamma_1-\gamma_2} \right)^{t^q} 
		\leq  t^{-q},
	\end{align*}
	where we adapt $\Cr{time theorem log dist}=\Cr{time theorem log dist}(a,q,\xi,q)$ such that  the last inequality holds for all $t\geq \Cr{time theorem log dist}$.
\end{proof}

	\begin{proof}[Proof of Theorem~\ref{th: log-distance breakpoint median}]
		1) We first prove the result under the additional assumption that $F$  fulfills \eqref{eq:p_k}. 
		Let $w^{\xi,F,w_0}$ be the solution to \eqref{eq:KPP1} with initial condition $w_0\in\mathcal{I}_{\text{F-KPP}},$ so in particular $0\leq w_0\leq \mathds{1}_{(-\infty,0]}$.  Because $F$ fulfills \eqref{eq:p_k}, by \eqref{eq:McKean} and the Markov property we infer
		\begin{align}
			w^{\xi,F,w_0}(t,x) &= \Eprob_x^\xi\Big[ 1-\prod_{\nu\in N(t)} \big(1-w_0(X_t^\nu)\big)  \Big] \geq \Eprob_x^\xi\Big[ 1-\prod_{\nu\in N(t)} \big(1-{w}_0(X_t^\nu) \big) ;N^\leq(t-s,0)\geq 1 \Big]\notag  \\
			&\geq \prob_x^\xi\big( N^\leq(t-s,0)\geq 1 \big) \cdot \inf_{y\leq0} w^{\ei,F,w_0}(s,y), \label{eq:sol_FKPP_markov}
		\end{align}  
		where $w=w^{\ei,F,{w}_0}$ solves the homogeneous equation $w_t=\frac12 w_{xx} + \ei\cdot F(w)$ with initial condition $w(0,\cdot)={w}_0$.  Then  we have  $w^{1,F,\widetilde{w}_0}=w^{\ei,F,w_0}(\frac{t}{\ei},\frac{x}{\sqrt{\ei}})$ with $\widetilde{w}_0(x):=w_0(x/\sqrt{\ei})$. Because $w^{\ei,F,w_0}(0,x)=0$ for $x>0$, conditions \cite[(8.1) and  (1.17)]{bramson1983convergence} are  fulfilled. 
		Together with \eqref{eq:KPP_initial} and \cite[Theorem~3, p.~141]{bramson1983convergence}, $w^{1,F,\widetilde{w}_0}$ (and thus also $w^{\ei,F,w_0}$) is a traveling wave solution, i.e., there exist $m^\ei(t)=\sqrt{2\ei}t + o(t)$ and some	 $g$ fulfilling  $\lim_{x\to-\infty}g(x)=1$ and $\lim_{x\to\infty}g(x)=0$ such that  
		\begin{equation}
		\label{eq:unif_conv}
		\sup_y \big| w^{\ei,F,w_0}(t,y + m^\ei(t)) - g(y) \big| \to 0, \quad t\to\infty.
		\end{equation}
		Now let $\e\in(0,1)$ and choose $\delta>0$ such that $\frac{\e}{1-\delta}\in(0,1)$. Then by \eqref{eq:unif_conv} we get 
		\[ \inf_{y\leq0} w^{\ei,F,w_0}(s,y)  = \inf_{y\leq -m^\ei(s)} w^{\ei,F,w_0}(s,y+m^\ei(s)) \geq 1-\delta\ \text{ for all }s\geq s_0(F,w_0,\delta,\ei),\]
		which, together with \eqref{eq:sol_FKPP_markov}, gives
		\begin{equation}
		\label{eq:comp_front_ini} 
		 m^{\xi,F,w_0,\e}(t) \geq m^{\xi,F,\mathds{1}_{(-\infty,0]},\frac{\e}{1-\delta}}(t-s_0)\quad\text{for all } t\geq s_0(F,w_0,\delta,\ei). 
		\end{equation}
		The inequality 
		\begin{equation*}
		m^{\xi,F,\mathds{1}_{(-\infty,0]},\frac{\e}{1-\delta}}(t-s_0) 
		{\geq}\ \munder^{\xi,\mathds{1}_{(-\infty,0]},\frac{\e}{1-\delta}}(t-s_0) - \Cr{constant theorem log dist}\ln(t), \quad\text{for all }t\geq \mathcal{T}_1(\xi,F,\e,\delta),
		\end{equation*}
		follows from Proposition~\ref{prop:LogDist}. 		
		By Corollary~\ref{cor:tightnessPAM}, for $C'>1$ from \eqref{eq:PAM_initial}   we get 
		\begin{align*}
		\munder^{\xi,\mathds{1}_{(-\infty,0]},\frac{\e}{1-\delta}}(t-s_0) &\geq \munder^{\xi,\mathds{1}_{(-\infty,0]},\frac{\e}{C'}}(t-s_0) - c_1(\e,\delta,C') \\
		&\geq \munder^{\xi,\mathds{1}_{(-\infty,0]},\frac{\e}{C}}(t)- c_2(\e,\delta, C', s_0)\quad\text{for all }t\geq \mathcal{T}_2(\xi,\e,\delta),
		\end{align*}
		where the second inequality can be obtained similarly to the argument in \eqref{eq:time_and_space_pert1} and \eqref{eq:time_and_space_pert2}. 
		Combining the above inequalities, we arrive at
		\begin{align*}
			m^{\xi,F,w_0,\e} (t) &\geq \munder^{\xi,\mathds{1}_{(-\infty,0]},\frac{\e}{C'}}(t) - C_1\ln(t) -  c_3 \\
			&= \munder^{\xi,C'\mathds{1}_{(-\infty,0]},\e}(t) - C_1\ln(t) -  c_3 \quad\text{for all }t\geq \mathcal{T}_3(\xi). 
		\end{align*}
		Now by \eqref{eq:PAM_initial} every $u_0\in\mathcal{I}_{\text{PAM}}$ is upper bounded by the function $C'\mathds{1}_{(-\infty,0]},$ and since we have $\munder^{\xi,\mathds{1}_{(-\infty,0]},\varepsilon}(t)\geq m^{\xi,F,w_0,\varepsilon}(t)$ for all $\varepsilon\in(0,1)$ and $w_0\in\mathcal{I}_{\text{F-KPP}}$, this finishes the proof for $F$ fulfilling \eqref{eq:p_k}.  
			    
	 2) Now let $F$ fulfill \eqref{eq:standard_condi} and $w_0$ be continuous. By a sandwich argument, it suffices to show that there exists some function $G$ fulfilling \eqref{eq:p_k}, such that $F(w)\geq G(w)$ for all $w\in[0,1]$. Indeed, by  Corollary~\ref{cor:mon_solutions} the  solutions $w^F$ and $w^G$ to  
	 \[ w^F_t-\frac{1}{2}w^F_{xx} - \xi(x)F(w^F)=0= w^G_t -\frac{1}{2}w^G_{xx} - \xi(x)G(w^G) \]
	 (which are classical by Proposition~\ref{prop:mckean1})
	 fulfill $w^F\geq w^G$. As a consequence, we infer that $m^{\xi,F,w_0,\e}(t)\geq m^{\xi,G,w_0,\e}(t)\geq \munder^{\xi,w_0,\varepsilon}(t)-C_1\ln(t)$, where the second  inequality is due to step 1). The claim for arbitrary $w_0\in\mathcal{I}_{\text{F-KPP}}$ is then true by an approximation argument of $w_0$ by continuous functions, that is if $F\geq G$, by Remark~\ref{rem:gen_approx} we have  $w^{w_0,F}\geq w^{w_0,G}$ and consequently $m^{\xi,w_0,F,\varepsilon}(t)\geq m^{\xi,w_0,G,\varepsilon}(t)$ for all $t\geq0$ and we can conclude.
	
		It remains to show that for every $F$ fulfilling \eqref{eq:standard_condi} there exists $G$ fulfilling \eqref{eq:p_k} such that $F(x)\geq G(x)$ for all $x\in[0,1]$. To do so, recall that \eqref{eq:standard_condi} implies that  there exists $M\in \N$ such that 
\begin{equation} \label{eq:FObs}	
1-F'(x)\leq \frac{M}{2}x \quad \text{ and } \quad F(1-x)\geq xM^{-1}
\quad \text{ for all }x\in[0,M^{-1}].
\end{equation} 
		Define $G_n(x):=\frac{1-x}{n} \big( 1-(1-x)^n \big)$, $x\in[0,1]$, $n\in\N$. Then each $G_n,$ $n\in\N,$ satisfies \eqref{eq:p_k} 
		with $p_1=1-n^{-1}$ and $p_{n+1}=n^{-1},$
		and our goal is to show that $G_n\leq F$ for all $x\in[0,1]$ and all $n$ large enough.
	
		We start with noting that $G_{n+1}\leq G_n$ as functions on  $x\in[0,1],$ for all $n\in\N,$ and that $G_n\downarrow 0$ uniformly as $n$ tends to infinity. Thus, since $F$ is continuous and $F >0$ on $(0,1)$ due to 
		\eqref{eq:standard_condi}, we only have to take care of the neighborhoods of $0$ and $1.$	 From \eqref{eq:FObs} we immediately get $G_M(x)\leq (1-x)M^{-1}\leq F(x)$ for all $x\in[1-M^{-1},1]$. To infer the desired inequality for $x\in[0,1-M^{-1}],$
	    Taylor expansion yields
		\begin{align*}
		(1-x)^M &\geq   1-Mx+\binom{M}{2}x^2-\binom{M}{3}x^3.
		\end{align*}
		Then for $M$ large enough and for all $x\in[0,M^{-1}]$ we get
		\begin{align*}
		G_M(x)&\leq (1-x)\Big( x-\frac{M-1}{2}x^2 + \frac{(M-1)(M-2)}{6}x^3 \Big) \leq x-\frac{M}{3}x^2 \\
		&\leq x-\frac{M}{4}x^2 = \int_0^x (1-Mt/2)\diff t \leq \int_0^x F'(t)\diff t = F(x),
		\end{align*}
		where the last inequality is due to \eqref{eq:FObs} again.
	  This finishes the proof.
	\end{proof}

\subsection{Remarks on $v_c$ and $v_0$}
\label{sec:vel}
By the same argument as in \cite[Lemma A.4]{drewitz_cerny}, one can show that a rich class of potentials $\xi$ satisfies \eqref{eq:VEL}, i.e.\ the inequality $v_c< v_0$. It is natural to ask if $v_c <v_0$ is always fulfilled in our setting or not; i.e., do there exist potentials which satisfy  \eqref{eq:SASS} but not the inequality $v_c < v_0$?

In order to answer this question, we will take advantage of the following result.
\begin{claim}
\label{claim:v_0_variational}
\begin{equation}
\label{eq:v_0_altern}
v_0=\inf_{\eta\leq 0} \frac{\eta-\text{\emph{\es}}}{L(\eta)}.
\end{equation}
\end{claim}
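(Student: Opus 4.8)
The plan is to identify $v_0$ as the unique zero of the Lyapunov exponent $\Lambda$ and then to use the relation $\Lambda(v) = \es - vL^*(1/v)$ from \eqref{eq:Lyapunov_Legendre} together with the Legendre-duality characterization of $L^*$ from Lemma~\ref{le:expectedLogMGfct}. First I would recall that by Corollary~\ref{cor:lyapunov_alt} the function $\Lambda$ is concave, satisfies $\Lambda(v) = \es - vL^*(1/v)$ for all $v \in [v_c,\infty)$, and that $v_0 > v_c$ is the unique speed with $\Lambda(v_0) = 0$. Hence $v_0$ is the unique $v > v_c$ solving $\es = vL^*(1/v)$, i.e.\ $L^*(1/v_0) = \es/v_0$.

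Next I would rewrite $L^*(1/v_0) = \es/v_0$ using the variational formula $L^*(1/v) = \sup_{\eta \le 0}\big(\frac{\eta}{v} - L(\eta)\big)$. Multiplying through by $v_0 > 0$, the equation $L^*(1/v_0) = \es/v_0$ becomes
\[
\sup_{\eta \le 0}\big(\eta - v_0 L(\eta)\big) = \es,
\]
which, since $L(\eta) < 0$ for $\eta \le 0$ (note $L(0-) \le 0$ because $L$ is strictly increasing on $(-\infty,0)$ with $L'>0$, and one checks $L(\eta)<0$ there), can be divided: for every $\eta \le 0$ we have $\eta - v_0 L(\eta) \le \es$, i.e.\ $v_0 \le \frac{\eta - \es}{L(\eta)}$ (the inequality flips because $L(\eta) < 0$), with equality attained at the maximizer $\overline{\eta}(v_0)$. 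This gives exactly $v_0 = \inf_{\eta \le 0}\frac{\eta - \es}{L(\eta)}$, which is \eqref{eq:v_0_altern}.

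The main point requiring care is the behavior at the boundary $\eta = 0$ and the sign of $L$ near $0$: one must make sure that $L(\eta) < 0$ strictly for $\eta \in (-\infty, 0)$ (so that the division is legitimate and the infimum is over genuinely finite quantities), and one must handle the possibility $v_c = 0$, in which case $L'(0-) = +\infty$ and the supremum $\sup_{\eta\le 0}(\eta - v_0 L(\eta))$ might be approached only in a limit; here the convention $\frac{1}{+\infty} := 0$ built into the definition of $v_c$ in Lemma~\ref{le:expectedLogMGfct}(d), together with the continuity argument for $L^*(1/v) \to L^*(1/v_c)$ as $v \downarrow v_c$ used in the proof of Corollary~\ref{cor:lyapunov_alt}, takes care of this. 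I would also note that $L(0) = \E[L_1^\zeta(0)] = \E[\ln E_1[e^{\int_0^{H_0}\zeta(B_s)\diff s}]] \le 0$ with strict inequality unless $\zeta \equiv 0$, which pins down the sign. Modulo these sign/boundary checks, the identity is a one-line manipulation of the Legendre transform once $L^*(1/v_0) = \es/v_0$ is in hand, so I do not expect a genuine obstacle — the bookkeeping around $v_c = 0$ is the only delicate part.
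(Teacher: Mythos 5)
Your route is genuinely different from the paper's. The paper cites Freidlin's analysis \cite[p.~514ff.\ and Lemma~7.6.3]{Fr-85} directly: it introduces the rate function $I(y)=\sup_{\eta\le -\es}(y\eta-L(\eta+\es))$ from there and identifies the unique zero $v^*$ of $I(1/\cdot)$ with $v_0$ by a two-sided F-KPP/Feynman--Kac comparison. You instead take the Lyapunov--Legendre identity $\Lambda(v)=\es-vL^*(1/v)$ from Corollary~\ref{cor:lyapunov_alt}, plug in $\Lambda(v_0)=0$, and read the variational formula off Legendre duality. Your approach is shorter and leans on machinery already built up in the body of the paper, and the algebra (multiplying through by $v_0>0$, dividing by $L(\eta)<0$, flipping the inequality) is correct.

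There is, however, one genuine gap. You assert ``$v_0>v_c$ is the unique speed with $\Lambda(v_0)=0$.'' That inequality is exactly assumption \eqref{eq:VEL}, which is \emph{not} among the standing assumptions and is not asserted in Proposition~\ref{prop:lyapunov} or Corollary~\ref{cor:lyapunov_alt}. Worse, Claim~\ref{claim:v_0_variational} lives in Section~\ref{sec:vel}, whose point is that $v_c>v_0$ can occur; the immediately following Proposition~\ref{le:v_c>v_0} \emph{invokes the Claim} in order to construct potentials with $v_c>v_0$. If the proof of the Claim presupposed $v_0>v_c$, that construction would be self-defeating. As written, your step ``$L^*(1/v_0)=\es/v_0$'' only applies the identity $\Lambda(v)=\es-vL^*(1/v)$ in its stated range $v\in[v_c,\infty)$, and the phrase ``equality attained at the maximizer $\overline{\eta}(v_0)$'' is meaningless for $v_0\le v_c$, where $\overline{\eta}(\cdot)$ is undefined. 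The repair is small but must be made explicit: Corollary~\ref{cor:lyapunov_alt} also identifies $\Lambda$ on $[0,v_c]$ with the linear function $v\mapsto\es+vL(0)$, and for $v\le v_c$ one has $L^*(1/v)=\sup_{\eta\le0}(\eta/v-L(\eta))=-L(0)$, since $1/v\ge L'(0-)$ makes $\eta\mapsto\eta/v-L(\eta)$ nondecreasing on $(-\infty,0]$ with supremum at $\eta=0$. Hence $\Lambda(v)=\es-vL^*(1/v)$ holds for \emph{all} $v\ge0$, not just $v\ge v_c$. Only with that extension does $\Lambda(v_0)=0$ give $\sup_{\eta\le0}(\eta-v_0L(\eta))=\es$ unconditionally, whence $\frac{\eta-\es}{L(\eta)}\ge v_0$ for every $\eta\le0$, with equality at $\overline{\eta}(v_0)$ if $v_0>v_c$ and at $\eta=0$ otherwise. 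That is what pins down $v_0=\inf_{\eta\le0}\frac{\eta-\es}{L(\eta)}$ in both regimes.
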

\begin{proof} 
As shown in \cite[p.\ 514ff.]{Fr-85}, the function 
\[
I:\, (0,\infty)\ni y\mapsto \sup_{\eta\leq-\es}\big( y\eta-L(\eta+\es) \big)
\]
is strictly decreasing, finite, convex, fulfills $\lim_{y\downarrow0}I(y)=+\infty$ and $\lim_{y\uparrow\infty}I(y)=-\infty$, there exists a unique $v^*>0$ such that $I(1/v^*)=0$, and one has
\[ v^*=\inf_{\eta\leq 0} \frac{\eta-\es}{L(\eta)}. \]
We now show $v^*=v_0$. 
For this purpose, let $w$ and $u$ be solutions to \eqref{eq:KPP1} and \eqref{eq:PAM}, respectively, both with initial condition $\mathds{1}_{[-1,0]}$. Then $w\leq u$ and thus by \cite[Lemma~7.6.3]{Fr-85} and Proposition~\ref{prop:lyapunov} we have for all $v>v^*$ that $\p$-a.s.,
\begin{align*}
\Lambda(v)=\lim_{t\to\infty}\frac{1}{t} \ln u(t,vt) &\geq \liminf_{t\to\infty} \frac{1}{t} \ln w(t,vt)\geq -v I(1/v). 
\end{align*}
Since $\Lambda$ and $I$ are continuous, passing to the limit $v\downarrow v^*$ we deduce that $\Lambda(v^*)\geq -v^*I(1/v^*)=0$. Furthermore,  $\Lambda(v)<0$ for all $v>v_0$ and thus we infer
$v_0 \geq v^*.$

To get the converse inequality, we use that for all $v>0$ and all $\eta\leq0$ we have
\begin{align*}
u(t,vt) &= E_{vt} \big[ e^{\int_0^t(\zeta(B_s)+\es)\diff s} ; B_t\in[-1,0] \big] = E_{vt} \big[ e^{\int_0^t(\zeta(B_s)+\es)\diff s}; B_t\in[-1,0], H_0\leq t\big] \\
&\leq e^{(\es-\eta)t} E_{vt} \big[ e^{\int_0^{H_0}(\zeta(B_s)+\eta)\diff s} \big]. 
\end{align*}
In combination with \eqref{eq:LLNEmpLogMG} this yields that for all $v>0,$
\begin{align*}
\Lambda(v)=\lim_{t\to\infty} \frac{1}{t}\ln u(t,vt) &\leq \inf_{\eta\leq 0} \big ( (\es-\eta) + vL(\eta) \big ) =-v\sup_{\eta\leq-\es}\big ( \frac{\eta}{v} - L(\eta+\es) \big ) = -vI(1/v).
\end{align*}
But then $\Lambda(v^*)\leq -v^*I(1/v^*)=0$ and thus we must have $v^*\geq v_0$.
\end{proof}

We now formulate our main result on the relation between $v_0$ and $v_c.$ It implies that our results do not apply for all potentials fulfilling \eqref{eq:POT}, \eqref{eq:STAT} and \eqref{eq:MIX}.
\begin{proposition}
	\label{le:v_c>v_0}
	There exist potentials $\xi$ fulfilling \eqref{eq:POT}, \eqref{eq:STAT} and \eqref{eq:MIX}, and such that  $v_c> v_0;$ i.e., condition \eqref{eq:VEL} is violated.
\end{proposition}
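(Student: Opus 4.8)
The plan is to combine the variational formula $v_0=\inf_{\eta\le 0}\frac{\eta-\es}{L(\eta)}$ from Claim~\ref{claim:v_0_variational} with the identity $v_c=1/L'(0-)$ from Lemma~\ref{le:expectedLogMGfct}(d), and to exhibit a one‑parameter family of potentials along which $L$ degenerates to the cumulant generating function of $H_0$ under a \emph{constant} shifted potential. Recall $L_1^\zeta(\eta)=\ln E_1[\exp\{\int_0^{H_0}(\zeta(B_s)+\eta)\diff s\}]$, $L(\eta)=\E[L_1^\zeta(\eta)]$, and $E_1[e^{-\lambda H_0}]=e^{-\sqrt{2\lambda}}$ for $\lambda\ge0$ (see e.g.\ \cite[(2.0.1), p.~204]{handbook_brownian_motion}); thus for the constant potential $\zeta\equiv-\kappa$, $\kappa>0$, one has $L_1^{-\kappa}(\eta)=-\sqrt{2(\kappa-\eta)}$, hence $(L_1^{-\kappa})'(0-)=\tfrac1{\sqrt{2\kappa}}$ and $E_1^{-\kappa,0}[H_0]=\tfrac1{\sqrt{2\kappa}}$.

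First I would fix $0<\ei<\es$ with $\es>2\ei$, set $\kappa:=\es-\ei$, and take a family $(\xi^{(\lambda)})_{\lambda\in(0,1]}$ of potentials, each satisfying \eqref{eq:POT} with the \emph{same} constants $\ei,\es$ as well as \eqref{eq:STAT} and \eqref{eq:MIX}, such that the shifted potentials $\zeta^{(\lambda)}:=\xi^{(\lambda)}-\es\in[-\kappa,0]$ are monotonically coupled with $\zeta^{(\lambda)}\downarrow-\kappa$ pointwise $\p$-a.s.\ as $\lambda\downarrow0$. Concretely one may take a Poisson point process $\Pi$ on $\R$ of intensity one, retain each point independently with probability $\lambda$ to obtain $\Pi_\lambda$, fix $\phi\in C_c^\infty(\R)$ with $0\le\phi\le1$, $\phi(0)=1$, $\operatorname{supp}\phi\subset(-\tfrac12,\tfrac12)$, and put $\xi^{(\lambda)}(x):=\ei+\kappa\sup_{z\in\Pi_\lambda}\phi(x-z)$; this is Lipschitz (so \eqref{eq:hoelder_cont} holds), has $\essinf=\ei$, $\esssup=\es$ a.s., is stationary, and is $1$-dependent, so \eqref{eq:MIX} holds with $\psi$ of bounded support, while the thinning provides the monotone coupling. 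Write $L^{(\lambda)},v_0^{(\lambda)},v_c^{(\lambda)}$ for the associated objects.

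The $v_0$-estimate is the easy half: since $\zeta^{(\lambda)}\in[-\kappa,0]$ one has $L_1^{\zeta^{(\lambda)}}(\eta)\in[-\sqrt{2(\kappa-\eta)},-\sqrt{-2\eta}]$, so $\zeta^{(\lambda)}\downarrow-\kappa$ and dominated convergence give $L^{(\lambda)}(\eta)\to-\sqrt{2(\kappa-\eta)}$ for each fixed $\eta<0$. By Claim~\ref{claim:v_0_variational}, $v_0^{(\lambda)}\le\frac{\eta-\es}{L^{(\lambda)}(\eta)}$, hence $\limsup_{\lambda\downarrow0}v_0^{(\lambda)}\le\frac{\es-\eta}{\sqrt{2(\kappa-\eta)}}$; minimising this elementary expression over $\eta\le0$ — the minimum is at $\eta=0$ exactly because $\es-2\kappa<0$, i.e.\ $\es>2\ei$ — gives $\limsup_{\lambda\downarrow0}v_0^{(\lambda)}\le\frac{\es}{\sqrt{2(\es-\ei)}}$. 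For $v_c$ I would first note that, for fixed $\zeta\le0$, $\eta\mapsto L_1^\zeta(\eta)$ is convex with $(L_1^\zeta)'(\eta)=E_1^{\zeta,\eta}[H_0]$, and letting $\eta\uparrow0$ (monotone convergence in numerator and denominator of the Gibbs average) that $(L_1^\zeta)'(0-)=E_1^{\zeta,0}[H_0]\in(0,\infty]$; combining this with the convexity of $L$ from Lemma~\ref{le:expectedLogMGfct}(a) and monotone convergence once more yields $(L^{(\lambda)})'(0-)=\E[E_1^{\zeta^{(\lambda)},0}[H_0]]$. Next, $\zeta\mapsto E_1^{\zeta,0}[H_0]$ is nondecreasing: a larger potential tilts the Brownian path towards longer excursions, and differentiating along a linear interpolation $\zeta_1+t(\zeta_2-\zeta_1)$ produces a covariance of $H_0$ and $\int_0^{H_0}(\zeta_2-\zeta_1)(B_s)\diff s$ under the tilted law, which is $\ge0$ by a Harris/FKG inequality. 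Therefore $E_1^{\zeta^{(\lambda)},0}[H_0]\downarrow E_1^{-\kappa,0}[H_0]=\tfrac1{\sqrt{2\kappa}}$ $\p$-a.s., and by monotone convergence $(L^{(\lambda)})'(0-)\downarrow\tfrac1{\sqrt{2\kappa}}$, i.e.\ $v_c^{(\lambda)}\uparrow\sqrt{2(\es-\ei)}$ — \emph{provided} $\E[E_1^{\zeta^{(\lambda_0)},0}[H_0]]<\infty$ for some $\lambda_0>0$. Granting this, $\es>2\ei$ is equivalent to $2(\es-\ei)>\es$, hence to $\sqrt{2(\es-\ei)}>\frac{\es}{\sqrt{2(\es-\ei)}}$; picking $\theta$ strictly between these two values and then $\lambda$ small enough that $v_c^{(\lambda)}>\theta>v_0^{(\lambda)}$ shows $v_0^{(\lambda)}<v_c^{(\lambda)}$, so $\xi^{(\lambda)}$ violates \eqref{eq:VEL}.

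The main obstacle is the finiteness $\E[E_1^{\zeta^{(\lambda_0)},0}[H_0]]<\infty$ (equivalently, the $\p$-a.s.\ finiteness of the quenched tilted mean hitting time) for sparse bumps: along a Brownian excursion from $1$ to $0$, $\int_0^{t}\zeta^{(\lambda_0)}(B_s)\diff s$ equals $-\kappa$ times the occupation time of the Lebesgue-sparse random complement of the bump set, which with overwhelming probability is $\ge\kappa't$ for some $\kappa'\in(0,\kappa)$, so $e^{\int_0^{H_0}\zeta^{(\lambda_0)}}\lesssim e^{-\kappa'H_0}$ and $E_1[H_0e^{-\kappa'H_0}]<\infty$; making this rigorous requires a quantitative control of the Brownian occupation time of the random bump set (a block decomposition, counting the $O(\lambda_0R)$ bumpy unit blocks in $[-R,R]$, plus standard hitting-time tail bounds). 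The FKG-type monotonicity of $\zeta\mapsto E_1^{\zeta,0}[H_0]$ and the verification that the concrete $\xi^{(\lambda)}$ above satisfies \eqref{eq:POT}, \eqref{eq:STAT}, \eqref{eq:MIX} are routine; the remaining ingredients are the explicit Laplace transform of $H_0$, dominated/monotone convergence, and the one-variable minimisation.
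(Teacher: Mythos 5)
Your overall plan coincides with the paper's: combine Claim~\ref{claim:v_0_variational} with the identity $v_c = 1/L'(0-)$, construct a one--parameter family of potentials whose shifted version degenerates pointwise to the constant $-\kappa$, evaluate both quantities in the limit via $E_1[e^{-\lambda H_0}]=e^{-\sqrt{2\lambda}}$, and read off the criterion $\es > 2\ei$ (the paper's $\ei < a/8$ is a quantitative safety margin obtained before passing to the full limit, and collapses to the same clean criterion). The variational bound on $v_0$, the convexity argument, and the interchange of $\partial_\eta$ with $\E$ are all sound. Two steps, however, are not correct as written. First, the claim that $\zeta\mapsto E_1^{\zeta,0}[H_0]$ is nondecreasing ``by a Harris/FKG inequality'' is not justified: $H_0$ is not a monotone functional of the path in any partial order for which the tilted law is log-supermodular, and the sign of $\mathrm{Cov}^{\zeta_1}_1\bigl(H_0,\int_0^{H_0}(\zeta_2-\zeta_1)(B_s)\,\diff s\bigr)$ is not at all obvious. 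This particular step can be bypassed, since the numerator $E_1[H_0e^{\int\zeta^{(\lambda)}}]$ is pointwise monotone under your coupling and the denominator is bounded below by $e^{-\sqrt{2\kappa}}$ uniformly in $\lambda$ --- but only once the integrability of the dominating function is in hand.

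That integrability, $\E\bigl[E_1[H_0e^{\int_0^{H_0}\zeta^{(\lambda_0)}}]\bigr]<\infty$, is precisely the step you flag as ``the main obstacle,'' and it is a genuine gap in your construction. With independent $\lambda$-thinning of a Poisson process the retained points can cluster arbitrarily, and with bumps of fixed width $\lesssim 1$ there is no uniform-in-starting-point Kasminskii estimate; the ``block decomposition'' you sketch would need a quantitative occupation-time bound for a random, non-hard-core bump set that is neither routine nor supplied. The paper sidesteps this entirely by taking the Matérn hard-core process (so retained points are at mutual distance $\geq 1$, deterministically) and using a mollifier of width $\varepsilon$ as the degeneration parameter instead of a thinning parameter. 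Then $\sup_{(\omega^i)\in\mathbb{J}}\sup_x E_x\bigl[a\int_0^1 g^{\varepsilon}(B_s)\,\diff s\bigr]$ is small for $\varepsilon$ small, Kasminskii's lemma gives $\sup_x E_x[e^{a\int_0^n g^{\varepsilon}}]\leq 2^n$ uniformly over configurations, and the key moment is bounded by $\sum_{n\geq0}(n+1)e^{-na}2^n<\infty$ for $a>\ln 2$. If you replace Poisson thinning by the Matérn process and the thinning parameter by the bump width, your sketch becomes the paper's argument; the gap lies in the modelling choice, not the strategy.
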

\begin{proof}
	Recalling \eqref{eq:v_0_altern}, the definition $v_c=\frac{1}{L'(0-)}$ from Lemma~\ref{le:expectedLogMGfct} d), it is sufficient to show $L(0) + \es\cdot L'(0-) <0$, which means
	\begin{equation}
	\label{eq:proof_v1}
	\E \Big[ \ln E_1\big[ e^{\int_0^{H_0} \zeta(B_s)\diff s} \big] \Big] + \es\cdot \E \Big[ \frac{E_1\big[H_0e^{\int_0^{H_0} \zeta(B_s)\diff s}\big]}{E_1\big[ e^{\int_0^{H_0} \zeta(B_s)\diff s}\big]} \Big]<0.
	\end{equation}
	To establish the latter, let $\widetilde{\omega}$ be a one-dimensional Poisson point process with  intensity one. In a slight abuse of notation,  $\widetilde{\omega}=(\widetilde{\omega}^i)_i$ can be seen as a mapping from $\Omega$ into the set of all locally finite point configurations; i.e., $\widetilde{\omega}=(\widetilde{\omega}^i)_i$ can be interpreted as a  random set of countably many points in $\R$, satisfying $|\{i:\widetilde{\omega}^i\in B\}|<\infty$ for every bounded Borel set and $\widetilde{\omega}^i\neq \widetilde{\omega}^j$ for all $i\neq j$.	See \cite{resnick} for further details. 
	Now denote by  $\omega=(\omega^i)_i$ be the point process that is obtained from $\widetilde \omega$ by deleting {\em simultaneously} all points in $\widetilde{\omega}$ which have distance $1$ or less to their nearest neighbor in $\widetilde \omega.$ 
	(see \cite[p.\ 47]{matern1986} for details). Let $\varphi(x)$ be a mollifier with support $[-1/2,1/2]$, non-decreasing for $x\leq0$, non-increasing for $x\geq0$ with $\varphi(0)=1$ and let $\varphi^{(\varepsilon)}(x):=\varphi(x/\varepsilon)$, $\varepsilon>0$. Finally, for $\varepsilon\in(0,1)$ and $a>0,$ define the potential $\zeta(x)=\zeta^{(\varepsilon,a)}(x):=-a + a\sum_{i}\varphi^{(\varepsilon)}(x-\omega^i)$. One can easily check that $\zeta$ is the corresponding
	shifted potential as in \eqref{eq:zetaBd} of some $\xi$ 
    fulfilling \eqref{eq:POT}, \eqref{eq:STAT} and \eqref{eq:MIX}, i.e.\ $a=\es-\ei$. We will choose $a>\ln 2$, $\ei\in(0,\frac{a}{8})$  and $\varepsilon(a)>0$  suitably at the end of the proof. Let us now consider both summands in \eqref{eq:proof_v1} separately.

	1) We observe that $\p$-a.s., $\zeta^{(\varepsilon,a)}\downarrow-a$ as $\varepsilon\downarrow0$ for all $x\in\R,$ as well as by \cite[(2.0.1), p.~204]{handbook_brownian_motion} 
	\[-\sqrt{2a} = \ln E_1\big[ e^{-aH_0} \big]\leq \ln E_1\big[ e^{\int_0^{H_0} \zeta^{(\varepsilon,a)}(B_s)\diff s} \big] \leq 0\]
	for all $\varepsilon\in(0,1).$ Thus, by dominated convergence, for all $a>0$ there exists $\varepsilon_1=\varepsilon_1(a)>0$, such that 
	\begin{equation}
	\label{eq:proof_v2}
	\E\Big[ \ln E_1\big[ e^{\int_0^{H_0} \zeta^{(\varepsilon_1,a)}(B_s)\diff s} \big] \Big] \leq -\frac{3}{4} \sqrt{2a}.
	\end{equation}
	
	2) To bound the second summand in \eqref{eq:proof_v1}, we lower bound its denominator by
\begin{equation} \label{eq:denBd}
 E_1\big[ e^{\int_0^{H_0} \zeta(B_s)\diff s}\big] \geq E_1\big[ e^{-a H_0} \big] = e^{-\sqrt{2a}}.
 \end{equation}
	For the numerator, define $\mathbb{J}$ to be the set of possible point configurations of the process $(\omega^i)_i$. Let us  first check that for all $a>0$ there exists $\varepsilon=\varepsilon(a)>0$, such that 
	\begin{equation}
	\label{eq:proof_v3}
	\sup_{(\omega^i)_i\in\mathbb{J}}E_1\big[H_0e^{\int_0^{H_0} ( -a + a\sum_{i}\varphi^{(\varepsilon)}(B_s-\omega^i) )\diff s}\big]<\infty.
	\end{equation}
	Indeed,  letting  $g^{\varepsilon,(\omega^i)_{i}}(x):=\sum_{i}\varphi^{(\varepsilon)}(x-\omega^i)$, we have
	\begin{align}\label{eq:proof_v4}
	\begin{split}
	E_1\big[H_0e^{\int_0^{H_0} ( -a + a\cdot  g^{\varepsilon,(\omega^i)_{i}}(B_s) )\diff s}\big] &= \sum_{n=0}^\infty E_1\big[H_0e^{\int_0^{H_0} ( -a + a\cdot g^{\varepsilon,(\omega^i)_{i}}(B_s) )\diff s};H_0\in[n,n+1)\big] \\
	&\leq \sum_{n=0}^\infty (n+1) E_1\big[e^{\int_0^{n} ( -a + a\cdot g^{\varepsilon,(\omega^i)_{i}}(B_s))\diff s}\big].
	\end{split}
	\end{align}
	Note that by the property of all point configurations in $\mathbb{J}$ to have points with mutual distance at least one, we have 
	\[ 
	\sup_{(\omega^i)_i\in\mathbb{J}} \sup_{x\in\R} E_x\Big[ a\int_0^1 g^{\varepsilon,(\omega^i)_{i}}(B_s)\diff s \Big]
	 \leq  a\int_0^1 E_0\big[ \mathds{1}_{A_\varepsilon} (B_s)\big]\diff s 	\leq \frac{1}{2} \]
	for all $\varepsilon(a)>0$ small enough, where $A_\varepsilon:=\bigcup_{i\in\Z}[-\varepsilon/2+i,\varepsilon/2+i]$. Using 	Kasminskii's lemma (cf.\ e.g.\ \cite[Lemma~1.2.1]{sznitman98}) we infer
	$\sup_{x\in\R}E_x \big[ e^{a\int_0^1 g^{\varepsilon,(\omega^i)_{i}}(B_s)\diff s} \big]\leq 2$. An $(n-1)$-fold application of the Markov property at times $1, \ldots, n-1$ supplies us with  $\sup_{x\in\R}E_x \big[ e^{a\int_0^n  g^{\varepsilon,(\omega^i)_{i}}(B_s)\diff s} \big]\leq 2^n$ for all $n\in\N$ and all $(\omega^i)_i\in\mathbb{J}$. Plugging this into \eqref{eq:proof_v4} we infer 
	\begin{align*}
	\sup_{(\omega^i)_i\in\mathbb{J}} E_1\Big[H_0e^{\int_0^{H_0} ( -a + a\cdot g^{\varepsilon,(\omega^i)_{i}}(B_s) )\diff s}\Big] &\leq  \sum_{n=0}^{\infty}(n+1)e^{-na}2^n,
	\end{align*}
	so the right-hand side in \eqref{eq:proof_v4} is finite, and \eqref{eq:proof_v3} holds true for all $a>\ln 2$ and $\varepsilon(a)$ small enough as well. Since $g^{\varepsilon,(\omega^i)_{i}}$ decreases $\p$-a.s.\ to $0$ monotonically as $\varepsilon\downarrow0$, we infer
	\begin{equation} \label{eq:numBd}
	\lim_{\varepsilon\downarrow0}\E\Big[E_1\big[H_0e^{\int_0^{H_0} \zeta^{(\varepsilon,a)}(B_s)\diff s}\big]\Big]=E_1\big[H_0 e^{-aH_0}\big]=-\frac{\diff}{\diff a}E_1\big[e^{-aH_0}\big]=-\frac{\diff}{\diff a}(e^{-\sqrt{2a}})=\frac{1}{\sqrt{2a}}e^{-\sqrt{2a}},
	\end{equation}
	 using \cite[(2.0.1), p.~204]{handbook_brownian_motion} in the third equality.
	Thus, combining \eqref{eq:denBd} and \eqref{eq:numBd} we infer that there exists $\varepsilon_2(a)>0$ such that the second summand on the left-hand side of \eqref{eq:proof_v1} is upper bounded by $\es\cdot \frac{4/3}{\sqrt{2a}}=(a+\ei)\cdot \frac{4/3}{\sqrt{2a}}.$ 
	Using this in combination with \eqref{eq:proof_v2}, we infer that  for all $a>\ln 2$ (which is sufficient for \eqref{eq:proof_v3}) and  $\varepsilon \in (0, \varepsilon_1(a)\wedge\varepsilon_2(a))$, choosing $\ei \in (0,\frac{a}{8}),$ we  get that the left-hand side in \eqref{eq:proof_v1} is upper bounded by 
	$-\frac{3}{4}\sqrt{2a} + (a+\ei)\cdot \frac{4/3}{\sqrt{2a}}<0$.
\end{proof}

\appendix

\section{Appendix}
\label{sec:appendix}
\subsection{Properties of the logarithmic moment generating functions}
\begin{lemma}
	\label{le:log mom fct and derivatives}
	We recall that $P_x^{\zeta,\eta}$ has been defined  in \eqref{eq:P_x}. 
	\begin{enumerate}
		\item $L,$ $L_x^\zeta$, and $\overline{L}_x^\zeta,$ for $x\in\R$, (see \eqref{eq:LogMG_x} to \eqref{eq:expectedLogMG} for their definitions) are infinitely differentiable on $(-\infty,0).$ Furthermore, for all $\eta<0$ we have
		\begin{align}
		\big( L_x^\zeta \big)'(\eta) &= \frac{E_x\Big[ e^{\int_0^{H_{\lceil x\rceil-1}}(\zeta(B_r)+\eta)\diff r}H_{\lceil x\rceil-1} \Big]}{E_x\Big[ e^{\int_0^{H_{\lceil x\rceil-1}}(\zeta(B_r)+\eta)\diff r} \Big]} = E_x^{\zeta,\eta}[ \tau_{\lceil x\rceil-1} ],\quad x\in\R,\label{eq:L_x'} \\
		\big( \overline{L}_x^\zeta \big)'(\eta) &= \frac{1}{x}E_x^{\zeta,\eta}\big[ H_0 \big],\quad x>0,\label{eq:Lbar_x'} \\
		L'(\eta) &=\E\Big[ \frac{E_1\big[ e^{\int_0^{H_0}(\zeta(B_r)+\eta)\diff r}H_0 \big]}{E_1\big[ e^{\int_0^{H_0}(\zeta(B_r)+\eta)\diff r} \big]} \Big] = \E\big[ E_1^{\zeta,\eta}[ H_0 ] \big],\label{eq:L'} 
		\intertext{and}	
		\big( L_x^\zeta \big)''(\eta) &=\frac{E_x\big[ e^{\int_0^{H_{\lceil x\rceil-1}}(\zeta(B_r)+\eta)\diff r}H_{\lceil x\rceil-1}^2 \big]}{E_x\big[ e^{\int_0^{H_{\lceil x\rceil-1}}(\zeta(B_r)+\eta)\diff r} \big]} -\Bigg(  \frac{E_x\big[ e^{\int_0^{H_{\lceil x\rceil-1}}(\zeta(B_r)+\eta)\diff r}H_{\lceil x\rceil-1} \big]}{E_x\big[ e^{\int_0^{H_{\lceil x\rceil-1}}(\zeta(B_r)+\eta)\diff r} \big]} \Bigg)^2 \notag \\
		&=E_x^{\zeta,\eta}\big[ \tau_{\lceil x\rceil-1}^2 \big] - \big( E_x^{\zeta,\eta}[ \tau_{\lceil x\rceil-1} ]\big)^2 =\text{\emph{Var}}_x^{\zeta,\eta}(\tau_{\lceil x\rceil-1}) >0 ,\quad x\in\R,\label{eq:L_x''} \\
		\big( \overline{L}_x^\zeta \big)''(\eta) &=\frac{1}{x}\text{\emph{Var}}_x^{\zeta,\eta}(H_0),\quad x>0,\label{eq:Lbar_x''}\\
		L''(\eta) &= \E\Bigg[ \frac{E_1\big[ e^{\int_0^{H_0}(\zeta(B_r)+\eta)\diff r}H_0^2 \big]}{E_1\big[ e^{\int_0^{H_0}(\zeta(B_r)+\eta)\diff r} \big]} -\Big(  \frac{E_1\big[ e^{\int_0^{H_0}(\zeta(B_r)+\eta)\diff r}H_0 \big]}{E_1\big[ e^{\int_0^{H_0}(\zeta(B_r)+\eta)\diff r} \big]} \Big)^2 \Bigg] \notag \\
		&=\E\Big[ E_1^{\zeta,\eta} [ H_0^2 ] - \big( E_1^{\zeta,\eta} [ H_0] \big)^2 \Big] = \E\big[ \text{\emph{Var}}_1^{\zeta,\eta}( H_0 ) \big] >0.\label{eq:L''}
		\end{align} 
		\item For each compact interval $\triangle\subset(-\infty,0)$ there exists a constant $\Cl{constDerLMG}=\Cr{constDerLMG}(\triangle)>0$, such that the following inequalities hold $\p$-a.s.:
		\begin{align*}
		\hspace{-4mm}  -\Cr{constDerLMG} \leq \inf_{\eta\in\triangle,x\geq 1} \big\{ L_{\lfloor x\rfloor}^\zeta(\eta),\overline{L}_x^\zeta(\eta),L(\eta)  \big\} &\leq \sup_{\eta\in\triangle,x\geq 1}  \big\{ L_{\lfloor x\rfloor}^\zeta(\eta),\overline{L}_x^\zeta(\eta),L(\eta) \big\} \leq -\Cr{constDerLMG}^{-1},\\
		\hspace{-4mm}  \Cr{constDerLMG}^{-1}\leq \inf_{\eta\in\triangle,x\geq 1} \big\{ (L_{\lfloor x\rfloor}^\zeta)'(\eta),(\overline{L}_x^\zeta)'(\eta),L'(\eta) \big\}  &\leq \sup_{\eta\in\triangle,x\geq 1}  \big\{ (L_{\lfloor x\rfloor}^\zeta)'(\eta),(\overline{L}_x^\zeta)'(\eta),L'(\eta) \big\}  \leq \Cr{constDerLMG},\\
		\hspace{-4mm}  \Cr{constDerLMG}^{-1}\leq \inf_{\eta\in\triangle,x\geq 1} \big\{ (L_{\lfloor x\rfloor}^\zeta)''(\eta),(\overline{L}_x^\zeta)''(\eta),L''(\eta) \big\}  &\leq \sup_{\eta\in\triangle,x\geq 1}  \big\{ (L_{\lfloor x\rfloor}^\zeta)''(\eta),(\overline{L}_x^\zeta)''(\eta),L''(\eta) \big\}  \leq \Cr{constDerLMG}.
		\end{align*} 
	\end{enumerate} 
\end{lemma}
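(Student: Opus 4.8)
The plan is to prove the differentiability and the explicit formulas in part (a) by a routine interchange of expectation and derivative in the definitions \eqref{eq:LogMG_x}--\eqref{eq:expectedLogMG}, and then to prove the uniform bounds in part (b) by comparing the tilted hitting time $H_0$ (resp.\ $\tau_{\lceil x\rceil-1}$) with the hitting times of an unperturbed Brownian motion with a suitable drift. First I would treat $L_x^\zeta$: since $\zeta$ takes values in the compact interval $[-(\es-\ei),0]$ by \eqref{eq:zetaBd}, for $\eta$ in any compact $\triangle\subset(-\infty,0)$ the integrand $e^{\int_0^{H_{\lceil x\rceil-1}}(\zeta(B_s)+\eta)\diff s}$ is dominated, together with all of its $\eta$-derivatives (which bring down powers of $H_{\lceil x\rceil-1}$), by $H_{\lceil x\rceil-1}^k e^{-cH_{\lceil x\rceil-1}}$ for a constant $c>0$; these are integrable under $E_x$ because $H_{\lceil x\rceil-1}$ has exponential moments (indeed $E_x[e^{\lambda H_y}]<\infty$ for $\lambda$ small, cf.\ \cite[(2.0.1), p.~204]{handbook_brownian_motion}). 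Hence one may differentiate under the integral sign arbitrarily often, and the standard computation of derivatives of $\ln E[e^{\eta T}\cdots]$ gives \eqref{eq:L_x'}, \eqref{eq:L_x''} with the tilted-measure interpretation coming directly from the definition \eqref{eq:def_P_x_zeta} of $P_x^{\zeta,\eta}$. The identities for $\overline{L}_x^\zeta$ in \eqref{eq:Lbar_x'}, \eqref{eq:Lbar_x''} then follow from $\overline{L}_x^\zeta(\eta)=\frac1x\ln E_x[e^{\int_0^{H_0}(\zeta+\eta)}]$ (strong Markov property at integer times, as recorded below \eqref{eq:LogMG_x}) together with additivity of $H_0=\sum\tau_i$ and independence of the increments $\tau_i$ under $P_x^{\zeta,\eta}$, which makes $\Var_x^{\zeta,\eta}(H_0)$ additive. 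For $L$ in \eqref{eq:L'}, \eqref{eq:L''} one uses $L(\eta)=\E[L_1^\zeta(\eta)]$, the pointwise differentiability just established, and a second dominated-convergence argument under $\E$: the $\eta$-derivatives of $L_1^\zeta$ are bounded uniformly in $\xi$ by part (b) (or directly by the same exponential-moment estimates, which are $\xi$-uniform because the bounds on $\zeta$ are), so one may differentiate $\E[L_1^\zeta(\eta)]$ under the expectation. Strict positivity of the second derivatives is immediate since $\tau_{\lceil x\rceil-1}$ (resp.\ $H_0$) is a genuinely nondegenerate random variable under $P_x^{\zeta,\eta}$, so its variance is $>0$ $\p$-a.s., and the same for $\E[\Var_1^{\zeta,\eta}(H_0)]$.

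For part (b), fix a compact $\triangle=[\undereta,\overeta]\subset(-\infty,0)$. The key observation is the pointwise sandwich $-(\es-\ei)+\eta\le \zeta(x)+\eta\le \eta$ for all $x$ and all $\eta\in\triangle$, valid $\p$-a.s.\ by \eqref{eq:zetaBd}. This gives, for every $x\ge1$ and $\eta\in\triangle$,
\begin{equation}
\label{eq:stochDomBd}
E_x\big[e^{(-\es+\ei+\overeta)H_{\lceil x\rceil-1}}\big]\le E_x\big[e^{\int_0^{H_{\lceil x\rceil-1}}(\zeta(B_s)+\eta)\diff s}\big]\le E_x\big[e^{\overeta H_{\lceil x\rceil-1}}\big],
\end{equation}
and both sides are explicit via \cite[(2.0.1), p.~204]{handbook_brownian_motion}: $E_x[e^{-\lambda H_{\lceil x\rceil-1}}]=e^{-\sqrt{2\lambda}\,(x-\lceil x\rceil+1)}$, so taking logarithms and using that $x-\lceil x\rceil+1\in(0,1]$ yields two-sided bounds on $L_x^\zeta(\eta)$ that are uniform in $x\ge1$ and $\eta\in\triangle$; since $\overline{L}_x^\zeta$ is an average of the $L_i^\zeta$ and $L=\E[L_1^\zeta]$, the same bounds transfer to $\overline{L}_x^\zeta$ and $L$, giving the first line of (b). For the derivative bounds I would use the tilted-measure formulas from (a): $(L_x^\zeta)'(\eta)=E_x^{\zeta,\eta}[\tau_{\lceil x\rceil-1}]$ and $(\overline{L}_x^\zeta)'(\eta)=\frac1x E_x^{\zeta,\eta}[H_0]$, and a stochastic-comparison argument showing that under $P_x^{\zeta,\eta}$ the hitting time $H_{\lceil x\rceil-1}$ (restricted to the excursion between consecutive integers) is stochastically dominated above and below by the corresponding hitting time of Brownian motion with the extreme constant drifts corresponding to tilting parameters $\overeta$ and $\undereta-(\es-\ei)$; this yields $\p$-a.s.\ bounds on the first and second tilted moments of $\tau_{\lceil x\rceil-1}$ that are uniform in $x$ and $\eta$, hence the second and third lines of (b). Equivalently — and more cheaply — one differentiates \eqref{eq:stochDomBd}: convexity of $\eta\mapsto L_x^\zeta(\eta)$ (from $L_x''>0$) lets one bound $(L_x^\zeta)'(\eta)$ on $\triangle$ by difference quotients of $L_x^\zeta$ over a slightly larger compact interval, which are already controlled by the first line; a second application of this idea to $(L_x^\zeta)'$ controls $(L_x^\zeta)''$.

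The main obstacle I anticipate is not any single estimate but making the uniformity genuinely uniform in $x\ge 1$ (continuous, not just integer) while keeping all bounds $\xi$-independent — one must be careful that the ``leftover'' piece $\tau_x=H_{\lceil x\rceil-1}-H_x$ for non-integer $x$, which appears in the convention of Section~\ref{sec:convention}, is handled by the same Brownian-with-drift comparison over an interval of length at most $1$, so that its tilted moments are bounded by universal constants; this is exactly the point where the explicit Laplace-transform formula for one-dimensional Brownian hitting times is indispensable. A secondary technical point is justifying the interchange of $\frac{\diff}{\diff\eta}$ with $\E[\cdot]$ when passing to $L$: here one invokes the $\xi$-uniform bound on $(L_1^\zeta)'$ and $(L_1^\zeta)''$ proved in (b) as the dominating function, which is legitimate since (b)'s proof only uses (a)'s \emph{formulas} (not the $\E$-differentiability), so there is no circularity. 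Finally, strict convexity everywhere on $(-\infty,0)$ — needed for the ``$>0$'' in \eqref{eq:L_x''}--\eqref{eq:L''} — follows because the law of $H_0$ under $P_x^{\zeta,\eta}$ has a density on $(0,\infty)$, hence positive variance; I would state this without belaboring it.
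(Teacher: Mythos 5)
Your plan for part (a) is essentially the paper's: dominate the difference quotients of the integrand by a function of the form $H_{\lceil x\rceil-1}^k e^{-cH_{\lceil x\rceil-1}}$ (the paper uses $|\tfrac{e^{hx}-1}{h}|\le xe^{hx}\vee1$ with $h_0=|\eta|/2$ to get the same kind of bound), then apply dominated convergence once more under $\E$ using a $\xi$-uniform bound, which you correctly note does not require the as-yet-unproved part (b) — the paper derives the uniform bound $\sup_{|h|\le|\eta|/2}(L_1^\zeta)'(\eta+h)\le c_1(\tfrac{2}{|\eta|}\vee1)$ inline. One small caveat: your parenthetical ``indeed $E_x[e^{\lambda H_y}]<\infty$ for $\lambda$ small'' is false as stated for one-dimensional Brownian hitting times — $H_y$ has no positive exponential moments at all ($P_x(H_y>t)\sim ct^{-1/2}$). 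What you actually use, namely that $H^k e^{-cH}$ is bounded and hence integrable, is correct; just don't invoke a nonexistent exponential moment as the reason.

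For part (b), the bounds on the values $L_{\lfloor x\rfloor}^\zeta,\overline L_x^\zeta,L$ via the sandwich $-(\es-\ei)+\eta\le\zeta+\eta\le\eta$ and the explicit Laplace transform of $H$ are exactly the paper's argument. Where you diverge is the derivative bounds. Your first suggestion — that $\tau_{\lceil x\rceil-1}$ under $P_x^{\zeta,\eta}$ is stochastically sandwiched between the hitting times of BM tilted by the extreme constants — is not obvious: the tilt $\exp\{\int_0^{H}(\zeta(B_s)+\eta)\diff s\}$ is path-dependent, and pointwise ordering of $\zeta+\eta$ does not automatically translate into stochastic ordering of the tilted hitting time (monotone likelihood ratio would need to be verified, which you do not do). More importantly, even if it held, stochastic domination between a random variable and two others does \emph{not} give a lower bound on its variance — it only controls moments. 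Your second suggestion (convexity + difference quotients) gives the upper bound on $(L^\zeta_{\lfloor x\rfloor})'$ over $\triangle$, and, if you take the auxiliary interval $\triangle'$ to have length strictly larger than $\es-\ei$ (which you should make explicit, because otherwise the lower difference quotient can be negative), also the lower bound on $(L^\zeta_{\lfloor x\rfloor})'$. But ``a second application of this idea to $(L_x^\zeta)'$'' does not control $(L_x^\zeta)''$: $(L_x^\zeta)'$ is increasing but not known to be convex, so its difference quotients neither upper- nor lower-bound $(L_x^\zeta)''$. The lower bound on the second derivatives — i.e.\ that $\Var_x^{\zeta,\eta}(\tau_{\lceil x\rceil-1})$ is uniformly bounded away from $0$ — is the genuinely delicate part of (b) and needs a separate argument, e.g.\ showing that under $P_x^{\zeta,\eta}$ both $\{\tau_{\lceil x\rceil-1}\le1\}$ and $\{\tau_{\lceil x\rceil-1}\ge2\}$ have $P_x^{\zeta,\eta}$-probability bounded away from $0$ uniformly in $x,\eta,\xi$ (which follows from the two-sided $\xi$-uniform bounds on the Radon–Nikodym density plus the explicit Brownian hitting law), whence $\Var_x^{\zeta,\eta}(\tau)\ge c\,(1)^2$ for a universal $c>0$. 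To be fair, the paper's own proof of (b) is terse at precisely this point (``these estimates can be used to derive similar bounds\dots we can conclude''), and two-sided bounds on $E_x[e^\cdot H^k]$, $k=0,1,2$, by themselves do not lower-bound $\tfrac{ca-b^2}{a^2}$; so you should be aware this detail needs to be supplied by both arguments.
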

\begin{proof}
	$(a)$ Due to the convexity of the exponential function we have $\big|\frac{e^{h x}-1}{h}\big|\leq xe^{hx}\vee 1$ for all $x\geq0$ and $h\in\mathbb{R}$. If we choose $h_0:=\frac{|\eta|}{2}$, then since $\zeta\leq0$, we have that for all  $|h|\leq h_0,$
	\begin{equation}
	\label{eq:L_x'proof1}
	\Big| \frac{1}{h}e^{\int_0^{H_{y}}(\zeta(B_r)+\eta)\diff r}\big( e^{hH_{y}}-1 \big) \Big| \leq e^{\eta H_{y}/2}(H_y\vee 1).
	\end{equation}
	Due to $H_ye^{\eta H_y}\leq \frac{1}{|\eta|}$ for all $H_y\geq0$ and $\eta<0,$ as well as $\lim_{h\to0}\frac{e^{hH_y}-1}{h}=H_ye^{hH_y}$ for all $H_y\geq0$,  dominated convergence yields for all $\eta<0$ that
	\begin{equation}
	\frac{\diff}{\diff \eta} E_x\big[ e^{\int_0^{H_{\lceil x\rceil-1}}(\zeta(B_r)+\eta)\diff r} \big] = E_x\big[ e^{\int_0^{H_{\lceil x\rceil-1}}(\zeta(B_r)+\eta)\diff r}H_{\lceil x\rceil-1} \big]. \label{eq:L_x'proof2} 
	\end{equation}
	Then \eqref{eq:L_x'} is a consequence of the chain rule and the fact that the expectation on the left-hand side in \eqref{eq:L_x'proof2} is positive. Then \eqref{eq:Lbar_x'} follows from linearity of the derivative. To show \eqref{eq:L'}, we have to apply dominated convergence once more. This time, we additionally need that the expectation on the left-hand side in \eqref{eq:L_x'proof2} for $x=1$ is strictly bounded from below by the constant $E_1[e^{-(\es-\ei-\eta)H_0}]=e^{-\sqrt{2(\es-\ei-\eta)}}>0$ due to \eqref{eq:POT} and \cite[(2.0.1), p.~204]{handbook_brownian_motion}. 
	Using the mean value theorem and \eqref{eq:L_x'proof1} entail the existence of $c_1>0$ such that 
	\begin{equation*}
	\sup_{|h|\leq |\eta|/2}\Big| \frac{1}{h}\big(  L_1^\zeta(\eta+h) - L_1^\zeta(\eta) \big) \Big| \leq \sup_{|h|\leq |\eta|/2} (L_1^\zeta)'(\eta+h) \leq c_1 E_1\big[e^{\eta H_{0}/2} (H_0\vee 1) \big]\leq c_1\big(\frac{2}{|\eta|}\vee 1\big).
	\end{equation*}
 Using \eqref{eq:L_x'} for $x=1$ and dominated convergence, we arrive at \eqref{eq:L'}.
	
	By induction and similar arguments as above, it follows that for all $n\in\mathbb{N}$, the function $(-\infty,0)\ni\eta\mapsto E_x\big[(H_{y})^n e^{\int_0^{H_{y}}(\zeta(B_r)+\eta)\diff r} \big]$ is positive $\p$-a.s.\  and  differentiable. Due to \eqref{eq:POT}, one can interchange expectation and differentiation in $\eta,$ yielding \eqref{eq:L_x''} and $\eqref{eq:Lbar_x''}$. The first equality in \eqref{eq:L''} is then again a consequence of dominated convergence. The strict inequalities in \eqref{eq:L_x''} and  \eqref{eq:L''} are due to the fact that under $P_1,$ and thus also $\p$-a.s.\  under $P_1^{\zeta,\eta}$, the random variable $H_0$ is non-degenerate.
	
	To show (b), observe that the function $\zeta\mapsto E_x\big[e^{\int_0^{H_{\lceil x\rceil -1}}(\zeta(B_s)+\eta)\diff s}\big]$ is nondecreasing. Consequently, using the notation $\triangle=[\undereta,\overeta]$, we have
	\begin{align*}
	-\infty &< e^{-\sqrt{2(\es-\ei+|\undereta|})} \leq E_x\big[ e^{(\ei-\es+\undereta)H_{\lceil x\rceil -1}}  \big]  \leq \inf_{\eta\in\triangle} \essinf_{\zeta}  E_x\big[e^{\int_0^{H_{\lceil x\rceil -1}}(\zeta(B_s)+\eta)\diff s}\big] \\
	&\leq \sup_{\eta\in\triangle} \esssup_{\zeta}  E_x\big[e^{\int_0^{H_{\lceil x\rceil -1}}(\zeta(B_s)+\eta)\diff s}\big] \leq E_x\big[ e^{\overeta H_{\lceil x\rceil -1}}  \big] = e^{(x-\lceil x\rceil+1)\sqrt{2|\overeta|}},
	\end{align*}
	where we used \cite[(2.0.1), p.~204]{handbook_brownian_motion} for the second inequality and last equality. 
	Due to the inequality $e^{-xy}x\leq\frac{2}{y}e^{-xy/2}$ for all $x\geq 0$ and $y>0$, these estimates can be used to derive similar bounds for $E_x\big[ e^{\int_0^{H_{\lceil x\rceil-1}}(\zeta(B_r)+\eta)\diff r}H_{\lceil x\rceil-1}^k \big]$, $k=1,2$. Thus, estimating the numerator and the denominator of the corresponding expressions in \eqref{eq:L_x'} to \eqref{eq:L''}, we can conclude. 
\end{proof} 

\subsection{Exponential mixing}
\begin{lemma}
	\label{le:expMixLi}
	Let $\F^k$ be as defined in \eqref{eq:MIX}, i.e.\ $\F^k=\sigma(\xi(x):x\geq k)$, and let $\triangle \subset (-\infty,0)$ be a compact interval. Then there exists a constant $C_\triangle>0$ such that  $\p$-a.s.,  for all $i,j\in\Z$ with $i< j$, and all $\eta\in\triangle,$ 
	\begin{align}
	\big|\E\big[ L_i^\zeta(\eta)|\F^j \big]-L(\eta)\big|&\leq C_\triangle \cdot \Big(\psi\big({\frac{j-i}{2}}\big)+e^{-(j-i)/C_\triangle}\Big), \label{eq:expMix1}\\
	0\leq \Big(  \esssup_{\xi(k): k\geq j} L_{i}^\zeta(\eta)   \Big) - L_{i}^\zeta(\eta) &\leq C_\triangle \cdot \Big(\psi\big({\frac{j-i}{2}}\big)+e^{-(j-i)/C_\triangle}\Big) \label{eq:expMix2},
	\intertext{as well as}
	\Big|\E\big[ (L_i^\zeta)'(\eta) |\F^j \big]-L'(\eta) \Big|&\leq C_\triangle \cdot \Big(\psi\big({\frac{j-i}{2}}\big)+e^{-(j-i)/C_\triangle}\Big),\label{eq:expMix3} \\
	0\leq \Big(   \esssup_{\xi(k): k\geq j} \big(L_{i}^\zeta\big)'(\eta) \Big)- \big(L_{i}^\zeta\big)'(\eta) &\leq C_\triangle\cdot \Big(\psi\big({\frac{j-i}{2}}\big)+e^{-(j-i)/C_\triangle}\Big) \label{eq:expMix4}.
	\end{align}
\end{lemma}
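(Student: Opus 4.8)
\textbf{Proof strategy for Lemma~\ref{le:expMixLi}.}
The plan is to reduce all four estimates to the single mixing inequality \eqref{eq:MIX} together with a truncation argument that exploits the exponential moment bounds already established in Lemma~\ref{le:log mom fct and derivatives}. The point is that $L_i^\zeta(\eta)$ and $(L_i^\zeta)'(\eta)$ are ‘‘almost'' $\F_{i-1}$-measurable — they depend on $\xi$ only through the excursion of Brownian motion between levels $i-1$ and $i$ — but a Brownian path started at $i$ can stray far to the right before hitting $i-1$, so $L_i^\zeta(\eta)$ genuinely reads $\xi(z)$ for arbitrarily large $z$. The exponential weight $e^{\eta H_{i-1}}$, however, makes such long excursions exponentially costly, so the dependence on $\xi$ restricted to $[j,\infty)$ decays exponentially in $j-i$. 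This is the origin of the $e^{-(j-i)/C_\triangle}$ term, and \eqref{eq:MIX} then handles the remaining, genuinely $\F_{j/2}$-ish part, producing the $\psi((j-i)/2)$ term.

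Concretely, for \eqref{eq:expMix1} I would first write, using the strong Markov property at the hitting time $H_{\lceil (i+j)/2 \rceil}$ of the midpoint level, a decomposition of $L_i^\zeta(\eta)$ into a part that is $\F_{\lceil (i+j)/2\rceil}$-measurable up to an error controlled by the $P_i$-probability that the Brownian motion reaches level $j$ before hitting $i-1$; by the gambler's-ruin estimate and the exponential weight $\zeta + \eta \le \eta < 0$, this error is bounded $\p$-a.s.\ by $c\,e^{-(j-i)/c}$ for a constant depending only on $\triangle$ (here one uses $\zeta(B_s)+\eta \le \eta$ and $E_x[e^{\eta H_y}] = e^{-\sqrt{2|\eta|}\,|x-y|}$ from \cite[(2.0.1), p.~204]{handbook_brownian_motion}, plus the uniform lower bound on $E_1[e^{\int_0^{H_0}(\zeta+\eta)}]$ from the proof of Lemma~\ref{le:log mom fct and derivatives}(b) to control the logarithm). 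Then $\big|\E[L_i^\zeta(\eta)\mid\F^j] - \E[L_i^\zeta(\eta)]\big|$ is bounded, up to this exponential error, by $\big|\E[\widetilde L\mid\F^j] - \E[\widetilde L]\big|$ where $\widetilde L$ is the $\F_{\lceil(i+j)/2\rceil}$-measurable truncation; since $\lceil(i+j)/2\rceil - j = -(j-i)/2 + O(1)$, applying the first inequality of \eqref{eq:MIX} with the roles as indicated (noting $\F_{\lceil(i+j)/2\rceil}$ sits to the left of $\F^j$ at distance $\approx (j-i)/2$) and using $\E[|\widetilde L|] \le c_\triangle$ yields the bound $c_\triangle\psi((j-i)/2)$. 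Finally $\E[L_i^\zeta(\eta)] = L(\eta)$ by stationarity. Inequality \eqref{eq:expMix3} is proved identically, replacing $L_i^\zeta$ by $(L_i^\zeta)'$ and using the formula \eqref{eq:L_x'} together with the exponential-moment bounds on $E_x[e^{\int_0^{H_{i-1}}(\zeta+\eta)}H_{i-1}^k]$, $k=1,2$, from Lemma~\ref{le:log mom fct and derivatives}(b); the monotonicity $\eta<0$ keeps all weights bounded.

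For the essential-supremum bounds \eqref{eq:expMix2} and \eqref{eq:expMix4}, I would use monotonicity: since $\zeta \le 0$ and the map $\zeta \mapsto E_i[e^{\int_0^{H_{i-1}}(\zeta(B_s)+\eta)\diff s}]$ is nondecreasing, increasing $\xi(z)$ (equivalently $\zeta(z)$) for $z \ge j$ can only increase $L_i^\zeta(\eta)$, so the left inequality ($\ge 0$) is immediate; for the right inequality one notes that the difference $\esssup_{\xi(k):k\ge j} L_i^\zeta(\eta) - L_i^\zeta(\eta)$ is controlled by the contribution to the Feynman–Kac integral coming from times at which $B_s \ge j$ before $H_{i-1}$, which — again by gambler's ruin plus the exponential weight — is at most $c_\triangle e^{-(j-i)/c_\triangle}$, and then the $\psi((j-i)/2)$ term is not even needed (one can absorb it, or simply observe the monotone truncation makes the mixing coefficient irrelevant). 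I expect the main obstacle to be bookkeeping the passage from the exponential expectations to their logarithms uniformly in $\eta \in \triangle$ and in the environment: one must keep both numerator and denominator in \eqref{eq:L_x'}–\eqref{eq:L'} bounded away from $0$ and $\infty$, which is exactly what Lemma~\ref{le:log mom fct and derivatives}(b) supplies, so the argument goes through, but care is needed to make the constant $C_\triangle$ depend only on $\triangle$ and not on $i,j$ or $\xi$.
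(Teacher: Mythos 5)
Your proposal is correct and follows essentially the same route as the paper's proof: split the Feynman--Kac expectation for $L_i^\zeta(\eta)$ according to whether the Brownian path crosses the midpoint level $\lceil(i+j)/2\rceil$ before $H_{i-1}$, use the exponential weight $\zeta+\eta\le\eta<0$ (together with gambler's-ruin and the uniform two-sided bounds from Lemma~\ref{le:log mom fct and derivatives}(b)) to show the crossing contribution is $\mathcal{O}(e^{-c(j-i)})$, apply \eqref{eq:MIX} to the midpoint-measurable part, and handle \eqref{eq:expMix3} via \eqref{eq:L_x'} and \eqref{eq:expMix2}, \eqref{eq:expMix4} via monotonicity of the Feynman--Kac weight in $\zeta$ plus the same exponential tail. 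One small slip in your phrasing: the error term must be the Feynman--Kac mass of paths that reach the \emph{midpoint} $\lceil(i+j)/2\rceil$ (not level $j$) before $H_{i-1}$ — otherwise the complementary part would still read $\xi$ on $[\lceil(i+j)/2\rceil,j)$ and fail to be $\F_{\lceil(i+j)/2\rceil}$-measurable — but since that midpoint-crossing contribution is still of order $e^{-c(j-i)}$, the argument goes through unchanged.
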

\begin{proof}
	By translation invariance, it is enough to prove \eqref{eq:expMix1} for $i=0$ and $j\geq2$ (the case $j=1$ follows immediately from the uniform boundedness of $L^\zeta_0$ and $L$ on $\triangle$ due to Lemma~\ref{le:log mom fct and derivatives}). To show \eqref{eq:expMix1}, let $\eta\in\triangle$ and write $L_0^\zeta(\eta)=\ln(A+B)$ with
	\begin{align*}
	A&=A(\eta):= E_0\Big[ e^{\int_0^{H_{-1}}(\zeta(B_s)+\eta)\diff s}; \sup_{0\leq s\leq H_{-1}} B_s<\lceil j/2\rceil \Big] 
	\intertext{and}
	B&=B(\eta):=E_0\Big[ e^{\int_0^{H_{-1}}(\zeta(B_s)+\eta)\diff s}; \sup_{0\leq s\leq H_{-1}} B_s\geq\lceil j/2\rceil \Big].
	\end{align*}
	Then $A\leq E_0[e^{H_{-1}\eta}]	\leq c_{\triangle,1}<1,$ and,  using \eqref{eq:zetaBd}, at the same time we have 
	\begin{align*}
	A&\geq E_0\Big[ e^{-(\es-\ei+|\eta|)H_{-1}};\sup_{0\leq s\leq H_{-1}}B_s<\lceil j/2\rceil\Big] \geq c_{\triangle,2}>0 \quad\text{for all }j\geq2.
	\end{align*}
	 To bound $B$, we condition on $H_{-1}$ to happen before or after time $j$ and use the reflection principle for Brownian motion and the tail estimate from \cite[Lemma~1.1]{bovier_gaussian_proc_on_trees} to infer for all $j\geq 2$
	\begin{align*}
	0\leq B&\leq    P_0\Big( \sup_{0\leq s\leq j} B_s\geq \lceil j/2\rceil\Big) + e^{\eta j} =2P_0(B_j\geq \lceil j/2\rceil) + e^{\eta j}
	 \leq 2e^{-j(1/8\wedge |\eta|)}.
	\end{align*}
	As $\ln(1+x)\leq x,$ the above implies that for all $j\geq 2$ 
	\begin{equation*}
	\ln(A) \leq L_0^\zeta(\eta) = \ln(A) + \ln\Big(1+\frac{B}{A}\Big) \leq \ln(A) + c_{\triangle,3}e^{-j/c_{\triangle,3}}.
	\end{equation*}
	Since  $L_0^\zeta$ is continuous on $\triangle$, the latter display $\p$-a.s.\ holds uniformly for all $\eta\in\triangle.$
	Now $\ln(A)$ is $\F_{\lceil j/2\rceil}$-measurable and bounded, so by \eqref{eq:MIX} 
	\begin{align}
	\sup_{\eta\in\triangle} \left|\E[ L_0^\zeta(\eta)|\F^j ]-L(\eta)\right| &\leq \sup_{\eta\in\triangle} \left| \E[ \ln(A)-\E[\ln(A)] | \F^j  ]  \right| + 2c_{\triangle,3}e^{-j/c_{\triangle,3}} \notag \\
	&\leq C_\triangle\cdot\big(\psi(j/2)+e^{-j/C_\triangle}\big) \label{eq:L_0_mixing_proof}.
	\end{align}    
	The proof of \eqref{eq:expMix3} is similar. Indeed, using the same notation  we have $(L_0^\zeta)'=\frac{A'}{A+B}+ \frac{B'}{A+B}$. Then by $A+B\geq c_{\triangle,2}$ and $e^{\eta H_{-1}}H_{-1}\leq \frac{2}{|\eta|}e^{\eta H_{-1}/2}$, we can use above calculation to conclude that $\frac{B'}{A+B}$ decays exponentially to $0$ as $j\to\infty$. Further, $\frac{A'}{A}-\frac{BA'}{A(A+B)}=  \frac{A'}{A+B}\leq  \frac{A'}{A}$, $F_{\lceil j/2\rceil}$-measurability of $\frac{A'}{A}$ and  above estimates    give a similar bound as in \eqref{eq:L_0_mixing_proof} for $(L_0^\zeta)'(\eta)$ and $L'(\eta)$ instead of $(L_0^\zeta)(\eta)$ and $L(\eta)$. 
	Finally,  \eqref{eq:expMix2} and \eqref{eq:expMix4} follow by the same arguments as \eqref{eq:expMix1} and \eqref{eq:expMix3}.
\end{proof}

\subsection{Properties of the Lyapunov exponent}
\begin{proposition}
	\label{prop:lyapunov}
	Assume \eqref{eq:POT} and \eqref{eq:STAT}
	and let $u^{(x,\delta)}$ be a solution to \eqref{eq:PAM} with initial condition $\mathds{1}_{[x-\delta,x+\delta]}$. 
	Then the limit $\Lambda(v):=\lim_{t\to\infty} \frac{1}{t} \ln u^{(x,\delta)} (t,vt)$, $v\in\R$, exists $\p$-a.s., is non-random and independent of $x\in\R$ and $\delta>0$. The function $[0,\infty)\ni v\mapsto \Lambda(v)$ is concave, $\Lambda(0)=\text{\emph{$\es$}}$  and $\lim_{v\to\infty} \frac{\Lambda(v)}{v}=-\infty$. In particular, there exists a unique $v_0>0$ such that $\Lambda(v_0)=0$. 
\end{proposition}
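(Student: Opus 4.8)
\textbf{Proof proposal for Proposition~\ref{prop:lyapunov}.}

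The plan is to establish the existence and non-randomness of $\Lambda(v)$ via a subadditive ergodic argument applied to the Feynman--Kac representation \eqref{eq:feynman_kac}, and then to read off the qualitative properties (concavity, the value at $0$, the behaviour at infinity, existence of $v_0$) from elementary bounds. First I would recall that $u^{(x,\delta)}(t,vt)=E_{vt}\big[e^{\int_0^t\xi(B_s)\diff s}\,\mathds{1}_{[x-\delta,x+\delta]}(B_t)\big]$. To set up subadditivity, fix $v$ and consider, for integers $0\le m<n$, the quantity $a_{m,n}:=-\ln E_{vn}\big[e^{\int_0^{n-m}\xi(B_s)\diff s};\,B_{n-m}\in[v m-\delta, vm+\delta]\big]$ (one has to be a little careful to land the path in a unit-size window so that the Markov property can be chained; this is exactly the role played by \eqref{eq:POT}, which lets us absorb the cost of a final short Gaussian correction at bounded multiplicative price, cf.\ the Harnack-type estimate used around \eqref{eq:harnack}). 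The Markov property at the intermediate time gives $a_{0,n}\le a_{0,m}+a_{m,n}+\mathcal{O}(1)$, and by \eqref{eq:STAT} the increments $a_{m,m+1}$ are stationary; \eqref{eq:POT} gives the integrability $\es\le -\frac1n a_{0,n}$ trivially from above and a matching crude lower bound (the path can always just run ballistically, paying $e^{\ei t}$ times a Gaussian cost $e^{-c t}$, so $\frac1t\ln u^{(x,\delta)}(t,vt)\ge \ei - c v^2/2 - o(1)$, in particular it is bounded below). Kingman's subadditive ergodic theorem then yields $\p$-a.s.\ convergence of $\frac1t\ln u^{(x,\delta)}(t,vt)$ to a limit; ergodicity of $\xi$ under the shift — which follows from \eqref{eq:MIX}, as remarked after that display, although here we only assume \eqref{eq:POT} and \eqref{eq:STAT}, so one should instead invoke the general Kingman theorem and identify the limit as $\E$ of the conditional expectation w.r.t.\ the invariant $\sigma$-algebra, which is the honest statement without a mixing hypothesis — gives that $\Lambda(v)$ is non-random. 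Independence of $x$ and $\delta$ is immediate from sandwiching $\mathds{1}_{[x-\delta,x+\delta]}$ between translates/dilates of one another and from the fact that shifting the target point by a bounded amount changes the Feynman--Kac expectation by at most a bounded multiplicative factor (again via \eqref{eq:POT} and a one-step Gaussian estimate), which is killed after dividing by $t$.

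Next I would treat the qualitative properties. Concavity of $v\mapsto\Lambda(v)$ on $[0,\infty)$: for $v=\lambda v_1+(1-\lambda)v_2$, split a path from $vt$ to (a window around) $0$ at time $\lambda t$, forcing it through a window around $(1-\lambda)v_2 t$; the Markov property and \eqref{eq:POT} give $u(t,vt)\gtrsim u(\lambda t,\cdot)\cdot u((1-\lambda)t,\cdot)$ evaluated at the appropriate speeds $v_1,v_2$ respectively, up to $\mathcal{O}(1)$ corrections, and taking logs, dividing by $t$, and letting $t\to\infty$ yields $\Lambda(v)\ge\lambda\Lambda(v_1)+(1-\lambda)\Lambda(v_2)$. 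The value $\Lambda(0)=\es$: the upper bound $u^{(x,\delta)}(t,0)\le e^{\es t}$ is immediate from $\xi\le\es$; for the lower bound one uses that with probability bounded below the Brownian motion stays within an order-one neighbourhood of a point where $\xi$ is within $o(1)$ of $\es$ for a fraction $1-o(1)$ of the time interval $[0,t]$ — by \eqref{eq:STAT} and the fact that $\es=\esssup\xi$, such near-maximal stretches exist, and the contribution is $e^{(\es-o(1))t}$ times a subexponential Brownian cost. (Alternatively, and more cleanly, one invokes exactly the bound proved in \eqref{eq:Lambda_lin_bound}--\eqref{eq:lyapunov_alt} of Corollary~\ref{cor:lyapunov_alt}, but since that corollary is downstream and uses \eqref{eq:VEL}, for a self-contained proof of this proposition I prefer the direct argument.) The behaviour $\Lambda(v)/v\to-\infty$: from $\xi\le\es$ and the Gaussian tail, $u^{(x,\delta)}(t,vt)\le e^{\es t} P_{vt}(B_t\in[x-\delta,x+\delta])\le e^{\es t}\,c\,e^{-(vt-|x|-\delta)^2/(2t)}$, so $\frac1t\ln u\le \es - v^2/2 + o(1)$, giving $\Lambda(v)\le\es - v^2/2$ and hence $\Lambda(v)/v\to-\infty$.

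Finally, existence and uniqueness of $v_0>0$. We have $\Lambda(0)=\es>0$, $\Lambda$ is concave and finite on $[0,\infty)$ (finiteness from $\ei-cv^2/2\le\Lambda(v)\le\es-v^2/2$), hence continuous on $(0,\infty)$ and upper semicontinuous at $0$; and $\Lambda(v)\to-\infty$ as $v\to\infty$. By the intermediate value theorem there is at least one $v_0>0$ with $\Lambda(v_0)=0$. For uniqueness: a concave function that is positive at $0$ can hit the value $0$ only once on $(0,\infty)$ \emph{unless} it is eventually constant equal to $0$, which is excluded by $\Lambda(v)\le\es-v^2/2<0$ for $v$ large; more precisely, if $\Lambda(v_0)=0$ with $\Lambda(0)>0$ then concavity forces $\Lambda(v)<0$ for all $v>v_0$ (the slope of the chord from $(0,\es)$ to $(v_0,0)$ is negative, and subsequent slopes are no larger), so no second zero exists. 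The main obstacle I anticipate is the bookkeeping in the subadditivity setup — making the "land in a bounded window" device precise so that the $\mathcal{O}(1)$ errors really are $\mathcal{O}(1)$ uniformly, and correctly invoking Kingman's theorem under only \eqref{eq:STAT} (without mixing) so that the limit is genuinely non-random; the honest route is to note that, even without ergodicity, the limit is invariant under the shift and therefore, by \eqref{eq:STAT} together with the trivial observation that the value is unchanged by shifting the initial condition (as above), it must be deterministic.
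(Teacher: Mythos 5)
Your overall architecture matches the paper's: set up a (super/sub)additive quantity from the Feynman--Kac representation by landing the Brownian path in a fixed-size window, invoke Kingman's theorem, obtain concavity by a Markov split of the path, get $\Lambda(v)/v\to-\infty$ from the Gaussian tail together with $\xi\le\es$, and conclude uniqueness of $v_0$ from concavity and $\Lambda(0)=\es>0$. The paper introduces $\overline{\mu}^{\delta}_{s,t}(v)$ (an infimum of log-Feynman--Kac costs over a $\delta$-window) and verifies superadditivity and shift covariance; your $a_{m,n}$ is just its negative, so the bookkeeping is the same, and your concavity, $v\to\infty$, and $v_0$-uniqueness arguments are all correct.

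There is, however, a genuine gap in the non-randomness step, and it is not cosmetic. You notice correctly that the proposition lists only \eqref{eq:POT} and \eqref{eq:STAT}, and you try to conclude non-randomness of $\Lambda(v)$ without \eqref{eq:MIX}. Both of the repairs you offer are wrong: (i) identifying the Kingman limit as $\E[\,\cdot\,\given\mathcal{F}_{\mathrm{inv}}]$ does \emph{not} make it non-random --- that requires $\mathcal{F}_{\mathrm{inv}}$ to be $\p$-trivial, i.e.\ ergodicity, which is exactly what you are trying to avoid; (ii) the observation that the limit is shift-invariant and unchanged by translating the initial condition again only shows it is $\mathcal{F}_{\mathrm{inv}}$-measurable, not constant. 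In fact, without ergodicity the proposition as literally stated is false: let $\xi(x)\equiv\Xi$ for all $x$, where $\Xi\in\{\ei,\es\}$ with probability $1/2$ each. This is stationary and satisfies \eqref{eq:POT}, but $\Lambda(0)=\Xi$ is random, so in particular $\Lambda(0)\ne\es$ with positive probability. The paper's proof explicitly invokes "by \eqref{eq:MIX}, $\xi$ is mixing and thus ergodic" --- the hypothesis \eqref{eq:MIX} is still in force through the Standing Assumptions \eqref{eq:SASS}, and the wording "Assume \eqref{eq:POT} and \eqref{eq:STAT}" only signals that \eqref{eq:VEL} is not needed here, not that mixing is dropped. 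You should simply cite ergodicity from \eqref{eq:MIX} rather than attempt to do without it.

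A secondary, related point: your "near-maximal stretch" argument for the lower bound $\Lambda(0)\ge\es$ is a heuristic rather than a proof and, more to the point, it itself requires ergodicity --- one needs that $\p$-a.s.\ the potential gets within $o(1)$ of the global essential supremum $\es$ on arbitrarily long intervals, which is exactly an ergodicity-type statement (and demonstrably false in the counterexample above). The paper sidesteps this by quoting Freidlin, $\cite[\text{Theorem 7.5.2}]{Fr-85}$, for $\Lambda(0)=\es$, which is the cleaner route once one accepts that ergodicity is in play anyway.
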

\begin{proof}
	Let $x=0$, $\lambda\in(0,1)$ and $v_1,v_2\in\R$, and  set $v:=\lambda v_1 + (1-\lambda) v_2$ and $A:=[(1-\lambda)v_2t-\delta,(1-\lambda)v_2t + \delta]$.
	By \eqref{eq:feynman_kac}, the solution to \eqref{eq:PAM}, admits the Feynman-Kac representation $u^{(x,\delta)}(t,vt)=E_{vt}\big[ e^{\int_0^t \xi(B_s)\diff s}; B_t\in[x-\delta,x+\delta] \big]$. 
	 Then by the Markov property, for all $y\in[vt-\delta,vt+\delta]$ we have
	\begin{equation}
	\label{eq:concave_Lyap}
	\begin{split}
	\ln E_{y} \big[ e^{\int_0^t \xi(B_s)\diff s}; B_t\in[-\delta,\delta] \big] &\geq \ln E_{y} \big[ e^{\int_0^{\lambda t} \xi(B_s)\diff s}; B_{\lambda t}\in A \big] \\
	&\quad + \inf_{z\in A} \ln E_z \big[ e^{\int_0^{(1-\lambda)t} \xi(B_s)\diff s}; B_{(1-\lambda)t}\in[-\delta,\delta] \big].
	\end{split}
	\end{equation}
	 Defining $\overline{\mu}^{\delta}_{s,t}(v):= \inf_{y\in[vt-\delta,vt+\delta]} \ln E_y\big[ e^{\int_0^{t-s}\xi(B_r)\diff r}; B_{t-s}\in[vs-\delta,vs+\delta] \big]$, $s<t$, by the same argument as in the last display one can see that $\overline{\mu}^{\delta}_{s,t}(v)\geq \overline{\mu}^{\delta}_{s,u}(v) + \overline{\mu}^{\delta}_{u,t}(v)$ for all $s<u<t$. Furthermore for the $h$-lateral shift $\theta_h$ on $\xi$ (i.e.\ $\xi(\cdot)\circ\theta_h=\xi(\cdot+h)$) we have $\overline{\mu}^{\delta}_{s,t}(v)\circ\theta_h=\overline{\mu}^{\delta}_{s+\frac{h}{v},t+\frac{h}{v}}(v)$ for every $h\in\R$, all $v\neq 0$ and $s<t$, and by \eqref{eq:POT} and a simple Gaussian computation there exists $K_{\delta,v} \in (0,\infty)$ such that  $\overline{\mu}^{\delta}_{0,t}(v)\geq -K_{\delta,v}\cdot t$ for all $t\geq 1$. By \eqref{eq:POT} we also have $\overline{\mu}_{0,t}^\delta(v)\leq \es\cdot t$ and thus $\overline{\mu}_{0,t}^\delta(v)\in L^1$ for all $t>0$. 
	Thus, Kingman's  subadditive ergodic theorem \cite{kingman73} implies that the limit $\Lambda^{\delta}(v):=\lim_{t\to\infty}\frac{1}{t}\overline{\mu}^{\delta}_{0,t}(v)$  exists $\p$-a.s. Furthermore, by \eqref{eq:MIX}, $\xi$ is mixing and thus ergodic, so the $\Lambda^\delta(v)$  is non-random. By standard estimates,  the limit is independent of $\delta>0$ and $x\in\R$.
	
	To show the concavity of $v\mapsto \Lambda(v)$, for $v_1,v_2 \ge0$ and dividing by $t$, the left-hand side of \eqref{eq:concave_Lyap} converges $\p$-a.s.\  to $\Lambda(\lambda v_1 + (1-\lambda) v_2)$, while the  second term on the right-hand side of \eqref{eq:concave_Lyap} converges $\p$-a.s.\  to $(1-\lambda)\Lambda(v_2)$.	Dividing by $t$ and using  \eqref{eq:STAT}, the first term on the right-hand side of \eqref{eq:concave_Lyap} converges in distribution to the constant $\lambda\Lambda(v_1)$, proving the concavity of $v\mapsto \Lambda(v)$.
	
	By \cite[Theorem 7.5.2]{Fr-85} we have $\lim_{t\to\infty}\frac1t \ln u^{\mathds{1}_{(-\delta,\delta)}}(t,0) =\es$, independent of $\delta>0$, giving $\Lambda(0)=\es$. Due to \eqref{eq:POT} and a standard Gaussian computation we have  $\lim_{v\to\infty}\frac{\Lambda(v)}{v}=-\infty$. This, together with $\Lambda(0)=\es>0$ and the concavity of $v\mapsto \Lambda(v),$ implies the existence of a unique $v_0>0$, such that $\Lambda(v_0)=0$. 
\end{proof}

\subsection{A Hoeffding-type inequality for dependent random variables}
\begin{lemma}[{\cite[Theorem 2.4]{rio}}]
	Let $(X_i)_{i\in\Z}$ be a sequence of real-valued bounded random variables, $\widetilde{\F}_i:=\sigma(X_j:j\leq i)$, $S_n:=\sum_{i=1}^{n}X_i,$ and let $(m_1,\ldots,m_n)$ be an $n$-tuple of positive reals such that for all $i\in\{1,\ldots,n\},$ 
	\begin{equation*}
	\sup_{j \in \{i, i+1, \ldots, n\}}\Big( \|X_i^2\|_\infty + 2\| X_i\sum_{k=i+1}^j\E[X_k|\widetilde{\F}_i]\|_\infty \Big) \leq m_i,
	\end{equation*}
	with the convention $\sum_{k=i+1}^i\E[X_k|\widetilde{\F}_i]=0$. Then for every $x>0,$
	\begin{equation*}
	\p\left( |S_n|\geq x \right) \leq \sqrt{e}\exp\left\{ -x^2/(2m_1+\cdots+2m_n) \right\}.
	\end{equation*}
\end{lemma}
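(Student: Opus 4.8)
This is \cite[Theorem~2.4]{rio}; for completeness I indicate the strategy of proof.

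\textbf{Reduction and exponential Chebyshev.} First note that it suffices to bound $\p(S_n\ge x)$: replacing $X_i$ by $-X_i$ changes neither $\widetilde{\F}_i$ nor the quantities $\|X_i^2\|_\infty$ and $\|X_i\sum_{k=i+1}^{j}\E[X_k\mid\widetilde{\F}_i]\|_\infty$, so the very same hypothesis holds for $(-X_i)_{i}$ with the same $(m_i)$, and $\p(|S_n|\ge x)\le\p(S_n\ge x)+\p(-S_n\ge x)$; the universal factor $\sqrt e$ is generous enough to accommodate this split together with the overshoot correction discussed below. Writing $M:=m_1+\dots+m_n$, everything then reduces, via $\p(S_n\ge x)\le e^{-tx}\E[e^{tS_n}]$ for $t>0$, to an exponential-moment estimate of the shape $\E[e^{tS_n}]\le C_0\,e^{t^2M/2}$ with $C_0$ universal, which upon choosing $t=x/M$ gives $\p(S_n\ge x)\le C_0\,e^{-x^2/(2M)}$.

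\textbf{The exponential-moment bound.} This is the heart of the matter and the place where the precise shape of the $m_i$ is used. For dependent summands the naive martingale approach fails, because the conditional means $\E[X_k\mid\widetilde{\F}_i]$ of future increments need not vanish; the mixed term $2\|X_i\sum_{k=i+1}^{j}\E[X_k\mid\widetilde{\F}_i]\|_\infty$ in the definition of $m_i$ is exactly the ``conditional covariance of $X_i$ with its future'' that has to be budgeted for, and the supremum over $j\in\{i,\dots,n\}$ is needed because, along a peeling procedure, only partial tail sums $\sum_{k=i+1}^{j}\E[X_k\mid\widetilde{\F}_i]$ occur. Concretely I would introduce the stopping time $\tau:=\inf\{k\le n:S_k\ge x\}$ (with $\inf\emptyset:=n+1$, so that one in fact controls $\max_{k\le n}S_k$) and the process $Z_k:=\exp\bigl(tS_k-\tfrac{t^2}{2}\sum_{i=1}^{k}m_i\bigr)$, and show that $(Z_{k\wedge\tau})_k$ is a supermartingale for $t>0$ in a suitable range: peeling off the increments via the elementary bound $e^{u}\le 1+u+\tfrac{u^2}{2}e^{u_+}$ applied to $u=tX_k$, one gets $\E[e^{tX_k}\mid\widetilde{\F}_{k-1}]\le 1+t\,\E[X_k\mid\widetilde{\F}_{k-1}]+\tfrac{t^2}{2}e^{t\|X_k\|_\infty}\E[X_k^2\mid\widetilde{\F}_{k-1}]$, and the nonzero linear drift terms, once summed over the remaining indices, are dominated by the increments $\tfrac{t^2}{2}m_i$ of the budget; the underlying second–order identity $S_n^2=\sum_i X_i\bigl(X_i+2\sum_{k>i}X_k\bigr)$, whose conditional expectation is bounded by $M$, is the guide for why this domination holds. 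Optional stopping then yields $1=\E[Z_0]\ge\E[Z_{n\wedge\tau}]\ge\p(\tau\le n)\cdot\inf\exp\bigl(tS_\tau-\tfrac{t^2}{2}M\bigr)$, and on $\{\tau\le n\}$ one has $S_\tau\ge x$ while the boundedness of the $X_i$ controls the overshoot $S_\tau-x$, giving $\p(\max_{k\le n}S_k\ge x)\le C_0\exp(-tx+\tfrac{t^2}{2}M)$ with $C_0$ universal.

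\textbf{Conclusion and main obstacle.} Optimizing $t=x/M$, and keeping track of the overshoot/optional-stopping slack and of the $\pm$ split, produces the asserted inequality $\p(|S_n|\ge x)\le\sqrt e\,\exp\bigl(-x^2/(2M)\bigr)$. The main obstacle is precisely the supermartingale step: one must verify that $m_i$, with both the $\|X_i^2\|_\infty$ contribution and the mixed term featuring the supremum over $j\ge i$, is exactly calibrated so that the non-vanishing conditional drifts of the future increments can be absorbed into the single sub-Gaussian budget $M$ — it is the bookkeeping tying these drifts to the $m_i$, rather than any individual estimate, that is delicate, and this is also the reason the hypothesis is formulated with a supremum over partial tail sums.
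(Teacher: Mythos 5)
The paper offers no proof of this lemma at all: it is imported verbatim as \cite[Theorem~2.4]{rio} and used as a black box, so there is no internal argument to compare yours against. Judged on its own terms, your sketch has a genuine gap at its central step. The process $Z_k=\exp\bigl(tS_k-\tfrac{t^2}{2}\sum_{i\le k}m_i\bigr)$ is \emph{not} a supermartingale under the stated hypothesis: the one-step requirement is $\E[e^{tX_k}\mid\widetilde{\F}_{k-1}]\le e^{t^2m_k/2}$, but your own expansion produces the drift term $t\,\E[X_k\mid\widetilde{\F}_{k-1}]$, which is first order in $t$ and need not be small, since the hypothesis places no bound whatsoever on the conditional mean of a single future increment. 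For small $t$ a term of order $t$ cannot be ``dominated by the increments $\tfrac{t^2}{2}m_i$ of the budget''. What the hypothesis actually controls is $\|X_i\sum_{k=i+1}^{j}\E[X_k\mid\widetilde{\F}_i]\|_\infty$, a \emph{product} of $X_i$ with future conditional means; such cross terms only become available after a genuinely second-order expansion and are attached to the earlier index $i$, never as a bare linear drift attached to index $k$. So the bookkeeping you identify as the ``main obstacle'' is not merely delicate — as organized in your sketch it does not close.

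This is precisely why Rio does not argue via exponential supermartingales. His route is through even moments: starting from the identity $S_n^2=\sum_iX_i\bigl(X_i+2(S_n-S_i)\bigr)$ and conditioning each cross term on $\widetilde{\F}_i$ (which is where the supremum over the partial tail sums $\sum_{k=i+1}^{j}$ enters), he proves by induction on $p$ the Gaussian-type bounds $\E[S_n^{2p}]\le\frac{(2p)!}{2^pp!}(m_1+\cdots+m_n)^p$, and then obtains the exponential inequality by optimizing Markov's inequality over $p$; the constant $\sqrt e$ is exactly what this optimization produces, and the bound is automatically two-sided because it goes through $S_n^{2p}$. Your route, even if the supermartingale step were repaired, pays an extra factor $2$ from the split $\p(|S_n|\ge x)\le\p(S_n\ge x)+\p(-S_n\ge x)$ plus an overshoot constant from optional stopping; since $2>\sqrt e$, the stated constant would not be recovered. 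If you want to include a proof rather than a citation, follow the moment-method route.
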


Rearranging the quantities in the above result, we arrive at the following corollary which we primarily pronounce explicitly since its formulation tailor-made for our purposes.

\begin{corollary}
	\label{cor:rioDepExp}
	Let $(Y_i)_{i\in\mathbb{Z}}$ be a sequence of real-valued bounded random variables, $\widetilde{\F}^k:=\sigma(Y_j:j\geq k),$ and let $(m_1,\ldots,m_{n})$ be an $n$-tuple of positive real numbers such that 
 for all $i\in\{1,\ldots,n\},$
	\begin{equation}
	\label{eq:rioDepExp}
	\sup_{j \in \{1, \ldots, i\}}\Big( \|Y_i^2\|_\infty + 2 \Big \Vert Y_i\sum_{k=j}^{i-1}\E[Y_k|{\widetilde{\F}}^i]\Big \Vert_\infty \Big) \leq m_i,
	\end{equation}
	with the convention $\sum_{k=i}^{i-1}\E[Y_k|\widetilde{\F}^i]=0$. Then for every $x>0,$
	\begin{equation*}
	\p\Big( \Big| \sum_{i=1}^{n} Y_i \Big|\geq x \Big) \leq \sqrt{e}\exp\left\{ -x^2/(2m_1+\cdots+2m_{n}) \right\}.
	\end{equation*}
\end{corollary}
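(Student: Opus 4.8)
The plan is to obtain this corollary from the quoted \cite[Theorem~2.4]{rio} by a simple time reversal of the index set. Fix $n\in\N$ and let $(Y_i)_{i\in\Z}$ and $(m_1,\dots,m_n)$ be as in the statement. First I would set $X_i:=Y_{n+1-i}$ for $i\in\Z$, so that $S_n:=\sum_{i=1}^n X_i=\sum_{l=1}^n Y_l$; in particular the events whose probabilities are estimated in the two inequalities coincide. With $\widetilde{\F}_i:=\sigma(X_j:j\le i)$ as in Rio's theorem one has $\widetilde{\F}_i=\sigma(Y_l:l\ge n+1-i)=\widetilde{\F}^{\,n+1-i}$, i.e.\ conditioning on the ``past'' of $(X_j)_j$ is the same as conditioning on the ``future'' of $(Y_l)_l$.

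Next I would verify that the hypothesis \eqref{eq:rioDepExp} is exactly Rio's hypothesis for $(X_i)_i$ with the reversed tuple $m_i':=m_{n+1-i}$. Writing $i':=n+1-i$ and $j':=n+1-j$, for $i\in\{1,\dots,n\}$, $j\in\{i,\dots,n\}$ and $k\in\{i+1,\dots,j\}$ one has $X_k=Y_{n+1-k}$ with $n+1-k$ running through $\{j',\dots,i'-1\}$, hence $\sum_{k=i+1}^{j}\E[X_k\mid\widetilde{\F}_i]=\sum_{l=j'}^{i'-1}\E[Y_l\mid\widetilde{\F}^{\,i'}]$ (both sums empty when $j=i$, consistently with the respective conventions); moreover, as $j$ runs over $\{i,\dots,n\}$ the value $j'$ runs over $\{1,\dots,i'\}$. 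Together with $X_i=Y_{i'}$ this turns the requirement $\sup_{j\in\{i,\dots,n\}}\bigl(\|X_i^2\|_\infty+2\,\|X_i\sum_{k=i+1}^j\E[X_k\mid\widetilde{\F}_i]\|_\infty\bigr)\le m_i'$ into precisely \eqref{eq:rioDepExp} evaluated at index $i'$, for every $i'\in\{1,\dots,n\}$.

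Finally, applying \cite[Theorem~2.4]{rio} to $(X_i)_i$ with the tuple $(m_1',\dots,m_n')$ gives, for every $x>0$, $\p(|S_n|\ge x)\le\sqrt{e}\,\exp\{-x^2/(2m_1'+\dots+2m_n')\}$; since $(m_1',\dots,m_n')$ is a permutation of $(m_1,\dots,m_n)$ the denominator equals $2m_1+\dots+2m_n$, and recalling $S_n=\sum_{i=1}^n Y_i$ this is the asserted bound. I do not expect a genuine obstacle here: the only delicate point is the bookkeeping, namely keeping the two reversals $i\leftrightarrow n+1-i$ and $j\leftrightarrow n+1-j$ aligned so that the range $\{i,\dots,n\}$ of the supremum in Rio's condition corresponds to the range $\{1,\dots,i\}$ of the supremum in \eqref{eq:rioDepExp}, and checking that the empty-sum conventions in the two formulations match.
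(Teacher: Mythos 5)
Your proof is correct, and it is essentially the same argument the paper has in mind: the paper itself only says the corollary follows by "rearranging the quantities" in Rio's theorem, and your time reversal $X_i:=Y_{n+1-i}$ is exactly that rearrangement, carefully spelled out with the index bookkeeping verified.
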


Recall that $u^{u_0}$ denotes the solution to \eqref{eq:PAM} with initial condition $u_0$.
\begin{lemma}\label{le:MomPAMIncrease}
	For all $x\in\mathbb{R}$ and $0\leq s\leq t$ we have
	\( u^{\mathds{1}_{(-\infty,0]}}(s,x) \leq 2u^{\mathds{1}_{(-\infty,0]}}(t,x). \)
\end{lemma}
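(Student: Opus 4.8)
The plan is to use the Feynman--Kac representation \eqref{eq:feynman_kac} for the solution with Heaviside initial condition, together with the reflection principle for Brownian motion. Recall that for $u_0 = \mathds{1}_{(-\infty,0]}$ we have
\[
u^{\mathds{1}_{(-\infty,0]}}(s,x) = E_x\Big[ \exp\Big\{ \int_0^s \xi(B_r)\diff r \Big\}; B_s \leq 0 \Big].
\]
The key observation is that the process $\big(\exp\{\int_0^r \xi(B_u)\diff u\}\big)_{r\geq 0}$ is nondecreasing in $r$ (since $\xi > 0$), and that once the Brownian path has hit $0$, it returns to the region $(-\infty,0]$ with probability at least $\tfrac12$ over any further time interval, by symmetry of Brownian motion. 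So first I would condition on the hitting time $H_0 = \inf\{r \geq 0 : B_r = 0\}$ (recall that $\{B_s \leq 0\} \subset \{H_0 \leq s\}$ when starting from $x$, the relevant case being $x > 0$; for $x \leq 0$ one argues even more directly, or simply notes $H_0$ may be $0$).

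The main step: using the strong Markov property at time $H_0$ and then at time $s$, write
\[
u^{\mathds{1}_{(-\infty,0]}}(t,x) \geq E_x\Big[ \exp\Big\{ \int_0^s \xi(B_r)\diff r \Big\}; B_s \leq 0,\ H_0 \leq s \Big]\cdot \inf_{y \leq 0} P_y\big( B_{t-s} \leq 0 \big).
\]
More carefully, on the event $\{B_s \leq 0\}$ one applies the Markov property at time $s$: the factor $\exp\{\int_0^s \xi(B_r)\diff r\}$ is $\F_s$-measurable and bounded below by nothing useful, but the remaining factor $\exp\{\int_s^t \xi(B_r)\diff r\} \geq 1$ since $\xi > 0$, so
\[
u^{\mathds{1}_{(-\infty,0]}}(t,x) \geq E_x\Big[ \exp\Big\{ \int_0^s \xi(B_r)\diff r \Big\}; B_s \leq 0\Big] \cdot \inf_{y\leq 0} P_y(B_{t-s}\leq 0) = u^{\mathds{1}_{(-\infty,0]}}(s,x)\cdot \inf_{y\leq 0}P_y(B_{t-s}\leq 0).
\]
By symmetry of Brownian motion about its starting point, for any $y \leq 0$ we have $P_y(B_{t-s}\leq 0) \geq P_0(B_{t-s}\leq 0) = \tfrac12$. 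This yields $u^{\mathds{1}_{(-\infty,0]}}(t,x) \geq \tfrac12 u^{\mathds{1}_{(-\infty,0]}}(s,x)$, which is exactly the claim after rearranging.

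I do not expect any serious obstacle here; the only point requiring a little care is the direction of the Markov-property splitting and making sure the "free" factor $\exp\{\int_s^t \xi(B_r)\diff r\}$ is correctly bounded below by $1$ (which uses $\xi \geq \ei > 0$ from \eqref{eq:POT}, though positivity alone suffices), and that $P_y(B_{t-s}\leq 0)\geq \tfrac12$ for all $y\leq 0$. The case $s = t$ is trivial, and the case $s = 0$ gives $u^{\mathds{1}_{(-\infty,0]}}(0,x) = \mathds{1}_{(-\infty,0]}(x) \leq 1 \leq 2u^{\mathds{1}_{(-\infty,0]}}(t,x)$ once one checks $u^{\mathds{1}_{(-\infty,0]}}(t,x) \geq \tfrac12$ for $x \leq 0$, which again follows from $\xi > 0$ and $P_x(B_t \leq 0)\geq \tfrac12$.
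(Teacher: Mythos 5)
Your proposal is correct and, once you drop the unnecessary detour through $H_0$, it is precisely the paper's argument: insert the event $\{B_s\leq 0\}$, use $\xi\geq 0$ to drop the exponential factor on $[s,t]$, apply the Markov property at time $s$, and bound $P_y(B_{t-s}\leq 0)\geq\tfrac12$ for $y\leq 0$. No difference in substance.
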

\begin{proof}
	By the Feynman-Kac formula, $\xi\geq0$  and the Markov property at time $s$ we have
	\begin{align*}
	u^{\mathds{1}_{(-\infty,0]}}(t,x) &= E_x\left[ e^{\int_0^t\xi(B_r)\diff r};B_t\leq 0 \right] \geq E_x\left[ e^{\int_0^s\xi(B_r)\diff r};B_s\leq 0,B_t\leq 0 \right] \\
	&\geq \Eprob_x^\xi\left[N^\leq(s,0)\right]P_0(B_{t-s}\leq 0) = \frac{1}{2}u^{\mathds{1}_{(-\infty,0]}}(s,x).
	\end{align*}
\end{proof}

\begin{lemma}
	\label{le:leadingPart} 
	Let $N_{s,u,t}^\mathcal{L}$ be as in \eqref{eq:defLeadingPart}.
	Then there exists $c>0$  such that  $\p$-a.s.\  
	\[\Eprobth^\xi_x\big[ N_{t,t,t}^{\mathcal{L}} \big]\geq c \Eprobth^\xi_{\lfloor x\rfloor}\big[ N_{t-1,t+1,t}^{\mathcal{L}} \big]\quad\text{ for all }t\geq1\text{ and }x\geq 1.\] 
\end{lemma}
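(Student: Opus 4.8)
The plan is to transfer both expectations to the Brownian side via the many-to-one formula and then to relate them by a short path surgery.

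By \eqref{eq:f-k-1} applied to the definition \eqref{eq:defLeadingPart}, writing $H_k=\inf\{r\ge0:B_r=k\}$, $T_k=T_k^{(a)}$ and $f_t:=5\chi_1(\munder^{(a)}(t))$, one has
\[
\Eprob_x^\xi\big[N_{t,t,t}^{\mathcal L}\big] = E_x\Big[e^{\int_0^t\xi(B_r)\diff r};\, B_t\le 0,\ H_k\ge t-T_k-f_t\ \ \forall\,1\le k\le\lfloor\munder^{(a)}(t)\rfloor\Big],
\]
and likewise for $\Eprob_{\lfloor x\rfloor}^\xi[N_{t-1,t+1,t}^{\mathcal L}]$, with time horizon $t$ replaced by $t-1$ and each threshold $t-T_k-f_t$ replaced by $(t+1)-T_k-f_t$. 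I would restrict the first expectation to $\{H_{\lfloor x\rfloor}\le1\}$ — an event of $P_x$-probability $P_{x-\lfloor x\rfloor}(H_0\le1)\ge P_1(H_0\le1)=:c_1>0$, uniformly in $x\ge1$ — and apply the strong Markov property at $H:=H_{\lfloor x\rfloor}$. With $\widehat B:=B_{H+\cdot}$, a Brownian motion from $\lfloor x\rfloor$ independent of $\mathcal F_H$ and defined on $[0,t-H]$, one has $B_t=\widehat B_{t-H}$, $\int_0^t\xi(B_r)\diff r\ge\int_0^{t-H}\xi(\widehat B_r)\diff r$ since $\xi\ge0$, and $H_k=H+\widehat H_k\ge\widehat H_k$ for every level $k<\lfloor x\rfloor$. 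Hence, imposing on $\widehat B$ the stronger constraints $\widehat H_k\ge(t+1)-T_k-f_t$ — the surplus $+1$ being exactly what absorbs $H\ge0$ and yields $H_k\ge t-T_k-f_t$ for $k<\lfloor x\rfloor$ — and discarding the non-negative prepended weight, we obtain
\[
\Eprob_x^\xi\big[N_{t,t,t}^{\mathcal L}\big]\ \ge\ c_1\,\Eprob_{\lfloor x\rfloor}^\xi\big[N_{t-H,\,t+1,\,t}^{\mathcal L}\big]
\]
(with $H\in[0,1]$ averaged out, $H$ being independent of $\widehat B$), modulo the bookkeeping for $k\ge\lfloor x\rfloor$ discussed below. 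Finally, $t-H\in[t-1,t]$ and, since $f_t=5\chi_1>2$, all thresholds $(t+1)-T_k-f_t$ are $\le t-1$; therefore the monotonicity argument of Lemma~\ref{le:MomPAMIncrease} applies verbatim to the constrained quantity (condition at time $t-1$, use $P_0(W_{s-(t-1)}\le0)=\tfrac12$ for $s\ge t-1$, and note that a level first hit in $(t-1,s]$ automatically meets its earlier threshold), giving $\Eprob_{\lfloor x\rfloor}^\xi[N_{s,t+1,t}^{\mathcal L}]\ge\tfrac12\Eprob_{\lfloor x\rfloor}^\xi[N_{t-1,t+1,t}^{\mathcal L}]$ for every $s\ge t-1$. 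Combining, the lemma follows with $c=c_1/2$.

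The delicate point — and the step I expect to be the main obstacle — is to check that the surgered path also respects the constraints $H_k\ge t-T_k-f_t$ for the levels $k\ge\lfloor x\rfloor$. For $k>\lfloor x\rfloor$ hit by $\widehat B$ within $[0,t-1]$ the same additivity applies; for $k$ hit only after time $t-1$ one has trivially $H_k>t-1\ge t-T_k-f_t$ (since $f_t\ge1$); and the only remaining case is a level $k\ge\lfloor x\rfloor$ reached during the short prepended piece $B|_{[0,H]}$. Here one uses that the right-hand side $\Eprob_{\lfloor x\rfloor}^\xi[N_{t-1,t+1,t}^{\mathcal L}]$ is nonzero only if the $k=\lfloor x\rfloor$ constraint is feasible from the starting point, i.e.\ (when $\lfloor x\rfloor\le\lfloor\munder^{(a)}(t)\rfloor$) only if $T_{\lfloor x\rfloor}\ge(t+1)-f_t$; by Corollary~\ref{cor:T_mt_sandwich} this bounds $T_k$ from below for the nearby levels, making $t-T_k-f_t$ negative for every $k$ that $B|_{[0,H]}$ can possibly reach — this is precisely why the constants from Corollary~\ref{cor:T_mt_sandwich} and Lemma~\ref{le:T_mtAlmostLinear}, together with the factor $5$, were built into $\chi_b$ in \eqref{eq:defChi}. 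When $\lfloor x\rfloor>\lfloor\munder^{(a)}(t)\rfloor$ no such levels occur and the surgery is unobstructed, and when the right-hand side vanishes the inequality is trivial, so the estimate holds in all cases.
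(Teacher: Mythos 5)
Your proof follows the paper's own route: restrict to $\{H_{\lfloor x\rfloor}\le1\}$, apply the strong Markov property at $H_{\lfloor x\rfloor}$, drop the non-negative prepended weight, and then pass from the random horizon $t-H_{\lfloor x\rfloor}$ to $t-1$ via the Markov/monotonicity argument of Lemma~\ref{le:MomPAMIncrease}. You are also right to single out the levels $k\ge\lfloor x\rfloor$ as the delicate step: for those the original hitting time $H_k$ is not controlled from below by the restarted one, and the paper's one-line chain of inequalities passes over this point silently.

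The resolution you sketch, however, does not actually close that gap. From the feasibility condition $T_{\lfloor x\rfloor}\ge(t+1)-f_t$, with $f_t:=5\chi_1(\munder^{(a)}(t))$, and Corollary~\ref{cor:T_mt_sandwich} iterated $m:=k-\lfloor x\rfloor$ times, one gets $T_k\ge(t+1)-f_t-m\overline K$ and hence
\begin{equation*}
t-T_k-f_t\ \le\ -1+m\overline K,
\end{equation*}
in which $f_t$ \emph{cancels}; since $\overline K>1$ this bound is already non-negative for $m=1$, so you have not shown $t-T_k-f_t<0$ for the relevant levels, and the factor $5$ — which you credit for the argument — plays no role in this chain at all. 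The way to actually cash in the factor $5$ is through Lemma~\ref{le:T_mtAlmostLinear}: with $\Cr{constant lemma T_mt eq t}$ the constant there, $T_k\ge t-\Cr{constant lemma T_mt eq t}-(\lfloor\munder^{(a)}(t)\rfloor-k+1)\overline K$ and $f_t\ge5(1+\overline K+\Cr{constant lemma T_mt eq t})$, which gives $1+t-T_k-f_t<0$ as soon as $\lfloor\munder^{(a)}(t)\rfloor-k\le3$. This is ample for the one place the lemma is used (where $x\in[\munder^{(a)}(t)-1,\munder^{(a)}(t)+1]$, hence $\lfloor\munder^{(a)}(t)\rfloor-\lfloor x\rfloor\le1$), but it is not implied by the bare hypotheses $t,x\ge1$; for $\lfloor x\rfloor$ far below $\lfloor\munder^{(a)}(t)\rfloor$, neither your argument nor the paper's controls the pre--$H_{\lfloor x\rfloor}$ excursion, and one would additionally have to confine $B|_{[0,H_{\lfloor x\rfloor}]}$ below a fixed level to make the inclusion of events go through.
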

\begin{proof}
	Write $A_{y,z,t}:=\big\{H_k + y \geq z+t - T_k - 5\chi_1(\munder(t))\ \forall k\in\{1,\ldots,\lfloor\munder(t)\rfloor\}\big\}$ and $p(r):=\inf_{y\leq 0}P_y(B_r\leq 0)=\frac12$.  Then by the Feynman-Kac formula
	\begin{align*}
	\Eprob^\xi_x\big[ N_{t,t,t}^\mathcal{L} \big]&=E_x\big[ e^{\int_0^{t} \xi(B_s) \diff s}; B_{t}\leq 0, A_{0,0,t} \big] \\
	&\geq   E_x\Big[ e^{\int_{H_{\lfloor x\rfloor}}^{t-1+H_{\lfloor x\rfloor}} \xi(B_s) \diff s}; B_{t}\leq 0, B_{t-1+H_{\lfloor x\rfloor}}\leq 0,  A_{H_{\lfloor x\rfloor},1,t}, H_{\lfloor x\rfloor}\leq 1 \Big] \\
	&\geq  E_x\Big[ E_{\lfloor x\rfloor}\big[ e^{\int_0^{t-1} \xi(B_s)\diff s}\times \inf_{r\in[0,1]}p(r); B_{t-1}\leq 0,A_{0,1,t} \big];H_{\lfloor x\rfloor}\leq 1 \Big] \\
	&\geq \frac{1}{2}P_x(H_{\lfloor x\rfloor}\leq 1) E_{\lfloor x\rfloor}\big[ e^{\int_0^{t-1} \xi(B_s)\diff s}; B_{t-1}\leq 0,A_{0,1,t} \big] = c\Eprob^\xi_{\lfloor x\rfloor}\big[ N_{t-1,t+1,t}^{\mathcal{L}} \big],
	\end{align*}
	with $c:= \frac{1}{2}P_x(H_{\lfloor x\rfloor}\leq 1)$.
\end{proof}
The next result is an inequality of Harnack-type flavor for the solution to \eqref{eq:PAM}. 
\begin{lemma}
	\label{le:harnack}
	There exists a constant $\Cl{Harnack}\in(0,\infty)$ such that
	$\p$-a.s.\ 
	for all $y\in\R$, $t\geq1$ and all $u_0\in\mathcal{I}_{\text{PAM}}$ we have
	\begin{equation}
		\label{eq:harnack_appendix}
		u^{u_0}(t,y) \leq \Cr{Harnack} \inf_{x\in[y-1,y+1]} u^{u_0}(t+1,x).
	\end{equation}	
\end{lemma}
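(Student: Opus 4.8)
The statement is a parabolic Harnack-type inequality for solutions to \eqref{eq:PAM}, and the natural route is via the Feynman-Kac representation \eqref{eq:feynman_kac} combined with the uniform bound \eqref{eq:POT} on the potential. First I would reduce to the case $u_0=\mathds{1}_{(-\infty,0]}$: by \eqref{eq:PAM_initial} every $u_0\in\mathcal{I}_{\text{PAM}}$ satisfies $\delta'\mathds{1}_{[-\delta',0]}\leq u_0\leq C'\mathds{1}_{(-\infty,0]}$, so by linearity of the solution in the initial condition and Corollary~\ref{cor:comp_heavi} (which compares $u^{\mathds{1}_{[-\delta',0]}}$ and $u^{\mathds{1}_{(-\infty,0]}}$ up to a multiplicative constant) it suffices to prove \eqref{eq:harnack_appendix} with $\Cr{Harnack}$ replaced by a constant depending only on $\delta'$ and $C'$ for the Heaviside initial condition, at least on the event $\mathcal{H}_{vt}$; alternatively, and more cleanly, one can argue directly with general $u_0$ as follows.

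\textbf{Main argument.} Fix $y\in\R$, $t\geq 1$, and $x\in[y-1,y+1]$. Using the Feynman-Kac formula \eqref{eq:feynman_kac} for $u^{u_0}(t+1,x)$ and the Markov property at time $1$, we have
\begin{align*}
u^{u_0}(t+1,x) &= E_x\Big[ \exp\Big\{\int_0^{t+1}\xi(B_s)\diff s\Big\} u_0(B_{t+1}) \Big] \\
&= E_x\Big[ \exp\Big\{\int_0^{1}\xi(B_s)\diff s\Big\}\, u^{u_0}(t,B_1) \Big] \\
&\geq e^{\ei} E_x\big[ u^{u_0}(t,B_1) \big],
\end{align*}
where we used $\xi\geq\ei$ from \eqref{eq:POT}. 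The quantity $E_x[u^{u_0}(t,B_1)]$ is the heat-flow applied to $u^{u_0}(t,\cdot)$ at $(x,1)$; since $x\in[y-1,y+1]$, the Gaussian transition density at time $1$ is bounded below uniformly on $[y-2,y+2]$ by some absolute constant $c_0>0$, so
\[
E_x\big[ u^{u_0}(t,B_1) \big] \geq c_0 \int_{y-2}^{y+2} u^{u_0}(t,z)\diff z \geq c_0 \int_{[y-1,y]} u^{u_0}(t,z)\diff z.
\]
It therefore remains to compare $u^{u_0}(t,y)$ with the spatial average $\int_{[y-1,y]}u^{u_0}(t,z)\diff z$. This is a purely local regularity estimate: writing $u^{u_0}(t,y)=E_y[\exp\{\int_0^\varepsilon\xi(B_s)\diff s\}u^{u_0}(t-\varepsilon,B_\varepsilon)]$ for $\varepsilon=1/2$ (using $t\geq 1$), one bounds this by $e^{\es/2}$ times a Gaussian average of $u^{u_0}(t-1/2,\cdot)$ around $y$, and then uses Lemma~\ref{le:MomPAMIncrease}-type monotonicity in time to revert to $u^{u_0}(t,\cdot)$ — or, more simply, one runs the comparison the other way and dominates the average from below using the same Gaussian lower bound applied at $(y,1/2)$ to the function $u^{u_0}(t-1/2,\cdot)$, which by Lemma~\ref{le:MomPAMIncrease} is at most $2u^{u_0}(t,\cdot)$ pointwise after shifting — these elementary Gaussian and monotonicity estimates yield $u^{u_0}(t,y)\leq c_1 \int_{[y-1,y]}u^{u_0}(t,z)\diff z$ for an absolute constant $c_1$. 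Combining the three displays with $\Cr{Harnack}:=c_1/(c_0 e^{\ei})$ gives \eqref{eq:harnack_appendix}.

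\textbf{Main obstacle.} The only delicate point is the local comparison of the point value $u^{u_0}(t,y)$ with its spatial average over a unit interval, uniformly in $t\geq 1$ and in the environment $\xi$. The bound $\ei\leq\xi\leq\es$ makes the Feynman-Kac weights over a bounded time window lie in $[e^{-\es/2},e^{\es/2}]$ (or similar), so all the relevant heat-kernel estimates are uniform; the genuine content is that no pathological spatial oscillation of $u^{u_0}(t,\cdot)$ on scale $1$ can occur, which follows because running the diffusion for a fixed positive time smooths the solution at scale $O(1)$ with constants controlled only through $\ei,\es$. I would carry this out by the double application of the Markov property sketched above (once to pass from time $t+1$ back to $t$, once to smooth at time $t$), invoking Lemma~\ref{le:MomPAMIncrease} to handle the backward-in-time step, and the explicit Gaussian density lower bound on a fixed compact interval. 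All constants are then absolute or depend only on $\ei,\es$, so $\Cr{Harnack}$ is deterministic, as claimed.
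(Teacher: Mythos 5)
Your decomposition into three displays is sound in structure, and your first two steps (Markov property at time $1$ for $u^{u_0}(t+1,x)$, plus a Gaussian lower bound for $E_x[u^{u_0}(t,B_1)]$ on $[y-1,y]$) are correct as written. The gap is in the third step, where the claim
\(
u^{u_0}(t,y)\leq c_1\int_{[y-1,y]}u^{u_0}(t,z)\diff z
\)
is not actually established. Both of the routes you sketch break down. First, you invoke Lemma~\ref{le:MomPAMIncrease}, but that lemma is stated and proved only for $u_0=\mathds{1}_{(-\infty,0]}$: its proof uses the identity $\inf_{y\leq 0}P_y(B_{t-s}\leq 0)=1/2$, which has no analogue for a general $u_0\in\mathcal{I}_{\text{PAM}}$, so there is no time-monotonicity available here. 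The ``cleaner'' reduction to Heaviside data you mention via Corollary~\ref{cor:comp_heavi} does not rescue this: that corollary only holds on the random event $\mathcal{H}_{vt}$ and only at spatial points of the form $vt$ with $v\in V$, whereas the lemma must hold for all $y\in\R$ and $t\geq1$, $\p$-a.s. Second, even granting the time-monotonicity, the route leads to
\(
u^{u_0}(t,y)\lesssim\int_\R f_{1/2,y}(z)\,u^{u_0}(t,z)\diff z,
\)
a Gaussian average over the whole line. Since $z\mapsto u^{u_0}(t,z)$ decays towards $+\infty$ and is large towards $-\infty$, the contribution from $z<y-1$ is not negligible and cannot be discarded; there is no localization to $[y-1,y]$ without a further, genuinely non-trivial argument. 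In short, step~3 is a Harnack-type estimate of essentially the same nature as the lemma you are trying to prove, and treating it as an ``elementary local regularity fact'' is circular.

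The paper sidesteps this completely by never comparing $u(t,y)$ with a spatial average of $u(t,\cdot)$. Instead it applies the Markov property to both sides with \emph{different time steps}: at time $1$ for $u^{u_0}(t,y)$ and at time $2$ for $u^{u_0}(t+1,x)$, so that both become weighted averages of the \emph{same} function $u^{u_0}(t-1,\cdot)$, with kernels $f_{1,y}$ and $f_{2,x}$ respectively. The whole lemma then reduces to the purely algebraic inequality
\(
f_{1,y}(z)\leq\sqrt{2/e}\,f_{2,x}(z)\quad\text{for all }z\in\R,\ x\in[y-1,y+1],
\)
together with the bounds $0\leq\xi\leq\es$. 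No regularity or monotonicity of $u$ is needed. I would recommend you abandon step~3 and instead compare the two Gaussian kernels directly: the exponent $2(z-y)^2-(z-x)^2\geq -2$ for $x\in[y-1,y+1]$ is a two-line computation and gives exactly the extra smearing needed to pass from the inner position $y$ to any $x$ at distance $\leq 1$.
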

\begin{proof}
	For $x\in\R$ and $t>0$ let $f_{t,x}$ be the probability density of a Brownian motion at time $t$, starting in $x$. Let us first show that for all $y,z\in\R$ we have 
	\begin{equation}
		\label{eq:prob_dens_mon}
		f_{1,y}(z) \leq \sqrt{2/e} \inf_{x\in[y-1,y+1]} f_{2,x}(z).
	\end{equation}	
	Indeed, using $f_{t,y}(z)=\frac{1}{\sqrt{2\pi t}}e^{-\frac{(z-y)^2}{2t}}$, \eqref{eq:prob_dens_mon} follows from 
	\begin{align*}
		\inf_{x\in[y-1,y+1]} \left\{ 2(z-y)^2 - (z-x)^2 \right\} &=\begin{cases}
			 2(z-y)^2- (z-y+1)^2  =  (z-y-1)^2 - 2, &\quad y<z,\\
			 2(z-y)^2- (z-y-1)^2  = (z-y+1)^2 - 2, &\quad y\geq z,\\
		\end{cases}\\
		&\geq -2.
	\end{align*}
	The Feynman-Kac formula in combination with the Markov property applied at time $1$ now supplies us with
	\begin{align*}
		u^{u_0}(t,y) &= E_y \big[ e^{\int_0^t \xi(B_r)\diff r} u_0(B_t) \big] \leq e^{\es} \int_\R f_{1,y}(z) E_z\big[ e^{\int_0^{t-1} \xi(B_r)\diff r} u_0(B_{t-1}) \big] \diff z \\
		&\leq e^{\es} \sqrt{2/e} \inf_{x\in[y-1,y+1]} \int_\R f_{2,x}(z)  E_z\big[ e^{\int_0^{t-1} \xi(B_r)\diff r} u_0(B_{t-1}) \big] \diff z \\
		&\leq e^{\es} \sqrt{2/e} \inf_{x\in[y-1,y+1]} u^{u_0}(t+1,x),
	\end{align*}
	where in the second inequality we used \eqref{eq:prob_dens_mon}. Then  \eqref{eq:harnack_appendix} follows choosing $\Cr{Harnack}=e^{\es} \sqrt{2/e}$.  
\end{proof}

\subsection{Proof of the McKean representation}
\label{sec:mckean_proof}
\begin{proof}[Proof of Proposition~\ref{prop:mckean1}]
	We start with considering $w_0$ continuous and $w_0(x)\in[0,1]$ for all $x\in\R$. Define $u:=1-w$ and $u_0:=1-w_0$. Then by \cite[Remark~4.4.4]{karatzas_shreve} (providing the existence of a $C^{1,2}$-solution to \eqref{eq:PAM}) and \cite[Corollary~4.4.5]{karatzas_shreve} (Feynman-Kac representation),  the function $u^{(1)}(t,x)=E_x\big[ e^{-\int_0^t\xi(B_r)\diff r} u_0(B_t) \big]$ is a classical solution to $u^{(1)}_t(t,x)=\frac{1}{2}\Delta u^{(1)}(t,x) - \xi(x)u^{(1)}(t,x)$ and we have  $u^{(1)}(0,x)=u_0(x)$. Furthermore, by the same argument, for all $s<t$ and $k\in\N$ fixed, the function
	\begin{align*}
	u^{(2)}(t,x;s,k)&:= E_x \Big[ \xi(B_{t-s})e^{-\int_0^{t-s}\xi(B_r)\diff r}u(s,B_{t-s})^k  \Big]
	\end{align*}
	is a classical solution to  $u^{(2)}_t(t,x;s,k)=\frac{1}{2}\Delta u^{(2)}(t,x;s,k) - \xi(x)u^{(2)}(t,x;s,k)$. By dominated convergence, taking advantage of the uniform continuity of the first and second order derivatives, for every fixed $t'\leq t$, we can interchange limits to obtain the identities
	\begin{align}
	\label{eq:interchange_limits}
	\begin{split}
	\sum_k p_k \int_0^{t'} \frac{\partial}{\partial t} u^{(2)}(t,x;s,k) \diff s &=  \frac{\partial}{\partial t} \sum_k p_k \int_0^{t'}  u^{(2)}(t,x;s,k) \diff s,\\
	\sum_k p_k \int_0^{t'}  \big(\frac{1}{2}\Delta-\xi(x)\big) u^{(2)}(t,x;s,k) \diff s &=  \big(\frac{1}{2}\Delta-\xi(x)\big) \sum_k p_k \int_0^{t'}  u^{(2)}(t,x;s,k) \diff s.
	 \end{split}
 	\end{align}
 	Conditioning on the first splitting time of the initial particle and on how many children are born,  we have
 	\begin{align*}
 	u(t,x) 
 	&=E_x\big[ e^{-\int_0^t\xi(B_r)\diff r} u_0(B_t) \big] + \sum_k p_k \int_0^t E_x\Big[  \xi(B_s)e^{-\int_0^s\xi(B_s)}u(t-s,B_s)^k \Big] \diff s \\
 	&=u^{(1)}(t,x) + \sum_k p_k \int_0^t E_x\Big[  \xi(B_{t-s})e^{-\int_0^{t-s}\xi(B_{r})\diff r}u(s,B_{t-s})^k \Big] \diff s, 
 	\end{align*}
 	where we used the Markov property of the process $(B_t)_{t\geq0}$. We deduce that for $h\neq0,$
 	\begin{align*}
 	\frac{1}{h}\big( u(t+h,x)-u(t,x) \big) &= \frac{1}{h}\big( u^{(1)}(t+h,x)-u^{(1)}(t,x) \big) \\
 	&\quad + \sum_k p_k E_x \Big[ \frac{1}{h}\int_t^{t+h} \xi(B_{t+h-s})e^{ -\int_0^{t+h-s}\xi(B_r)\diff r } u(s,B_{t+h-s})^k \diff s \Big] \\
 	&\quad +\sum_k p_k \int_0^t \frac{1}{h} \big( u^{(2)}(t+h,x;s,k) - u^{(2)}(t,x;s,k) \big)\diff s.
 	\end{align*}
 	Invoking once again the arguments from the beginning of the proof, as $h$ tends to zero, the first summand converges to $\frac{1}{2}\Delta u^{(1)}(t,x) - \xi(x)u^{(1)}(t,x)$ and by dominated convergence, the second summand converges to $\xi(x)\sum_k p_k  u(t,x)^k$. For the third term, we mention that the integrand is uniformly bounded since $u^{(2)}_t$ is continuous. Again, due to dominated convergence  and  \eqref{eq:interchange_limits}, the latter term converges to
 	\begin{align*}
 	\big(\frac{1}{2}\Delta-\xi(x)\big) \sum_k p_k \int_0^{t}  u^{(2)}(t,x;s,k) \diff s.
 	\end{align*}
 	All in all, combining these observations we arrive at
 	\begin{align*}
 	u_t(t,x) &= u^{(1)}_t(t,x) + \xi(x)\sum_k p_k  u(t,x)^k + \big(\frac{1}{2}\Delta-\xi(x)\big) \sum_k p_k \int_0^{t}  u^{(2)}(t,x;s,k) \diff s \\
 	&=\big(\frac{1}{2}\Delta-\xi(x)\big) u(t,x) + \xi(x)\sum_k p_ku(t,x)^k,
 	\end{align*}
 	which is equivalent $w$ being a solution to \eqref{eq:KPP1}. Thus we have shown the claim for continuous $w_0$. By Remark~\ref{rem:gen_approx}, for $w_0\in\mathcal{I}_{\text{F-KPP}}$ the result follows by approximation. 	
\end{proof}

\subsection{Existence and monotonicity of solutions} 
We start with an existence result for generalized and also classical solutions.

\begin{proposition}[{\cite[Theorem 7.4.1]{Fr-85}}] \label{prop:genClass}
	Let $w_0$ be measurable, non-negative and bounded, and let $F$ fulfill \eqref{eq:standard_condi}. Then $\p$-a.s.\ 
	there exists a unique, bounded generalized solution $w\in C\big( (0,\infty)\times\R \big)$ to \eqref{eq:KPP1}, which satisfies $0\leq w(t,x)\leq 1\vee \sup_x w_0(x)$ for all $(t,x)\in[0,\infty)\times\R$. 
	If additionally $w_0$ is continuous and $x\mapsto \xi(x,\omega)$ is H\"older continuous for $\p$-a.a.\ $\omega\in\Omega$, then $\p$-a.s.\ the generalized solution $w$ is a classical one, i.e. $w\in C^{1,2}\big((0,\infty)\times\R\big)\cap C\big([0,\infty)\times\R\big)$  and $w$ solves \eqref{eq:KPP1}.
\end{proposition}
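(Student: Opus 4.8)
\textbf{Proof proposal for Proposition~\ref{prop:genClass}.}

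The plan is to combine two classical inputs: a fixed-point / probabilistic existence argument for the Feynman--Kac-type integral equation \eqref{eq:genSol}, and parabolic regularity theory to upgrade the generalized solution to a classical one when the data are smooth enough. First I would construct the generalized solution via Picard iteration on the integral equation \eqref{eq:genSol}. Rewriting \eqref{eq:KPP1} in mild form, one sets $w^{(0)}\equiv 0$ (or $w^{(0)}=E_x[w_0(B_t)]$) and iterates
\[
w^{(n+1)}(t,x) = E_x\Big[ \exp\Big\{ \int_0^t \xi(B_s)\,\tfrac{F(w^{(n)}(t-s,B_s))}{w^{(n)}(t-s,B_s)}\,\diff s \Big\}\, w_0(B_t) \Big].
\]
Because $\xi$ is bounded by $\es$ (cf.\ \eqref{eq:POT}) and, by \eqref{eq:standard_condi}, $0\le F(w)/w \le 1$ on $(0,1]$ with the ratio extending continuously to $1$ at $w=0$ and being Lipschitz near $0$ (the condition $\limsup_{w\downarrow 0}(1-F'(w))/w<\infty$), the map $w\mapsto \xi F(w)/w$ is globally Lipschitz on the relevant range. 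A Gronwall estimate on finite time intervals $[0,T]$ then gives a contraction in $C([0,T]\times\R)$ with the sup norm (weighted if necessary), yielding a unique bounded fixed point $w$; the bound $0\le w\le 1\vee\sup_x w_0$ follows by a comparison/maximum argument applied to the iteration, since the iteration preserves the order interval $[0, 1\vee\|w_0\|_\infty]$. This $w$ is continuous on $(0,\infty)\times\R$ because each $w^{(n)}$ is (the Brownian expectation smooths in $x$ and $t$ for $t>0$), and the convergence is locally uniform.

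Next I would show $w$ is a classical solution under the extra hypotheses. Assuming $w_0$ continuous and $\xi(\cdot,\omega)$ H\"older continuous, the coefficient $x\mapsto \xi(x)F(w(t,x))/w(t,x)$ is, for $t>0$, H\"older continuous in $x$ and continuous in $t$ (this uses the continuity of $w$ on $(0,\infty)\times\R$ just established, the H\"older continuity of $\xi$, and the local Lipschitz continuity of $w\mapsto F(w)/w$). Feeding this back into the Feynman--Kac representation and invoking the standard parabolic Schauder / Feynman--Kac regularity results quoted from \cite{karatzas_shreve} (Remark~4.4.4 and Corollary~4.4.5 there, exactly as used in the proof of Proposition~\ref{prop:mckean1}), one obtains $w\in C^{1,2}((0,\infty)\times\R)$ solving \eqref{eq:KPP1} in the classical sense; joint continuity up to $t=0$ with $w(0,\cdot)=w_0$ comes from the continuity of $w_0$ and dominated convergence in \eqref{eq:genSol} as $t\downarrow 0$. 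Uniqueness in the classical class follows from uniqueness in the generalized class, which was already obtained from the contraction argument (and can also be seen directly: any two classical solutions are generalized solutions, hence agree). Since the statement is quoted verbatim from \cite[Theorem 7.4.1]{Fr-85}, an alternative and shorter route is simply to cite that theorem; I would present the above as a sketch of why it holds in the present notation.

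The main obstacle is the behavior of the nonlinearity near $w=0$: the quotient $F(w)/w$ appearing in \eqref{eq:genSol} must be controlled so that the iteration map is well-defined and Lipschitz even where $w$ is small or vanishes. This is precisely what the last condition in \eqref{eq:standard_condi}, $\limsup_{w\downarrow 0}(1-F'(w))/w<\infty$, is designed to handle: together with $F'(0)=1$ it forces $F(w)/w = 1 + O(w)$ as $w\downarrow 0$, so $F(w)/w$ extends to a Lipschitz function on $[0, \|w_0\|_\infty\vee 1]$; the product with the bounded $\xi$ is then uniformly Lipschitz, and the contraction estimate goes through on every finite time horizon with constants depending only on $\es$, the Lipschitz constant of $F(w)/w$, and $T$. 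A secondary, purely technical point is that all estimates must be made $\p$-a.s.\ and uniform enough in $\omega$ so that the exceptional null set is fixed; this is harmless because the relevant bounds depend on $\xi$ only through $\es$ (and, for the classical part, through the a.s.\ H\"older continuity of paths).
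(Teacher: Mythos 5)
The paper gives no proof of this proposition; it is quoted from \cite[Theorem~7.4.1]{Fr-85}, and your closing remark that one may simply cite that theorem matches exactly what the paper does. Your sketch is therefore a supplementary reconstruction, which I review on its own merits.

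The Lipschitz/Gronwall contraction, the treatment of $F(w)/w$ near $w=0$ via $\limsup_{w\downarrow 0}(1-F'(w))/w<\infty$, and the Feynman--Kac/Schauder regularity upgrade are fine as written. The step that does not work is ``the bound $0\le w\le 1\vee\sup w_0$ follows \dots\ since the iteration preserves the order interval $[0,1\vee\|w_0\|_\infty]$.'' It does not: for $w^{(n)}\in[0,1]$ the exponential factor in
\[
w^{(n+1)}(t,x)=E_x\Big[\exp\Big\{\int_0^t \xi(B_s)\tfrac{F(w^{(n)})}{w^{(n)}}(t-s,B_s)\,\diff s\Big\}\,w_0(B_t)\Big]
\]
lies in $[1,e^{\es t}]$, so $w^{(n+1)}(t,\cdot)$ can be as large as $e^{\es t}\|w_0\|_\infty$; the linearization underlying \eqref{eq:genSol} discards the fact that $F(1)=0$, which is precisely what makes the constant $1$ a supersolution of the nonlinear equation. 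You therefore need a separate argument for the a priori bound. Two standard routes: (i) replace the Picard scheme by a monotone iteration --- write $\xi F(w)=\big(\xi F(w)+\lambda w\big)-\lambda w$ with $\lambda>\es\sup_{[0,1]}|F'|$ so that the bracketed term is nondecreasing in $w$, then iterate from the supersolution $w^{(0)}\equiv 1\vee\|w_0\|_\infty$ downward and from $w^{(0)}\equiv 0$ upward; the resulting monotone sequences stay trapped in $[0,1\vee\|w_0\|_\infty]$ and converge to the same limit, yielding existence, uniqueness and the a priori bound simultaneously; or (ii) keep your contraction for existence/uniqueness and obtain the bound a posteriori by comparison with the constant super-/subsolutions $1\vee\|w_0\|_\infty$ and $0$ (in the paper's notation via Lemma~\ref{le:maximum_princ} and Corollary~\ref{cor:mon_solutions}), first for continuous $w_0$ where the solution is classical, then for general $w_0$ by the monotone approximation of Remark~\ref{rem:gen_approx}. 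Either route closes the gap.
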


For the next lemma, we introduce the differential operator 
\begin{equation*}
(\mathcal{L}_G w)(t,x) := w_t(t,x) - \frac{1}{2} w_{xx}(t,x) - G(x,w(t,x)), 
\end{equation*}
where $G$ is uniformly Lipschitz-continuous in $w$, i.e., there exists a constant $\alpha>0$, such that 
\begin{equation}
\label{eq:lipschitz_G}
|G(x,u)-G(x,v)|\leq \alpha|u-v|\quad\forall\ x,u,v\in\R.
\end{equation}
The next lemma is  in the spirit of  \cite[{Proposition 2.1}]{aronson_diffusion}.
\begin{lemma}
	\label{le:maximum_princ}
	Let $T>0$, $Q:=(0,T)\times\R$ and $G$ be such that \eqref{eq:lipschitz_G} holds. Let $w^{(1)}$ and $w^{(2)}$ be non-negative and bounded functions on $\overline{Q}$, such that for $i\in\{1,2\}$, $w^{(i)}_x$ and $w^{(i)}_{xx}$ are continuous on $Q,$ and such that $w^{(i)}_t$ exists in $Q.$ If $\mathcal{L}_{G}w^{(1)} \leq \mathcal{L}_{G}w^{(2)}$ on $Q$ and $0\leq w^{(1)}(0,x)\leq w^{(2)}(0,x)$ for all $x\in\R$, then also $w^{(1)} \leq w^{(2)}$ on $Q.$
\end{lemma}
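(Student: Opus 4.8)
\textbf{Proof proposal for Lemma~\ref{le:maximum_princ}.} The plan is to reduce to the classical parabolic maximum principle via the standard exponential substitution that removes the (possibly sign-indefinite) zeroth-order term arising from the Lipschitz bound on $G$. First I would set $z:=w^{(2)}-w^{(1)}$ on $\overline{Q}$; by hypothesis $z$ is bounded, $z(0,\cdot)\geq 0$, and $z_x,z_{xx}$ are continuous on $Q$ while $z_t$ exists on $Q$. Writing out $\mathcal{L}_G w^{(1)}\leq \mathcal{L}_G w^{(2)}$ gives
\begin{equation*}
z_t-\tfrac12 z_{xx}\;\geq\;G(x,w^{(2)})-G(x,w^{(1)})\;=\;b(t,x)\,z,
\end{equation*}
where $b(t,x):=\big(G(x,w^{(2)}(t,x))-G(x,w^{(1)}(t,x))\big)/z(t,x)$ on $\{z\neq 0\}$ and $b:=0$ on $\{z=0\}$; by \eqref{eq:lipschitz_G} we have $|b|\leq\alpha$ everywhere. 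Then I would substitute $z=e^{\alpha t}\,\widetilde z$, so that $\widetilde z$ is still bounded, $\widetilde z(0,\cdot)\geq 0$, and
\begin{equation*}
\widetilde z_t-\tfrac12\widetilde z_{xx}\;\geq\;(b(t,x)-\alpha)\,\widetilde z,
\end{equation*}
where now the coefficient $c(t,x):=b(t,x)-\alpha$ satisfies $c\leq 0$. It therefore suffices to show that a bounded function $\widetilde z$ with continuous spatial derivatives, existing time derivative, nonnegative initial data and $\widetilde z_t-\tfrac12\widetilde z_{xx}-c\widetilde z\geq 0$ on $Q$ with $c\leq 0$ is nonnegative on $Q$.

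The core of the argument is then a comparison-with-a-parabola trick to handle the unbounded spatial domain. Suppose for contradiction that $\widetilde z(t_0,x_0)<0$ for some $(t_0,x_0)\in Q$. For $\beta>0$ I would introduce the auxiliary function
\begin{equation*}
v_\beta(t,x):=\widetilde z(t,x)+\beta\big(x^2+t\big),
\end{equation*}
which is bounded below on each slab $\{|x|\leq R\}\cap\overline{Q}$ and, since $\widetilde z$ is bounded, satisfies $v_\beta\to+\infty$ as $|x|\to\infty$ uniformly in $t\in[0,T]$; hence for $R=R(\beta)$ large enough $v_\beta>0$ on $\{|x|=R\}\cap[0,T]$ and on $\{t=0\}$, while $v_\beta(t_0,x_0)<0$ once $\beta$ is small enough (so $x_0,t_0$ lie inside the chosen box). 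Thus $v_\beta$ attains a negative minimum over the compact set $[0,T]\times[-R,R]$ at some interior-in-$x$ point $(t_1,x_1)$ with $t_1>0$. At such a minimum $v_{\beta,x}(t_1,x_1)=0$, $v_{\beta,xx}(t_1,x_1)\geq 0$, and $v_{\beta,t}(t_1,x_1)\leq 0$ (with equality if $t_1<T$, and $\leq 0$ if $t_1=T$ since $v_\beta$ decreases in $t$ towards $t_1$). Plugging in, and using $c\leq 0$ together with $v_\beta(t_1,x_1)<0$ so that $c\,v_\beta\geq 0$ there, one computes
\begin{equation*}
v_{\beta,t}-\tfrac12 v_{\beta,xx}-c\,v_\beta
=\widetilde z_t-\tfrac12\widetilde z_{xx}-c\widetilde z+\beta-\beta c(x^2+t)\;\geq\;\beta>0
\end{equation*}
at $(t_1,x_1)$, which contradicts $v_{\beta,t}-\tfrac12 v_{\beta,xx}\leq 0$ and $c\,v_\beta\geq 0$ there (indeed the left side is $\leq 0$). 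Hence no such $(t_0,x_0)$ exists, $\widetilde z\geq 0$ on $Q$, and therefore $z\geq 0$, i.e.\ $w^{(1)}\leq w^{(2)}$ on $Q$.

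The main obstacle is the non-compactness of $\R$ in the spatial variable: the naive maximum-principle argument would want to evaluate at a global minimum which need not exist, and one must also be careful that $\widetilde z_t$ is only assumed to \emph{exist} (not be continuous) on $Q$, so the minimum point analysis in $t$ must be phrased as a one-sided statement $v_{\beta,t}(t_1,x_1)\leq 0$ (taking the left difference quotient for $t_1>0$). Both issues are dealt with by the $\beta(x^2+t)$ penalization, which simultaneously confines the minimum to a compact box and supplies a strict sign in the differential inequality; letting $\beta\downarrow 0$ is not even needed since the contradiction is obtained for each small $\beta$. A minor point to check is the Lipschitz quotient $b$: it is measurable and bounded by $\alpha$, which is all that is used, so no continuity of $b$ is required. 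This yields the claim, and in particular Corollary~\ref{cor:mon_solutions} (monotone dependence of classical solutions to \eqref{eq:KPP1} on the initial condition and on the non-linearity) follows by applying the lemma with $G(x,w)=\xi(x)F(w)$, which is Lipschitz on the relevant bounded range by \eqref{eq:standard_condi}.
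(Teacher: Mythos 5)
Your approach is genuinely different from the paper's: after the common exponential substitution that makes the zeroth-order coefficient nonpositive, the paper simply invokes a black-box maximum principle (Krylov, \cite[Theorem~8.1.4]{krylov96}), whereas you give a self-contained penalization argument to cope with the unbounded spatial domain. That self-contained route is appealing, but as written the final computation has a sign error that destroys the needed strict inequality.

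Concretely, with $v_\beta=\widetilde z+\beta(x^2+t)$ one has $v_{\beta,t}=\widetilde z_t+\beta$ and $v_{\beta,xx}=\widetilde z_{xx}+2\beta$, so
\begin{equation*}
v_{\beta,t}-\tfrac12 v_{\beta,xx}-c\,v_\beta
=\big(\widetilde z_t-\tfrac12\widetilde z_{xx}-c\widetilde z\big)+\beta-\beta-c\beta(x^2+t)
=\big(\widetilde z_t-\tfrac12\widetilde z_{xx}-c\widetilde z\big)-c\beta(x^2+t).
\end{equation*}
The $+\beta$ coming from $v_{\beta,t}$ is exactly cancelled by the $-\tfrac12\cdot 2\beta$ coming from $-\tfrac12 v_{\beta,xx}$; your display keeps the $+\beta$ but drops the compensating $-\beta$. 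Consequently the lower bound at the candidate minimum is only $\geq 0$, not $\geq\beta>0$, and since the three terms $v_{\beta,t}\le0$, $-\tfrac12 v_{\beta,xx}\le0$, $-c\,v_\beta\le0$ also give an upper bound of $0$, you obtain equality rather than a contradiction (for instance if $c(t_1,x_1)=0$ nothing is forced). The fix is minor: replace the penalty by $\beta(x^2+2t)$ (or, more generally, $\beta(x^2+\lambda t)$ with $\lambda>1$), so that $v_{\beta,t}$ contributes $2\beta$ while $-\tfrac12 v_{\beta,xx}$ still contributes $-\beta$, leaving a genuine surplus $+\beta>0$ and yielding the desired contradiction. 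With that correction, the rest of the argument — the reduction to $\widetilde z_t-\tfrac12\widetilde z_{xx}\ge c\widetilde z$ with $c\le0$, the confinement of a negative minimum to a compact box, and the one-sided treatment of $v_{\beta,t}$ at $t_1\in(0,T]$ — is sound and provides a nice elementary alternative to citing the maximum principle wholesale.
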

\begin{proof}
	From the assumptions we infer that
	\begin{align*}
	w^{(2)}_t(t,x)-w^{(1)}_t(t,x)  &\geq  \frac12(w^{(2)}_{xx}(t,x)-w^{(1)}_{xx}(t,x)) +   G(x,w^{(2)}(t,x))-G(x,w^{(1)}(t,x)). 
	\end{align*}
	Then, recalling  \eqref{eq:lipschitz_G}	and letting 
	\[ v(t,x):= e^{-2\alpha t}\big(w^{(2)}(t,x)-w^{(1)}(t,x)\big), \]
	we get
	\begin{align*}
	v_t (t,x) -\frac12 v_{xx}(t,x) &\geq  -2\alpha v(t,x) + e^{-2\alpha t}\big( G(x,w^{(2)}(t,x))-G(x,w^{(1)}(t,x)) \big) \\
	&\geq \big( -2-\text{sgn}(v(t,x)) \big)\alpha\cdot v(t,x).
	\end{align*}
	Now the the first factor on the right-hand side is negative and bounded. Applying the maximum principle \cite[{Theorem~8.1.4}]{krylov96} to $-v$ then implies that $v \geq0$ on $ Q$ and we can conclude. 
\end{proof}

As an important application we get that the solution $w$ to $\mathcal{L}_Gw=0$ is monotone in $G$ and in the initial condition.

\begin{corollary}
	\label{cor:mon_solutions}
	Let $T>0$, $Q:=(0,T)\times\R$, and let $G_1$ and $G_2$ fulfill ${G_1}\leq {G_2}$  on $\R\times[0,\infty).$ Furthermore, assume that  $G_2$ satisfies \eqref{eq:lipschitz_G}. In addition, let  $w^{(1)}$ and $w^{(2)}$  be non-negative and bounded functions on $\overline{Q}$, such that for $i\in\{1,2\}$, $w^{(i)}_x$ and $w^{(i)}_{xx}$ are continuous on $Q$ and  $w^{(i)}_t$ exist on $Q$. If  $\mathcal{L}_{G_1}w^{(1)} = \mathcal{L}_{G_2}w^{(2)}$ and  $w^{(1)}(0,\cdot)\leq w^{(2)}(0,\cdot)$ on $x\in\R$, then  we have  $w^{(1)} \leq w^{(2)}$ on $Q.$
\end{corollary}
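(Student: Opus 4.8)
The plan is to reduce Corollary~\ref{cor:mon_solutions} to the comparison principle of Lemma~\ref{le:maximum_princ}. The key observation is that the hypothesis $\mathcal{L}_{G_1}w^{(1)}=\mathcal{L}_{G_2}w^{(2)}$ together with $G_1\le G_2$ can be rewritten as an inequality between $\mathcal{L}_{G_2}$ applied to both functions. First I would compute, for all $(t,x)\in Q$,
\[
(\mathcal{L}_{G_2}w^{(1)})(t,x) = (\mathcal{L}_{G_1}w^{(1)})(t,x) + G_1(x,w^{(1)}(t,x)) - G_2(x,w^{(1)}(t,x)).
\]
Since $w^{(1)}$ is non-negative, the argument $w^{(1)}(t,x)$ lies in $[0,\infty)$, so the assumption $G_1\le G_2$ on $\R\times[0,\infty)$ gives $G_1(x,w^{(1)}(t,x))-G_2(x,w^{(1)}(t,x))\le 0$. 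Hence
\[
(\mathcal{L}_{G_2}w^{(1)})(t,x) \le (\mathcal{L}_{G_1}w^{(1)})(t,x) = (\mathcal{L}_{G_2}w^{(2)})(t,x),
\]
where the last equality is precisely the hypothesis of the corollary.

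Next I would check that all hypotheses of Lemma~\ref{le:maximum_princ} are in place with the single operator $\mathcal{L}_{G}$ for $G:=G_2$. The regularity assumptions on $w^{(1)},w^{(2)}$ (non-negative and bounded on $\overline{Q}$, with $w^{(i)}_x,w^{(i)}_{xx}$ continuous on $Q$ and $w^{(i)}_t$ existing on $Q$) are assumed verbatim in the corollary; the Lipschitz condition \eqref{eq:lipschitz_G} for $G_2$ is assumed explicitly; and the ordering of the initial data $0\le w^{(1)}(0,\cdot)\le w^{(2)}(0,\cdot)$ is part of the hypothesis as well (non-negativity of $w^{(1)}(0,\cdot)$ follows from non-negativity of $w^{(1)}$ on $\overline{Q}$). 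Together with the inequality $\mathcal{L}_{G_2}w^{(1)}\le \mathcal{L}_{G_2}w^{(2)}$ just derived, Lemma~\ref{le:maximum_princ} applied with $G=G_2$ yields $w^{(1)}\le w^{(2)}$ on $Q$, which is the claim.

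This argument is essentially a one-line reduction, so there is no real obstacle; the only point requiring a moment's care is making sure we invoke the Lipschitz and comparison structure of $G_2$ (the larger nonlinearity) rather than $G_1$, since it is $\mathcal{L}_{G_2}$ that appears when we move the nonlinear terms onto one side, and it is for $G_2$ that \eqref{eq:lipschitz_G} has been assumed. I would also note in passing that non-negativity of $w^{(1)}$ is exactly what is needed to evaluate the sign of $G_1-G_2$, since the hypothesis $G_1\le G_2$ is only assumed on $\R\times[0,\infty)$.
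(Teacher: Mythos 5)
Your proof is correct and is essentially the paper's own argument: rewrite $\mathcal{L}_{G_2}w^{(1)}=\mathcal{L}_{G_1}w^{(1)}+G_1(x,w^{(1)})-G_2(x,w^{(1)})\le \mathcal{L}_{G_1}w^{(1)}=\mathcal{L}_{G_2}w^{(2)}$ and invoke Lemma~\ref{le:maximum_princ} with $G=G_2$. You even get one small point more precisely than the paper does: it is the non-negativity of $w^{(1)}$ (not $w^{(2)}$, as the paper's proof states) that is needed to evaluate $G_1-G_2$ at the point $w^{(1)}(t,x)\in[0,\infty)$.
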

\begin{proof}
	Since the function $w^{(2)}$ is non-negative, we have $\mathcal{L}_{G_2}w^{(2)}=\mathcal{L}_{G_1}w^{(1)}\geq \mathcal{L}_{G_2}w^{(1)}$. Then by Lemma~\ref{le:maximum_princ}, we have  $w^{(1)}(t,x)\leq w^{(2)}(t,x)$ for all $(t,x)\in Q$.
\end{proof}

\begin{corollary}
	Let $G$ fulfill \eqref{eq:lipschitz_G} and $G(x,0)=G(x,1)=0$ for all $x\in\R$. Let $w$ be a solution to $\mathcal{L}_Gw=0$ with $0\leq w(0,x)\leq 1$. Then $0 \leq w(t,x) \leq 1$ for all $(t,x)\in[0,\infty)\times\R$.
\end{corollary}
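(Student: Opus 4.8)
The plan is to deduce this final corollary directly from the comparison principle in Corollary~\ref{cor:mon_solutions} (or, equivalently, two applications of Lemma~\ref{le:maximum_princ}), by sandwiching $w$ between the two constant solutions $0$ and $1$. The key observation is that, since $G(x,0)=0$ for all $x\in\R$, the constant function $\underline{w}\equiv 0$ solves $\mathcal{L}_G\underline{w}=0$; likewise, since $G(x,1)=0$ for all $x\in\R$, the constant function $\overline{w}\equiv 1$ solves $\mathcal{L}_G\overline{w}=0$. Both are non-negative, bounded, and smooth on any strip $Q=(0,T)\times\R$, so they are admissible competitors for the comparison lemma.

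First I would fix an arbitrary $T>0$ and set $Q:=(0,T)\times\R$. For the lower bound I would apply Lemma~\ref{le:maximum_princ} (or Corollary~\ref{cor:mon_solutions} with $G_1=G_2=G$) to the pair $w^{(1)}:=\underline{w}\equiv 0$ and $w^{(2)}:=w$: indeed $\mathcal{L}_G\underline{w}=0=\mathcal{L}_Gw$ on $Q$ and $0=\underline{w}(0,\cdot)\leq w(0,\cdot)$ on $\R$ by hypothesis, so the lemma yields $0\leq w(t,x)$ on $Q$. For the upper bound I would apply the same lemma to $w^{(1)}:=w$ and $w^{(2)):=\overline{w}\equiv 1$: here $\mathcal{L}_Gw=0=\mathcal{L}_G\overline{w}$ on $Q$, and $w(0,\cdot)\leq 1=\overline{w}(0,\cdot)$ on $\R$ by hypothesis, while $w$ is assumed non-negative and bounded; hence $w(t,x)\leq 1$ on $Q$. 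Since $T>0$ was arbitrary and $w$ is continuous up to $t=0$ with $0\leq w(0,x)\leq 1$, this gives $0\leq w(t,x)\leq 1$ for all $(t,x)\in[0,\infty)\times\R$.

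One small point to check is that the hypotheses of Lemma~\ref{le:maximum_princ} are genuinely met: we need $w$ to be non-negative and bounded on $\overline{Q}$ with $w_x,w_{xx}$ continuous on $Q$ and $w_t$ existing on $Q$ — these are part of what it means for $w$ to be a (classical) solution to $\mathcal{L}_Gw=0$, and the constant functions $0$ and $1$ trivially satisfy all regularity and boundedness requirements. The Lipschitz condition \eqref{eq:lipschitz_G} on $G$ is assumed in the statement. I do not expect any real obstacle here; the only thing requiring a word of care is that the lower-bound comparison is applied with $w$ in the role of the \emph{larger} function and the upper-bound comparison with $w$ in the role of the \emph{smaller} function, so that in each case the monotonicity of the initial data points in the direction demanded by the lemma. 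This is immediate from $0\leq w(0,\cdot)\leq 1$.
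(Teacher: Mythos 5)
Your proof is correct and is essentially identical to the paper's own argument: both use that $G(x,0)=G(x,1)=0$ makes the constants $0$ and $1$ solutions of $\mathcal{L}_G w=0$, and then sandwich $w$ between them via Lemma~\ref{le:maximum_princ}. The extra care you take in checking the hypotheses of the comparison lemma is fine but adds nothing beyond what the paper's two-line proof already relies on.
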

\begin{proof}
	The functions $w^{(1)}(t,x)=0$ and $w^{(2)}(t,x)=1$ are solutions to $\mathcal{L}w^{(1)}=\mathcal{L}w^{(2)}=0$ and $w^{(1)}(0,x)\leq w(0,x)\leq w^{(2)}(0,x)$. The claim then follows from Lemma~\ref{le:maximum_princ}.
\end{proof}

\bibliographystyle{abbrv}
\bibliography{bibliothek}

\end{document}